\definecolor{darkblue}{rgb}{0,0,0.4}
\definecolor{mediumblue}{rgb}{0,0,0.6}
\definecolor{darkred}{rgb}{0.7,0,0.0}
\definecolor{darkgreen}{rgb}{.35,.51,.3} 
\newcommand{\RR}{\mathbb R}
\newcommand{\ZZ}{\mathbb Z}
\newcommand{\QQ}{\mathbb Q}
\newcommand{\FF}{\mathbb F}
\newcommand{\NN}{\mathbb N}
\newcommand{\lra}{\longrightarrow}
\newcommand{\comma}{\mathbin ,}
\newcommand{\co}{\nobreak\mskip2mu\mathpunct{}\nonscript
  \mkern-\thinmuskip{:}\penalty300\mskip6muplus1mu\relax}
\newcommand{\bdy}{\partial}
\DeclareMathOperator{\Hom}{Hom}
\DeclareMathOperator{\id}{id}
\DeclareMathOperator{\im}{im}
\DeclareMathOperator{\Pin}{\mathit{Pin}}
\DeclareMathOperator{\Split}{split}
\DeclareMathOperator{\gr}{gr}
\theoremstyle{plain}
\numberwithin{equation}{section}
\newtheorem{theorem}[equation]{Theorem}
\newtheorem{proposition}[equation]{Proposition}
\newtheorem{lemma}[equation]{Lemma}
\newtheorem{corollary}[equation]{Corollary}
\newtheorem{definition}[equation]{Definition}
\theoremstyle{definition}
\newtheorem{question}[equation]{Question}
\theoremstyle{remark}
\newtheorem{example}[equation]{Example}
\newtheorem{remark}[equation]{Remark}
\newcommand{\tild}[1]{{\widetilde{#1}}}
\newcommand{\Kh}{\mathit{Kh}}
\newcommand{\KhCx}{\mathit{CKh}}
\newcommand{\oKhCx}{\mathit{CKh}_o}
\newcommand{\oKh}{\mathit{Kh}_o}
\newcommand{\roKh}{\widetilde{\mathit{Kh}}_o}
\newcommand{\KhCxBN}{\KhCx_h} 
\newcommand{\rKhCx}{\tild{\mathit{CKh}}}
\newcommand{\roKhCx}{\tild{\mathit{CKh}}_o}
\newcommand{\rKhCxBN}{\tild{\KhCx}_h} 
\newcommand{\CCat}[1]{\underline{2}^{#1}}
\newcommand{\BNring}{\mathcal{R}}
\newcommand{\rKh}{\widetilde{\Kh}}
\newcommand\HH{\mathit{HH}}
\newcommand\Hochschild\HH
\DeclareMathOperator{\sgn}{sgn}
\newcommand{\ol}[1]{\overline{#1}{}}
\newcommand{\wt}[1]{\widetilde{#1}{}}
\newcommand{\onto}{\twoheadrightarrow}
\newcommand{\Sq}{\mathrm{Sq}}
\newcommand{\LEO}{\mathit{LEO}}
\newcommand{\LEE}{\mathit{LEE}}
\newcommand{\CLEO}{\mathcal{C}_\LEO}
\newcommand{\rCLEO}{\wt{\mathcal{C}}_\LEO}
\newcommand{\kernoverline}[3]{{\mkern #1mu\overline{\mkern-#1mu #2\mkern-#3mu}\mkern#3mu}}
\newcommand{\Kbar}{{\kernoverline{3.5}{K}{1}}}
\newcommand{\fbar}{{\kernoverline{2.5}{f}{0}}}
\newcommand{\stil}{\tilde{s}\vphantom{s}}
\newcommand{\svec}{\bm{s}}
\newcommand{\svectil}{\tilde{\bm{s}}\vphantom{\bm{s}}}
\renewcommand{\sgn}{\sigma}
\begin{document}
\title{Local equivalence and refinements of Rasmussen's s-invariant}

\author{Nathan M. Dunfield}
 \address{Dept.~of Mathematics, University of Illinois Urbana-Champaign\\
   Urbana, IL 61801, USA}
\thanks{NMD was partially supported by US NSF grants DMS-1811156 and
    DMS-2303572 and by a fellowship from the Simons Foundation (673067, Dunfield).}
\email{\href{mailto:nathan@dunfield.info}{nathan@dunfield.info}}
\urladdr{https://dunfield.info}

\author{Robert Lipshitz}
 \address{Department of Mathematics, University of Oregon\\
   Eugene, OR 97403, USA}
\thanks{RL was supported by NSF grant DMS-2204214 and a grant from the Simons Foundation (899815, Lipshitz).}
\email{\href{mailto:lipshitz@uoregon.edu}{lipshitz@uoregon.edu}}

\date{\today}

\author{Dirk Sch\"utz}
 \address{Department of Mathematical Sciences, Durham University\\
   Durham, DH1 3LE, UK}
\email{\href{mailto:dirk.schuetz@durham.ac.uk}{dirk.schuetz@durham.ac.uk}}

\begin{abstract}
  Inspired by the notions of local equivalence in monopole and
  Heegaard Floer homology, we introduce a version of local equivalence
  that combines odd Khovanov homology with equivariant even Khovanov
  homology into an algebraic package called a local even-odd (LEO)
  triple.  We get a homomorphism from the smooth concordance group
  $\mathcal{C}$ to the resulting local equivalence group
  $\mathcal{C}_{\mathit{LEO}}$ of such triples.  We give several
  versions of the $s$-invariant that descend to
  $\mathcal{C}_{\mathit{LEO}}$, including one that completely
  determines whether the image of a knot $K$ in
  $\mathcal{C}_{\mathit{LEO}}$ is trivial.  We discuss computer
  experiments illustrating the power of these invariants in
  obstructing sliceness, both statistically and for some interesting
  knots studied by Manolescu-Piccirillo.  Along the way, we explore
  several variants of this local equivalence group, including one that
  is totally ordered.

 
\end{abstract}

\leavevmode
\vspace{-0.25cm}

\maketitle

\vspace{-0.25cm}

\tableofcontents



\section{Introduction}

Over the last two decades, knot homologies---knot Floer homology,
Khovanov homology, and related invariants---have led to a wealth of
new concordance invariants. One of the first of these was
Ozsv\'ath-Szab\'o's homomorphism $\tau\co \mathcal{C}\to\ZZ$ from the
smooth concordance group, defined using knot Floer
homology~\cite{OSz03:tau}. Rasmussen imitated the construction of
$\tau$ using the Lee deformation of Khovanov homology to obtain
another concordance homomorphism $s\co \mathcal{C}\to\ZZ$, and he used
it to give the first combinatorial proof of Milnor's conjecture on the
unknotting number of torus knots~\cite{Rasmussen10:s}. The homomorphism $s$
was soon shown to be different from
$\tau$~\cite{HeddenOrding08:tau-neq-s} and continues to play a
somewhat special role in the subject
(e.g.,~\cite{Piccirillo20:Conway,HMP21:Mazur}). Other examples include
Manolescu-Owens's homomorphism $\delta$ coming from the correction
term for the branched double cover~\cite{ManolescuOwens07:delta},
Hom's invariant $\varepsilon\in\{-1,0,1\}$~\cite{Hom14:epsilon},
Ozsv\'ath-Stipsicz-Szab\'o's $\Upsilon$~\cite{OSSz17:Upsilon}, and
many others.

Another class of concordance invariants was inspired by Manolescu's
disproof of the Triangulation Conjecture using $\Pin(2)$-equivariant
Seiberg-Witten theory~\cite{Manolescu16:triangulation}. There, Manolescu
introduced homology cobordism invariants $\alpha$, $\beta$, and
$\gamma$ of homology 3-spheres. While studying them,
Stoffregen~\cite{Stoffregen20:Seifert} abstracted the key features of
Seiberg-Witten theory underlying their existence: for all homology
3-spheres, the Seiberg-Witten Floer spectrum has the same
$\Pin(2)$-fixed set (corresponding to the reducible connections), and
the maps associated to homology cobordisms induce homotopy
equivalences of these fixed sets. So, the local behavior near the
fixed set gives a homology cobordism invariant. This behavior can be
captured at the level of cochain complexes by using the Atiyah-Bott
localization theorem, leading to Stoffregen's notion of ``chain local
equivalence.''

Partial analogues of Stoffregen's construction work for Heegaard Floer
homology of closed 3-manifolds, using Hendricks-Manolescu's involutive
Heegaard Floer homology~\cite{HM17:involutive}.  For example,
Hendricks, Manolescu, and Zemke built a group whose elements are chain
complexes over a particular ring, modulo an equivalence relation they
called local equivalence, so that involutive Heegaard Floer homology
induces a homomorphism from the 3-dimensional homology cobordism group
to this group~\cite{HMZ18:inv-sum}. Using a variant of their definition,
whose group structure is easier to analyze, Dai, Hom, Stoffregen, and Truong
were able to prove that the homology cobordism group has a
$\ZZ^\infty$ summand~\cite{DHST:homol-cob}. Analogous constructions
also work for the concordance group of knots, using knot Floer
homology~\cite{Zemke19:inv-HFK-sum,DHST21:more-conc}.

The first goal of this paper is to give an analogous construction
using Khovanov homology. Given a knot $K$, we consider the pair
$(\oKhCx(K),\KhCxBN(K))$ of the odd Khovanov complex of $K$ and the
Bar-Natan deformation of the even Khovanov complex of $K$. These
complexes come with quotient maps to $\KhCx(K;\FF_2)$, the even
Khovanov complex with coefficients in $\FF_2$. We call the
data of (complexes like) $\oKhCx(K)$ and $\KhCxBN(K)$, and a homotopy
equivalence between their mod-2 reductions, a \emph{local even-odd
  triple} or \emph{LEO triple} (Definition~\ref{def:local}).  Using
the fact that knot cobordisms induce isomorphisms on $\KhCxBN$ after
suitably localizing, we define a notion of local equivalence for LEO
triples (Definition~\ref{def:local-map}) and prove:
\begin{theorem}
  The local equivalence classes of LEO triples form an abelian group
  $\CLEO$, and the Khovanov complexes induce a group homomorphism from
  the smooth concordance group to this local equivalence group.
\end{theorem}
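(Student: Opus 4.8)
The plan is to make $\CLEO$ into a group under tensor product and to derive the group axioms from the corresponding properties of chain complexes, with inverses supplied by duality. Given LEO triples $C$ and $C'$, let $C\otimes C'$ be the LEO triple whose odd complex is the tensor product of the odd complexes of $C$ and $C'$, whose Bar--Natan complex is their tensor product over $\BNring$, and whose homotopy equivalence of mod $2$ reductions is the tensor product of those of $C$ and $C'$. First I would check that local equivalence is an equivalence relation directly from Definition~\ref{def:local-map}: two triples are locally equivalent precisely when there are local maps in both directions, identity maps are local, and composites of local maps are local because the localization used to test locality is functorial. Since that localization is also compatible with tensor products, the tensor product of two local maps is again a local map, so $\otimes$ descends to a well-defined operation on $\CLEO$. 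Associativity and commutativity (in the homotopy category, with the Koszul sign convention on the odd factor) are inherited from $\otimes$ of complexes, and the LEO triple $\mathbf 1$ of the unknot is a two-sided unit since $C\otimes\mathbf 1$ is homotopy equivalent to $C$ through maps compatible with the reductions.

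For inverses, given $C$ I would take $C^\vee$ to be built from the $\BNring$-linear dual of its Bar--Natan complex and the dual of its odd complex, with the transposed reduction identification. The evaluation $C\otimes C^\vee\to\mathbf 1$ and coevaluation $\mathbf 1\to C\otimes C^\vee$ adjoint to the identity are chain maps compatible with the reductions, and the crux is to show that they are \emph{local} maps. For this I would use that after applying the relevant localization the Bar--Natan complex of any LEO triple in play becomes free of rank one---this is the rigidity packaged in the statement that knot cobordisms induce isomorphisms on $\KhCxBN$ after localizing, together with the mod $2$ identification $\phi$ with the odd side---so that on localizations evaluation and coevaluation become the duality pairing and copairing of a rank-one free module, which are isomorphisms. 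Hence $C\otimes C^\vee$ is locally equivalent to $\mathbf 1$ and $[C^\vee]=-[C]$. Making this precise---tracking how $\phi$ interacts with dualization and localization, and controlling the sign ambiguities of odd Khovanov functoriality---is the step I expect to be the main obstacle.

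For the homomorphism, I would send a knot $K$ to the class of $(\oKhCx(K),\KhCxBN(K),\phi_K)$. This is well defined on isotopy classes because the odd Khovanov complex and the Bar--Natan complex are invariants up to homotopy equivalence over their ground rings and $\phi_K$ is natural. It is well defined on concordance classes because a concordance $\Sigma$ from $K_0$ to $K_1$ induces, by functoriality of odd Khovanov homology and of the Bar--Natan--deformed even theory under cobordisms, chain maps that commute with the reductions and whose Bar--Natan components are isomorphisms after localizing (the cited property of cobordism maps), hence a local map $C(K_0)\to C(K_1)$; the reversed concordance gives a local map in the other direction, so $C(K_0)$ and $C(K_1)$ are locally equivalent. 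Finally, the connected-sum formulas $\oKhCx(K_0\#K_1)\simeq\oKhCx(K_0)\otimes\oKhCx(K_1)$ and $\KhCxBN(K_0\#K_1)\simeq\KhCxBN(K_0)\otimes_{\BNring}\KhCxBN(K_1)$, compatible with the reductions, show that $[K_0\#K_1]=[K_0]+[K_1]$; since a monoid homomorphism between groups is automatically a group homomorphism, the map $\mathcal C\to\CLEO$ is a group homomorphism (consistently, the mirror reverse $\overline K$ represents $-[K]$ because $K\#\overline K$ is slice).
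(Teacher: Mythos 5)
Your proposal is correct and follows essentially the same route as the paper: tensor product as the group operation with grading-shifted duality providing inverses, evaluation and coevaluation maps as the local equivalences witnessing $C\otimes C^\vee\simeq\mathbf 1$, and the K\"unneth theorems plus functoriality under concordances to obtain the homomorphism from $\mathcal C$. The one spot you flag as a likely obstacle---making evaluation and coevaluation homotopy-commute with the reduction maps---is indeed where the paper does the only nontrivial bookkeeping: it constructs explicit homotopies from the homotopies $k\colon f\circ\fbar\sim\id$ and $\ell\colon\fbar\circ f\sim\id$, sending $a\mapsto a\cdot k$ on the coevaluation side and $c^\vee\otimes c'\mapsto c^\vee(\ell(c'))$ on the evaluation side. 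Two small omissions in your sketch worth noting: you should verify explicitly that the tensor product and dual of LEO triples are again LEO triples (in particular, that the dual's Bar--Natan complex localizes to a free rank-one module in homological degree $0$ and odd quantum degree), and you need the quantum grading shifts---$\{-1\}$ in the tensor product and $\{2\}$ in the dual---to make the unknot the identity; these are straightforward but are part of making the definitions close up.
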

\noindent(This is stated more precisely as Theorem~\ref{thm:local-equiv-group}.)

We also give several versions of the construction modeled on reduced
Khovanov homology. 
A \emph{reduced LEO triple} $(C, D, f)$ consists of
finitely generated, free, bigraded, cochain complexes $C$ over $\ZZ$ and
$D$ over $\ZZ[h]$, and a bigraded homotopy equivalence
\[
  f\co C\otimes_\ZZ\ZZ/(2)\to D\otimes_{\ZZ[h]}\ZZ[h]/(2,h),
\]
so that $D \otimes_{\ZZ[h]} \ZZ[h, h^{-1}]$ is homotopy equivalent to
a free graded module of rank 1 over $\ZZ[h,h^{-1}]$. Local equivalence
classes of these form an abelian group $\rCLEO$.  There is also a
variant where $D$ is merely defined over $\FF_2[h]$, giving a group
$\rCLEO^o$, and here the analogy with Heegaard Floer
homology goes farther:

\begin{theorem}\label{thm:intro-totally-ordered}
  There is an invariant $\stil_o\in\ZZ$ of reduced LEO triples so that
  $[(C,D,f)]$ is trivial in $\rCLEO^o$ if and only if
  $\stil_o(C,D,f)=\stil_o((C,D,f)^*)=0$, where $(C,D,f)^*$ is the
  inverse to $(C,D,f)$ in $\rCLEO^o$. Further, the relation that
  $[(C,D,f)]\geq0$ if and only if $\stil_o(C,D,f)\geq0$ makes
  $\rCLEO^o$ into a totally ordered abelian group.
\end{theorem}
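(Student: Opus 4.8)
The plan is to build $\stil_o$ from the rank-one localization of $D$ and then deduce both assertions from the rigidity of finitely generated complexes over $\FF_2[h]$ that have such a localization. Since $\FF_2[h]$ is a graded principal ideal domain (with $h$ of quantum degree $-2$), the structure theorem writes the bigraded module $H(D)$ as a finite direct sum of shifted copies of $\FF_2[h]$ and of $\FF_2[h]/(h^n)$; inverting $h$ kills the torsion summands, leaving $H(D\otimes_{\FF_2[h]}\FF_2[h,h^{-1}])$, which by hypothesis is free of rank one. Hence $H(D)$ has exactly one free summand, and --- since the localized homology is supported in homological degree $0$ --- it is generated in a bidegree $(0,\sigma)$ with $\sigma\in\ZZ$ well defined. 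I would set $\stil_o(C,D,f):=\sigma$ up to a fixed additive normalization making the trivial triple (the LEO triple of the unknot, $C=\ZZ$ and $D=\FF_2[h]$ in bidegree $(0,0)$ with $f=\id$) have $\stil_o=0$; equivalently $\stil_o$ is the top quantum grading of a cycle in $D$ whose class is non-torsion, and on the LEO triples of knots this recovers a normalization of Rasmussen's $s$. (Should the paper's $\stil_o$ be a finer, one-sided invariant also reading off the odd factor $C$ through $f$ rather than a homomorphism, the outline below is unchanged, with the two conditions $\stil_o(C,D,f)=0$ and $\stil_o((C,D,f)^*)=0$ controlling the two local maps built at the end.)

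Next I would check that $\stil_o$ descends to $\rCLEO^o$. A local map $(\phi_C,\phi_D,\dots)\co(C,D,f)\to(C',D',f')$ has $\phi_D$ inducing an isomorphism after inverting $h$, so $\phi_D$ carries a non-torsion cycle of $D$ to a non-torsion cycle of $D'$ of the same quantum grading; applied to one of top grading this gives $\stil_o(C,D,f)\le\stil_o(C',D',f')$, and since a local equivalence consists of local maps both ways, $\stil_o$ is constant on local equivalence classes. The same bookkeeping, together with the Künneth theorem over $\FF_2[h]$ (under which the free part of $H(D_1\otimes D_2)$ is the tensor product of the free parts, generated in the sum of the quantum gradings), shows $\stil_o$ is additive for the group operation on $\rCLEO^o$ (tensor product of complexes), and a universal-coefficients computation identifies the free part of $H(D^*)$ with the graded dual of that of $H(D)$, giving $\stil_o((C,D,f)^*)=-\stil_o(C,D,f)$. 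So $\stil_o\co\rCLEO^o\to\ZZ$ is a homomorphism vanishing on the trivial class.

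The ``only if'' direction of the first statement is now immediate, and both that statement and the total order reduce to the single claim $(\star)$: if $\stil_o(C,D,f)=0$ then $[(C,D,f)]=0$ in $\rCLEO^o$. Granting $(\star)$, the homomorphism $\stil_o$ has trivial kernel, so $\rCLEO^o$ embeds in $\ZZ$; declaring $[(C,D,f)]\ge 0$ iff $\stil_o(C,D,f)\ge 0$ then pulls back the total order on $\ZZ$, hence is a total order, antisymmetric precisely because of $(\star)$ together with $\stil_o((C,D,f)^*)=-\stil_o(C,D,f)$. This is also why triviality is tested on both the triple and its inverse: $[(C,D,f)]=0$ is equivalent to $[(C,D,f)]\ge 0$ and $[(C,D,f)]\le 0$, i.e.\ to $\stil_o(C,D,f)\ge 0$ and $\stil_o((C,D,f)^*)\ge 0$, which forces both to vanish. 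Finally one checks this order agrees with the relation ``there is a local map from the trivial triple,'' finishing the theorem modulo $(\star)$.

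It remains to prove $(\star)$, which is the main obstacle. Assuming $\stil_o(C,D,f)=0$, the free summand of $H(D)$ --- and, by the duality, that of $H(D^*)$ --- is generated in bidegree $(0,0)$; choosing representing cycles $z\in D^{0,0}$ and $\zeta\in(D^*)^{0,0}$ with $\zeta(z)$ invertible after inverting $h$ yields chain maps $\phi_D\co\FF_2[h]\langle(0,0)\rangle\to D$ and $\psi_D\co D\to\FF_2[h]\langle(0,0)\rangle$, each a localization isomorphism. The remaining work is to promote these to local maps of LEO triples $T_0\to(C,D,f)$ and $(C,D,f)\to T_0$, where $T_0$ is the trivial triple. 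For the first one needs a chain map $\phi_C\co\ZZ\langle(0,0)\rangle\to C$ whose mod-$2$ reduction is $f^{-1}$ of the mod-$(2,h)$ reduction of $\phi_D$ up to homotopy; since homotopy classes of maps out of $\ZZ\langle(0,0)\rangle$ are classified by $H^{0,0}(C;\ZZ)$, this is a coefficient-lifting problem for a specific class $\alpha\in H^{0,0}(C;\FF_2)$, with a priori Bockstein obstruction $\beta(\alpha)\in H^{1,0}(C;\ZZ)$. This is the genuinely nonformal point; I would handle it by observing that $\alpha$ is the $f$-image of the reduction of the non-torsion generator of $H(D;\FF_2[h])$, which is not $h$-divisible, and by using that in $\rCLEO^o$ the first entry of a triple matters only through its mod-$2$ homotopy type, so $C$ may be replaced by a Bockstein-free finitely generated free $\ZZ$-complex, for which the lift is automatic. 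The mirror argument upgrades $\psi_D$, and local maps in both directions give $[(C,D,f)]=0$. In short, the $\FF_2[h]$-complex $D$ is completely pinned down up to local equivalence by its rank-one localization, so additivity, totality, and antisymmetry are then formal; the real content is that trivializing $D$ also forces the odd factor $C$ and the gluing datum $f$ to be trivializable, i.e.\ that the integral lifting for $\phi_C$ compatible with $f$ always has a solution --- which hinges on the precise form of the local-equivalence relation defining $\rCLEO^o$.
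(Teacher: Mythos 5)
There is a genuine gap, and it starts with the definition. You take $\stil_o(C,D,f)$ to be the quantum grading of the generator of the free $\FF_2[h]$-summand of $H(D)$, i.e.\ $s_{\FF_2}(C,D,f)$, which is indeed a homomorphism. But the paper's $\stil_o$ is strictly finer: it is the largest even $q$ such that some $a\in H^{0,q}(D;\FF_2)$ with $i(a)\neq 0$ in $H^{0,q}(h^{-1}D;\FF_2)$ can be matched, via $f$, with a class in the image of $j\colon H^{0,q}(C)\to H^{0,q}(C;\FF_2)$. This genuinely reads off $C$ and $f$, and it is \emph{not} a homomorphism (the paper remarks explicitly in Remark~\ref{remark:nothing} that none of these refinements are). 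With your $s_{\FF_2}$-type definition the pivotal claim $(\star)$ is simply false: Example~\ref{eg:alg-example1} produces a two-reduced triple with $s_{\FF_2}=0$ but $\stil_o=-2$, which by Theorem~\ref{thm:reduced-local-is-e} is nontrivial in $\rCLEO^o$. Your parenthetical hedge --- that the outline is unchanged if the paper's $\stil_o$ is a finer, non-homomorphic invariant --- does not hold: once $\stil_o$ is not a homomorphism, the step ``$\stil_o$ has trivial kernel, so $\rCLEO^o$ embeds in $\ZZ$, pull back the total order'' is unavailable. And indeed $\rCLEO^o$ is a totally ordered group that is \emph{not} isomorphic to $\ZZ$ (stated in the introduction, after Theorem~\ref{thm:intro-totally-ordered}), so no such embedding can exist.

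The place where your argument actually needs new content is totality, and that is precisely what the formal reduction above skips. The paper proves totality via Lemma~\ref{lem:o-mirror} (equivalently Corollary~\ref{cor:o-mirror}): if $\stil_o(C,D,f)=s_{\FF_2}(C,D,f)-2$ then $\stil_o(C^*,D^*,f^*)=s_{\FF_2}(C^*,D^*,f^*)$, so at least one of $\stil_o(C,D,f)$ and $\stil_o((C,D,f)^*)$ is $\geq 0$. This is a concrete structural statement about free complexes over $\ZZ$ and $\FF_2[h]$, proved by decomposing $C$ and $D\otimes\FF_2$ into elementary pieces and running a linear-algebra argument with a suitable functional; it is not a formal consequence of anything you set up, and nothing in your sketch proves it. Your proposed resolution of the integral lifting problem --- ``$C$ may be replaced by a Bockstein-free finitely generated free $\ZZ$-complex, for which the lift is automatic'' --- is also wrong: in $\rCLEO^o$ a local map requires an integral chain map $\alpha\colon C\to C'$ (Definition~\ref{def:reduced-local-equiv}), so the integral (Bockstein) structure of $C$ is part of the local equivalence class and cannot be discarded. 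Whether the integral lift $\check{a}$ exists is \emph{exactly} what $\stil_o$ measures; assuming it away begs the question and collapses $\stil_o$ back to $s_{\FF_2}$. The paper's proof of the ``if'' direction of Theorem~\ref{thm:reduced-local-is-e} instead \emph{uses} the hypothesis $\stil_o=0$ to get such a $\check{a}$ for free and then builds the two local maps from there.
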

\noindent
This is re-stated and proved as Theorems~\ref{thm:reduced-local-is-e}
and~\ref{thm:o-total-order}. There are partial analogues for some
other variants of local equivalence as well:
Theorem~\ref{thm:reduced-local-is-e} for $\rCLEO$ and
Theorem~\ref{thm:local-is-e} for the subgroup of $\CLEO$ coming from
knots. 
Theorem~\ref{thm:intro-totally-ordered} is reminiscent of Hom's work
on the $\varepsilon$-invariant, with $\rCLEO^o$ being analogous to the group
$\mathcal{CFK}$ of knot Floer-like complexes
modulo those  with $\varepsilon=0$ \cite{Hom15:inf-rank}.

The groups $\CLEO$ and $\rCLEO$ contain $\ZZ^\infty$-summands, though
we do not know if these are realized by knots
(Section~\ref{subsec:s-invt}); $\rCLEO^o$ has at least a
$\ZZ$-summand and is not isomorphic to $\ZZ$. Beyond this, the
structure of these groups is open; see Remark~\ref{remark:nothing}. We
do show that the images of the concordance group in $\CLEO$ and
$\rCLEO$ are isomorphic (Corollary~\ref{cor:CLEO-to-rCLEO-iso}). (The
image in $\rCLEO^o$ is smaller.) Understanding the structure of these
groups further might lead to new concordance information.

The local equivalence class associated to a knot determines its
$s$-invariant, as well as the refinements using the even and odd
$\Sq^1$-operations (cases of refinements studied
earlier~\cite{LS14:refine-s,SSS20:odd-htpy}). We give further integer
invariants of knots that are determined by the local equivalence class
of their Khovanov complexes. Certain of these invariants completely
determine whether a knot $K$ gives rise to the trivial element of
$\CLEO$, $\rCLEO$, and, $\rCLEO^o$, see
Theorems~\ref{thm:reduced-local-is-e} and~\ref{thm:local-is-e}.  Even
the most subtle of our invariants can be computed in practice for a
given knot $K$ with 20 crossings using \texttt{KnotJob}
\cite{knotjob}, allowing us to determine when such
$K$ gives rise to the trivial element of $\CLEO$.  We demonstrate
their efficacy by computing them for many knots where the
$s$-invariant and its even and odd $\Sq^1$-refinements vanish.  First,
we looked at the roughly 18,000 prime knots with at most 19 crossings
whose (smooth) slice status has not been resolved by Dunfield and
Gong~\cite{DunfieldGong2023} using a wide range of techniques; as
there are 352 million prime knots in this range~\cite{Burton2020},
these 18,000 knots are very unusual in their difficulty to analyze.
The integer invariants of Section~\ref{sec:knot-case} obstruct
sliceness for 890 of these knots, reducing the number of mystery knots
by 5\%.  Second, we use these invariants to give an alternate proof
that the five intriguing knots of
Manolescu-Piccirillo~\cite{ManolescuPiccirillo2023} are not slice;
this was originally shown by Nakamura~\cite{Nakamura2022} using
0-surgery homeomorphisms to stably relate slice properties of two
knots.

After posting this paper, we learned that Lewark has independently studied a notion of local equivalence for Khovanov homology \cite{Lewark23:Oberwolfach,Lewark24:local}, although without odd Khovanov homology.

This paper is organized as follows. Section~\ref{sec:background}
collects the results on Khovanov homology needed for the rest of the
paper. In particular, Section~\ref{sec:Kunneth} recalls Putyra's
K\"unneth theorem for the odd Khovanov homology of connected sums
(which is more subtle than the even
case). Section~\ref{sec:local-equiv} introduces our notion of local
equivalence and develops its basic
properties. Section~\ref{sec:refined-s} gives integer invariants of
local equivalence classes coming from refinements of $s$, and uses
them to further study the local equivalence group. The ties between
these invariants and the structure of the local equivalence groups is
studied in Section~\ref{sec:LEO-structure}; in particular,
Theorem~\ref{thm:intro-totally-ordered} is proved there. These
invariants have somewhat simpler behavior for LEO triples coming from
knots, and we study properties specific to knots in
Section~\ref{sec:knot-case}. We conclude, in
Section~\ref{sec:computations}, with results of computer computations
of these invariants, including the applications to certain difficult
knots mentioned above.

\subsection*{Acknowledgments} We thank Ciprian Manolescu, Sucharit
Sarkar, and the anonymous referee for helpful conversations and
comments.



\section{Background}\label{sec:background}

\subsection{The even and odd Khovanov complexes and the Bar-Natan
  deformation}\label{sec:review}
In this section, we introduce the notation we will use for the
Khovanov complex and its variants, and specify our grading
conventions.

Given a commutative ring $R$, we will denote the Khovanov complex for
a knot $K$ with coefficients in $R$ by $\KhCx(K;R)$.
We largely follow the conventions for Khovanov homology
from the first papers in the subject~\cite{Khovanov00:CatJones,BarNatan02:on-khovanovs,Rasmussen10:s}.
The Khovanov complex is constructed by applying the Frobenius algebra
$R[X]/(X^2)$ with comultiplication
\[
  \Delta(1)=1\otimes X+X\otimes 1\qquad \Delta(X)=X\otimes X
\]
(over $R$) to the cube of resolutions for $K$, and then taking either an
iterated mapping cone or the total complex.  Specifically, in
$\KhCx(K;R)$ the differential goes from the $0$-resolution of a
crossing to the $1$-resolution, and the homological and quantum
gradings of a generator $x$ lying over the vertex $v\in\{0,1\}^n$ are
given by:
\begin{align*}
  \gr_h(x) &= |v| - n_-\\
  \gr_q(x) &= |v| + \#\{x(C)=1\} - \#\{x(C)=X\} + n_+ - 2 n_-,
\end{align*}
where $n_\pm$ are the number of positive and negative crossings of
$K$, $|v|$ is the number of entries of $v$ which are $1$, and where
$\#\{x(C)=1\}$ and $\#\{x(C)=X\}$ are the number of circles
in the resolution $K_v$ that $x$ labels $1$ and $X$ respectively.
We will write $\KhCx^{i,j}(K;R)$ for the summand of $\KhCx(K;R)$
spanned by generators $x$ with $(\gr_h(x),\gr_q(x))=(i,j)$; the
differential is a map $\KhCx^{i,j}(K;R)\to \KhCx^{i+1,j}(K;R)$. The
homology of $\KhCx(K;R)$ is $\Kh(K;R)=\bigoplus_{i,j}\Kh^{i,j}(K;R)$.

We will use $\{n\}$ to denote an upwards shift in the quantum grading
by $n$, so
\[
  \bigl(\KhCx(K;R)\{1\}\bigr)^{i,j}=\KhCx^{i,j-1}(K;R).
\]
At the level of graded Euler characteristic, $\{1\}$ corresponds to
multiplying by $q$.

Choosing a basepoint $p$ on $K$, not at any of the crossings, makes
$\KhCx(K;R)$ into a module over $R[X]/(X^2)$: $X$ acts by multiplying
the label of the circle containing $p$ in each resolution by
$X$. Equivalently, this module structure corresponds to merging a
0-crossing unknot at $p$.

The Bar-Natan deformation of the Khovanov Frobenius algebra is the
Frobenius algebra $\BNring=R[h,X]/(X^2-hX)$ over $R[h]$ with
comultiplication
\[
  \Delta(1)=1\otimes X+X\otimes 1 - h \otimes 1\qquad
  \Delta(X)=X\otimes X.
\]
(This is a signed version of a construction of Bar-Natan's~\cite[Section
9.3]{BarNatan05:Kh-tangle-cob}. The tensor products are over $R[h]$,
so $h\otimes 1 = 1\otimes h$.)  We view $h$ as having quantum
grading $-2$. The unit is still $1$, and the counit to $R[h]$ sends
$1\mapsto 0$ and $X\mapsto 1$. Applying this Frobenius algebra to the
cube of resolutions gives the Bar-Natan complex $\KhCxBN^{i,j}(K;R)$,
which is again a bigraded complex, but now over the graded ring
$R[h]$.

Fixing a basepoint $p$ on $K$ makes $\KhCxBN(K;R)$ into a module over
$\BNring$, where $X$ acts on a vertex of the cube by multiplying the
label of the circle containing $p$ by $X$. Again, this can also be
thought of as merging in an unknot at $p$.

A fundamental property of the Bar-Natan complex is that
\[
h^{-1}\Kh_h(K;R)\coloneqq\Kh_h(K;R)\otimes_{R[h]}R[h,h^{-1}] \cong
R[h,h^{-1}]\oplus R[h,h^{-1}]
\]
(e.g.,~\cite[Proposition 2.1]{LS22:mixed}); the two summands
correspond to the two orientations of $K$, and both lie in homological
grading $0$. Since localization is exact, we could equivalently
localize $\KhCxBN^{i,j}(K;R)$ and then take homology. In fact, this
identification holds at the chain level, and respects the action of
$X$ as well: there is a chain homotopy equivalence, over $\BNring$,
\begin{equation}\label{eq:BN-cx-localized}
  h^{-1}\KhCxBN(K;R) \simeq h^{-1}\BNring=\BNring\otimes_{R[h]}R[h,h^{-1}]
\end{equation}
\cite[Proof of Proposition 2.1]{LS22:mixed}.
We will often abuse notation and write $h^{-1}\Kh_h^{i,j}(K;R)$ to
mean the part of $h^{-1}\Kh_h(K;R)$ in bigrading $(i,j)$.

Ozsv\'ath, Rasmussen, and Szab\'o constructed another variant of
Khovanov homology, odd Khovanov homology
$\Kh_o(K;R)=\bigoplus_{i,j}\Kh_o^{i,j}(K;R)$~\cite{ORSz13:odd}. (To
make the distinction clear, we henceforth refer to the original
Khovanov homology as the \emph{even Khovanov homology}.)  The
underlying odd Khovanov complex $\oKhCx(K;R)$ is defined similarly to
the even Khovanov complex, except that instead of viewing the space
associated to a vertex $K_v$ in the cube of resolutions as a tensor
product of copies of $R[X]/(X^2)$ over the circles in $K_v$, it is
viewed as the exterior algebra on the set of circles in $K_v$. As an
$R$-module, if one chooses an ordering of the circles in $K_v$, there
is an identification between the groups, sending
$C_{i_1}\wedge\cdots\wedge C_{i_k}\in\oKhCx(K;R)$ to the generator
$x\in \KhCx(K;R)$ which labels each $C_{i_j}$ by $X$ and the other
circles by $1$. The exterior algebra leads to different signs, and
hence a different invariant if $\mathrm{char}(R)\neq 2$. On the other
hand, $\Kh_o(K;\FF_2)$ and $\Kh(K;\FF_2)$ are canonically isomorphic.

Putyra observed that the complex $\oKhCx(K;R)$ associated to a based
knot (or link) is a bimodule over
$R[X]/(X^2)$~\cite{Putyra16:triply}. We review the construction of
this action in Section~\ref{sec:Kunneth}. The left and right actions
on $\oKhCx(K;\ZZ)$ have the same mod-2 reduction, and agree with the
usual action on $\KhCx(K;\FF_2)$. (In fact, they are related by an
automorphism of the complex; see Lemma~\ref{lem:left-is-right}.)

An oriented cobordism $\Sigma\subset [0,1]\times \RR^3$ from $K_0\subset
\{0\}\times \RR^3$ to $K_1\subset \{1\}\times\RR^3$, decomposed as a
movie of elementary cobordisms (Reidemeister moves, births, saddles,
and deaths), induces a chain map
\[
  \Sigma_*\co \KhCx^{i,j}(K_0;R)\to \KhCx^{i,j+\chi(\Sigma)}(K_1;R),
\]
as well as chain maps $\Sigma_*$ on $\KhCxBN$ and $\oKhCx$, with the
same grading shift by $\chi(\Sigma)$.  (The constructions of the maps
$\Sigma_*$ are given
by Khovanov, Bar-Natan, and Putyra~\cite{Khovanov00:CatJones,BarNatan05:Kh-tangle-cob,Putyra14:chron}.)
Up to homotopy and an overall sign, the maps on $\KhCx$ and $\KhCxBN$
are independent of the decomposition of $\Sigma$ as a movie (or,
equivalently, of isotopies of $\Sigma$), but the maps on $\oKhCx$ are
not known to be; we will never use below that any of the maps are
independent of the choice of movie. If none of the Reidemeister
moves in the movie cross the basepoint (i.e., it is a \emph{based
  movie}), then the map $\Sigma_*$ commute with the $X$-action on the
two sides.

For all of the variants of Khovanov homology, the complexes and maps
with $R$ coefficients are obtained from the corresponding complexes
over $\ZZ$ by tensoring with $R$. So, for instance, given a ring
homomorphism $f\co R\to S$, the square
\[
  \begin{tikzcd}
    \KhCx(K_0;R)\arrow[r,"f_*"]\arrow[d,"\Sigma_*"] &
    \KhCx(K_0;S)\arrow[d,"\Sigma_*"] \\
    \KhCx(K_1;R)\arrow[r,"f_*"] &
    \KhCx(K_1;S)
  \end{tikzcd}
\]
commutes. The complex $\KhCx(K;R)$ is obtained from $\KhCxBN(K;R)$ by
setting $h=0$, i.e.,
\[
  \KhCx(K;R)=\KhCxBN(K;R)\otimes_{R[h]}R
\]
where $R=R[h]/(h)$. Again, this identification is functorial,
in the sense that
\[
  \begin{tikzcd}
    \KhCxBN(K_0;R)\arrow[r,"/(h)"]\arrow[d,"\Sigma_*"] &
    \KhCx(K_0;R)\arrow[d,"\Sigma_*"] \\
    \KhCxBN(K_1;R)\arrow[r,"/(h)"] &
    \KhCx(K_1;R)
  \end{tikzcd}
\]
commutes.

There are reduced versions of the Khovanov complex, lifting the
reduced Jones polynomial. If we view $R$ as an $R[X]/(X^2)$-module
where $X$ acts by $0$ then the reduced Khovanov complex is
\[
  \rKhCx(K;R)=\KhCx(K;R)\otimes_{R[X]/(X^2)}R\{-1\}.
\]
Similarly, we can make $R[h]$ into a module over $\BNring$ by letting
$X$ act by $0$, and 
\[
  \rKhCxBN(K;R)=\KhCxBN(K;R)\otimes_{\BNring}R[h]\{-1\}.
\]
Finally, using either the left or right module structure on
$\oKhCx(K;R)$, we have
\[
  \roKhCx(K;R)=\oKhCx(K;R)\otimes_{R[X]/(X^2)}R\{-1\}.
\]
By construction, these are all natural with respect to changes of ring
$R$, and based movies induce maps on the reduced complexes.

\subsection{K\"unneth theorems}\label{sec:Kunneth}
The main reason we record the action of $R[X]/(X^2)$ on Khovanov
homology is the well-known K\"unneth theorem for connected sums:

\begin{lemma}\label{lem:Kh-kunneth}
  Given based knot (or link) diagrams $K_1$ and $K_2$, there is an isomorphism
  of chain complexes
  \begin{equation}\label{eq:Kh-kunneth}
    \KhCx(K_1\#K_2;R)\cong \KhCx(K_1;R)\otimes_{R[X]/(X^2)}\KhCx(K_2;R),
  \end{equation}
  where $K_1\#K_2$ is the result of taking the connected sum at the
  basepoints. Moreover, if we choose a basepoint on $K_1\#K_2$ to be
  one of the two arcs that connect $K_1$ and $K_2$, then the
  isomorphism~\eqref{eq:Kh-kunneth} intertwines the actions of
  $R[X]/(X^2)$ on the two sides. Further, given a ring homomorphism
  $R\to R'$, the isomorphism~\eqref{eq:Kh-kunneth} commutes with the
  change-of-ring maps $\KhCx(K;R)\to\KhCx(K;R')$ in the obvious sense.
\end{lemma}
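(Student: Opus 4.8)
The plan is to construct the isomorphism \eqref{eq:Kh-kunneth} levelwise on the cube of resolutions, and then check compatibility with the differentials, with the $R[X]/(X^2)$-module structure, and with the change-of-ring maps. I expect the only genuine difficulty to be reconciling the cube signs of $K_1\#K_2$ with the Koszul sign of a tensor product of complexes; everything else is a matter of unwinding the definition of the Khovanov cube.

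First I would set up the combinatorics. Write $V=R[X]/(X^2)$. Taking the connected sum of the diagrams at the basepoints creates no new crossings, so the crossings of $K_1\#K_2$ are canonically the disjoint union of those of $K_1$ and $K_2$, the cube of resolutions of $K_1\#K_2$ is indexed by $\{0,1\}^{n_1}\times\{0,1\}^{n_2}$, and $n_\pm(K_1\#K_2)=n_\pm(K_1)+n_\pm(K_2)$. For a vertex $(v_1,v_2)$ I would observe that $(K_1\#K_2)_{(v_1,v_2)}$ is obtained from $(K_1)_{v_1}\sqcup(K_2)_{v_2}$ by the connect-sum band near the basepoints; since that band is supported in a small disk meeting the diagram in just the two basepoint strands, its only effect is to merge the circle $Z_1\ni p_1$ of $(K_1)_{v_1}$ and the circle $Z_2\ni p_2$ of $(K_2)_{v_2}$ into a single circle $Z$, leaving every other circle untouched. (For \emph{even} Khovanov homology no ordering of circles is needed, since the state space at a vertex is the unordered tensor product of copies of $V$ over the set of circles, $V$ being commutative.) Hence the state space of $\KhCx(K_1\#K_2;R)$ over $(v_1,v_2)$ is
\[
  \Bigl(\bigotimes_{Z'\neq Z_1}V\Bigr)\otimes V_Z\otimes\Bigl(\bigotimes_{Z''\neq Z_2}V\Bigr),
\]
whereas the state space of $\KhCx(K_1;R)\otimes_V\KhCx(K_2;R)$ over $(v_1,v_2)$ is $\bigl(\bigotimes_{Z'}V\bigr)\otimes_V\bigl(\bigotimes_{Z''}V\bigr)$, where the left module structure is multiplication on the $Z_1$-factor and the right is multiplication on the $Z_2$-factor. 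Grouping these two distinguished factors together and using the canonical isomorphism $V\otimes_V V\cong V$ identifies the two; this circle-merge is exactly what accounts for the bigrading bookkeeping, which I would verify directly from the grading formulas using additivity of $n_\pm$ and of the circle counts. Call the resulting levelwise map $\Phi$.

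Next I would prove $\Phi$ is a chain map. An edge of the cube lying in the $K_1$-block applies the multiplication $m$ or comultiplication $\Delta$ to the circles of $(K_1)_{v_1}$ participating in that saddle and the identity to the rest. If neither participating circle is $Z_1$ this is plainly $(\partial_{K_1}\text{-edge})\otimes\id_{\KhCx(K_2)}$ under $\Phi$; if $Z_1$ is involved, then in $K_1\#K_2$ the saddle acts on the merged circle $Z$, and because $Z_2$ is merely a spectator of this saddle the induced map on $V_Z=V\otimes_V V$ is exactly the identification of $(\text{structure map on }V_{Z_1})\otimes_V\id$ with $\id\otimes_V(\text{structure map on }V_{Z_2})$. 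So, up to cube signs, $K_1$-block edges realize $\partial_{K_1}\otimes\id$ and $K_2$-block edges realize $\id\otimes\partial_{K_2}$. The hard part is the signs: ordering the crossings of $K_1$ before those of $K_2$ and using the standard edge-sign convention, the sign on a $K_2$-block edge out of $(v_1,v_2)$ picks up an extra factor $(-1)^{|v_1|}$ relative to the intrinsic $K_2$-sign, and since $|v_1|\equiv\gr_h^{K_1}+n_-(K_1)\pmod 2$ this matches the Koszul sign $(-1)^{\gr_h^{K_1}}$ of the tensor product of cochain complexes only up to the global factor $(-1)^{n_-(K_1)}$. I would absorb this discrepancy into the evident sign twist $x_1\otimes x_2\mapsto(-1)^{n_-(K_1)\,\gr_h^{K_2}(x_2)}\,x_1\otimes x_2$ (equivalently, note it amounts to multiplying all $K_2$-block edges by a constant sign, which is a valid change of sign assignment for the cube, under which the Khovanov complex is unchanged up to the evident isomorphism), after which $\Phi$ becomes an isomorphism of cochain complexes $\KhCx(K_1\#K_2;R)\cong\KhCx(K_1;R)\otimes_V\KhCx(K_2;R)$.

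Finally I would dispatch the remaining two claims, both immediate once $\Phi$ is in hand. Putting the basepoint of $K_1\#K_2$ on one of the two connecting arcs places it on the merged circle $Z$ in every resolution, so $X$ acts by multiplication on $V_Z$; under $\Phi$ this is exactly the $V$-action on $\KhCx(K_1;R)\otimes_V\KhCx(K_2;R)$ (well defined since $V$ is commutative, with $X(x_1\otimes x_2)=(Xx_1)\otimes x_2=x_1\otimes(Xx_2)$), which gives the $R[X]/(X^2)$-intertwining statement. And every state space, structure map, and identification used above — in particular $V\otimes_V V\cong V$ — is obtained by base change along $\ZZ\to R$ (respectively $\ZZ[X]/(X^2)\to R[X]/(X^2)$) from its counterpart over $\ZZ$, so $\Phi$ commutes with the change-of-ring maps $\KhCx(K;R)\to\KhCx(K;R')$. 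The one step requiring real care is the sign reconciliation in the previous paragraph.
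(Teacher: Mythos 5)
Your proof is correct, but it takes a different route from the paper's. The paper starts from the disjoint-union K\"unneth isomorphism $\KhCx(K_1\amalg K_2;R)\cong\KhCx(K_1;R)\otimes_R\KhCx(K_2;R)$ (taken as ``immediate from the construction''), then realizes $\KhCx(K_1\#K_2;R)$ as the coequalizer of the two merge maps $\KhCx(K_1)\otimes_R \KhCx(U)\otimes_R\KhCx(K_2)\rightrightarrows\KhCx(K_1)\otimes_R\KhCx(K_2)$, using $\KhCx(U;R)=R[X]/(X^2)$; this produces the chain map to the tensor product over $R[X]/(X^2)$ and one checks it is a bijection on generators. That is a categorical argument which buries the cube-sign bookkeeping in the disjoint-union statement and in the fact that merge cobordism maps are chain maps. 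You instead build the isomorphism vertex-by-vertex on the cube, identify the merged basepoint circle via $V\otimes_V V\cong V$, and then confront the sign question head-on, correctly identifying the $(-1)^{n_-(K_1)}$ discrepancy between the cube edge-sign convention and the Koszul sign of the tensor product of (shifted) complexes, and fixing it with the twist $x_1\otimes x_2\mapsto(-1)^{n_-(K_1)\gr_h^{K_2}(x_2)}x_1\otimes x_2$. Both are valid; the paper's proof is shorter and more conceptual (and automatically generalizes to the Bar-Natan case, as in Lemma~\ref{lem:Kh-kunneth-BN}), while yours is self-contained and makes explicit a sign subtlety that the paper leaves implicit in the phrase ``immediate from the construction.'' The parts of your argument about the $R[X]/(X^2)$-module structure and change of rings match the paper's in spirit.
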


\begin{proof}
  It is immediate from the construction of Khovanov homology that 
  \begin{equation}
    \label{eq:Kh-kunneth-amalg}
    \KhCx(K_1\amalg K_2;R)\cong \KhCx(K_1;R)\otimes_{R}\KhCx(K_2;R).
  \end{equation}
  Let $U$ be a 0-crossing unknot which is adjacent to the basepoints
  in $K_1$ and $K_2$.  The two cobordisms
  \[
    K_1\amalg U\amalg K_2\rightrightarrows K_1\amalg K_2\to K_1\#K_2,
  \]
  gotten by first merging $U$ to $K_1$ and then to $K_2$, or first to
  $K_2$ and then to $K_1$, induce the same map of Khovanov
  complexes. Using the isomorphism~\eqref{eq:Kh-kunneth-amalg} and the
  fact that $\KhCx(U;R)=R[X]/(X^2)$, we get a commutative diagram
  \begin{align*}
\KhCx(K_1;R)\otimes_R R[X]/(X^2)\otimes_R\KhCx(K_2;R)&\rightrightarrows \KhCx(K_1;R)\otimes_R \KhCx(K_2;R)\\&\to \KhCx(K_1\#K_2;R),
  \end{align*}
  and hence a chain map
  $\KhCx(K_1;R)\otimes_{R[X]/(X^2)} \KhCx(K_2;R)\to
  \KhCx(K_1\#K_2;R)$. It is easy to see this map induces a bijection
  on generators, and hence is a chain isomorphism.

  All of these maps commute with merging on an unknot somewhere else
  to $K_1$ or $K_2$, and hence respect the $R[X]/(X^2)$-module
  structure. It is also immediate from the construction that the maps
  are natural with respect to change of rings.
\end{proof}

\begin{remark}
Lemma~\ref{lem:Kh-kunneth} can be seen as a special case of Khovanov's
tangle invariant~\cite{Khovanov02:Tangles} for the 2-ended tangles
obtained by deleting neighborhoods of the basepoints.
\end{remark}

\begin{lemma}\label{lem:Kh-kunneth-BN}
  With notation as in Lemma~\ref{lem:Kh-kunneth}, there is an
  isomorphism of chain complexes
  \begin{equation}
    \KhCxBN(K_1\#K_2;R)\cong \KhCxBN(K_1;R)\otimes_{\BNring}\KhCxBN(K_2;R)\label{eq:BN-Kunneth}
  \end{equation}
  respecting the action of $\BNring$ and natural with respect to
  changes in ground ring. Moreover, reducing the
  isomorphism~\eqref{eq:BN-Kunneth} modulo $h$ gives the
  isomorphism~\eqref{eq:Kh-kunneth}.
\end{lemma}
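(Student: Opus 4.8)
The plan is to transcribe the proof of Lemma~\ref{lem:Kh-kunneth}, systematically replacing the Frobenius algebra $R[X]/(X^2)$ over $R$ by the Bar-Natan algebra $\BNring=R[h,X]/(X^2-hX)$ over $R[h]$, and $\KhCx$ by $\KhCxBN$ throughout. The starting point is the disjoint-union formula: it is immediate from the construction of the Bar-Natan complex (apply $\BNring$ to the cube of resolutions, which for $K_1\amalg K_2$ is the product of the two cubes, and note that the $R[h]$-module assigned to a resolution of $K_1\amalg K_2$ is the $R[h]$-tensor product of the modules assigned to the corresponding resolutions of $K_1$ and $K_2$) that
\[
  \KhCxBN(K_1\amalg K_2;R)\cong \KhCxBN(K_1;R)\otimes_{R[h]}\KhCxBN(K_2;R).
\]

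Next I would introduce a $0$-crossing unknot $U$ lying between the basepoint arcs of $K_1$ and $K_2$, with $\KhCxBN(U;R)=\BNring$, and consider the two cobordisms from $K_1\amalg U\amalg K_2$ to $K_1\#K_2$ obtained by merging $U$ into $K_1$ and then into $K_2$, or in the opposite order. On each vertex of the cube, an elementary merge induces the multiplication $\BNring\otimes_{R[h]}\BNring\to\BNring$ fusing the two relevant circles; since this multiplication is associative (the relation $X^2=hX$ does not disturb associativity), the two composites are \emph{equal} as chain maps on the nose, so the indeterminacy of Bar-Natan cobordism maps is not an issue here. Combining this with the disjoint-union isomorphism and $\KhCxBN(U;R)=\BNring$ exhibits the composite as a pair of parallel maps
\[
  \KhCxBN(K_1;R)\otimes_{R[h]}\BNring\otimes_{R[h]}\KhCxBN(K_2;R)\rightrightarrows \KhCxBN(K_1;R)\otimes_{R[h]}\KhCxBN(K_2;R)\to\KhCxBN(K_1\#K_2;R)
\]
which coequalize, hence factor through $\KhCxBN(K_1;R)\otimes_{\BNring}\KhCxBN(K_2;R)$ and yield the desired chain map~\eqref{eq:BN-Kunneth}.

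To check this map is an isomorphism I would verify it is a bijection on $R[h]$-module generators vertex by vertex. Writing $A=\BNring$, at a pair of vertices $v_1,v_2$ the source module is $(N_1\otimes_{R[h]}A)\otimes_A(A\otimes_{R[h]}N_2)\cong N_1\otimes_{R[h]}A\otimes_{R[h]}N_2$, where $N_i$ is the free $R[h]$-module on the labelings of the non-basepoint circles of the relevant resolution of $K_i$; this is free of rank $2^{\,|(K_1)_{v_1}|+|(K_2)_{v_2}|-1}$, which matches the number of labelings of the $|(K_1)_{v_1}|+|(K_2)_{v_2}|-1$ circles of the corresponding resolution of $K_1\#K_2$, and the merge map carries basis labelings to basis labelings bijectively. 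Compatibility with the $\BNring$-action (when the basepoint of $K_1\#K_2$ lies on a connecting arc) and naturality in $R$ follow exactly as in Lemma~\ref{lem:Kh-kunneth}: merging in an unknot elsewhere, and change-of-ring maps, commute with the merges defining~\eqref{eq:BN-Kunneth}.

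Finally, for the mod-$h$ statement: the merge cobordism maps are natural with respect to the quotient $R[h]\to R[h]/(h)=R$, equivalently with respect to $\KhCxBN\to\KhCx$, so reducing~\eqref{eq:BN-Kunneth} modulo $h$ gives the map induced on $\KhCx$ by the same merge cobordisms, which is~\eqref{eq:Kh-kunneth}; here one also uses the base-change identity $\bigl(\KhCxBN(K_1;R)\otimes_{\BNring}\KhCxBN(K_2;R)\bigr)\otimes_{R[h]}R\cong \KhCx(K_1;R)\otimes_{R[X]/(X^2)}\KhCx(K_2;R)$, valid since all the modules involved are free. I do not anticipate a genuine obstacle; the only points requiring care are the generator count establishing bijectivity (because $\otimes_{\BNring}$ is now a tensor product over a nontrivial ring) and keeping the coequalizer argument clean over $\BNring$, and everything else is a formal copy of the proof of Lemma~\ref{lem:Kh-kunneth}.
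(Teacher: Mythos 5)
Your proposal is correct and takes essentially the same approach as the paper, which simply states that the proof is the same as that of Lemma~\ref{lem:Kh-kunneth} with the Bar-Natan disjoint-union isomorphism substituted for Formula~\eqref{eq:Kh-kunneth-amalg}. You have just spelled out the straightforward details that the paper leaves to the reader.
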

\begin{proof}
  The proof is the same as for Lemma~\ref{lem:Kh-kunneth},
  using the isomorphism
  \[
    \KhCxBN(L_1\amalg L_2;R)\cong \KhCxBN(L_1;R)\otimes_{R[h]}\KhCxBN(L_2;R)
  \]
  in place of Formula~\eqref{eq:Kh-kunneth-amalg}.
\end{proof}

For odd Khovanov homology, the K\"unneth theorems for disjoint unions
and connected sums were proved by Putyra~\cite{Putyra16:triply}, and
are substantially more complicated. Putyra in fact proves more general
results about unified Khovanov homology. For the reader's
convenience, we give a streamlined explanation for just the case of
odd Khovanov homology. To keep notation short, we state the results
only for $R=\ZZ$; the general case follows by tensoring all of the
complexes with $R$.

To define the odd Khovanov complex, one first picks \emph{crossing
  orientations}. A generator of the complex $\oKhCx(K)$ at a vertex
$v$ of the cube is an element of the exterior algebra
$\Lambda(\pi_0K_v)$ on the components of the complete resolution
$K_v$. Reducing modulo 2, the generator
$Z_{i_1}\wedge\cdots\wedge Z_{i_k}$ of $\oKhCx(K;\FF_2)$, where the
$Z_{i_j}$ are circles in the $v$-resolution of $K$, corresponds to the
generator of $\KhCx(K;\FF_2)$ which labels the circles $Z_{i_j}$ by
$X$ and the other circles by $1$. To get the differential, one
also uses an \emph{edge assignment}~\cite[Definition 1.2]{ORSz13:odd},
which assigns $\pm1$ to each edge
in a way that makes each face anti-commute, and subject to an extra
condition on so-called \emph{type $X$} and \emph{type $Y$} faces.

A key observation of Putyra's is that for each vertex $v$ in the cube
of resolutions of $K$, there is an integer $\Split(v)$, the
\emph{number of splits before $v$}. Explicitly,
\[
\Split(v)=\frac{1}{2}\bigl(|\pi_0 K_v|-|\pi_0 K_{\vec{0}}|+|v|\bigr),
\]
the difference in the number of circles in $K_v$ and the number of circles in $K_{\vec{0}}$, plus the sum of the entries of $v$, divided by $2$.

Fix a basepoint $p$ on $K$. Given a complete resolution $K_v$ of $K$,
let $Z_p\subset K_v$ be the circle containing $p$. There is a right
action of
$\ZZ[X]/(X^2)$ on $\oKhCx(K)$ by
\begin{equation}
  \label{eq:right-act}
(Z_{i_1}\wedge\cdots\wedge Z_{i_j})\cdot X=Z_{i_1}\wedge\cdots\wedge
Z_{i_j}\wedge Z_p.
\end{equation}
Since the merge map merging circles $Z_1$ and
$Z_2$ to a new circle $Z$ is induced by the projection
$Z_1,Z_2\mapsto Z$, $Z_i\mapsto Z_i$ for $i\neq 1,2$, it is immediate
that this commutes with the merge maps in the odd Khovanov cube; it also
commutes with the split maps since splitting a circle $Z$ into $Z_1$ and
$Z_2$ corresponds to multiplying on the left by $Z_1-Z_2$, which
commutes with multiplying on the right by $Z_p$. Thus, $\oKhCx(K)$ is a
cochain complex over $\ZZ[X]/(X^2)$. There is also a left action of
$\ZZ[X]/(X^2)$ defined by
\begin{equation}
  X\cdot(Z_{i_1}\wedge\cdots\wedge Z_{i_j})=(-1)^{\Split(v)}Z_p\wedge Z_{i_1}\wedge\cdots\wedge Z_{i_j}.
\end{equation}
where $Z_{i_1},\dots,Z_{i_j}$ are circles in the resolution
$K_v$. Again, this evidently commutes with all merge maps; it is
straightforward to check that it commutes with all split maps as well.

Here is the special case of Putyra's result~\cite[Theorem
6.7]{Putyra16:triply} that we use in this paper:
\begin{theorem}\label{thm:odd-Kunneth}
  Given link diagrams $K_1$ and $K_2$, there is an isomorphism of chain complexes
  \begin{equation}
    \label{eq:odd-disjoint-union}
    \oKhCx(K_1\amalg K_2)\cong \oKhCx(K_1)\otimes_{\ZZ}\oKhCx(K_2).
  \end{equation}
  Given based link diagrams $K_1$ and $K_2$, there is an isomorphism of chain complexes
  \begin{equation}
    \label{eq:odd-con-sum}
    \oKhCx(K_1\#K_2)\cong \oKhCx(K_1)\otimes_{\ZZ[X]/(X^2)}\oKhCx(K_2).
  \end{equation}
  Further, if we choose the basepoint for $\oKhCx(K_1\#K_2)$ to be on
  one of the two arcs where the connected sum occurred, then the chain
  isomorphism~\eqref{eq:odd-con-sum} respects the
  $\ZZ[X]/(X^2)$ bimodule structure. Finally, if we reduce the
  isomorphisms~\eqref{eq:odd-disjoint-union}
  and~\eqref{eq:odd-con-sum} modulo 2, we obtain the
  isomorphisms~\eqref{eq:Kh-kunneth-amalg} and~\eqref{eq:Kh-kunneth}
  with $R=\FF_2$.
\end{theorem}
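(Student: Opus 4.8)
The plan is to mimic the proof of Lemma~\ref{lem:Kh-kunneth} as closely as possible, importing Putyra's sign bookkeeping~\cite{Putyra16:triply} where needed.

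I would first establish the disjoint-union isomorphism~\eqref{eq:odd-disjoint-union}. Fix crossing orientations and edge assignments for each of $K_1$ and $K_2$, and assemble from them the analogous data for $K_1\amalg K_2$: the crossing orientations are the union, and on the product cube the edge assignment equals the $K_1$-edge-assignment on $K_1$-direction edges and $(-1)^{|v_1|}$ times the $K_2$-edge-assignment on a $K_2$-direction edge lying over the vertex $v_1$ of $K_1$. One checks this is a valid edge assignment: the faces internal to a $K_1$- or $K_2$-slice inherit anticommutativity and the type $X$/type $Y$ conditions from $K_1$ or $K_2$, the sign flip $(-1)^{|v_1|}\mapsto-(-1)^{|v_1|}$ across a $K_1$-edge makes the mixed square faces anticommute, and the type conditions hold on mixed faces automatically since the two saddles are supported in disjoint parts of the diagram. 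Ordering the circles of the complete resolution $K_{1,v_1}\amalg K_{2,v_2}$ so that those of $K_{1,v_1}$ come first, the standard isomorphism $\Lambda(\pi_0 K_{1,v_1}\sqcup\pi_0 K_{2,v_2})\cong\Lambda(\pi_0 K_{1,v_1})\otimes_\ZZ\Lambda(\pi_0 K_{2,v_2})$ assembles over the product cube to a bigraded isomorphism; the content of the step is to check that it carries the differential to the tensor-product differential, the $K_1$-direction maps going over on the nose and the $K_2$-direction maps acquiring precisely the Koszul sign $(-1)^{|v_1|}$ that we built into the edge assignment. This differential check, together with the precise circle-ordering and edge-assignment conventions that make it come out, is what~\cite[Theorem~6.7]{Putyra16:triply} does, and I would cite it for the details.

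For the connected sum, let $U$ be a $0$-crossing unknot adjacent to the basepoints of $K_1$ and $K_2$, so $\oKhCx(U)=\Lambda(\{u\})\cong\ZZ[X]/(X^2)$. Exactly as in Lemma~\ref{lem:Kh-kunneth}, the two composite cobordism maps $\oKhCx(K_1\amalg U\amalg K_2)\rightrightarrows\oKhCx(K_1\amalg K_2)\to\oKhCx(K_1\#K_2)$, merging $U$ first into $K_1$ and then into $K_2$ or in the other order, are \emph{equal}: each odd merge saddle is induced by the corresponding surjection of circle sets, so both composites are induced by the single surjection that identifies the three basepoint-region circles of $K_1$, $U$, $K_2$, and disjoint saddles compose strictly (so the general movie-independence issue for odd Khovanov does not intervene here). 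Using~\eqref{eq:odd-disjoint-union} to identify $\oKhCx(K_1\amalg U\amalg K_2)\cong\oKhCx(K_1)\otimes_\ZZ\ZZ[X]/(X^2)\otimes_\ZZ\oKhCx(K_2)$, this common chain map coequalizes the two maps to $\oKhCx(K_1)\otimes_\ZZ\oKhCx(K_2)$ obtained by merging $U$ into the $K_1$-factor, respectively the $K_2$-factor; and the key point is that these two maps are precisely the right action~\eqref{eq:right-act} on $\oKhCx(K_1)$ and the left action (defined just after~\eqref{eq:right-act}) on $\oKhCx(K_2)$, because both actions are, by construction, ``merge in a $0$-crossing unknot at the basepoint.'' Hence we obtain a chain map $\oKhCx(K_1)\otimes_{\ZZ[X]/(X^2)}\oKhCx(K_2)\to\oKhCx(K_1\#K_2)$ out of the bimodule asserted in the theorem. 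It is a chain isomorphism: using the tensor relation to push every basepoint-circle factor from the $K_1$-side to the $K_2$-side gives both sides the same free rank over each vertex of the product cube, namely $2^{|\pi_0 K_{1,v_1}|+|\pi_0 K_{2,v_2}|-1}$, and the map is visibly surjective.

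That~\eqref{eq:odd-con-sum} respects the $\ZZ[X]/(X^2)$-bimodule structure follows as in Lemma~\ref{lem:Kh-kunneth}, since every map involved commutes with merging a further $0$-crossing unknot at a point of $K_1\#K_2$ lying on one of the two connect-sum arcs. For the reduction mod $2$, all of the above constructions are natural with respect to the ring map $\ZZ\to\FF_2$, and over $\FF_2$ the odd complex is canonically the even complex, the odd merge saddles become the even multiplication maps, and the left and right actions both coincide with the usual $\FF_2[X]/(X^2)$-action; hence~\eqref{eq:odd-disjoint-union} and~\eqref{eq:odd-con-sum} reduce modulo $2$ to the even isomorphisms built from the very same merge maps, namely~\eqref{eq:Kh-kunneth-amalg} and the isomorphism of Lemma~\ref{lem:Kh-kunneth} with $R=\FF_2$. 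The one genuinely delicate ingredient throughout --- and the reason the odd case is harder than the even one --- is the sign-and-ordering bookkeeping in the disjoint-union step and, correspondingly, the verification that the ``collapse three circles'' algebra map matches the \emph{signed} left action (in particular accounting for the factor $(-1)^{\Split(v)}$); I would present the skeleton above and defer these verifications to~\cite{Putyra16:triply}.
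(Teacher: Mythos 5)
Your overall strategy --- prove the disjoint-union isomorphism by an explicit map of exterior algebras over the product cube, then get the connected-sum isomorphism by composing with the merge map --- is the same strategy the paper uses. But the specific bookkeeping you propose for the disjoint-union step has an error that would derail a full write-up, and it is worth understanding why, since it is precisely the subtlety the paper's proof is designed to handle (rather than just citing Putyra).

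You propose to use the \emph{untwisted} identification $w_1\otimes w_2\mapsto w_1\wedge w_2$ (circles of $K_1$ first) and to correct the signs only by multiplying the $K_2$-edge-assignment by $(-1)^{|v_1|}$ on $K_2$-direction edges. That cannot work. For a $K_2$-direction \emph{split} $v_2\to v_2'$ creating circles $A,B$, the tensor differential sends $w_1\otimes w_2$ to $(-1)^{\gr_h(w_1)}\varepsilon_2\,w_1\otimes(A-B)\wedge w_2$, which under the untwisted map becomes $(-1)^{\gr_h(w_1)}\varepsilon_2\,w_1\wedge(A-B)\wedge w_2$; the odd Khovanov split map on $K_1\amalg K_2$ produces $(A-B)\wedge w_1\wedge w_2$. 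Moving $(A-B)$ past $w_1$ costs $(-1)^{|w_1|}$, so the total discrepancy is $(-1)^{\gr_h(w_1)+|w_1|}\varepsilon_2$, and $|w_1|$ is not a function of the vertex --- it is \emph{not} an edge assignment. (For merges and for $K_1$-direction edges the untwisted map happens to be fine, which is why the error is easy to miss.) The fix, used both by Putyra and by the paper's streamlined proof, is the twisted isomorphism
\[
f(w_1\otimes w_2)=(-1)^{|w_1|\cdot\Split(v_2)}\,w_1\wedge w_2.
\]
Because a $K_2$-direction split changes $\Split(v_2)$ by $1$, the twist contributes an extra $(-1)^{|w_1|}$ exactly cancelling the problematic factor, and the residual sign $(-1)^{\gr_h(w_1)}\varepsilon_2(v_2\to v_2')$ depends only on the edge. (The resulting edge assignment on $K_1$-direction \emph{split} edges is then $(-1)^{\Split(v_2)}\varepsilon_1$, not just $\varepsilon_1$, which also differs from your proposal.) You flag this as the part you would defer to Putyra, but since the paper's stated goal is to give a self-contained ``streamlined explanation'' of exactly this, and since your sketch of the conventions would not produce a chain map, this is a genuine gap rather than a harmless omission.

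For the connected-sum step your reorganization via the coequalizer of the two merges $\oKhCx(K_1\amalg U\amalg K_2)\rightrightarrows\oKhCx(K_1\amalg K_2)$ is a nice way to exhibit the balanced tensor product, directly mirroring the even case of Lemma~\ref{lem:Kh-kunneth}. The paper instead works with $g=m\circ f$ and directly computes $g(w_1X\otimes w_2)=g(w_1\otimes Xw_2)$, which amounts to the same thing. Two cautions: (1) the identification of ``merge $U$ into $K_2$'' with the \emph{left} action $X\cdot w_2=(-1)^{\Split(v_2)}Z_{p'}\wedge w_2$ again only comes out with the twisted isomorphism $f$, not the untwisted one --- so your earlier error propagates here too; and (2) your rank-count argument that the resulting map on the balanced tensor product is an isomorphism (which the paper leaves implicit) is a welcome addition.
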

\begin{proof}
We start with the result about disjoint unions. Suppose $K_i$ has
$n_i$ crossings. For convenience, fix an ordering of the crossings
of each $K_i$, so we can identify the cube of resolutions of $K_i$
with $\CCat{n_i}$, where $\CCat{1}=(0\to 1)$. Fix also an edge
assignment $\varepsilon_i$ for $K_i$.

Let $v=(v_1,v_2) \in \{0,1\}^{n_1+n_2}$, and let $\oKhCx^{v_i}(K_i)$
denote the summand of $\oKhCx(K_i)$ corresponding to the
$v_i$-resolution. Given a basis element $w_i\in\oKhCx^{v_i}(K_i)$, so
$w_i$ is a wedge product of circles in $(K_i)_{v_i}$, let $|w_i|$
denote the number of factors in the wedge product.
Define $f\colon \oKhCx^{v_1}(K_1)\otimes_\ZZ \oKhCx^{v_2}(K_2) \to \oKhCx^{v}(K_1\sqcup K_2)$ by
\[
f(w_1\otimes w_2) = (-1)^{|w_1|\cdot \Split(v_2)}w_1\wedge w_2.
\]
Clearly this induces an isomorphism on cochain groups
\[
f\colon  \oKhCx^{i}(K_1)\otimes_\ZZ \oKhCx^{j}(K_2) \to \oKhCx^{i+j}(K_1\sqcup K_2),
\]
and thus we can define a differential on $\oKhCx(K_1\sqcup K_2)$ by
$\partial^f = f\circ \partial^\otimes\circ f^{-1}$. We need to check
that this differential is the differential of an odd Khovanov complex
for $K_1\sqcup K_2$; that is, we need to check that it differs from
the usual merge and split maps by an edge assignment.

We first show that $\partial^f$ restricted to an edge $v\to v'$ in the
cube of resolutions differs from the merge or split map by a sign only
depending on the edge, and then show that this sign function is indeed
an edge assignment. To see the first part, we need to check the
various cases with $v_1\to v_1'$ a merge or split, and $v_2\to v_2'$ a
merge or split. The most interesting case is the case when
$v_2\to v_2'$ is a split, and this case will also make clear how the
other easier cases work. We have
\begin{align*}
\partial^f_{v\to v'}(w_1\wedge w_2) &= f\partial^\otimes_{v\to v'}((-1)^{|w_1|\cdot \Split(v_2)} w_1\otimes w_2) \\
&= f((-1)^{\gr_h(w_1)+\varepsilon_2(v_2\to v_2')+|w_1|\cdot \Split(v_2)}w_1\otimes (A-B)\wedge w_2)\\
&=(-1)^{\gr_h(w_1)+\varepsilon_2(v_2\to v_2')+|w_1|}w_1\wedge (A-B)\wedge w_2 \\
&=(-1)^{\gr_h(w_1)+\varepsilon_2(v_2\to v_2')} (A-B)\wedge w_1\wedge w_2,
\end{align*}
where $A$ and $B$ are the two circles created by the split, and the
orientation of the handle (crossing orientation) points from $A$ to $B$.
Since $\gr_h(w_1)$ only depends on $v_1$, the sign difference $(-1)^{\gr_h(w_1)+\varepsilon_2(v_2\to v_2')}$ between $\partial^f_{v\to v'}$ and the split map depends only on the edge $v\to v'$, and not on the particular generator involved, as desired. The other three cases are similar, but easier: for a $v_2\to v'_2$ merge, the difference is $(-1)^{\gr_h(w_1)+\varepsilon_2(v_2\to v'_2)}$, for a $v_1\to v'_1$ split the difference is $(-1)^{\Split(v_2)+\varepsilon_1(v_1\to v'_1)}$, and for a $v_1\to v'_1$ merge, the difference is $(-1)^{\varepsilon_1(v_1\to v'_1)}$.

It remains to show that the sign function is indeed an edge
assignment. For faces in $\CCat{n_1+n_2}$ of type $A$ or $C$, this
follows from the fact that $\partial^f\circ \partial^f =
0$. For faces of type $X$ and $Y$, observe that such faces have to be
either in $\CCat{n_1}$ or in $\CCat{n_2}$. In either case, the product
of the signs around the face is the same as in $(K_i,\varepsilon_i)$,
because there is either an extra contribution of $(-1)^{4\gr_h(w_1)}$
or of $(-1)^{2\Split(v_2)}$.

Turning to the result for connected sums, note that there is a natural surjection on cochain groups
\[
m\colon \oKhCx(K_1\sqcup K_2) \to \oKhCx(K_1\# K_2)
\]
by merging the appropriate circles. The edge assignment on the disjoint union descends to an edge assignment on the connected sum; again, this follows from $\partial\circ \partial = 0$, since $X$- and $Y$-type faces have to be in $\CCat{n_1}$ or $\CCat{n_2}$ only. In particular, $m$ is a cochain map with this choice.

Now,
\[
g = m\circ f\colon \oKhCx(K_1)\otimes_\ZZ \oKhCx(K_2) \to \oKhCx(K_1\sqcup K_2) \to \oKhCx(K_1\# K_2).
\]
is a cochain map. We claim that $g(w_1 X \otimes w_2) = g(w_1 \otimes X w_2)$. Indeed, if we
let $Z_p$ and $Z_{p'}$ be the components of $(K_1)_{v_1}$ and $(K_2)_{v_2}$ containing the basepoints, then
\[
g(w_1X\otimes w_2) = m(f(w_1\wedge Z_p\otimes w_2) = m((-1)^{(|w_1|+1)\cdot \Split(v_2)}w_1\wedge Z_p\wedge w_2),
\]
while
\[
g(w_1\otimes X w_2) = m(f((-1)^{\Split(v_2)}w_1\otimes Z_{p'} \wedge w_2) = m((-1)^{\Split(v_2)+|w_1|\cdot \Split(v_2)}w_1\wedge Z_{p'}\wedge w_2).
\]
After the merge, $Z_p$ and $Z_{p'}$ represent the same element, so these expressions agree. Thus, $g$ descends to a cochain map
\[
\overline{g}\colon \oKhCx(K_1) \otimes_{\ZZ[X]/(X^2)}\oKhCx(K_2) \to \oKhCx(K_1\# K_2).
\]
We also get that $\overline{g}$ preserves the left and right $\ZZ[X]/(X^2)$-module structures. For the right action, this is obvious. For the left action,
\[
\overline{g}(X w_1\otimes w_2) = \overline{g}((-1)^{\Split(v_1)} Z_p\wedge w_1 \otimes w_2) = (-1)^{\Split(v_1)+(|w_1|+1)\Split(v_2)} m(Z_p\wedge w_1\wedge w_2),
\]
while
\[
X \overline{g}(w_1\otimes w_2) = X m((-1)^{|w_1|\cdot\Split(v_2)}w_1\wedge w_2) = (-1)^{\Split(v) + |w_1|\cdot \Split(v_2)} m(Z_p\wedge w_1\wedge w_2).
\]
So, since $\Split(v) = \Split(v_1)+\Split(v_2)$ the left action of $X$ is also preserved.
\end{proof}

The left and right module structures on $\oKhCx$ are related by an
automorphism of the complex. Specifically,
decompose $\oKhCx(K)$ as $C'(K)\oplus C''(K)$ where
$C'(K)=\oKhCx(K)\cdot X$ is the image of multiplication by $X$ (i.e.,
is spanned by wedge products containing $Z_p$ as a factor) and
$C''(K)$ is the complement to $C'(K)$ spanned by the empty wedge
products and wedge products of the
form $(Z_1-Z_p)\wedge Z_2\wedge\cdots\wedge Z_i$ (where the $Z_j$ are
arbitrary circles in the resolution not containing the
basepoint). Define $h\co C'\oplus C''\to C'\oplus C''$
\[
  h(w',w'')=(w',(-1)^{\Split(v)+|w''|}w''),
\]
where $v$ is the vertex of the cube that $w''$ lies over.
\begin{lemma}\label{lem:left-is-right}
  The map $h$ is a chain isomorphism satisfying $h(X\cdot w)=h(w)\cdot X$.
\end{lemma}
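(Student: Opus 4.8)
The plan is to verify the two assertions separately: that $h$ is a chain map, and that $h(X\cdot w)=h(w)\cdot X$. Bijectivity of $h$ is immediate, since $h$ is block diagonal for $\oKhCx(K)=C'(K)\oplus C''(K)$, acting as the identity on $C'(K)$ and multiplying each homogeneous generator $w''$ of $C''(K)$ lying over a vertex $v$ by $(-1)^{\Split(v)+|w''|}$; in particular $h^{2}=\id$.

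For the intertwining relation, I would first note that for every $w$ both $X\cdot w$ and $h(w)\cdot X$ lie in $C'(K)=\oKhCx(K)\cdot X=\ker\!\bigl((-)\cdot X\bigr)=\ker\!\bigl(X\cdot(-)\bigr)$, on which $h$ is the identity, so $h(X\cdot w)=X\cdot w$. It then suffices to take $w=w'+w''$ homogeneous, with $w'\in C'(K)$ and $w''\in C''(K)$ of wedge-degree $k$; here $X\cdot w'=0=w'\cdot X$ (wedging in $Z_{p}$ annihilates anything already containing it), while $X\cdot w''=(-1)^{\Split(v)}Z_{p}\wedge w''$ and $h(w)\cdot X=(-1)^{\Split(v)+k}\,w''\wedge Z_{p}$, and these agree by graded commutativity $Z_{p}\wedge w''=(-1)^{k}\,w''\wedge Z_{p}$ in the exterior algebra. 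An arbitrary $w$ is handled by passing to homogeneous summands.

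For $h\partial=\partial h$ I would introduce the bigraded automorphism $\widetilde h$ of $\oKhCx(K)$ that multiplies a homogeneous generator lying over $v$, with $k$ wedge factors, by $(-1)^{\Split(v)+k}$. Each merge map fixes $\Split$ and the number of wedge factors, and each split map $w\mapsto(A-B)\wedge w$ raises both by $1$ (one has $\Split(v')=\Split(v)+1$ along a split edge), so $\Split(v)$ plus the number of wedge factors changes by an even amount along every edge; since the edge-assignment signs are scalars, $\widetilde h$ commutes with $\partial$, and $\widetilde h$ is block diagonal for $C'(K)\oplus C''(K)$, agreeing with $h$ on $C''(K)$. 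As $C'(K)$ is the image of the chain map $(-)\cdot X$, it is a subcomplex, so $\partial$ is block upper-triangular for $C'(K)\oplus C''(K)$; writing $\partial|_{C''}=b+d$ with $b$ into $C'(K)$ and $d$ into $C''(K)$, comparison of the blocks of $\widetilde h\partial=\partial\widetilde h$ gives $\widetilde h|_{C''}\,d=d\,\widetilde h|_{C''}$ and $\widetilde h|_{C'}\,b=b\,\widetilde h|_{C''}$, whereupon a short computation reduces $h\partial=\partial h$ to the identity $b\,(\id-\widetilde h|_{C''})=0$. Since $\widetilde h|_{C''}$ equals $+1$ on the part of $C''(K)$ where $\Split(v)+|w''|$ is even and $-1$ where it is odd, this says exactly that $\partial(w'')$ has vanishing $C'(K)$-component whenever $w''\in C''(K)$ is homogeneous over $v$ with $\Split(v)+|w''|$ odd; equivalently, the ``odd part'' of $C''(K)$ is a $\partial$-subcomplex, its complement $C'(K)\oplus(\text{even part of }C''(K))$ being automatically one because $\partial$ preserves $C'(K)$ and the parity of $\Split(v)+|w''|$.

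I expect this last point to be the main obstacle, and I would check it edge by edge. For a generator $w''=(Z_{1}-Z_{p})\wedge Z_{2}\wedge\cdots\wedge Z_{i}$ of $C''(K)$ (the empty-wedge generator being easier) and the merge or split map $\phi$ of a given edge, one expands $\phi(w'')$, separates the monomials literally containing $Z_{p}$ (which lie in $C'(K)$) from the $Z_{p}$-free monomials, and re-expresses the $Z_{p}$-free part in the preferred basis of $C''(K)$; the $C'(K)$-contribution from the explicit $Z_{p}$-terms produced by $\phi$ should cancel the $C'(K)$-contribution produced by completing the square (replacing $Z_{1}$ by $Z_{1}-Z_{p}$) in the $Z_{p}$-free part. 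Running through the handful of cases for merge and split edges — and, for a split, recording which of the two created circles is identified with the circle being split — then finishes the argument; this cancellation is also where one uses that the complement $C''(K)$ has been chosen compatibly along the edges of the cube of resolutions.
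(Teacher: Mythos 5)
Your verification of bijectivity and of the intertwining identity $h(X\cdot w)=h(w)\cdot X$ is correct, and the reduction via the auxiliary automorphism $\widetilde h$ is a clean and valid way to isolate the chain-map question. The genuine gap is exactly where you flag it: the claim that $\partial(w'')$ has no $C'(K)$-component whenever $w''\in C''(K)$ lies over $v$ with $\Split(v)+|w''|$ odd is false for the complement as literally defined, and the cancellation you predict does not occur. Take a merge edge $v\to v'$ that merges unmarked circles $Z_1,Z_2$ into $W$, and take $w''=(Z_1-Z_p)\wedge Z_2$, so $|w''|=2$. The merge sends $w''\mapsto(W-Z_p)\wedge W=-Z_p\wedge W$, which lies entirely in $C'(K)$; its $Z_p$-free part is $W\wedge W=0$, so there is no ``completing-the-square'' term to cancel against. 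Since $\Split$ is unchanged along merge edges, any such configuration at a vertex with $\Split(v)$ odd (e.g.\ one split edge past $\vec{0}$ in a disjoint union containing a two-crossing Hopf diagram away from the basepoint, where $\Split(v)=1$) gives $\Split(v)+|w''|$ odd while the $C'(K)$-component of $\partial(w'')$ is nonzero, and one checks directly that $h\partial(w'')\neq\partial h(w'')$ along that edge.

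What makes the lemma work is taking the complement to be spanned by $1$ and by $(Z_1-Z_p)\wedge(Z_2-Z_p)\wedge\cdots\wedge(Z_i-Z_p)$, with \emph{every} factor shifted by $-Z_p$; this appears to be the intended $C''(K)$. With this choice every merge and split map carries $C''(K)$ to $C''(K)$ (the example above now yields $(W-Z_p)\wedge(W-Z_p)=0$, and for splits one uses $(A-B)\wedge(A-Z_p)=(A-Z_p)\wedge(B-Z_p)$), so $C''(K)$ is itself a subcomplex, $\partial$ is block-diagonal for $C'\oplus C''$, the map you call $b$ vanishes outright, and $h=\id_{C'}\oplus\widetilde h|_{C''}$ commutes with $\partial$ automatically. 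That, rather than the weaker parity statement $b(\id-\widetilde h|_{C''})=0$, is the sense in which the lemma is ``straightforward from the definitions''; the edge-by-edge verification you postponed is precisely the step that would have exposed the discrepancy.
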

\begin{proof}
  This is straightforward from the definitions.
\end{proof}

\begin{corollary}
  Given based link diagrams $K_1$ and $K_2$, if we view both
  $\oKhCx(K_i)$ as right modules over the commutative ring $\ZZ[X]/(X^2)$ by
  using the action in Equation~\eqref{eq:right-act}, then there is an isomorphism
  of chain complexes of $\ZZ[X]/(X^2)$-modules
  \[
    \oKhCx(K_1\#K_2)\cong \oKhCx(K_1)\otimes_{\ZZ[X]/(X^2)}\oKhCx(K_2)
  \]
  where the action on the left-hand side is again the right action by
  $\ZZ[X]/(X^2)$ (and the basepoint is on one of the arcs
  where the connected sum occurred).
\end{corollary}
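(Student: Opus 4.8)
The plan is to deduce this corollary from Theorem~\ref{thm:odd-Kunneth} together with Lemma~\ref{lem:left-is-right}. Theorem~\ref{thm:odd-Kunneth} already gives a chain isomorphism $\oKhCx(K_1\#K_2)\cong \oKhCx(K_1)\otimes_{\ZZ[X]/(X^2)}\oKhCx(K_2)$, but there the left factor $\oKhCx(K_1)$ is regarded as a \emph{left} $\ZZ[X]/(X^2)$-module (via the left action), the right factor is regarded as a right module, and the tensor product is balanced accordingly; moreover the isomorphism intertwines the full bimodule structure on $\oKhCx(K_1\#K_2)$. The issue is purely one of which module structures one uses to form the tensor product: in the corollary we want to use the \emph{right} action on both $\oKhCx(K_1)$ and $\oKhCx(K_2)$ (so that, $\ZZ[X]/(X^2)$ being commutative, the tensor product over it makes sense), and we want the resulting isomorphism to respect the right action on $\oKhCx(K_1\#K_2)$.

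First I would use Lemma~\ref{lem:left-is-right} applied to $K_1$: the chain isomorphism $h\co \oKhCx(K_1)\to\oKhCx(K_1)$ carries the right action of $X$ to the left action, i.e. $h(w\cdot X)=X\cdot h(w)$ (the lemma states $h(X\cdot w)=h(w)\cdot X$, and since $h$ is an involution up to sign, equivalently $h$ intertwines the two actions in the needed direction; if orientation of the identity is a concern I would instead invoke $h^{-1}$, which exists since $h$ is an isomorphism). Therefore $h\otimes\id$ gives a chain isomorphism from $\oKhCx(K_1)\otimes_{\ZZ[X]/(X^2)}\oKhCx(K_2)$, with $\oKhCx(K_1)$ a \emph{right} module, to $\oKhCx(K_1)\otimes_{\ZZ[X]/(X^2)}\oKhCx(K_2)$ with $\oKhCx(K_1)$ a \emph{left} module; this is well-defined on the balanced tensor product precisely because $h$ swaps the relevant actions.

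Composing $h\otimes\id$ with the isomorphism of Theorem~\ref{thm:odd-Kunneth} yields the desired chain isomorphism $\oKhCx(K_1\#K_2)\cong\oKhCx(K_1)\otimes_{\ZZ[X]/(X^2)}\oKhCx(K_2)$ with both factors viewed as right modules. It remains to check that this composite intertwines the right $X$-action on the two sides. On the target, the right action on the tensor product is by $X$ acting on the right factor $\oKhCx(K_2)$; the Theorem~\ref{thm:odd-Kunneth} isomorphism already intertwines this with the right action on $\oKhCx(K_1\#K_2)$, and tensoring with $h$ on the left factor does not touch the right action on $\oKhCx(K_2)$, so commutativity is preserved. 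Naturality under change of rings follows formally since $h$ is defined integrally and is compatible with tensoring by $R$.

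The main obstacle, such as it is, is bookkeeping: making sure the sign $(-1)^{\Split(v)+|w''|}$ appearing in $h$ does not interfere with the right action (it does not, since the right action appends $Z_p$ and the sign is a function of the vertex $v$ and the word length $|w''|$, both unaffected in the way that matters) and making sure one has $h$ going in the correct direction to be defined on the balanced tensor product over $\ZZ[X]/(X^2)$ rather than merely on the tensor product over $\ZZ$. Once those are pinned down, the proof is a one-line composition.
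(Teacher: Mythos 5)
Your proposal has the right overall shape — use Theorem~\ref{thm:odd-Kunneth} to get the chain isomorphism, and use Lemma~\ref{lem:left-is-right} to reconcile the module structures — but you have misread which factor in the theorem carries the left action, and this propagates into applying $h$ to the wrong tensor factor. In the proof of Theorem~\ref{thm:odd-Kunneth}, the descent relation is $g(w_1 X\otimes w_2)=g(w_1\otimes Xw_2)$, i.e.\ the balancing pairs the \emph{right} action on $\oKhCx(K_1)$ with the \emph{left} action on $\oKhCx(K_2)$. So in $\oKhCx(K_1)\otimes_{\ZZ[X]/(X^2)}\oKhCx(K_2)$ as the theorem defines it, $\oKhCx(K_1)$ is already the right module; the factor whose action disagrees with the corollary's all-right-action convention is $\oKhCx(K_2)$, not $\oKhCx(K_1)$. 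Consequently the conversion should be made on the \emph{second} factor, not the first. Indeed $h\otimes\id$ fails the most basic check: with $h(w_1 X)=X\,h(w_1)$, the element $h(w_1 X)\otimes w_2 = X h(w_1)\otimes w_2$ is not identified with $h(w_1)\otimes w_2 X$ by the balancing relation $uX\otimes v = u\otimes Xv$, so $h\otimes\id$ does not even descend to a well-defined map out of the right-right tensor product.

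There is a second, subtler gap. You assert that since $h$ is applied to the first factor, "tensoring with $h$ on the left factor does not touch the right action on $\oKhCx(K_2)$." Once the correction is made on the proper factor, this reasoning no longer applies: you are now applying $h$ to the very factor that carries the right module structure of the tensor product, and Lemma~\ref{lem:left-is-right} tells you $h$ intertwines the left action with the right action — it does \emph{not} commute with the right action (for $w\in C''$ one has $h(w\cdot X)=X\cdot h(w)=(-1)^{\Split(v)+|w|}\,h(w)\cdot X$). So the compatibility of the resulting composite with the $\ZZ[X]/(X^2)$-module structure requires a genuine sign-check rather than the dismissal you give; as written, the proposal does not establish that the isomorphism you construct is one of $\ZZ[X]/(X^2)$-module complexes, which is the substance of the corollary. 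The paper's own proof is the one-line "immediate from Theorem~\ref{thm:odd-Kunneth} and Lemma~\ref{lem:left-is-right}," so you should not feel bad for finding the bookkeeping fiddly — but you do need to get the factor right and actually confront the interaction between $h$ and the residual right action.
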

\begin{proof}
  This is immediate from Theorem~\ref{thm:odd-Kunneth} and
  Lemma~\ref{lem:left-is-right}.
\end{proof}

So, for the rest of the paper, we will only think of $\oKhCx(K)$ as a
right module over $\ZZ[X]/(X^2)$, and ignore the left action.

The K\"unneth theorems for reduced Khovanov homology are slightly simpler:
\begin{lemma}
  There are chain isomorphisms
  \begin{align}
    \rKhCx(K_1\#K_2;R)&\cong \rKhCx(K_1;R)\otimes_{R}\rKhCx(K_2;R)\\
    \rKhCxBN(K_1\#K_2;R)&\cong \rKhCxBN(K_1;R)\otimes_{R[h]}\rKhCxBN(K_2;R)\\
    \roKhCx(K_1\# K_2;R)&\cong \roKhCx(K_1;R)\otimes_{R}\roKhCx(K_2;R).
  \end{align}
  Moreover, the second isomorphism mod $h$ is the first one, and the first
  and third isomorphisms agree mod $2$.
\end{lemma}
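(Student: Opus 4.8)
The plan is to prove all three isomorphisms in parallel, in each case by reducing the corresponding unreduced K\"unneth theorem along the appropriate coefficient module. Recall that by definition $\rKhCx(K;R)=\KhCx(K;R)\otimes_{R[X]/(X^2)}R\{-1\}$, that $\rKhCxBN(K;R)=\KhCxBN(K;R)\otimes_{\BNring}R[h]\{-1\}$, and that $\roKhCx(K;R)=\oKhCx(K;R)\otimes_{R[X]/(X^2)}R\{-1\}$, where for the last complex I would fix once and for all the right $R[X]/(X^2)$-action (which of the two actions is used being immaterial by Lemma~\ref{lem:left-is-right}). For the first isomorphism I would start from
\[
  \rKhCx(K_1\#K_2;R)=\KhCx(K_1\#K_2;R)\otimes_{R[X]/(X^2)}R\{-1\},
\]
rewrite the factor $\KhCx(K_1\#K_2;R)$ using the chain isomorphism~\eqref{eq:Kh-kunneth} of Lemma~\ref{lem:Kh-kunneth}, which intertwines the $R[X]/(X^2)$-module structures on the two sides, and then invoke the elementary base-change identity: for any ring homomorphism $A\to B$ and $A$-modules $M$ and $N$ there is a natural isomorphism
\[
  (M\otimes_A N)\otimes_A B\;\cong\;(M\otimes_A B)\otimes_B(N\otimes_A B),
\]
which follows from associativity of the tensor product together with $B\otimes_B(-)\cong\mathrm{id}$, and which respects gradings when $A\to B$ is a graded map. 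Applied with $A=R[X]/(X^2)$ and $B=R$, on which $X$ acts by $0$ so that $\otimes_B$ is an ordinary tensor product of $R$-modules, this converts $(\KhCx(K_1;R)\otimes_A\KhCx(K_2;R))\otimes_A R$ into $(\KhCx(K_1;R)\otimes_A R)\otimes_R(\KhCx(K_2;R)\otimes_A R)$, which, once the quantum grading shift is accounted for, is exactly $\rKhCx(K_1;R)\otimes_R\rKhCx(K_2;R)$.

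For the second and third isomorphisms I would run the identical argument, replacing Lemma~\ref{lem:Kh-kunneth} by Lemma~\ref{lem:Kh-kunneth-BN} with $(A,B)=(\BNring,R[h])$, respectively by the connected-sum part~\eqref{eq:odd-con-sum} of Theorem~\ref{thm:odd-Kunneth} in its right-module formulation (the corollary following Lemma~\ref{lem:left-is-right}) with $(A,B)=(R[X]/(X^2),R)$. The two compatibility assertions should then be automatic. Reduction modulo $h$ is the functor $-\otimes_{R[h]}R[h]/(h)$, which commutes with every step of the construction above, while Lemma~\ref{lem:Kh-kunneth-BN} says that~\eqref{eq:BN-Kunneth} reduces modulo $h$ to~\eqref{eq:Kh-kunneth}; hence the second reduced isomorphism reduces modulo $h$ to the first. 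Likewise, reduction modulo $2$ is $-\otimes_{\ZZ}\FF_2$, which commutes with everything and identifies $\roKhCx(-;\ZZ)\otimes_{\ZZ}\FF_2$ with $\rKhCx(-;\FF_2)$ via the canonical identification of $\oKhCx(-;\FF_2)$ with $\KhCx(-;\FF_2)$; and by the last sentence of Theorem~\ref{thm:odd-Kunneth} the odd connected-sum isomorphism reduces modulo $2$ to the even one, so the first and third reduced isomorphisms agree modulo $2$.

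Beyond the content already packaged in Lemmas~\ref{lem:Kh-kunneth} and~\ref{lem:Kh-kunneth-BN} and Theorem~\ref{thm:odd-Kunneth}, the argument has no geometric input; the one place that needs real care is the bookkeeping of the quantum grading shift $\{-1\}$. The left-hand side carries a single $\{-1\}$ coming from the definition of the reduced complex of $K_1\#K_2$, whereas the right-hand side carries one $\{-1\}$ for each factor, and I would have to check---against the normalization used in Lemma~\ref{lem:Kh-kunneth}, where $\KhCx(U;R)$ is identified with $R[X]/(X^2)$---that these are consistent. I expect this shift-tracking, rather than any homological-algebra obstruction, to be where the only real work lies.
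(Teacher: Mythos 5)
Your proposal is correct and takes essentially the same route as the paper, whose proof simply defers to the unreduced K\"unneth isomorphisms (Lemmas~\ref{lem:Kh-kunneth} and~\ref{lem:Kh-kunneth-BN}, Theorem~\ref{thm:odd-Kunneth}) and the definitions of the reduced complexes; your base-change identity and the functoriality argument for the mod~$h$ and mod~$2$ compatibilities spell out exactly why that deferral works. The grading bookkeeping you flag does work out cleanly: the single $\{-1\}$ on the left is absorbed, together with the shift implicit in the unreduced connected-sum isomorphism, against the two $\{-1\}$'s on the right, as one can already see on the unknot, where $\rKhCx(U;R)=R$ in quantum degree $0$ and $R\otimes_R R=R$ with no residual shift.
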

\begin{proof}
  The result is immediate from Lemmas~\ref{lem:Kh-kunneth}
  and~\ref{lem:Kh-kunneth-BN}, Theorem~\ref{thm:odd-Kunneth}, and the
  definitions of the reduced complexes.
\end{proof}

\section{Even-odd local equivalence}\label{sec:local-equiv}
In this section, we introduce a notion of local equivalence
combining even and odd Khovanov homology. We work
over the ground ring $R=\ZZ$, so in particular
$\BNring=\ZZ[X,h]/(X^2-Xh)$.

\begin{definition}
  \label{def:local}
  A \emph{local even-odd (LEO) triple} consists of a finitely generated, bigraded
  cochain complex $C$ over $\ZZ[X]/(X^2)$, a finitely
  generated, bigraded cochain complex $D$ over $\BNring$,
  and a bigraded chain homotopy equivalence
  \[
    f\co C\otimes_{\ZZ} \ZZ/(2) \to D\otimes_{\BNring}\BNring/(2,h),
  \]
  so that:
  \begin{itemize}
  \item $C$ is freely generated over $\ZZ[X]/(X^2)$,
  \item  $D$ is freely generated over $\BNring$,
  \item the map $f$ is a homomorphism of cochain complexes over
    $\FF_2[X]/(X^2)$, and
  \item the localization $h^{-1}D=D\otimes_{\BNring}h^{-1}\BNring$ is
    homotopy equivalent to a free graded module of rank 1 over $h^{-1}\BNring$,
    supported in homological grading $0$ and odd quantum gradings.
  \end{itemize}
\end{definition}

Given a knot diagram $K$, let
$f\co \oKhCx(K)\otimes_\ZZ\FF_2\to\KhCxBN(K)\otimes_{\BNring}\FF_2$ be
the identification of the mod-2 reduction of the odd Khovanov complex
with the mod-$(2,h)$ reduction of the Bar-Natan complex (both of which
are $\KhCx(K;\FF_2)$).  It follows from the results cited in
Section~\ref{sec:review} that
\[
  \LEO(K)=(\oKhCx(K),\KhCxBN(K),f)
\]
is a LEO triple, and this is our motivating example.  A special case
is the \emph{trivial LEO triple}
\[
  \LEO(U)=(\ZZ[X]/(X^2)\{1\},\BNring\{1\},\id)
\]
associated to the unknot $U$. (The set braces indicate quantum grading
shifts.) Another LEO triple associated to $K$ that uses only the even
Khovanov homology is
\[
  \LEE(K)=(\KhCx(K),\KhCxBN(K),f),
\]
where $f$ is the obvious identification of
$\KhCxBN(K)\otimes_{\BNring}\FF_2$ with $\KhCx(K;\FF_2)$. While the data in
$\LEE(K)$ is completely determined by $\LEO(K)$, we will use
it later to put certain existing invariants into our framework.

\begin{definition}\label{def:local-map}
  Given LEO triples $(C,D,f)$ and $(C',D',f')$, a \emph{local
    map} from $(C,D,f)$ to $(C',D',f')$ consists of bigrading-preserving chain maps
  $\alpha\co C\to C'$ and $\beta\co D\to D'$ (respecting the
  module structures) so that
  \begin{itemize}
  \item the induced map $\beta\co h^{-1}D\to h^{-1}D'$ is a homotopy
    equivalence and
  \item the following diagram commutes up to homotopy:
  \begin{equation}\label{eq:local-map}
    \begin{tikzcd}
       C\arrow[r]\arrow[d,"\alpha"] & C\otimes_\ZZ\ZZ/(2)\arrow[d,"\alpha"]\arrow[r,"f"] &
       D\otimes_{\BNring} \BNring/(2,h)\arrow[d,"\beta"] & D \arrow[l]\arrow[d,"\beta"] \\
       C'\arrow[r] & C'\otimes_\ZZ\ZZ/(2)\arrow[r,"f'"] &
       D'\otimes_{\BNring} \BNring/(2,h) & D' \arrow[l].
    \end{tikzcd}
  \end{equation}
  \end{itemize}
  (The first and third squares automatically commute on the nose, so
  the condition is equivalent to homotopy commutativity of the
  second square. Also, the homotopy
  $f' \circ \alpha \sim \beta \circ f$ is required to respect the
  action of $\FF_2[X]/(X^2)$.) We say that $(C,D,f)$ and $(C',D',f')$
  are \emph{locally equivalent} if there are local maps
  $(C,D,f)\to(C',D',f')$ and $(C',D',f')\to(C,D,f)$.
\end{definition}

A key motivation for Definition~\ref{def:local-map} is the following:
\begin{proposition}\label{prop:conc-implies-local-equiv}
  If $K_0$ and $K_1$ are (smoothly) concordant, then $\LEO(K_1)$ and
  $\LEO(K_2)$ are locally equivalent, as are $\LEE(K_1)$ and
  $\LEE(K_2)$. In particular, if $K$ is slice then $\LEO(K)$ and
  $\LEE(K)$ are locally equivalent to $\LEO(U)$.
\end{proposition}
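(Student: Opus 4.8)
The plan is to reduce the statement to the standard fact that a concordance, cut open, gives a pair of knot cobordisms whose composite (in either order) is isotopic to an unknotted annulus, and then feed this through the functoriality already recorded in Section~\ref{sec:review}. Concretely, let $\Sigma\subset[0,1]\times\RR^3$ be a concordance from $K_0$ to $K_1$, and let $\Sigma'$ be the reverse concordance from $K_1$ to $K_0$ (obtained by turning $\Sigma$ upside down). Choose a decomposition of each as a movie of elementary cobordisms, and choose the basepoints so that the movies are based (i.e.\ no Reidemeister move crosses the basepoint); this is possible because the concordance can be taken to have a single strand near the basepoint throughout. Then $\Sigma_*$ and $\Sigma'_*$ are chain maps on each of $\oKhCx$, $\KhCxBN$, and $\KhCx(-;\FF_2)$, all in bidegree-preserving form since $\chi(\Sigma)=\chi(\Sigma')=0$, and they commute with the $X$-action because the movies are based. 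Because the constructions of $\Sigma_*$ on $\KhCx$ and $\KhCxBN$ are natural with respect to the change-of-ring maps $\ZZ\to\FF_2$ and the reduction $/(h)$, the pair $(\alpha,\beta)=((\Sigma)_*^{\oKhCx},(\Sigma)_*^{\KhCxBN})$ makes diagram~\eqref{eq:local-map} commute up to homotopy: the two outer squares commute on the nose by functoriality of the change-of-ring maps, and the middle square commutes because both $f$ and $f'$ are the canonical identifications of the various mod-$2$ reductions with $\KhCx(-;\FF_2)$, so after reducing mod $2$ all three vertical maps become the single map $(\Sigma)_*$ on $\KhCx(-;\FF_2)$; the requisite $\FF_2[X]/(X^2)$-equivariance of this homotopy holds because the movies are based.

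The remaining point is that $\beta\co h^{-1}\KhCxBN(K_0)\to h^{-1}\KhCxBN(K_1)$ is a homotopy equivalence. Here I would use Equation~\eqref{eq:BN-cx-localized}: after inverting $h$, both complexes are homotopy equivalent, over $\BNring$, to $h^{-1}\BNring$. Under these identifications $\beta$ becomes an $h^{-1}\BNring$-module endomorphism of $h^{-1}\BNring$, hence multiplication by some element of $h^{-1}\BNring=\ZZ[h,h^{-1},X]/(X^2-hX)$, and by grading reasons (it is bidegree-preserving and $h^{-1}\BNring$ is concentrated in homological degree $0$) it is multiplication by $\pm h^k$ for some $k$, possibly after projecting to one of the two orientation summands. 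To see it is a unit—i.e.\ that the coefficient is nonzero—I would compose with $\Sigma'_*$: the composite $\Sigma'\circ\Sigma$ is a cobordism from $K_0$ to $K_0$ isotopic to the product annulus $[0,1]\times K_0$, which induces the identity (up to homotopy and sign) on $\KhCxBN(K_0)$, hence the identity up to homotopy and sign on $h^{-1}\KhCxBN(K_0)$. (Even though the maps on $\oKhCx$ are not known to be isotopy-independent, the maps on $\KhCxBN$ are, which is all we need for $\beta$; and we never claimed $\alpha$ was canonical.) Therefore $(\Sigma')_*\circ\Sigma_*$ and $\Sigma_*\circ(\Sigma')_*$ are each homotopy equivalences on the localizations, so $\beta$ is a homotopy equivalence. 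This gives a local map $\LEO(K_0)\to\LEO(K_1)$, and running the argument with $\Sigma'$ in place of $\Sigma$ gives a local map $\LEO(K_1)\to\LEO(K_0)$, proving local equivalence. The identical argument with $\KhCx$ in place of $\oKhCx$ handles $\LEE$, and taking $K_1=U$ with $\Sigma$ a slice disk (doubled to a concordance from $K$ to $U$, or rather using that $K$ is concordant to $U$ iff $K$ is slice) gives the last sentence.

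The main obstacle is the equivariant homotopy-commutativity of the middle square of~\eqref{eq:local-map}: one must check not merely that $f'\circ\alpha\simeq\beta\circ f$ but that the chain homotopy can be chosen to commute with the $\FF_2[X]/(X^2)$-action. The cleanest way around this is to observe that, mod $2$, the identifications $f$ and $f'$ are literally equalities of complexes $\oKhCx(-;\FF_2)=\KhCx(-;\FF_2)=\KhCxBN(-;\FF_2)$ respecting the $X$-action (this is part of Theorem~\ref{thm:odd-Kunneth}'s setup and the discussion after Equation~\eqref{eq:BN-cx-localized}), and that $\alpha\otimes\FF_2$ and $\beta\otimes\FF_2$ become literally the \emph{same} chain map $(\Sigma)_*$ on $\KhCx(-;\FF_2)$—so the square commutes on the nose, not just up to homotopy, and no homotopy needs to be chosen at all. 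Verifying this "on the nose" claim is the one place that requires care, and it rests on the naturality statements already in Section~\ref{sec:review}: the cobordism maps on $\KhCx$ and $\KhCxBN$ are obtained from the $\ZZ$-level maps by base change, and the $\ZZ$-level odd map reduces mod $2$ to the even one by construction of $\oKhCx$.
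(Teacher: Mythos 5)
Your overall strategy matches the paper's: take a based movie for the concordance $\Sigma$ so the induced maps on $\oKhCx$, $\KhCxBN$, and $\KhCx(-;\FF_2)$ are $X$-equivariant, observe that the outer squares of Diagram~\eqref{eq:local-map} commute by naturality of base change, and observe that the middle square commutes because mod~$2$ all three vertical maps become the single map $\Sigma_*$ on $\KhCx(-;\FF_2)$.  Two points, however, need repair.

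The minor one is your assertion that the $\ZZ$-level odd cobordism map reduces mod~$2$ to the even one ``by construction of $\oKhCx$.'' This is not immediate from the construction: the odd cobordism maps carry signs coming from Putyra's chronological cobordism framework, and verifying that they reduce to the even maps mod~$2$ is the content of a result the paper cites (Sarkar--Scaduto--Stoffregen, Lemma~5.11, also implicit in Putyra). You should flag this as a citation rather than as an obvious consequence of the definitions.

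The serious gap is in the argument that $\beta\co h^{-1}\KhCxBN(K_0)\to h^{-1}\KhCxBN(K_1)$ is a homotopy equivalence. You compose with $\Sigma'_*$ and claim that $\Sigma'\circ\Sigma$ is isotopic rel boundary to the product annulus $[0,1]\times K_0$, so that the composite induces the identity up to sign and homotopy. That premise is unjustified and, in general, false: a concordance followed by its reverse is an annulus from $K_0$ to $K_0$, but nothing forces it to be isotopic to the trivial one (the existence of non-isotopic slice disks and self-concordances shows this ``doubling'' can be nontrivial). So your argument shows at best that $\Sigma'_*\circ\Sigma_*$ is the map induced by \emph{some} concordance from $K_0$ to itself, which is exactly the thing whose invertibility is in question---the reasoning is circular. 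The paper avoids this entirely by citing Rasmussen (Corollary~4.2, and the Bar-Natan analogue in~\cite{LS22:mixed}): for any connected cobordism of genus $g$, the induced map on the localized complex is, up to units, determined by $g$ via orientation classes, and for a concordance ($g=0$) it is a homotopy equivalence. This is a genuinely different mechanism from the composition trick you propose; you need it (or an equivalent direct argument via orientation classes) to close the gap. The remainder of your proof---using $\Sigma'$ to obtain the local map $\LEO(K_1)\to\LEO(K_0)$ and thus local equivalence---is fine and matches the paper.
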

\begin{proof}
  We prove the result for $\LEO$; the statements for $\LEE$ follow
  since it is determined by $\LEO$. Suppose that $\Sigma$ is a concordance from $K_0$ to $K_1$. Fix a
  description of $\Sigma$ as a movie. Then there is a diagram:
  \begin{equation}\label{eq:Kh-local-equiv}
    \begin{tikzcd}
      \oKhCx(K_0)\arrow[r,"/(2)"]\arrow[d, "\alpha"] & \KhCx(K_0;\FF_2)\arrow[d] &
      \KhCxBN(K_0) \arrow[r]\arrow[d, "\beta"]\arrow[l,"/(2\comma h)" above] & h^{-1}\KhCxBN(K_0)\arrow[d,"\simeq"]\\
      \oKhCx(K_1)\arrow[r,"/(2)"] & \KhCx(K_1;\FF_2) &
      \KhCxBN(K_1)\arrow[l,"/(2\comma h)" above]\arrow[r] & h^{-1}\KhCxBN(K_1).
    \end{tikzcd}
  \end{equation}
  Here, all the vertical arrows are induced by $\Sigma$, and are
  bigrading-preserving; the second vertical arrow is the map on
  $\KhCx(\cdot;\FF_2)$ induced by the movie. Commutativity of the
  middle and right squares follows from the principle at the end of
  Section~\ref{sec:review}, that the maps on the even Khovanov complex
  with $R$-coefficients (for any $R$) are induced by the maps over
  $\ZZ$, and the map on the Bar-Natan complex reduces to the usual map
  on the Khovanov complex. Commutativity of the left square is a
  special case of a result of Sarkar-Scaduto-Stoffregen about the odd
  Khovanov spectrum~\cite[Lemma 5.11]{SSS20:odd-htpy}, and is implicit
  in Putyra's work~\cite{Putyra14:chron}. In particular, all of the
  squares commute exactly, not just up to homotopy.

  The fact that the vertical arrow at the right is a homotopy
  equivalence essentially follows from a result of
  Rasmussen's~\cite[Corollary 4.2]{Rasmussen10:s}, though he was
  working over $\QQ$, used a different Frobenius algebra, and only
  stated it for homology. The result for the Bar-Natan deformation, over
  arbitrary rings, is well known (e.g.,~\cite[Proposition
  3.4]{LS22:mixed} where the result is
  stated only for homology, but the proof gives the chain level statement).

  To arrange that $\alpha$ commutes with the action of $\ZZ[X]$ and
  $\beta$ commutes with the action of $\BNring$, we modify $\Sigma$ and
  its movie representative slightly. Specifically, the action of $X$
  uses a basepoint on $K_0$ and $K_1$, though up to homotopy
  equivalence the resulting complexes of modules are independent of
  the choice of basepoint. Choose $\Sigma$ and the movie representing
  it so that no Reidemeister move in the movie for $\Sigma$ crosses
  the basepoint, and the component containing the basepoint never
  disappears (dies), and choose the basepoint on $K_1$ to be the image
  of the basepoint on $K_0$ under the movie. (To arrange the condition
  about Reidemeister moves, one can replace any move that crosses the
  basepoint by a sequence of moves going around the rest of the
  diagram and through infinity. Note that this may change the isotopy
  class of $\Sigma$ in $[0,1]\times S^3$.) Then, the map associated to
  each step in the movie for $\Sigma$ commutes with the $X$-action. 
  
  Thus, $\Sigma$ induces a local map $\LEO(K_0)\to \LEO(K_1)$. Reading
  $\Sigma$ backward gives a local map $\LEO(K_1)\to\LEO(K_0)$, so the
   $\LEO(K_0)$ and $\LEO(K_1)$ are equivalent.
\end{proof}

With this motivation in hand, we return to developing some general
properties of LEO triples and local equivalence.

\begin{lemma}
  Local equivalence of LEO triples is an equivalence relation.
\end{lemma}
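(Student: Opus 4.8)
The plan is to verify the three defining properties of an equivalence relation — reflexivity, symmetry, and transitivity — for local equivalence of LEO triples. Reflexivity and symmetry are essentially immediate from the definitions: for reflexivity, the pair of identity maps $(\id_C, \id_D)$ is a local map from $(C,D,f)$ to itself, since $\id_D$ induces the identity (hence a homotopy equivalence) on $h^{-1}D$ and the relevant square commutes on the nose; symmetry is built into Definition~\ref{def:local-map}, which declares $(C,D,f)$ and $(C',D',f')$ locally equivalent precisely when local maps exist in both directions. So the only real content is transitivity, which reduces to the statement that local maps compose.

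The key step, then, is: given local maps $(\alpha,\beta)\co (C,D,f)\to(C',D',f')$ and $(\alpha',\beta')\co(C',D',f')\to(C'',D'',f'')$, the pair $(\alpha'\circ\alpha,\beta'\circ\beta)$ is a local map from $(C,D,f)$ to $(C'',D'',f'')$. I would check the two bulleted conditions of Definition~\ref{def:local-map}. First, $\alpha'\circ\alpha$ and $\beta'\circ\beta$ are again bigrading-preserving chain maps respecting the module structures, as composites of such. Second, the induced map $(\beta'\circ\beta)\co h^{-1}D\to h^{-1}D''$ is the composite of the two homotopy equivalences $\beta\co h^{-1}D\to h^{-1}D'$ and $\beta'\co h^{-1}D'\to h^{-1}D''$, hence a homotopy equivalence. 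Finally, for the homotopy-commutativity of diagram~\eqref{eq:local-map}: stacking the two diagrams for the individual local maps, the outer rectangle commutes up to homotopy because $f'\circ\alpha\sim\beta\circ f$ and $f''\circ\alpha'\sim\beta'\circ f'$, so $f''\circ(\alpha'\circ\alpha)\sim\beta'\circ f'\circ\alpha\sim\beta'\circ\beta\circ f$; concatenating the two homotopies (after composing with $\alpha$ on one side and $\beta'$ on the other) gives the required homotopy, and since each of the two given homotopies respects the $\FF_2[X]/(X^2)$-action, so does the concatenation.

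Once local maps are shown to compose, transitivity follows formally: if $(C,D,f)$ is locally equivalent to $(C',D',f')$ and the latter to $(C'',D'',f'')$, then composing local maps in each direction produces local maps $(C,D,f)\to(C'',D'',f'')$ and back, so $(C,D,f)$ and $(C'',D'',f'')$ are locally equivalent. I do not expect any genuine obstacle here; the only point requiring a moment's care is the bookkeeping of the $\FF_2[X]/(X^2)$-equivariant homotopies when composing, and the observation that the first and third squares in~\eqref{eq:local-map} commute strictly so that homotopy-commutativity of the middle square is the only thing that needs to be transported through the composition. Everything else is routine verification that composites of maps with a given property again have that property.
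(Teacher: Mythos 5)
Your proof is correct, and since the paper disposes of this lemma with ``This is straightforward,'' your argument is simply the routine verification the authors had in mind: reflexivity and symmetry are immediate, and transitivity reduces to composing local maps, where the only point of any substance is stacking the two homotopies (composed with $\beta'$ on one side and $\alpha$ on the other) and noting that the resulting homotopy remains $\FF_2[X]/(X^2)$-equivariant.
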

\begin{proof}
  This is straightforward.
\end{proof}

We can make the set of local equivalence classes of LEO triples into a
group, as follows:
\begin{definition}\label{def:tensor-prod}
  The \emph{tensor product} of LEO triples $(C,D,f)$ and $(C',D',f')$
  is
  \[
  \big(C\otimes_{\ZZ[X]/(X^2)}C'\{-1\}, \,
  D\otimes_{\BNring}D'\{-1\}, \,f\otimes f'\big).
  \]
  The \emph{dual} of $(C,D,f)$ is
  \[
  (\Hom_{\ZZ[X]/(X^2)}(C,\ZZ[X]/(X^2))\{2\}, \, \Hom_{\BNring}(D,\BNring)\{2\}, \, \fbar),
  \]
  where we grade dual complexes so that
  $\Hom(C,\ZZ)_{i,j}=\Hom(C_{-i,-j},\ZZ)$, and
  the $\{2\}$ denotes a quantum grading shift.
  The map $\fbar$ is the
  transpose (map of dual complexes) induced by the inverse homotopy
  equivalence to $f$. We will denote the dual by $(C,D,f)^*$ or $(C^*,
  D^*, f^*)$. (The latter is a slight abuse of notation.)
\end{definition}

\begin{theorem}\label{thm:local-equiv-group}
  The set of local equivalence classes of LEO triples forms an abelian
  group $\CLEO$, with addition the tensor product from
  Definition~\ref{def:tensor-prod}, identity $\LEO(U)$, and inverse
  given by the dual from
  Definition~\ref{def:tensor-prod}. Moreover, the assignment $K\mapsto
  \LEO(K)$ induces a homomorphism from the smooth
  concordance group $\mathcal{C}$ to $\CLEO$.
\end{theorem}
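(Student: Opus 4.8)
The plan is to verify the group axioms for $\CLEO$ under tensor product, then check that local equivalence is a congruence for tensor and duality, and finally that $K \mapsto \LEO(K)$ respects connected sum (which is addition in $\mathcal{C}$) and sends slice knots to the identity.

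First I would check that the tensor product of two LEO triples is again a LEO triple. Freeness and finite generation of $C \otimes_{\ZZ[X]/(X^2)} C'$ and $D \otimes_\BNring D'$ are clear since tensor products of free modules are free; bigradings add and the shift $\{-1\}$ is there to normalize so that $\LEO(U)$ is the identity. The only substantive point is the localization condition: one must check $h^{-1}(D \otimes_\BNring D') \simeq h^{-1}D \otimes_{h^{-1}\BNring} h^{-1}D'$, which is rank one over $h^{-1}\BNring$ with the correct grading support, using that localization commutes with tensor product and is exact, and that each factor is $\simeq h^{-1}\BNring$ concentrated in homological grading $0$ and odd quantum grading — the two shifts combine with $\{-1\}$ to land in odd quantum grading again. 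Similarly one checks the map $f \otimes f'$ is still a homotopy equivalence of $\FF_2[X]/(X^2)$-complexes (tensor of homotopy equivalences over a field, or over $\FF_2[X]/(X^2)$ via freeness). For the dual one must check $h^{-1}\Hom(D,\BNring) \simeq h^{-1}\BNring$ with odd quantum grading: here $h^{-1}D \simeq h^{-1}\BNring$ in homological degree $0$, so its dual is again rank one in homological degree $0$, and the quantum grading negates but the $\{2\}$ shift restores oddness (odd $\mapsto$ odd under $j \mapsto -j$ anyway, and an even shift preserves parity).

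Next I would establish that these operations descend to local equivalence classes: given local maps realizing $(C,D,f) \simeq_{\mathrm{loc}} (C_1,D_1,f_1)$, tensoring the maps $\alpha,\beta$ with the identity gives local maps between the tensor products — the condition that $\beta$ stays a homotopy equivalence after localizing is preserved since $h^{-1}(\beta \otimes \id) = h^{-1}\beta \otimes \id$, and homotopy commutativity of the middle square is preserved by tensoring (homotopies tensor). Then associativity and commutativity of $\otimes$ follow from the corresponding properties of $\otimes$ over the commutative rings $\ZZ[X]/(X^2)$ and $\BNring$ (the braiding uses that these rings are commutative, and the compatibility of $f$-maps is routine), and $\LEO(U)$ is a two-sided unit because $C \otimes_{\ZZ[X]/(X^2)} \ZZ[X]/(X^2)\{1\}\{-1\} \cong C$ canonically. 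The inverse property, that $(C,D,f) \otimes (C,D,f)^* \simeq_{\mathrm{loc}} \LEO(U)$, is the one point requiring a genuine argument: the evaluation map $C \otimes_{\ZZ[X]/(X^2)} C^* \to \ZZ[X]/(X^2)$ and its counterpart for $D$ should furnish one local map, and a coevaluation the other; crucially one needs that $h^{-1}D \otimes h^{-1}D^* \to h^{-1}\BNring$ is a homotopy equivalence, which holds because $h^{-1}D \simeq h^{-1}\BNring$ is invertible of rank one, so evaluation on a rank-one free module is an isomorphism. This inverse argument is the step I expect to be the main obstacle, since one must produce the maps and homotopies respecting both module structures and the compatibility squares with $f$ and $f^*$ simultaneously; it may be cleanest to first reduce to the case where $D$ (and $C$) has already been replaced by a minimal or "reduced" model using the localization hypothesis.

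Finally, the homomorphism property: the connected sum $K_1 \# K_2$ represents $[K_1] + [K_2]$ in $\mathcal{C}$, and by Lemma~\ref{lem:Kh-kunneth-BN}, Theorem~\ref{thm:odd-Kunneth} (in its right-module form, via the Corollary following Lemma~\ref{lem:left-is-right}), and compatibility of these isomorphisms mod $2$ and mod $h$, we get $\LEO(K_1 \# K_2) \cong \LEO(K_1) \otimes \LEO(K_2)$ as LEO triples up to a quantum grading shift — and the shift is exactly the $\{-1\}$ built into Definition~\ref{def:tensor-prod}, arising because the reduced-type connected-sum Künneth merges an unknot, matching the normalization of $\LEO(U)$. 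Well-definedness on $\mathcal{C}$ (independence of the representative knot) and the fact that slice knots map to $0 = [\LEO(U)]$ are exactly Proposition~\ref{prop:conc-implies-local-equiv}. Combining these, $K \mapsto [\LEO(K)]$ is a well-defined map $\mathcal{C} \to \CLEO$ carrying $\#$ to $\otimes$ and the unknot to the identity, hence a group homomorphism (that it respects inverses is then automatic, though it also matches the fact that the mirror/reverse of $K$ induces the dual).
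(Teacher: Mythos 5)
Your proposal is correct and follows essentially the same route as the paper: verify that tensor and dual preserve the LEO-triple conditions (including the rank-one localization and grading support), check they descend to local equivalence classes, establish the group axioms, prove invertibility via evaluation/coevaluation (trace and identity) maps, and invoke the K\"unneth theorems together with Proposition~\ref{prop:conc-implies-local-equiv} for the homomorphism statement. The only tactical difference is that the paper writes down the evaluation and coevaluation maps explicitly (noting the sign convention for the dual differential and using the homotopy between $f\circ\fbar$ and the identity to fill in the middle square) rather than passing to a minimal model as you tentatively suggest, but your outline identifies exactly the right maps and the key point that invertibility of the rank-one object $h^{-1}D$ makes evaluation a homotopy equivalence after localizing.
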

\begin{proof}
  For the first statement, we must verify that:
  \begin{enumerate}[label=(\arabic*)]
  \item\label{item:gp-mul-defd} The tensor product of LEO triples is a
    LEO triple.
  \item\label{item:gp-mul-equiv} Tensor product respects local
    equivalence.
  \item\label{item:gp-inv-defd} The dual of a LEO triple is a LEO triple.
  \item\label{item:gp-inv-equiv} The dual respects local equivalence.
  \item\label{item:gp-assoc} The tensor product is associative.
  \item\label{item:gp-id} The triple $\LEO(U)$ is a unit for
    tensor product.
  \item\label{item:gp-abelian} The tensor product is commutative up to
    local equivalence.
  \item\label{item:gp-inverses} For any LEO triple
    $(C,D,f)$, $(C,D,f)\otimes (C,D,f)^*$ is locally equivalent
    to $\LEO(U)$.
  \end{enumerate}
  Points~\ref{item:gp-mul-defd},~\ref{item:gp-mul-equiv},~\ref{item:gp-inv-defd},
  and~\ref{item:gp-inv-equiv} are straightforward from the
  definitions; note that for Point~\ref{item:gp-inv-equiv}, a local
  map from $(C,D,f)$ to $(C',D',f')$ induces a map
  from $(C',D',f')^*$ to $(C,D,f)^*$.
  Points~\ref{item:gp-assoc},~\ref{item:gp-id},
  and~\ref{item:gp-abelian} are immediate from the definitions. For
  Point~\ref{item:gp-inverses}, we claim there is a canonical isomorphism of LEO triples
  \[
    (C,D,f)\otimes (C,D,f)^*=(\Hom_{\ZZ[X]/(X^2)}(C,C)\{1\},\Hom_{\BNring}(D,D)\{1\},F),
  \]
  where $F(\eta)=f\circ \eta\circ \fbar$ (and $\fbar$ is the
  homotopy inverse to $f$). For the first two components, this bigraded isomorphism is immediate from the definitions, the hypothesis that $C$ and $D$ are finitely-generated complexes of free modules, and the usual isomorphism $\Hom_R(X,Y)\cong \Hom_R(X,R)\otimes Y$ for $X$ a finitely generated, free $R$-module. In particular, the $\{1\}$ grading shift comes from the $\{2\}$ grading shift for the dual and the $\{-1\}$ grading shift for the tensor product. Tracing through these identifications also shows that $F$ has the specified form. 
  
  Now, consider the map from $\LEO(U)$ to this
  tensor product defined by
  \begin{align*}
    \alpha\co \ZZ[X]/(X^2)\{1\}&\to\Hom_{\ZZ[X]/(X^2)}(C,C)\{1\} &     \beta\co \BNring\{1\}&\to \Hom_{\BNring}(D,D)\{1\}
  \end{align*}
  both of which send $1$ to the identity map.  We claim that
  Diagram~\eqref{eq:local-map} homotopy commutes. As noted
  above, it suffices to check that the second square, which has the form
  \[
    \begin{tikzcd}
      \FF_2[X]/(X^2) \arrow[r,"\id"]\arrow[d,"\alpha"] &
      \FF_2[X]/(X^2)\arrow[d,"\beta"]\\
      \Hom_{\ZZ[X]/(X^2)}(C,C)\otimes_\ZZ\ZZ/(2) \arrow[r,"F"] &
      \Hom_{\BNring}(D,D)\otimes\BNring/(2,h),
    \end{tikzcd}
  \]
  homotopy commutes. (We are suppressing the quantum grading shifts,
  and will continue to do so for the rest of the proof.)
  Since $D$ is free over $\BNring$,
  \[
    \Hom_{\BNring}(D,D)\otimes\BNring/(2,h)=\Hom_{\BNring/(2,h)}(D\otimes\BNring/(2,h),D\otimes\BNring/(2,h))
  \]
  If $k\co D\otimes\BNring/(2,h)\to D\otimes \BNring/(2,h)$ is the
  homotopy from $f\circ\fbar$ to the identity, then the homotopy
  making this square commute sends $a\in \FF_2[X]/(X^2)$ to
  $a\cdot
  k\in\Hom_{\BNring/(2,h)}(D\otimes\BNring/(2,h),D\otimes\BNring/(2,h))$
  (i.e., sends $1$ to $k$). Finally, after inverting $h$, the map
  \[
    \beta\co h^{-1}\BNring\to
    h^{-1}\Hom_{\BNring}(D,D)=\Hom_{h^{-1}\BNring}(h^{-1}D,h^{-1}D)
  \]
  sends $1$ to the identity map. So, up to homotopy, the map
  to
  \[
    \Hom_{h^{-1}\BNring}(h^{-1}\BNring,h^{-1}\BNring)\cong h^{-1}\BNring
  \]
  induced by the homotopy equivalence $h^{-1}D\simeq h^{-1}\BNring$ is
  an isomorphism. (Here, by $h^{-1}\BNring$ we really mean a free
  module of rank 1 over $h^{-1}\BNring$.) So, the map $\beta\co h^{-1}\BNring\to
    h^{-1}\Hom_{\BNring}(D,D)$ is a homotopy equivalence.

  Similarly, there is a local map from $(C,D,f)\otimes (C,D,f)^*$
  to $\LEO(U)$ given by letting
  $\alpha'\co \Hom_{\ZZ[X]/(X^2)}(C,C)\to \ZZ[X]/(X^2)$ and
  $\beta'\co \Hom_{\BNring}(D,D)\to \BNring$ be the following trace maps. On a basic tensor
  $c^\vee\otimes c'\in \Hom_{\ZZ[X]/(X^2)}(C,\ZZ[X]/(X^2))\otimes C$, we have
  $\alpha(c^\vee\otimes c')=c^\vee(c')$.
  (When checking this is a chain
  map, it is important to remember that the differential on the dual
  complex $\Hom_{\ZZ[X]/(X^2)}(C,\ZZ[X]/(X^2))$ is given by
  $d(f)=(-1)^{\gr_h(f)+1}f\circ\partial$, and the differential on the
  tensor product of course inherits the Koszul sign.) 
  The map $\beta'$ is defined
  similarly. It is clear that $\alpha'$ and $\beta'$ are chain
  maps. The two ways around the
  second square give the maps $c^\vee\otimes c' \mapsto c^\vee(c')$ and
  $c^\vee\otimes c'\mapsto c^\vee(\fbar(f(c')))$; if $\ell$ is the
  homotopy from $\fbar\circ f$ to the identity map then the
  homotopy making the second square commute is
  $c^\vee\otimes c'\mapsto c^\vee(\ell(c))$. Again, the facts that the
  evaluation map $\Hom_{h^{-1}\BNring}(h^{-1}\BNring,h^{-1}\BNring)\to
  h^{-1}\BNring$ is an isomorphism and that $h^{-1}D\simeq
  h^{-1}\BNring$ imply that the map
  $h^{-1}\Hom_{\BNring}(D,D)\to h^{-1}\BNring$ induced by $\beta'$ is
  a homotopy equivalence.

  For the second statement, in light of
  Proposition~\ref{prop:conc-implies-local-equiv}, all that remains is
  to show that if $K_0$ and $K_1$ are knots then $\LEO(K_0\#K_1)$ is
  locally equivalent to $\LEO(K_0)\otimes\LEO(K_1)$; but this is
  immediate from Lemma~\ref{lem:Kh-kunneth-BN} and Theorem~\ref{thm:odd-Kunneth}.
\end{proof}

\begin{remark}
  The conditions we have placed on a LEO triple $(C,D,f)$ are not
  strong enough to rule out some pathological behavior which does not
  occur for $\LEO(K)$ (see Example~\ref{eg:pathology}). There is other
  structure in Khovanov homology, such as the automorphism $I$ and
  chain map $T$ of Section~\ref{subsec:conjugation}, which one might
  be able to abstract to exclude such behavior.
\end{remark}

\subsection{Reduced versions}
Recall that the reduced Khovanov complexes of a based link are given
by
\begin{align*}
  \rKhCxBN(K)&=\KhCxBN(K)\otimes_{\BNring}\ZZ[h]\{-1\}\\
  \roKhCx(K)&=\oKhCx(K)\otimes_{\ZZ[X]/(X^2)}\ZZ\{-1\}\\
  \rKhCx(K)&=\KhCx(K)\otimes_{\ZZ[X]/(X^2)}\ZZ\{-1\}
\end{align*}
where $X$ acts by $0$ on $\ZZ[h]$ and $\ZZ$. 
Here is an analogue of Definitions~\ref{def:local}
and~\ref{def:local-map} motivated by the reduced
Khovanov complexes:
\begin{definition}\label{def:reduced-local-equiv}
  A \emph{reduced LEO triple} consists of finitely generated, free,
  bigraded cochain complexes $C$ over $\ZZ$ and $D$ over $\ZZ[h]$, and a
  bigraded homotopy equivalence
  \[
    f\co C\otimes_\ZZ\ZZ/(2)\to D\otimes_{\ZZ[h]}\ZZ[h]/(2,h),
  \]
  so that $h^{-1}D$ is homotopy equivalent to a free graded
  module of rank 1 over $\ZZ[h,h^{-1}]$, supported in homological grading $0$
  and even quantum gradings.

  A \emph{local map} of such triples from $(C,D,f)$ to $(C',D',f')$
  consists of bigrading-preserving chain maps $\alpha\co C\to C'$ and
  $\beta\co D\to D'$ so that
  \begin{itemize}
  \item the induced map $\beta\co h^{-1}D\to h^{-1}D'$ is a homotopy
    equivalence and
  \item the following diagram commutes up to homotopy:
  \begin{equation}\label{eq:reduced-local-map}
    \begin{tikzcd}
       C\otimes_\ZZ\ZZ/(2)\arrow[d,"\alpha"]\arrow[r,"f"] &
       D\otimes_{\ZZ[h]} \ZZ[h]/(2,h)\arrow[d,"\beta"] \\
       C'\otimes_\ZZ\ZZ/(2)\arrow[r,"f'"] &
       D'\otimes_{\ZZ[h]} \ZZ[h]/(2,h).
    \end{tikzcd}
  \end{equation}
  \end{itemize}
  We call $(C,D,f)$ and $(C',D',f')$ are \emph{locally equivalent}
  if there are local maps $(C,D,f)\to(C',D',f')$ and
  $(C',D',f')\to(C,D,f)$.
\end{definition}

This definition has many of the same formal properties as the
unreduced case:
\begin{proposition}\label{prop:rCLEO}
  Reduced LEO triples satisfy the following:
  
  \begin{enumerate}[label=(\arabic*)]
  \item\label{item:red-equiv-rel} Local
    equivalence is an equivalence relation on reduced LEO triples.
  \item\label{item:red-group} If we define the tensor product of
    reduced LEO triples $(C,D,f)$ and $(C',D',f')$ by
    $(C\otimes_{\ZZ}C',D\otimes_{\ZZ[h]}D',f\otimes f')$ and the
    inverse of $(C, D, f)$ by
    \[
      (\Hom_{\ZZ}(C,\ZZ),\Hom_{\ZZ[h]}(D,\ZZ[h]),\fbar)
    \]
    (where
    $\fbar$ is induced by the homotopy inverse to
    $f$), then the set of local equivalence
    classes of reduced LEO triples is an abelian group $\rCLEO$.
  \item\label{item:reduce} There is a homomorphism $\pi\co
    \CLEO\to\rCLEO$ given by
    \[
      (C,D,f) \mapsto
      \big(C\otimes_{\ZZ[X]/(X^2)}\ZZ\{-1\},
      \, D\otimes_{\BNring}\ZZ[h]\{-1\}, \, \wt{f} \big),
    \]
    where $\wt{f}$ is the map induced by $f$.
  \item\label{item:homom} The map $\mathcal{C}\to \rCLEO$ which sends
    $K$ to $(\roKhCx(K),\rKhCxBN(K),\id)$ is a well-defined group
    homomorphism.
  \end{enumerate}
\end{proposition}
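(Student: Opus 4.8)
The plan is to mirror the proofs of Theorem~\ref{thm:local-equiv-group} and Proposition~\ref{prop:conc-implies-local-equiv} in the reduced setting, replacing the ground rings $\ZZ[X]/(X^2)$ and $\BNring$ by $\ZZ$ and $\ZZ[h]$ throughout and discarding the $X$-module bookkeeping. Part~\ref{item:red-equiv-rel} is purely formal (identity maps for reflexivity, symmetry by definition, composition of local maps for transitivity, using that a composite of maps that become homotopy equivalences after inverting $h$ again does so and that homotopy-commuting squares compose). For Part~\ref{item:red-group} I would run through the eight points verified in the proof of Theorem~\ref{thm:local-equiv-group}: that the tensor product of reduced LEO triples is one follows since a tensor product of bounded complexes of finitely generated free modules is again such, a tensor product of homotopy equivalences is a homotopy equivalence, and $h^{-1}(D\otimes_{\ZZ[h]}D')=h^{-1}D\otimes_{\ZZ[h,h^{-1}]}h^{-1}D'\simeq\ZZ[h,h^{-1}]$ of rank $1$, with generator in homological grading $0$ and in a sum of even quantum gradings, hence even; the dual is similar, using that $\Hom(D,\ZZ[h])$ is free of the same rank as $D$ and $h^{-1}\Hom(D,\ZZ[h])=\Hom_{\ZZ[h,h^{-1}]}(h^{-1}D,\ZZ[h,h^{-1}])\simeq\ZZ[h,h^{-1}]$ of rank $1$ in homological grading $0$ and an even quantum grading. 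Associativity, commutativity, and the unit property (with unit $(\ZZ,\ZZ[h],\id)$) are immediate, and there are no quantum grading shifts in the reduced tensor product or dual, consistently with the unit having its generators in bidegree $(0,0)$. The only non-formal point, that $(C,D,f)\otimes(C,D,f)^*$ is locally equivalent to $(\ZZ,\ZZ[h],\id)$, goes verbatim as in Theorem~\ref{thm:local-equiv-group}: one identifies the tensor product with $(\Hom_\ZZ(C,C),\Hom_{\ZZ[h]}(D,D),F)$, $F(\eta)=f\circ\eta\circ\fbar$, and writes down the two local maps, sending $1$ to the identity in one direction and using the trace and evaluation maps in the other, the key input being that $h^{-1}D\simeq\ZZ[h,h^{-1}]$ of rank $1$ makes both the evaluation map $\Hom_{\ZZ[h,h^{-1}]}(h^{-1}D,h^{-1}D)\to\ZZ[h,h^{-1}]$ and the map $\beta$ on $h$-localizations homotopy equivalences.

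For Part~\ref{item:reduce}, I would first check that $\pi$ lands among reduced LEO triples: $C\otimes_{\ZZ[X]/(X^2)}\ZZ$ is finitely generated free over $\ZZ$, and since $\BNring$ is free over $\ZZ[h]$ with basis $\{1,X\}$, $D\otimes_{\BNring}\ZZ[h]$ is finitely generated free over $\ZZ[h]$; moreover $h^{-1}(D\otimes_{\BNring}\ZZ[h])=h^{-1}D\otimes_{h^{-1}\BNring}\ZZ[h,h^{-1}]$, which, since $h^{-1}D\simeq h^{-1}\BNring$ of rank $1$, base-changes to $\ZZ[h,h^{-1}]$ of rank $1$ in homological grading $0$, the generator moving from an odd quantum grading (as required in Definition~\ref{def:local}) to an even one under the $\{-1\}$ shift (as required in Definition~\ref{def:reduced-local-equiv}). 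The subtle point is that $\wt f$ is still a homotopy equivalence: it is the base change of $f$ along $\FF_2[X]/(X^2)\to\FF_2$, and since $C\otimes_\ZZ\FF_2$ and $D\otimes_{\BNring}\BNring/(2,h)$ are bounded complexes of free $\FF_2[X]/(X^2)$-modules and $f$ is an $\FF_2[X]/(X^2)$-linear quasi-isomorphism, its mapping cone is a bounded acyclic complex of free $\FF_2[X]/(X^2)$-modules, hence contractible, so $f$ is a homotopy equivalence of complexes of $\FF_2[X]/(X^2)$-modules and $\wt f$, obtained from it by an additive functor, is a homotopy equivalence. The same observation shows $\pi$ carries local maps to local maps (base-change the chain maps and the $\FF_2[X]/(X^2)$-equivariant homotopy filling~\eqref{eq:local-map}), so $\pi$ is well defined on local equivalence classes. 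Finally $\pi$ is a homomorphism, in fact on the nose, via the natural identifications $(C\otimes_{\ZZ[X]/(X^2)}C')\otimes_{\ZZ[X]/(X^2)}\ZZ\cong(C\otimes_{\ZZ[X]/(X^2)}\ZZ)\otimes_\ZZ(C'\otimes_{\ZZ[X]/(X^2)}\ZZ)$ and its $\BNring$-analogue (``tensor, then kill $X$'' equals ``kill $X$ in each factor, then tensor''), with the grading shifts matching as $\{-1\}\{-1\}=\{-2\}$; and $\pi$ sends $\LEO(U)$ to $(\ZZ,\ZZ[h],\id)$.

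For Part~\ref{item:homom}, observe that the map $K\mapsto(\roKhCx(K),\rKhCxBN(K),\id)$ is exactly $\pi$ composed with the concordance homomorphism $K\mapsto\LEO(K)$ of Theorem~\ref{thm:local-equiv-group}: applying $\pi$ to $\LEO(K)=(\oKhCx(K),\KhCxBN(K),f)$ gives $(\oKhCx(K)\otimes_{\ZZ[X]/(X^2)}\ZZ\{-1\},\KhCxBN(K)\otimes_{\BNring}\ZZ[h]\{-1\},\wt f)=(\roKhCx(K),\rKhCxBN(K),\wt f)$, and $\wt f=\id$ because $f$ is the canonical identification of the mod-$2$ reductions; hence the map is a well-defined group homomorphism. (Alternatively, directly: the triple is reduced because $\pi$ lands among reduced triples; well-definedness on concordance classes follows exactly as in Proposition~\ref{prop:conc-implies-local-equiv}, using that a based movie induces compatible chain maps on the reduced complexes by naturality and that the $h$-localized map on $\rKhCxBN$ is a homotopy equivalence by the Rasmussen-type argument; additivity follows from the reduced K\"unneth theorem of Section~\ref{sec:Kunneth}.) The main obstacle is the well-definedness of $\pi$ in Part~\ref{item:reduce}: the claim that base change along $\FF_2[X]/(X^2)\to\FF_2$ preserves the homotopy-equivalence property of $f$ is false for arbitrary ring maps, but holds here because every complex in sight is a bounded complex of free modules, so a quasi-isomorphism between them is automatically a homotopy equivalence and homotopy equivalences survive any base change. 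Everything else is a routine transcription of the arguments already carried out for $\CLEO$.
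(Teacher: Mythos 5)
Your proof is correct and follows the same route the paper indicates: Part~\ref{item:red-equiv-rel} is formal, Part~\ref{item:red-group} transposes the eight verifications from the proof of Theorem~\ref{thm:local-equiv-group} with the ground rings replaced and the grading shifts dropped, Part~\ref{item:reduce} is a base-change check, and Part~\ref{item:homom} is $\pi$ composed with $K\mapsto\LEO(K)$. One place where you add something the paper's ``immediate from the definitions'' glosses over is the verification that $\wt f$ remains a homotopy equivalence: Definition~\ref{def:local} only asserts that $f$ is a homotopy equivalence and is $\FF_2[X]/(X^2)$-linear, without explicitly requiring an $\FF_2[X]/(X^2)$-linear homotopy inverse, so naively base-changing along $\FF_2[X]/(X^2)\to\FF_2$ is not automatic. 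Your observation --- that $f$ is an $\FF_2[X]/(X^2)$-linear quasi-isomorphism between bounded complexes of finitely generated free $\FF_2[X]/(X^2)$-modules, hence its cone is contractible over $\FF_2[X]/(X^2)$ and so $f$ has an equivariant homotopy inverse, which then does survive base change --- is exactly the right way to close this small gap, and it is the same mechanism needed to see that local maps (whose homotopies are required to be $\FF_2[X]/(X^2)$-equivariant by Definition~\ref{def:local-map}) descend. The remaining bookkeeping (the $\{-1\}\{-1\}=\{-2\}$ match of grading shifts, the ``tensor then kill $X$'' versus ``kill $X$ then tensor'' identification, and $\wt f = \id$ for $\LEO(K)$) is all correct.
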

\begin{proof}
  Point~\ref{item:red-equiv-rel} is
  immediate from the definition. The proof of
  Point~\ref{item:red-group} is similar to the proof of the first part of
  Theorem~\ref{thm:local-equiv-group}, replacing $\ZZ[X]/(X^2)$
  by $\ZZ$ and $\BNring$ by $\ZZ[h]$ throughout, and is left to the
  reader. Point~\ref{item:reduce} is immediate from the
  definitions. Point~\ref{item:homom} follows from
  Proposition~\ref{prop:conc-implies-local-equiv} and Point~\ref{item:reduce}.
\end{proof}

We give one further specialization of LEO triples, which is perhaps
the simplest formulation incorporating both the Bar-Natan deformation
and a version of the odd Khovanov complex:

\begin{definition}
  \label{def:mod-2-local}
  A \emph{two-reduced local even-odd triple} consists of finitely
  generated, free,
  bigraded cochain complexes $C$ over $\ZZ$ and $D$ over $\FF_2[h]$,
  and a
  bigraded homotopy equivalence
  \[
    f\co C\otimes_\ZZ \ZZ/(2)\to D\otimes_{\FF_2[h]}\FF_2[h]/(h)
  \]
  so that $h^{-1}D$ is homotopy equivalent to a free graded module of
  rank 1 over $\FF_2[h,h^{-1}]$, supported in homological grading $0$
  and even quantum gradings.

  \emph{Local maps} and \emph{local equivalence} of two-reduced local
  even-odd triples are defined as in
  Definition~\ref{def:reduced-local-equiv}, replacing $\ZZ[h]$ by
  $\FF_2[h]$ throughout.
\end{definition}

\begin{proposition}
  The local equivalence classes of two-reduced local even-odd triples
  form an abelian group $\rCLEO^o$, and there is a homomorphism from
  the concordance group to $\rCLEO^o$.
\end{proposition}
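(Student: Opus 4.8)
The plan is to transcribe the proofs of Theorem~\ref{thm:local-equiv-group} and part~\ref{item:red-group} of Proposition~\ref{prop:rCLEO}, replacing $\ZZ[h]$ by $\FF_2[h]$ everywhere while keeping the first complex $C$ over $\ZZ$. So, with tensor product of two-reduced triples defined by $(C,D,f)\otimes(C',D',f')=(C\otimes_\ZZ C',\,D\otimes_{\FF_2[h]}D',\,f\otimes f')$, dual defined by $(C,D,f)^*=(\Hom(C,\ZZ),\,\Hom(D,\FF_2[h]),\,\fbar)$ exactly as in part~\ref{item:red-group} of Proposition~\ref{prop:rCLEO} (with $\fbar$ the transpose of a homotopy inverse of $f$), and identity $(\ZZ,\FF_2[h],\id)$ concentrated in bidegree $(0,0)$ (which is the two-reduced triple $(\roKhCx(U),\rKhCxBN(U;\FF_2),\id)$ of the unknot), one checks in turn: local equivalence is an equivalence relation (formal); the tensor product and dual of two-reduced triples are again two-reduced triples; the tensor product and dual respect local equivalence; the operation is associative, commutative, and unital; and $(C,D,f)\otimes(C,D,f)^*$ is locally equivalent to the identity. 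Since $\FF_2[h]$ has characteristic $2$, the $D$-side carries no Koszul signs, so the only sign bookkeeping is on the $C$-side, exactly as in the proofs being copied.

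Two points deserve a word. First, the localization condition is preserved under the two operations. If $h^{-1}D\simeq\FF_2[h,h^{-1}]$ and $h^{-1}D'\simeq\FF_2[h,h^{-1}]$, each free of rank $1$ in homological degree $0$ and even quantum degrees, then $h^{-1}(D\otimes_{\FF_2[h]}D')=h^{-1}D\otimes_{\FF_2[h,h^{-1}]}h^{-1}D'\simeq\FF_2[h,h^{-1}]$, still rank $1$, in homological degree $0+0=0$ and even $+$ even $=$ even quantum degree; and, using that $D$ is free, $h^{-1}\Hom_{\FF_2[h]}(D,D)=\Hom_{\FF_2[h,h^{-1}]}(h^{-1}D,h^{-1}D)\simeq\FF_2[h,h^{-1}]$, likewise rank $1$ in the right bidegrees. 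Second, inverses: as in Theorem~\ref{thm:local-equiv-group}, freeness of $C$ and $D$ gives $(C,D,f)\otimes(C,D,f)^*\cong(\Hom_\ZZ(C,C),\Hom_{\FF_2[h]}(D,D),F)$ with $F(\eta)=f\circ\eta\circ\fbar$; the map from the identity triple sending $1$ to the identity endomorphism, and the trace map in the opposite direction, are local maps, where the homotopy making the square~\eqref{eq:reduced-local-map} (with $\ZZ[h]$ replaced by $\FF_2[h]$) commute is built from the chain homotopies $f\circ\fbar\sim\id$ and $\fbar\circ f\sim\id$, and where the maps induced on $h^{-1}$ are homotopy equivalences by the previous display together with $h^{-1}D\simeq\FF_2[h,h^{-1}]$ of rank $1$.

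For the homomorphism from the concordance group, I claim that base change along $\ZZ[h]\to\FF_2[h]$ gives a homomorphism $\rho\co\rCLEO\to\rCLEO^o$ sending $(C,D,f)$ to $(C,\,D\otimes_{\ZZ[h]}\FF_2[h],\,f)$, where we use the identification $(D\otimes_{\ZZ[h]}\FF_2[h])\otimes_{\FF_2[h]}\FF_2[h]/(h)=D\otimes_{\ZZ[h]}\ZZ[h]/(2,h)$ so that $f$ literally serves as the comparison map. The output is a two-reduced triple because base change preserves freeness and preserves chain homotopy equivalences together with the homotopies witnessing them, so $h^{-1}(D\otimes_{\ZZ[h]}\FF_2[h])=(h^{-1}D)\otimes_{\ZZ[h,h^{-1}]}\FF_2[h,h^{-1}]\simeq\FF_2[h,h^{-1}]$ of rank $1$, still in homological degree $0$ and even quantum degrees; for the same reason a local map base-changes to a local map (the condition that $\beta$ be a homotopy equivalence after localizing survives the passage to $\FF_2$-coefficients), so $\rho$ is well defined on local-equivalence classes, and it is visibly additive and unital. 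Composing $\rho$ with the homomorphism $\mathcal{C}\to\rCLEO$, $K\mapsto(\roKhCx(K),\rKhCxBN(K),\id)$, of part~\ref{item:homom} of Proposition~\ref{prop:rCLEO} gives a homomorphism $\mathcal{C}\to\rCLEO^o$; it sends a knot $K$ to $(\roKhCx(K),\rKhCxBN(K;\FF_2),\id)$.

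The main obstacle: there really is not one of substance --- every step is a mechanical transcription of the proofs of Theorem~\ref{thm:local-equiv-group} and Proposition~\ref{prop:rCLEO}. The only genuinely new ingredient is the observation that base change along $\ZZ[h]\to\FF_2[h]$ is functorial enough to descend to local-equivalence classes, which rests on the elementary fact that homotopy equivalences and their homotopies survive base change; the only thing to be a little careful about is that tensor products and $\Hom$-complexes preserve the ``rank $1$, homological degree $0$, even quantum degree'' localization condition.
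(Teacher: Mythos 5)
Your proof is correct and follows the same approach the paper indicates, namely transcribing the proofs of Theorem~\ref{thm:local-equiv-group} and Proposition~\ref{prop:rCLEO} with $\ZZ[h]$ replaced by $\FF_2[h]$ while keeping $C$ over $\ZZ$. The one point where you depart slightly is in constructing the homomorphism from $\mathcal{C}$: rather than re-running the concordance-implies-local-equivalence argument of Proposition~\ref{prop:conc-implies-local-equiv} in the two-reduced setting, you factor it through a base-change homomorphism $\rho\co\rCLEO\to\rCLEO^o$ and invoke part~\ref{item:homom} of Proposition~\ref{prop:rCLEO}; this is a clean packaging (it also explains the reduction used elsewhere in the paper, e.g.\ that $\stil_o$ and $\stil^{\beta_n}$ descend to $\rCLEO^o$), and the justification you give --- that freeness, homotopy equivalences, and the rank-one localization condition all survive base change --- is sound.
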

\noindent
The proof is the same that of Proposition~\ref{prop:rCLEO}, and is left to
the reader.

\subsection{The \texorpdfstring{$s$}{s}-invariant}\label{subsec:s-invt}

Here is a key property of LEO triples, which will allow us to define
the $s$-invariant:
\begin{lemma}\label{lem:PID_homology}
  Let $(C,D,f)$ be a LEO triple and let $\FF$ be a field. Then the
  homology of $D\otimes_\ZZ\FF$, viewed as a module over $\FF[h]$,
  decomposes as $\FF[h]\oplus\FF[h]\oplus T$ where $T$ is a torsion
  $\FF[h]$-module. Similarly, if $(\wt{C},\wt{D},f)$ is a reduced
  LEO triple, then the homology of $\wt{D}\otimes_\ZZ\FF$ decomposes as
  $\FF[h]\oplus T$ where $T$ is a torsion $\FF[h]$-module.
\end{lemma}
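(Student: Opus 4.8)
The complex $C$ and the map $f$ play no role in this statement, so I would focus entirely on $D$ (resp.\ $\wt D$); the unreduced and reduced cases are formally identical, with $\BNring$ replaced by $\ZZ[h]$. The plan is to combine the structure theorem for finitely generated modules over the PID $\FF[h]$ with the localization hypothesis built into the definition of a LEO triple: the structure theorem gives a decomposition into a free part of some rank $r$ plus torsion, and localizing at $h$ pins down $r$.

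First I would reduce to a statement about complexes of free $\FF[h]$-modules. Since $D$ is freely generated over $\BNring$ by finitely many homogeneous generators, it is bounded, and $D\otimes_\ZZ\FF$ is a bounded cochain complex of finitely generated free modules over $\BNring_\FF\coloneqq\FF[X,h]/(X^2-Xh)$. As an $\FF[h]$-module, $\BNring_\FF$ is free of rank $2$, with basis $\{1,X\}$, so $D\otimes_\ZZ\FF$ is a bounded complex of finitely generated free $\FF[h]$-modules. Hence its homology $H\coloneqq H_*(D\otimes_\ZZ\FF)$ is a finitely generated $\FF[h]$-module, and since $\FF[h]$ is a PID, $H\cong(\FF[h])^r\oplus T$ for some $r\geq 0$ and some torsion module $T$. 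It remains to show $r=2$.

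For this I would localize at $h$. Localization is exact, so $h^{-1}H\cong H_*\big(h^{-1}(D\otimes_\ZZ\FF)\big)\cong H_*\big((h^{-1}D)\otimes_\ZZ\FF\big)$. The localization hypothesis in Definition~\ref{def:local} says $h^{-1}D$ is homotopy equivalent, as a complex of $h^{-1}\BNring$-modules, to a rank-one free $h^{-1}\BNring$-module concentrated in homological degree $0$; tensoring with $\FF$, the homology $h^{-1}H$ is isomorphic to $h^{-1}\BNring_\FF$ (up to grading shift). Since $h$ is invertible, $X$ and $X-h$ generate the unit ideal in $h^{-1}\BNring_\FF$, whence $h^{-1}\BNring_\FF\cong \FF[h,h^{-1}]\times\FF[h,h^{-1}]$; in particular it is free of rank $2$ over $\FF[h,h^{-1}]$. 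Comparing with $h^{-1}H\cong(\FF[h,h^{-1}])^r\oplus h^{-1}T$ and using that a free module has no nonzero torsion, we get $h^{-1}T=0$ and $r=2$. (As a bonus this shows $T$ is $h$-power torsion, which is stronger than claimed.)

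For the reduced case the argument applies verbatim with $\BNring$ replaced by $\ZZ[h]$: $\wt D\otimes_\ZZ\FF$ is a bounded complex of finitely generated free $\FF[h]$-modules, so $H_*(\wt D\otimes_\ZZ\FF)\cong(\FF[h])^r\oplus T$, and localizing at $h$ identifies $h^{-1}H_*(\wt D\otimes_\ZZ\FF)$ with the homology of a rank-one free $\FF[h,h^{-1}]$-module in degree $0$, forcing $r=1$ and $h^{-1}T=0$. I do not expect a genuine obstacle here: the only point requiring care is the rank bookkeeping, namely that $h^{-1}D$ has rank $1$ over $h^{-1}\BNring$ but rank $2$ over $\FF[h,h^{-1}]$ after reducing mod a prime, whereas in the reduced case the localized complex already has rank $1$ over $\ZZ[h,h^{-1}]$; everything else is routine homological algebra over a PID.
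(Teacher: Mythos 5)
Your argument is correct and follows essentially the same route as the paper's own proof: apply the structure theorem for finitely generated modules over the PID $\FF[h]$ to get a decomposition $\FF[h]^r\oplus T$, then use exactness of localization together with the hypothesis that $h^{-1}D$ is homotopy equivalent to a rank-one free $h^{-1}\BNring$-module to pin down $r=2$ (resp.\ $r=1$ in the reduced case). Your CRT observation that $h^{-1}\BNring_\FF\cong\FF[h,h^{-1}]\times\FF[h,h^{-1}]$ and the remark that $h^{-1}T=0$ are minor elaborations; the paper gets the rank-$2$ count more directly from the basis $\{1,X\}$ of $h^{-1}\BNring$ over $\ZZ[h,h^{-1}]$.
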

\begin{proof}
  We prove the unreduced case; the reduced case is similar.
  By the classification of modules over a PID, the homology of
  $D\otimes_\ZZ\FF$ is isomorphic to $\FF[h]^m\oplus T$ for some $m$
  and some torsion module $T$. Since localization is exact, it follows
  that the homology of
  $D\otimes_{\ZZ[h]}\FF[h,h^{-1}]$ is isomorphic to $\FF[h,h^{-1}]^m$. On the other
  hand, by hypothesis,
  $D\otimes_{\ZZ[h]}\ZZ[h,h^{-1}]\simeq h^{-1}\BNring\cong
  \ZZ[h,h^{-1}]^2$. So,
  $D\otimes_{\ZZ[h]}\FF[h,h^{-1}]\simeq \FF[h,h^{-1}]^2$, and $m=2$,
  as claimed.
\end{proof}
For the complexes coming from a knot, the generators of the two free
$\FF[h]$ summands of $\Kh_h(K;\FF)$ lie in quantum gradings which are
$2$ apart, and the $s$-invariant is defined to be the average of these
two gradings. (See, for instance, Sch\"utz's paper~\cite[Section
2]{Schutz:integral-s}.)  The following example shows that this is not
true in general for LEO triples.

\begin{example}\label{eg:pathology}
  Let $k$ be a non-negative integer, and $D$ the free
  $\BNring$-cochain complex concentrated in homological degrees $-1$
  and $0$ given by $D^{-1} = \BNring{\{-1\}}$,
  $D^0 = \BNring{\{1\}}\oplus \BNring{\{-1+2k\}}$ and
  $\partial\co D^{-1}\to D^0$ given by $\partial(1) = (X-h,
  h^k)$. (Recall that the quantum grading shift $\{-1+2k\}$, for
  instance, means the generator $1\in\BNring{\{-1+2k\}}$ satisfies
  $\gr_q(1) = -1+2k$.)

Since $h^k$ is an isomorphism over $h^{-1}\BNring$, we can use
Gaussian elimination~\cite[Lemma~3.2]{MR2320156} to see that $h^{-1}D$
is homotopy equivalent to a free graded module of rank 1 supported in homological degree $0$, namely $h^{-1}\BNring{\{1\}}$.
If we restrict coefficients to $\ZZ[h]$, the complex $D$ is given by
\[
\begin{tikzpicture}
\node at (0,2) {$\ZZ[h]{\{-1\}}$};
\node at (0,1) {$\ZZ[h]{\{-3\}}$};
\node at (4,3) {$\ZZ[h]{\{1\}}$};
\node at (4,2) {$\ZZ[h]{\{-1\}}$};
\node at (4,1) {$\ZZ[h]{\{-1+2k\}}$};
\node at (4,0) {$\ZZ[h]{\{-3+2k\}}$};
\draw[->] (0.9, 2.1) -- node [above, sloped] {$-h$} (3.2, 3);
\draw[->] (0.9, 2) -- node [above] {$1$} (3.1, 2);
\draw[->] (0.9, 1.9) -- node [above, sloped] {$h^k$} (2.55, 1);
\draw[->] (0.9, 1) -- node [above, sloped] {$h^k$} (2.55, 0);
\end{tikzpicture}
\]
Another Gaussian elimination shows this is homotopy equivalent to $E$ given by $E^{-1} = \ZZ[h]{\{-3\}}$, $E^0 = \ZZ[h]{\{1\}}\oplus \ZZ[h]{\{-1+2k\}}\oplus \ZZ[h]{\{-3+2k\}}$ and $\partial_E^{-1}(1) = (0, 0, h^k)$. Given a field $\FF$ and the ring homomorphism $\ZZ[h]\to \FF$ sending $h$ to $1$, we get $E\otimes_{\ZZ[h]}\FF$ is homotopy equivalent to a complex with two generators, and their quantum gradings are $1$ and $-1+2k$. In particular, the difference in quantum grading of the two generators can be arbitrarily large in general.
\end{example}

\begin{definition}\label{def:filt-notation}
  Given a bigraded chain complex $D$, let $D^{(q)}$ be the subspace of
  $D$ in quantum grading $q$. Since we are often working with graded
  rings, such as $\ZZ[X]/(X^2)$ or $\ZZ[h]$, $D^{(q)}$ is typically not a
  submodule of $D$. We will denote the homology of $D$ in quantum
  grading $q$ by
  \[
    H^{*,q}(D)=H(D^{(q)}).
  \]
  Finally, for brevity, we will write $H^{i,j}(D;\FF)$ to
  denote $H^{i,j}(D\otimes_\ZZ\FF)$.
\end{definition}

\begin{definition}\label{def:s-of-triple}
  Given a LEO triple $(C,D,f)$ and a field $\FF$, let
  \begin{align*}
    s^+_\FF(C,D,f) &= \max\{q \mid H^{0,q}(D;\FF) \to H^{0,q}(h^{-1}D;\FF) \text{ non-zero}\} - 1\\
    s^-_\FF(C,D,f) &= \max \{q \mid H^{0,q}(D;\FF) \to H^{0,q}(h^{-1}D;\FF) \text{ surjective}\} + 1.
  \end{align*}
  Given a reduced LEO triple $(\wt{C},\wt{D},f)$, let
  \begin{align*}
    s_\FF(\wt{C},\wt{D},f)&=\max \{q \mid H^{0,q}(\wt{D};\FF) \to H^{0,q}(h^{-1}\wt{D};\FF) \text{ non-zero}\}\\
                &= \max \{q \mid H^{0,q}(\wt{D};\FF) \to H^{0,q}(h^{-1}\wt{D};\FF) \text{ surjective}\}.
  \end{align*}
  Given a LEO triple $(C,D,f)$, let $s_\FF(C,D,f)$ be $s_\FF$ of the
  image of $(C,D,f)$ in $\rCLEO$.
\end{definition}
Note that Definition~\ref{def:s-of-triple} does not use $C$ or $f$,
only $D$.

\begin{lemma}
  The numbers $s^\pm_\FF(C,D,f)$ and $s_\FF(C,D,f)$ depend only on
  $\FF$ and the local equivalence class of $(C,D,f)$. Similarly,
  $s_\FF(\wt{C},\wt{D},f)$ depends only on $\FF$ and the local
  equivalence class of $(\wt{C},\wt{D},f)$.
\end{lemma}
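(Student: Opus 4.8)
The plan is to prove the sharper statement that a \emph{single} local map $(C,D,f)\to(C',D',f')$ already forces the inequalities $s^\pm_\FF(C,D,f)\le s^\pm_\FF(C',D',f')$, and then to obtain equality for locally equivalent triples by running this with the two local maps coming from a local equivalence. The first observation I would make is that Definition~\ref{def:s-of-triple} refers only to the complex $D$ together with the localization map $D\to h^{-1}D$; so of the data in a local map, only the $\BNring$-linear chain map $\beta\colon D\to D'$, with $\beta\colon h^{-1}D\to h^{-1}D'$ a homotopy equivalence, is relevant, and $C$, $\alpha$, $f$, and the homotopy-commutativity of Diagram~\eqref{eq:local-map} play no role.

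Next I would set up the relevant naturality. Because $\beta$ commutes with multiplication by $h$, it commutes strictly with the localization maps $D\to h^{-1}D$ and $D'\to h^{-1}D'$. Tensoring the resulting commuting square of chain complexes over $\ZZ$ with $\FF$ and passing to homology in homological grading $0$ and a fixed quantum grading $q$ gives a commuting square of $\FF$-vector spaces whose horizontal maps are $\phi\colon H^{0,q}(D;\FF)\to H^{0,q}(h^{-1}D;\FF)$ and the analogous $\phi'$ for $D'$, and whose right-hand vertical map is an isomorphism. The last point is the only one needing (routine) justification: $\beta\colon h^{-1}D\to h^{-1}D'$ is a homotopy equivalence over $h^{-1}\BNring$; homotopy equivalences, and the bigrading, are preserved under the additive base-change functor $-\otimes_\ZZ\FF$; and localization commutes with base change, so that $h^{-1}(D\otimes_\ZZ\FF)=(h^{-1}D)\otimes_\ZZ\FF$. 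Hence $\beta$ induces an isomorphism on $H^{0,q}$ of the localized, $\FF$-reduced complexes.

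With this square in hand, the heart of the argument is two one-line diagram chases. If $\phi$ is nonzero, pick a class mapping nontrivially under $\phi$; the right vertical map is injective, so its image in $H^{0,q}(h^{-1}D';\FF)$ is nonzero, and commutativity shows $\phi'$ is nonzero. If $\phi$ is surjective, then the composite (right vertical)$\circ\,\phi$ is surjective, hence so is $\phi'\circ(\text{left vertical})$, hence so is $\phi'$. Therefore $\{q\mid\phi\text{ nonzero}\}\subseteq\{q\mid\phi'\text{ nonzero}\}$ and $\{q\mid\phi\text{ surjective}\}\subseteq\{q\mid\phi'\text{ surjective}\}$, which gives $s^+_\FF(C,D,f)\le s^+_\FF(C',D',f')$ and $s^-_\FF(C,D,f)\le s^-_\FF(C',D',f')$; applying both local maps of a local equivalence yields equality. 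The reduced case is verbatim the same with $\BNring$ replaced by $\ZZ[h]$ (here $H^{0,q}(h^{-1}\wt D;\FF)$ is at most one-dimensional, so ``nonzero'' and ``surjective'' coincide and the two chases merge into one), proving that $s_\FF(\wt C,\wt D,f)$ is a local equivalence invariant. Finally, $s_\FF$ of an unreduced LEO triple is by definition $s_\FF$ of its image under the homomorphism $\pi\colon\CLEO\to\rCLEO$ of Proposition~\ref{prop:rCLEO}, and $\pi$ is well-defined on local equivalence classes, so this invariant descends as well.

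I do not anticipate a genuine obstacle. The only points requiring care are bookkeeping: a local map is only one-directional, so one must symmetrize to pass from an inequality to equality; and one must check that $-\otimes_\ZZ\FF$ preserves the localized homotopy equivalence and commutes with localization, so that the right vertical map of the naturality square really is an isomorphism on homology in each bigrading.
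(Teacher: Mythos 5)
Your proof is correct and is exactly the detailed unwinding of the argument that the paper compresses to ``immediate from the definitions'': you observe that only the $\BNring$-linear chain map $\beta$ matters, pass to the commuting square with the localization maps, check the right vertical is an isomorphism after base change to $\FF$, and read off monotonicity of $s^\pm_\FF$ and $s_\FF$ under a single local map, then symmetrize. No gaps; the route matches the paper's intended one.
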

\begin{proof}
  This is immediate from the definitions.
\end{proof}

If $(C,D,f)=\LEO(K)$ then for each field $\FF$ the elements
$s^\pm_\FF(C,D,f)$ and $s(C,D,f)$ are all equal (e.g.,~\cite[Section
2.2]{LS14:refine-s},~\cite[Section 2]{Schutz:integral-s}), and agree
with the Rasmussen invariant over $\FF$.  In contrast:

\begin{lemma}\label{lem:splus-not-homom}
  The maps $s^+_\FF$ and $s^-_\FF$ are not group homomorphisms, but
  the map $s_\FF$ is.
\end{lemma}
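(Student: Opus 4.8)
The strategy is to prove the two assertions separately. For $s_\FF$ being a homomorphism, I would invoke the K\"unneth theorem for the Bar-Natan complex (Lemma~\ref{lem:Kh-kunneth-BN}, or rather its reduced analogue) together with the definition of the group operation on $\rCLEO$. The key algebraic fact is that $s_\FF$ depends only on the reduced Bar-Natan complex $\wt D$, and that tensoring complexes over $\ZZ[h]$ adds the "critical quantum gradings": if $H(\wt D\otimes\FF)\cong \FF[h]\oplus T$ with the free summand generated in quantum grading $s_\FF$ (after localization), then $H((\wt D\otimes\wt D')\otimes\FF)\cong \FF[h]\oplus(\text{torsion})$ with free summand in grading $s_\FF+s'_\FF$; this uses exactness of localization and the K\"unneth theorem over the PID $\FF[h]$ (the torsion terms contribute only to torsion after localization). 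The value on the dual negates, for essentially the same reason. So I would first reduce $s_\FF$ on LEO triples to $s_\FF$ on reduced triples (this is literally the definition, \emph{via} $\pi$ from Proposition~\ref{prop:rCLEO}\ref{item:reduce}, which \emph{is} a homomorphism), then verify additivity of $s_\FF$ under $\otimes_{\ZZ[h]}$ for reduced triples.

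For $s^{\pm}_\FF$ \emph{not} being homomorphisms, the cleanest route is to exhibit a counterexample, and Example~\ref{eg:pathology} is tailor-made for this. That example produces, for each $k\geq 0$, a LEO-triple-like object $D$ (one checks it extends to a genuine LEO triple by taking $C$ and $f$ to be anything compatible, e.g.\ the obvious mod-2 reduction of $D$) whose localization is rank one in homological degree $0$ and quantum grading $1$, so that $s_\FF = s^{\pm}_\FF$ of the unknot's value would force $s^{+}_\FF(D)$ and $s^{-}_\FF(D)$ to be governed by the gradings $1$ and $-1+2k$. Concretely: over $\FF$ with $h\mapsto 1$, the homology $H^{0,*}(D;\FF)$ has generators in gradings $1$ and $-1+2k$, both mapping nontrivially to the rank-one localization (which sits in grading... — here one must track the grading collapse carefully). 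The upshot is that $s^{+}_\FF(D) = (-1+2k) - 1 = 2k-2$ while $s^{-}_\FF(D) = 1 + 1 = 2$ (or the analogous computation), so $s^{+}_\FF \neq s^{-}_\FF$ already for $k\geq 2$; meanwhile $s_\FF(D) = 0$ agrees with $s_\FF(\LEO(U)) = 0$. Since $s_\FF$ is additive and $s^{\pm}_\FF$ disagrees with it on $D$, and $s^{\pm}_\FF(\LEO(U)) = 0 = s_\FF(\LEO(U))$, the maps $s^{\pm}_\FF$ cannot be homomorphisms: if $s^{+}_\FF$ were a homomorphism it would have to equal $s_\FF$ on any triple locally equivalent to a multiple of a single generator, but one can also argue more directly by computing $s^{+}_\FF$ on $D \otimes D^*$ (which is locally equivalent to $\LEO(U)$, hence has $s^{+}_\FF = 0$) versus $s^{+}_\FF(D) + s^{+}_\FF(D^*)$.

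I would actually prefer the last argument as the cleanest: pick the $D$ from Example~\ref{eg:pathology} with, say, $k=2$; compute $s^{+}_\FF(D)$ and $s^{+}_\FF(D^*)$ directly from the Gaussian-eliminated model $E$ and its dual; observe that $D\otimes_{\ZZ[h]}D^*$ is locally equivalent to the trivial triple (by Point~\ref{item:gp-inverses} of Theorem~\ref{thm:local-equiv-group}, or its reduced analogue), so $s^{+}_\FF$ of it is $0$; and check that $s^{+}_\FF(D) + s^{+}_\FF(D^*) \neq 0$. The same $D$ works for $s^{-}_\FF$.

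**Main obstacle.** The delicate point is the bookkeeping of quantum gradings through the localization map and through the tensor product — in particular, correctly reading off from Example~\ref{eg:pathology} what $s^{+}_\FF$ and $s^{-}_\FF$ actually are (the definitions involve the $\pm1$ shifts and a $\max$ over gradings $q$ where a map is nonzero versus surjective), and then confirming these fail additivity while $s_\FF$ — which is insensitive to the torsion and only sees the single free generator after localization — does not. I expect no conceptual difficulty, only the need to be careful that the counterexample object genuinely satisfies all four clauses of Definition~\ref{def:local} (freeness, the localization condition, \emph{odd} quantum gradings for the unreduced case — which may require a grading shift of the Example's $D$ by $\{1\}$ or passing to the reduced setting where the parity condition is "even"). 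Handling that parity/normalization consistently with the rest of the paper is the one place where care is required.
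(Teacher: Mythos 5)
Your overall strategy matches the paper's exactly: use Example~\ref{eg:pathology} to produce a counterexample for $s^\pm_\FF$ via the ``$s^\pm_\FF(D) + s^\pm_\FF(D^*) \neq 0$ but $s^\pm_\FF(D\otimes D^*\{-1\}) = 0$'' argument, and use the K\"unneth theorem for the reduced Bar-Natan complex over $\FF[h]$ to establish additivity of $s_\FF$. The homomorphism half of your argument is correct and is the paper's argument.

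However, there is a concrete arithmetic gap in your counterexample: you propose to use Example~\ref{eg:pathology} ``with, say, $k=2$,'' but $k=2$ is the unique value for which the check fails. Direct computation (as in the paper) gives $s^+(D) = 2k-2$, $s^-(D) = 2$, $s^+(D^*) = -2$, and $s^-(D^*) = 2-2k$, so
\[
s^+(D) + s^+(D^*) = 2k-4, \qquad s^-(D) + s^-(D^*) = 4-2k,
\]
both of which vanish exactly at $k=2$. For the same reason your earlier claim that ``$s^+_\FF \neq s^-_\FF$ already for $k\geq 2$'' is backwards: $k=2$ is precisely the case $s^+ = s^-$; the right condition is $k\neq 2$. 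So the step ``check that $s^+_\FF(D)+s^+_\FF(D^*)\neq 0$'' would not go through with your choice of parameter, though it does for every other $k$, and this is all the fix needed.

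Two smaller remarks. First, the middle paragraph asserting that ``$s_\FF$ is additive, $s^\pm_\FF$ disagrees with it on $D$, and all vanish on $\LEO(U)$, so $s^\pm_\FF$ cannot be homomorphisms'' is a non sequitur — two distinct homomorphisms to $\ZZ$ can vanish at the identity and disagree elsewhere — but you yourself discard this in favor of the dual argument, so it does not affect the final proof. Second, the claim ``$s_\FF(D) = 0$'' is only correct for $k=1$; in general $s_\FF(D) = 2k-2$ (the grading of the free $\FF[h]$-summand of $H^0(\wt D;\FF)$), though this does not matter once you use the $D\otimes D^*$ argument.
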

\begin{proof}
To see that $s^\pm_\FF$ are not group homomorphisms, consider the
complex $D$ from Example~\ref{eg:pathology}, and extend $D$
arbitrarily to a LEO triple. Direct computation gives $s^+(D)=2k-2$
and $s^-(D)=2$. On the other hand, if we let
$D^*=\Hom_{\BNring}(D,\BNring)\{2\}$ be the dual complex then direct computation
again shows that $s^+(D^*)=-2$ and $s^-(D^*)=2-2k$.  By
Part~\ref{item:gp-inverses} of Theorem~\ref{thm:local-equiv-group},
$D\otimes D^*\{-1\}$ is locally equivalent to $\LEO(U)$, so
$s^\pm(D\otimes D^*\{-1\})=s^\pm(U)=0$. For any $k\neq 2$, this shows
$s^+$ and $s^-$ are not group homomorphisms.

To see that $s$ is a group homomorphism, consider reduced LEO triples
$(\wt{C}_i,\wt{D}_i,f_i)$, $i=1,2$. Abusing notation, let $\wt{D}_i$
denote the tensor product of $\wt{D}_i$ with $\FF$.  By the K\"unneth
theorem, the free part of $\wt{D}_1\otimes_{\FF[h]}\wt{D}_2$ is the
tensor product of the free parts of $\wt{D}_1$ and $\wt{D}_2$. The
$s$-invariant is the quantum grading of a generator of this free part,
so the result follows.
\end{proof}

\begin{corollary}
  \label{cor:C-not-onto}
  The image of the smooth knot concordance group $\mathcal{C}$ in
  $\CLEO$ from Theorem~\ref{thm:local-equiv-group} is a proper
  subgroup of $\CLEO$.
\end{corollary}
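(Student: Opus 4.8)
The plan is to produce an explicit local equivalence class in $\CLEO$ that cannot equal $[\LEO(K)]$ for any knot $K$, using $s^+_\FF$ and $s^-_\FF$ as the obstruction. The crucial contrast is already visible in Example~\ref{eg:pathology} and Lemma~\ref{lem:splus-not-homom}: for any knot $K$ one has $s^+_\FF(\LEO(K)) = s^-_\FF(\LEO(K))$ (these refinements of the Rasmussen invariant all coincide for genuine Khovanov complexes, as recorded just before Lemma~\ref{lem:splus-not-homom}), whereas there exist LEO triples on which $s^+_\FF$ and $s^-_\FF$ disagree.

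First I would fix $k$ (say $k=3$) and let $D$ be the $\BNring$-complex of Example~\ref{eg:pathology}, extended to a LEO triple $\Lambda = (C, D, \id)$ by taking $C = D/(h)$. This is a valid LEO triple: $D$ is free over $\BNring$ by construction, hence $C$ is free over $\BNring/(h) = \ZZ[X]/(X^2)$; the identity serves as the homotopy equivalence $f$ because $C \otimes_\ZZ \FF_2 = D \otimes_{\BNring}\BNring/(2,h)$; and Example~\ref{eg:pathology} already verifies that $h^{-1}D$ is homotopy equivalent to the rank-one free module $h^{-1}\BNring\{1\}$, concentrated in homological grading $0$ and odd quantum grading. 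The computation in the proof of Lemma~\ref{lem:splus-not-homom} gives $s^+_\FF(\Lambda) = 2k-2 = 4$ and $s^-_\FF(\Lambda) = 2$, so $s^+_\FF(\Lambda) \neq s^-_\FF(\Lambda)$.

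Then I would invoke the lemma following Definition~\ref{def:s-of-triple}, which says $s^\pm_\FF$ depend only on $\FF$ and the local equivalence class. If $\Lambda$ were locally equivalent to $\LEO(K)$ for some knot $K$, this would force $s^+_\FF(\Lambda) = s^+_\FF(\LEO(K)) = s^-_\FF(\LEO(K)) = s^-_\FF(\Lambda)$, a contradiction. Hence $[\Lambda]$ is not in the image of $\mathcal{C}$ in $\CLEO$, so that image is a proper subgroup.

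There is no real obstacle here; the corollary is essentially a repackaging of Lemma~\ref{lem:splus-not-homom}, and the only computation involved, namely $s^+_\FF(D) = 2k-2$ and $s^-_\FF(D) = 2$, has already been carried out in that proof. (Alternatively one could argue that $s_\FF$ is a homomorphism agreeing with $s^+_\FF$ on $[\LEO(K)]$ for every knot $K$, so if the image were all of $\CLEO$ then $s^+_\FF$ would be a homomorphism on $\CLEO$, contradicting Lemma~\ref{lem:splus-not-homom}; the version above is preferable since it exhibits an explicit class outside the image.)
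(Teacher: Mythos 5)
Your proposal is correct and takes essentially the same approach as the paper: the corollary is stated there without proof, as an immediate consequence of Lemma~\ref{lem:splus-not-homom} together with the observation (recorded just before it) that $s^+_\FF = s^-_\FF$ on $\LEO(K)$ for every knot $K$. You have simply spelled out the argument by exhibiting the explicit class $\Lambda$ from Example~\ref{eg:pathology} with $s^+_\FF(\Lambda)\neq s^-_\FF(\Lambda)$, and both that verification and the check that $\Lambda$ is a valid LEO triple are sound.
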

\begin{proof}
As noted above, $s_\FF = s^+_\FF = s^-_\FF$ on the image of
$\mathcal{C}$, but not on all of $\CLEO$ by
Lemma~\ref{lem:splus-not-homom}.
\end{proof}

\begin{proposition}\label{prop:Zinf-summand}
  The tuple
  $\vec{s}=(s_\QQ,s_{\FF_2}-s_\QQ,s_{\FF_3}-s_{\QQ},s_{\FF_5}-s_{\QQ},\dots)$
  defines a surjective homomorphism
  $\rCLEO\to (2\ZZ)^\infty=\bigoplus_{n\in\NN}2\ZZ$.
\end{proposition}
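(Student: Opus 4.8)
The plan is to establish three things: that $\vec s$ is a group homomorphism into the product $\prod_{n\in\NN}2\ZZ$; that for each reduced LEO triple all but finitely many coordinates of $\vec s$ vanish, so that it lands in the direct sum; and that the resulting map $\rCLEO\to\bigoplus_{n\in\NN}2\ZZ$ is onto. The first point is immediate from Lemma~\ref{lem:splus-not-homom}: each $s_\FF\co\rCLEO\to\ZZ$ is a group homomorphism, and its image lies in $2\ZZ$ because $H^{0,q}(h^{-1}\wt D;\FF)=0$ for odd $q$. Hence $s_\QQ$ and every difference $s_{\FF_p}-s_\QQ$ is a homomorphism $\rCLEO\to2\ZZ$, so $\vec s$ is a homomorphism into $\prod_{n\in\NN}2\ZZ$.

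For the second point, fix a reduced LEO triple $(\wt C,\wt D,f)$; I claim $s_{\FF_p}=s_\QQ$ for all but finitely many primes $p$. Over the PID $\QQ[h]$, repeated Gaussian elimination (as in \cite[Lemma~3.2]{MR2320156}) writes $\wt D\otimes_\ZZ\QQ[h]$ as a finite direct sum of contractible two-step complexes, two-step complexes $\bigl[\QQ[h]\xrightarrow{\,u\,h^{k}\,}\QQ[h]\bigr]$ with $u\in\QQ^\times$ and $k\ge1$, and single copies of $\QQ[h]$. Since $h^{-1}\wt D$ is homotopy equivalent to a rank-one free $\ZZ[h,h^{-1}]$-module in homological degree $0$ and even quantum gradings, inverting $h$ kills every summand except exactly one single copy of $\QQ[h]$, which sits in homological degree $0$; because the two-step summands contribute only $h$-power torsion to $H^0$, the quantum grading of this surviving generator must be $s_\QQ$. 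Thus there is a homotopy equivalence, over $\QQ[h]$,
\[
  \wt D\otimes\QQ[h]\ \simeq\ \QQ[h]\{s_\QQ\}\oplus E,
\]
with $E$ a finite complex of free modules such that $h^{-1}E$ is contractible. The two chain maps and two chain homotopies witnessing this equivalence, together with the complex $E$ itself, involve only finitely many elements of $\QQ[h]$; let $S$ be the finite set of primes dividing their denominators, so that everything is defined over $\ZZ[S^{-1}][h]$ and the homotopy-equivalence relations persist there. For $p\notin S$, base-changing along $\ZZ[S^{-1}]\to\FF_p$ gives
\[
  \wt D\otimes\FF_p[h]\ \simeq\ \FF_p[h]\{s_\QQ\}\oplus(E\otimes\FF_p),
\]
where $h^{-1}(E\otimes\FF_p)$ is still contractible (each differential $u\,h^{k}$ remains invertible after inverting $h$). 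Reading off Definition~\ref{def:s-of-triple} from this splitting---the localization map out of $H^{0,q}(E\otimes\FF_p)$ is zero, and the summand $\FF_p[h]\{s_\QQ\}$ makes it nonzero precisely for even $q\le s_\QQ$---gives $s_{\FF_p}=s_\QQ$. Hence $\vec s$ takes values in $\bigoplus_{n\in\NN}2\ZZ$.

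For surjectivity it suffices to hit the standard generators of $\bigoplus_{n\in\NN}2\ZZ$, since $\vec s$ is a homomorphism. The generator of the $0$-th summand (the one corresponding to $s_\QQ$) is realized by the reduced LEO triple with $\wt D=\ZZ[h]\{2\}$ and zero differential: here $s_\FF(\wt D)=2$ for every field $\FF$, so $\vec s=(2,0,0,\dots)$. For the generator of the $k$-th summand ($k\ge1$, corresponding to $s_{\FF_p}-s_\QQ$ with $p$ the $k$-th prime), take the two-step reduced LEO triple with $\wt D^{0}=\ZZ[h]\{0\}\oplus\ZZ[h]\{2\}$ on generators $u_0,u_2$, with $\wt D^{1}=\ZZ[h]\{2\}$ on a generator $v$, and with $\partial u_0=hv$ and $\partial u_2=pv$. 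A Gaussian elimination gives $h^{-1}\wt D\simeq\ZZ[h,h^{-1}]\{2\}$, so this is a valid reduced LEO triple. Over $\QQ$, and over $\FF_q$ for a prime $q\ne p$, the entry $p$ is a unit, so one cancels $u_2$ against $v$ and is left with $H^{0}\cong\QQ[h]\{0\}$ (respectively $\FF_q[h]\{0\}$), whence $s_\QQ=s_{\FF_q}=0$; over $\FF_p$ the entry $p$ vanishes and $H^{0}\cong\FF_p[h]\{2\}$, which survives localization, so $s_{\FF_p}=2$. Thus $\vec s$ is twice the $k$-th standard basis vector. In both families one completes the triple by letting $\wt C$ be a free $\ZZ$-complex with zero differential whose bigraded ranks match the homology of $\wt D\otimes\FF_2[h]/(2,h)$ (over the field $\FF_2$ every complex is homotopy equivalent to its homology) and $f$ the resulting homotopy equivalence. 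Since the image of $\vec s$ contains all the standard generators, $\vec s$ is surjective.

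The step I expect to be the main obstacle is the finite-support claim. The subtlety is that, by Example~\ref{eg:pathology}-type behavior, a torsion class over $\FF_p$ can survive to the localization and push $s_{\FF_p}$ strictly above $s_\QQ$; an argument working one quantum grading at a time would only exclude finitely many primes per grading and hence could fail to give a finite exceptional set. One therefore genuinely needs to transport the $\QQ[h]$-homotopy splitting of $\wt D$ to $\FF_p[h]$ for all but finitely many $p$ simultaneously, which is exactly what the ``invert finitely many primes'' step accomplishes.
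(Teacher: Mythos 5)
Your proof is correct and the surjectivity construction matches the paper's, but your finite-support argument takes a genuinely different route. The paper works at the level of homology: since $\ZZ[h]$ is Noetherian, $H^0(\wt D)$ is a finitely generated $\ZZ[h]$-module, so the section and retraction exhibiting $\QQ[h]\{s\}$ as a direct summand of $H^0(\wt D)\otimes\QQ$ are already defined over $P^{-1}\ZZ[h]$ for some finite set of primes $P$, and one reduces this split summand mod $p$ for $p\notin P$. You instead work at the chain level: split $\wt D\otimes\QQ[h]$ as a free rank-one piece $\QQ[h]\{s_\QQ\}$ plus an $h$-torsion complement $E$, spread the whole homotopy equivalence (maps, homotopies, and $E$ itself) out over $\ZZ[S^{-1}][h]$, and base-change to $\FF_p$. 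The chain-level version pays off neatly: the splitting of $\wt D\otimes\FF_p[h]$ gives a splitting of $H^0(\wt D\otimes\FF_p)$ directly, sidestepping the universal-coefficients comparison between $H^0(\wt D)\otimes\FF_p$ and $H^0(\wt D\otimes\FF_p)$ that the module-level version has to navigate. One imprecision worth flagging: ``repeated Gaussian elimination'' as in the cited lemma only cancels invertible matrix entries, and so reduces $\wt D\otimes\QQ[h]$ to a minimal model; it does not on its own produce the advertised direct sum of elementary two-step complexes and single free summands. That decomposition is true, but it requires graded Smith normal form --- equivalently, the structure theorem for bounded complexes of finitely generated free graded modules over the graded PID $\QQ[h]$ --- which is a genuinely stronger tool than the Gaussian elimination lemma. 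With that substitution, everything goes through, and your closing remark correctly identifies why a grading-by-grading argument would be insufficient.
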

\begin{proof}
Let $(\wt{C}, \wt{D}, f)$ be a reduced LEO-triple. To
see that $\vec{s}$ is well-defined, we need to show that for all but finitely many primes $p$ we get $s_{\FF_p}(\wt{C},\wt{D},f) = s_{\QQ}(\wt{C},\wt{D},f)$.

Set $s=s_{\QQ}(\wt{C},\wt{D},f)$ and denote by $i\colon \QQ[h]\{s\}\to H^0(\wt{D})\otimes \QQ$, $r\colon H^0(\wt{D})\otimes\QQ\to \QQ[h]\{s\}$ the grading-preserving maps with $r\circ i=\id_{\QQ[h]}$, which exist by Lemma \ref{lem:PID_homology}. Then
\[
i(1) = \sum_{m=1}^k a_m\otimes q_m,
\]
with $a_m\in H^0(\wt{D})$ and $q_m\in \QQ$. There exists a localization $P^{-1}\ZZ\subset \QQ$ of $\ZZ$ obtained by inverting finitely many primes $P$ such that each $q_m\in P^{-1}\ZZ$, $m=1,\ldots, k$.
Also, $\ZZ[h]$ is a Noetherian ring, so $H^0(\wt{D})$ is finitely
generated. After possibly adding finitely many primes to $P$, we can
assume that $r\bigl(H^0(\wt{D})\bigr)\subset P^{-1}\ZZ[h]\{s\}$. From the commutative diagram
\[
\begin{tikzcd}
P^{-1}\ZZ[h]\{s\} \ar[r, "i|"] \ar[rd, equal] & H^0(\wt{D})\otimes P^{-1}\ZZ \ar[r] \ar[d, "r|"] & H^0(\wt{D})\otimes \QQ \ar[d, "r"] \\
 & P^{-1}\ZZ[h]\{s\} \ar[hookrightarrow, r] & \QQ[h]\{s\}
\end{tikzcd}
\]
it follows that $P^{-1}\ZZ[h]\{s\}$ is a direct summand in $H^0(\wt{D})\otimes P^{-1}\ZZ$. (Here, $i|$ and $r|$ denote the restrictions of $i$ and $r$.) If $p$ is a prime not in $P$, there is a surjective ring homomorphism $P^{-1}\ZZ\to \FF_p$, and therefore $\FF_p[h]\{s\}$ is a direct summand in $H^0(\wt{D}\otimes\FF_p)$. It follows that $s_{\FF_p}(\wt{C},\wt{D},f) = s$ for all but finitely many primes.

  To see that $\vec{s}$ is surjective, we will now show that $2$ times each standard basis vector is in the
  image. The vector $(2,0,0,\dots)$ is the image of
  $(\wt{C},\wt{D},f)$ where $\wt{D}=\ZZ[h]\{2\}$ in homological
  grading $0$ and $\wt{C}$ and $f$ are any choice of extension to
  a reduced LEO triple (e.g., $\wt{C}=\ZZ\{2\}$ and $f$ the obvious
  identification).

  For the other basis vectors, let $p$ be a prime. Consider the
  complex $\wt{D}$ with $\wt{D}^{0}=\ZZ[h]\oplus\ZZ[h]\{2\}$ and
  $\wt{D}^1=\ZZ[h]\{2\}$, and $\bdy\co \wt{D}^{0}\to \wt{D}^1$ given
  by $\bdy(x,y)=hx+py$. Extend $\wt{D}$ to a reduced LEO triple
  $(\wt{C},\wt{D},f)$ arbitrarily. Then
  \[
      s_{\FF}(\wt{C},\wt{D},f)=
      \begin{cases}
        2 & \mathrm{char}(\FF)=p\\
        0 & \text{otherwise,}
      \end{cases}
    \]
    as desired.
\end{proof}

In particular, Proposition~\ref{prop:Zinf-summand} implies that
$\rCLEO$ has a $\ZZ^\infty$ summand, and since $\CLEO\onto \rCLEO$,
the group $\CLEO$ does as well. (We have not shown these complexes are
induced by knots, so this does not re-prove the existence of a
$\ZZ^\infty$ summand of $\mathcal{C}$, but it is conjectured to be
so~\cite[Conjecture 1.3]{Schutz:integral-s}, \cite[Question
6.5]{LewarkZibrowius:doubles}.)  The definition of $s_{\FF_2}(C,D,f)$
extends to two-reduced LEO triples, giving a $\ZZ$-summand of
$\rCLEO^o$. Nontriviality of the invariants introduced in
Section~\ref{sec:refined-s} implies that the map
$\vec{s}\co \rCLEO\to(2\ZZ)^\infty$ is not an isomorphism, nor is
$s_{\FF_2}\co \rCLEO^o\to\ZZ$ (cf.\ Remark~\ref{remark:nothing}).

\begin{question}\label{question:identify-group}
  Is it possible to completely determine $\rCLEO^o$? What about
  $\rCLEO$?  Also, what is the image of the smooth concordance group
  $\mathcal{C}$ in either $\rCLEO^o$ or $\rCLEO$?
\end{question}

\subsection{The graded integral \texorpdfstring{$s$}{s}-invariant}
\label{subsec:graded-s}

There is an integral version of the $s$-invariant, which is not a
homomorphism but which gives a lower bound on $s_{\FF}$ for any field
$\FF$.  As we do not use these invariants until
Section~\ref{sec:knot-case}, the reader may wish to initially
skip ahead to Section~\ref{sec:refined-s}.  Specifically,
given a $\LEO$-triple $(C,D,f)$, let
\begin{align*}
s^+_\ZZ(C,D,f) &= \max\{q \mid \exists\thinspace a\in H^{0,q}(D;\ZZ)\mbox{ with }i(a)\in H^{0,q}(h^{-1}D;\ZZ)\mbox{ primitive}\}-1,\\
s^-_\ZZ(C,D,f) &= \max\{q \mid H^{0,q}(D;\ZZ)\to H^{0,q}(h^{-1}D;\ZZ)\mbox{ surjective}\}+1.
\end{align*}
For a reduced $\LEO$-triple $(\wt{C},\wt{D},f)$, we also set
\begin{equation}\label{eq:def-sZ}
s_\ZZ(\wt{C},\wt{D},f) = \max\{q\mid H^{0,q}(\wt{D};\ZZ)\to H^{0,q}(h^{-1}\wt{D};\ZZ)\mbox{ surjective}\}.
\end{equation}
For a $\LEO$-triple $(C,D,f)$, we set $s_\ZZ(C,D,f)$ to be $s_\ZZ$ of
its image in $\rCLEO$. If $K$ is a knot, we write $s_\ZZ(K)$ for
$s_\ZZ(\LEO(K))$, and similarly for the $\pm$ decorations. We will
show in Lemma~\ref{lem:allszettsame} that the three numbers
$s^+_\ZZ(K)$, $s^-_\ZZ(K)$, and $s_\ZZ(K)$ agree.

To put these constructions in context, recall that
Sch\"utz \cite{Schutz:integral-s} defined an $s$-invariant over $\ZZ$ as a finite sequence of numbers which encodes the $E_\infty$-term of the reduced Bar-Natan--Lee--Turner spectral sequence of a knot $K$. The definition can be extended to reduced $\LEO$ triples $(C,D,f)$ as follows. Consider the commutative diagram
\[
\begin{tikzcd}
H^{0,q+2}(D) \ar[r] \ar[d, "h"] & H^{0,q+2}(h^{-1}D) \ar[d, "h", "\cong"']  \ar[r,"\cong"]& \ZZ \ar[d, "\id"] \\
H^{0,q}(D) \ar[r] & H^{0,q}(h^{-1}D) \ar[r,"\cong"] & \ZZ
\end{tikzcd}
\]
and define
\begin{align*}
H^0(h^{-1}D)_q &= \im\big(H^{0,q}(D)\to H^{0,q}(h^{-1}D)\big) \\
H^0(h^{-1}D)^{(q)} &= H^0\big(h^{-1}D)_q/h(H^0(h^{-1}D)_{q+2}\big).
\end{align*}
Since $H^{0,q}(h^{-1}D)\cong \ZZ$, both $H^0(h^{-1}D)_q$ and $H^0(h^{-1}D)^{(q)}$ are cyclic groups. Furthermore, for large $q$, $H^0(h^{-1}D)_q = 0$, and for small $q$, $H^0(h^{-1}D)_q = H^{0,q}(h^{-1}D)$. In fact,
\begin{align*}
s_\QQ(C,D,f) &= \max\{q\in 2\ZZ\mid H^0(h^{-1}D)^{(q)}\not=0\}\\
s_\ZZ(C,D,f) &= \min\{q\in 2\ZZ\mid H^0(h^{-1}D)^{(q)}\not= 0\},
\end{align*}
as can be seen straight from the definitions
(Definition~\ref{def:s-of-triple} and Formula~\eqref{eq:def-sZ}). We now define the {\em graded length of } $(C,D,f)$ as
\[
\mathrm{gl}(C,D,f) = \frac{s_\QQ(C,D,f) - s_\ZZ(C,D,f)}{2}.
\]

\begin{lemma}
Let $(C,D,f)$ be a reduced $\LEO$ triple, and $q$ an even integer. The cyclic groups $H^0(h^{-1}D)_q$ and $H^0(h^{-1}D)^{(q)}$ only depend on the local equivalence class of $(C,D,f)$.
\end{lemma}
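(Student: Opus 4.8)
The plan is to show that a local map of reduced $\LEO$ triples induces well-behaved maps on the groups $H^0(h^{-1}D)_q$ and $H^0(h^{-1}D)^{(q)}$, and then to combine the two local maps supplied by a local equivalence.

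First I would unwind the data of a local map $(C,D,f)\to(C',D',f')$: it includes a bigrading-preserving chain map $\beta\co D\to D'$ over $\ZZ[h]$ whose localization $h^{-1}\beta\co h^{-1}D\to h^{-1}D'$ is a bigraded homotopy equivalence. Since $\beta$ is a strict chain map, is $\ZZ[h]$-linear, and commutes with localization, it maps the whole commutative square defining $H^0(h^{-1}D)_q$ and $H^0(h^{-1}D)^{(q)}$ to the corresponding square for $(C',D',f')$; in particular it yields compatible homomorphisms $H^{0,q}(D)\to H^{0,q}(D')$ and $H^{0,q}(h^{-1}D)\to H^{0,q}(h^{-1}D')$ intertwining the localization maps and multiplication by $h$, and the second of these is an isomorphism because $h^{-1}\beta$ is a bigraded homotopy equivalence. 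It follows that $\beta$ carries $H^0(h^{-1}D)_q=\im(H^{0,q}(D)\to H^{0,q}(h^{-1}D))$ into $H^0(h^{-1}D')_q$ and restricts there to an injection, being a restriction of an isomorphism; running the same argument in quantum grading $q+2$ and using compatibility with $h$, it also descends to a homomorphism $H^0(h^{-1}D)^{(q)}\to H^0(h^{-1}D')^{(q)}$.

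Next I would feed in a local equivalence, which furnishes local maps in both directions and hence isomorphisms $\Phi\co H^{0,q}(h^{-1}D)\to H^{0,q}(h^{-1}D')$ and $\Psi\co H^{0,q}(h^{-1}D')\to H^{0,q}(h^{-1}D)$ satisfying $\Phi(H^0(h^{-1}D)_q)\subseteq H^0(h^{-1}D')_q$ and $\Psi(H^0(h^{-1}D')_q)\subseteq H^0(h^{-1}D)_q$. Using that $H^{0,q}(h^{-1}D)\cong\ZZ$ (the point recalled just before the lemma), the composite $\Psi\Phi$ is an automorphism of $\ZZ$, hence $\pm\id$, and since every subgroup of $\ZZ$ is stable under negation, $\Psi\Phi$ fixes $H^0(h^{-1}D)_q$ setwise; combined with the two inclusions above, this forces both inclusions to be equalities, so $\Phi$ restricts to an isomorphism $H^0(h^{-1}D)_q\cong H^0(h^{-1}D')_q$. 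The same applies in grading $q+2$, compatibly with the multiplication-by-$h$ isomorphism $H^{0,q+2}(h^{-1}D)\to H^{0,q}(h^{-1}D)$, so passing to quotients gives $H^0(h^{-1}D)^{(q)}\cong H^0(h^{-1}D')^{(q)}$, as desired.

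I expect the argument to be essentially formal; the only real content is the passage from ``injections in both directions'' to ``the two subgroups of $\ZZ$ actually coincide,'' which is exactly where the rank-one localization hypothesis $H^{0,q}(h^{-1}D)\cong\ZZ$ enters. Without it one would only obtain an abstract isomorphism of the groups $H^0(h^{-1}D)_q$, which would not be enough to control the quotients $H^0(h^{-1}D)^{(q)}$. There is no homotopy-coherence subtlety, since $\beta$ is a genuine chain map and only $D$ (not $C$ or $f$) appears in these definitions.
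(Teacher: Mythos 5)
Your proof is correct and follows essentially the same route as the paper: use the commutative square induced by a local map $\beta$ to see $\beta_*\bigl(H^0(h^{-1}D)_q\bigr)\subseteq H^0(h^{-1}D')_q$, then combine the two local maps with the fact that $H^{0,q}(h^{-1}D)\cong\ZZ$ to force these inclusions to be equalities, and finally pass to the quotient by $h$. The only cosmetic difference is how the final step is packaged (you observe $\Psi\Phi=\pm\id$ on $\ZZ$, the paper compares indices of subgroups of $\ZZ$), but the content is identical.
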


\begin{proof}
Let $(\alpha,\beta)$ be a local map from $(C,D,f)$ to $(C',D',f')$. From the commutative diagram
\[
\begin{tikzcd}
H^{0,q}(D) \ar[r,"\beta"] \ar[d] & H^{0,q}(D') \ar[d] \\
H^{0,q}(h^{-1}D) \ar[r, "\beta", "\cong"'] & H^{0,q}(h^{-1}D')
\end{tikzcd}
\]
we get $\beta(H^0(h^{-1}D)_q)\subset H^0(h^{-1}D)_q$. Similarly, a local map $(\alpha',\beta')$ from $(C',D',f')$ to $(C,D,f)$ implies $\beta'(H^0(h^{-1}D')_q)\subset H^0(h^{-1}D)_q$. Since $\beta$ and $\beta'$ induce isomorphisms of $\ZZ=H^{0,q}(h^{-1}D)$ we get that $H^0(h^{-1}D)_q$ and $H^0(h^{-1}D')_q$ are isomorphic.
As $\beta$ commutes with $h$, we also get $H^0(h^{-1}D)^{(q)}\cong H^0(h^{-1}D')^{(q)}$.
\end{proof} 

\begin{corollary}
Let $(C,D,f)$ be a reduced $\LEO$ triple. Then $s_\ZZ(C,D,f)$ and $\mathrm{gl}(C,D,f)$ only depend on the local equivalence class of $(C,D,f)$.
\end{corollary}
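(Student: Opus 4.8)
The plan is to obtain the corollary as an immediate consequence of the Lemma just proved, via the two displayed formulas expressing $s_\QQ$ and $s_\ZZ$ in terms of the graded pieces $H^0(h^{-1}D)^{(q)}$. First I would note that, by that Lemma, for each even $q$ the cyclic group $H^0(h^{-1}D)^{(q)}$ is determined up to isomorphism by the local equivalence class of $(C,D,f)$; since being the zero group is an isomorphism invariant, the set
\[
S(C,D,f) = \{\, q \in 2\ZZ \mid H^0(h^{-1}D)^{(q)} \neq 0 \,\}
\]
depends only on that class. It is a finite, nonempty subset of $\ZZ$, since this is exactly the assertion that $s_\QQ(C,D,f)$ and $s_\ZZ(C,D,f)$ are well-defined integers, which has already been established.

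Next I would invoke the identities
\[
s_\QQ(C,D,f) = \max S(C,D,f), \qquad s_\ZZ(C,D,f) = \min S(C,D,f),
\]
which hold directly from Definition~\ref{def:s-of-triple} and Formula~\eqref{eq:def-sZ}. Since the maximum and minimum of a subset of $\ZZ$ depend only on the subset, both $s_\QQ(C,D,f)$ and $s_\ZZ(C,D,f)$ depend only on the local equivalence class (the statement for $s_\QQ$ re-deriving the earlier lemma on $s_\FF$), and therefore so does
\[
\mathrm{gl}(C,D,f) = \tfrac{1}{2}\bigl(s_\QQ(C,D,f) - s_\ZZ(C,D,f)\bigr).
\]

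I do not expect any genuine obstacle: all the content resides in the preceding Lemma, which already transports the cyclic groups $H^0(h^{-1}D)^{(q)}$ along local maps in both directions. The only point meriting a moment's care is the reduction $s_\QQ = \max S$ and $s_\ZZ = \min S$ --- i.e., that the image filtration on $H^{0,q}(h^{-1}D) \cong \ZZ$ governing the two $s$-invariants is precisely the filtration whose associated graded pieces are the $H^0(h^{-1}D)^{(q)}$ --- but this is already spelled out in the paragraph preceding the Lemma, so it requires no new argument.
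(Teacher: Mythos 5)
Your proof is correct and follows the same route the paper implicitly intends: apply the preceding lemma to see that the isomorphism types of the cyclic groups $H^0(h^{-1}D)^{(q)}$, and hence the set of even $q$ at which they are nonzero, are local-equivalence invariants, then read off $s_\QQ$ and $s_\ZZ$ as the max and min of that set using the displayed formulas and conclude for $\mathrm{gl}$ by its definition. The paper leaves this as immediate; your write-up just makes it explicit.
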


The cyclic groups $H^0(h^{-1}D)^{(q)}$ can also be expressed in terms of the complex $D_{h=1}$. To see this, note that there is a filtration
\[
\cdots \subset \wt{\mathcal{F}}_q \subset \wt{\mathcal{F}}_{q-2} \subset \cdots\subset D_{h=1}
\]
given by
\[
\wt{\mathcal{F}}_q = p(D^{(q)})\subset D_{h=1},
\]
where $p\colon D\to D_{h=1}$ is projection and $D^{(q)}$ is as in
Definition~\ref{def:filt-notation}. Since $p$ restricted to
$D^{(q)}$ is injective,
we get $H^{0,q}(D)\cong H^0(\wt{\mathcal{F}}_q)$. Also, $p$ extends to $p\colon h^{-1}D\to D_{h=1}$, and restricting this to $h^{-1}D^{(q)}$ induces an isomorphism $h^{-1}D^{(q)}\to D_{h=1}$ for all even $q$. In particular, for a knot $K$, the integral $s$-invariant $s^\ZZ(K)$ of Sch\"utz \cite{Schutz:integral-s} can be recovered using $D=\rKhCx_h(K)$.

\begin{definition}
Let $(C,D,f)$ be a reduced $\LEO$ triple. If $s_\QQ(C,D,f) =
s_\ZZ(C,D,f)$, define $s^\ZZ(C,D,f) = s_\QQ(C,D,f)$. Otherwise, for
$q=s_\QQ(C,D,f)-2,\ldots, s_\ZZ(C,D,f)$, denote the cardinality of the
finite cyclic group $H^{0}(h^{-1}D)^{(q)}$ by
\[
c(q) = \# H^{0}(h^{-1}D)^{(q)},
\]
and define $s^\ZZ(C,D,f) \in \ZZ^{1+\mathrm{gl}(C,D,f)}$ as
\[
s^\ZZ(C,D,f) = \Big(s_\QQ(C,D,f), c\big(s_\QQ(C,D,f) - 2\big), \ldots, c\big(s_\QQ(C,D,f) - 2\mathrm{gl}(C,D,f)\big)\Big).
\]
\end{definition}

These definitions can be carried over to unreduced $\LEO$ triples,
although we do not necessarily get that the $H^{0}(h^{-1}D)^{(q)}$ are
finite cyclic. Also, at least for knots, this does not give better
information, as we will show in Remark~\ref{rem:no-better}.

\section{Invariants of LEO triples}\label{sec:refined-s}
In this section, we introduce some algebraic invariants of local
equivalence classes, in the spirit of the refinements of the
$s$-invariant using the Khovanov stable homotopy
type~\cite{LS14:refine-s}. Unlike the invariants in the previous
section, the ones defined here depend on both $C$ and $D$, not just
$D$. We give two classes of refinements, name the reduced variants of
both, and then discuss some examples.

\subsection{Bockstein-refined \texorpdfstring{$s$}{s}-invariants}
For $n$ a positive integer, we have the short exact sequence
\[
\begin{tikzcd}
0 \arrow[r] & \ZZ/(2)  \arrow[r] & \ZZ/(2^{n+1})  \arrow[r] & \ZZ/(2^n)  \arrow[r] & 0.
\end{tikzcd}
\]
For a LEO triple $(C,D,f)$, this gives rise to the Bockstein homomorphism
\[
\beta_n\colon H^k(C; \ZZ/(2^n)) \to H^{k+1}(C; \FF_2).
\]
Now consider the configurations
\begin{equation}\label{eq:Bockstein_s_invariant}
\begin{tikzcd}[column sep=1.75em]
\langle\check{a}\rangle \arrow[r] \arrow[hookrightarrow, d] &[1em] \langle\hat{a}\rangle \arrow[hookrightarrow, d] & \langle a \rangle \arrow[l] \arrow[r] \arrow[hookrightarrow, d] & \langle \overline{a} \rangle \arrow[hookrightarrow, d]\\
H^{-1,q}(C;\ZZ/(2^n)) \arrow[r, "f \circ \beta_n"] & H^{0,q}(D_{h=0}; \FF_2) & H^{0,q}(D;\FF_2) \arrow[l,
"p" swap] \arrow[r, "i"] & H^{0, q}(h^{-1}D;\FF_2) \\
\langle\check{a}, \check{b}\rangle \arrow[r] \arrow[hookrightarrow, u] & \langle \hat{a}, \hat{b}\rangle \arrow[hookrightarrow, u] & \langle a,b \rangle \arrow[l] \arrow[r] \arrow[hookrightarrow, u] & \langle \overline{a}, \overline{b}\rangle \arrow[equal, u]
\end{tikzcd}
\end{equation}
where $\langle \, \cdot \, \rangle$ refers to the $\FF_2$-vector space
generated by the elements.
\begin{definition}\label{def:halffull}
  Given a LEO triple $(C,D,f)$ and a positive integer $n$, we say an
  integer $q$ is {\em $\beta_n$-half-full} if there exist
  $\check{a}\in H^{-1,q}(C;\ZZ/(2^n))$ and $a\in H^{0,q}(D;\FF_2)$
  such that $f\circ \beta_n(\check{a}) = p(a)$ and $i(a)\not=0$.

  We say $q$ is {\em $\beta_n$-full} if there exist
  $\check{a}, \check{b}\in H^{-1,q}(C;\ZZ/(2^n))$ and
  $a, b\in H^{0,q}(D;\FF_2)$ such that
  $f\circ \beta_n(\check{a}) = p(a)$,
  $f\circ \beta_n(\check{b}) = p(b)$, and $i(a),i(b)$ span
  $H^{0, q}(h^{-1}D;\FF_2)$ as an $\FF_2$-vector space.
\end{definition}
So, $q$ being $\beta_n$-half-full means we have a commutative diagram as in the upper half of (\ref{eq:Bockstein_s_invariant}) and $q$ being $\beta_n$-full means we have a commutative diagram as in the lower half of (\ref{eq:Bockstein_s_invariant}). We allow $\check{a}$ and $\check{b}$ to be $0$ in this definition. In particular, if $q$ is $\beta_n$-half-full, then $q-2$ is half-full as well, using $ha\in H^{0,q-2}(D;\FF_2)$ and $0\in H^{-1,q}(C;\ZZ/(2^n))$.

\begin{definition}\label{def:s-beta-n}
  Given a LEO triple $(C,D,f)$ and a positive integer $n$, let
  \[
  r^{\beta_n}(C,D,f) = \max\{ q\in \ZZ \mid q \mbox{ is }\beta_n\mbox{-half-full}\}+1,
  \]
  and
  \[
  s^{\beta_n}(C,D,f) = \max\{ q \in \ZZ \mid q \mbox{ is }\beta_n\mbox{-full}\}+3.
  \]
\end{definition}

If $q > s^+(C,D,f)+1$, then $q$ cannot be $\beta_n$-half-full. Also,
if $q < s^+(C,D,f)$ we can find $a\in H^{0,q+2}(D;\FF_2)$ with
$i(a)\in H^0(h^{-1}D;\FF_2)$ non-zero.  Then $ha\in H^{0,q}(D;\FF_2)$
also satisfies $i(ha)\not=0$ but $p(ha)=0$ so $q$ is $\beta_n$-half-full. In particular,
\begin{equation}\label{eq:r-diff-by-2}
  0\leq r^{\beta_n}(C,D,f) - s^+_{\FF_2}(C,D,f) \leq 2.
\end{equation}
A similar argument shows that
\begin{equation}\label{eq:s-diff-by-2}
0\leq s^{\beta_n}(C,D,f) - s^-_{\FF_2}(C,D,f) \leq 2.
\end{equation}
For $k \leq n$, we have the commutative diagram
\[
\begin{tikzcd}
0 \arrow[r] & \ZZ/(2)  \arrow[r] \arrow[equal, d]& \ZZ/(2^{k+1})  \arrow[r] \arrow[d, "\cdot 2^{n-k}"] & \ZZ/(2^k)  \arrow[r] \arrow[d, "\cdot 2^{n-k}"] & 0 \\
0 \arrow[r] & \ZZ/(2)  \arrow[r] & \ZZ/(2^{n+1})  \arrow[r] & \ZZ/(2^n) \arrow[r] & 0
\end{tikzcd}
\]
Given a $\LEO$-triple, naturality implies that if $q$ is
$\beta_k$-(half-)full, then $q$ is $\beta_n$-(half-)full so
$r^{\beta_n}\geq r^{\beta_k}$ and $s^{\beta_n}\geq s^{\beta_k}$. Passing to the direct limit gives a commutative diagram
\[
\begin{tikzcd}
0 \arrow[r] & \ZZ/(2)  \arrow[r] \arrow[equal, d]& \ZZ/(2^{k+1})  \arrow[r] \arrow[d, "\cdot 2^{-k-1}"] & \ZZ/(2^k)  \arrow[r] \arrow[d, "\cdot 2^{-k}"] & 0 \\
0 \arrow[r] & \ZZ/(2)  \arrow[r] & \ZZ[\frac{1}{2}]/\ZZ  \arrow[r] & \ZZ[\frac{1}{2}]/\ZZ \arrow[r] & 0
\end{tikzcd}
\]
and a Bockstein homomorphism $\beta_\infty\colon H^k(C;\ZZ[\frac{1}{2}]/\ZZ)\to H^{k+1}(C;\FF_2)$ giving rise to invariants $r^{\beta_\infty}(C,D,f)$ and $s^{\beta_\infty}(C,D,f)$ with
\[
r^{\beta_\infty}(C,D,f) = \lim_{n\to\infty}r^{\beta_n}(C,D,f)\hspace{0.4cm}\mbox{ and }\hspace{0.4cm} s^{\beta_\infty}(C,D,f) = \lim_{n\to\infty} s^{\beta_n}(C,D,f).
\]
See Examples~\ref{eg:unknot-full}--\ref{eg:alg-example2} for some
examples of these invariants, as well as of the ones constructed later
in this section.

In the case $n=1$, we can use the isomorphism $f\colon H^{\ast,q}(C;\FF_2)\to H^{\ast,q}(D_{h=0};\FF_2)$ to produce another operation $\beta\colon H^{-1,q}(C;\FF_2)\to H^{0,q}(C;\FF_2)$ given by
\begin{equation}\label{eq:beta-sum}
\beta = \beta_1+f^{-1}\circ \beta_1\circ f.
\end{equation}
The notions of $\beta$-half-full and $\beta$-full carry over verbatim, and we can define $r^\beta(C,D,f)$ and $s^\beta(C,D,f)$ as before.

\begin{remark}
  For a knot $K$, the map $\beta$ from Equation~\eqref{eq:beta-sum}
  corresponds to the sum of the even and odd first Steenrod squares
  $\Sq^1+\Sq^1_o\colon \Kh^{-1,q}(K;\FF_2)\to Kh^{0,q}(K;\FF_2)$. (See
  also Lemma~\ref{lem:old-refinements}.) The sum of these Steenrod
  squares can also be realized via a connecting homomorphism involving
  unified Khovanov homology, as described by Putyra and Shumakovitch~\cite{PS16:mixed-Bocksteins}. In
  Section~\ref{sec: MP knots}, we use the associated refined
  $s$-invariant, $s^{\beta}$, to give an alternate
  proof that the five key knots from Manolescu-Piccirillo's paper~\cite{ManolescuPiccirillo2023}
  are not slice.
\end{remark}

\subsection{The comprehensive refinements}
\label{subsec:comprehensive}
Our second class of refinements use the change of
coefficients map $j\co H^{i,j}(C)\to H^{i,j}(C;\FF_2)$ instead of
a Bockstein homomorphism, and are based on the diagrams
\[
  \begin{tikzcd}
    H^{0,q}(C)\arrow[r,"f\circ j"] &
    H^{0,q}(D_{h=0}; \FF_2) &
    H^{0,q}(D; \FF_2) \arrow[l, "p" swap] \arrow[r, "i"] &
    H^{0,q}(h^{-1}D; \FF_2)
  \end{tikzcd}
\]
and
\[
  \begin{tikzcd}
    H^{0,q}(C)\arrow[r,"f\circ j"] &
    H^{0,q}(D_{h=0}; \FF_2) &
    H^{0,q}(D) \arrow[l, "p" swap] \arrow[r, "i"] &
    H^{0,q}(h^{-1}D).
  \end{tikzcd}
\]

\begin{definition}
  Given a LEO triple $(C,D,f)$, we say an integer $q$ is
  \emph{oddly half-full} if there exist $\check{a}\in H^{0,q}(C)$
  and $a\in H^{0,q}(D;\FF_2)$ such that $f\circ j(\check{a}) = p(a)$
  and $i(a)\not=0$.  We say $q$ is \emph{oddly full} if in
  addition there exist $\check{b} \in H^{0,q}(C)$ and $b\in H^{0,q}(D;\FF_2)$ 
  such that $f\circ j(\check{b}) = p(b)$
  and $i(a)$, $i(b)$ span $H^{0, q}(h^{-1}D; \FF_2)$ as an
  $\FF_2$-vector space.

  Similarly, $q$ is \emph{completely half-full} if there exist
  $\check{a}\in H^{0,q}(C)$ and $a\in H^{0,q}(D)$ such that
  $f\circ j(\check{a}) = p(a)$ and $i(a)$ is a nonzero primitive
  element of $H^{0, q}(h^{-1}D)$.  Moreover, $q$ is \emph{completely
    full} if in addition there exist and $\check{b} \in H^{0,q}(C)$
  $b\in H^{0,q}(D)$ such that $f\circ j(\check{b}) = p(b)$
  and $i(a)$, $i(b)$ generate $H^{0, q}(h^{-1}D)$ as a
  $\ZZ$-module.

For a $\LEO$-triple $(C,D,f)$ define
\begin{align*}
r_o(C,D,f) &= \max\{q\in \ZZ\mid q \mbox{ is oddly half-full}\}-1,\\
s_o(C,D,f) &= \max\{q\in \ZZ\mid q \mbox{ is oddly full}\}+1,\\
r_c(C,D,f) &= \max\{q\in \ZZ\mid q \mbox{ is completely half-full}\}-1,\\
s_c(C,D,f) &= \max\{q\in \ZZ\mid q \mbox{ is completely full}\}+1.
\end{align*}
\end{definition}
The shifts here are different from
Definition~\ref{def:s-beta-n}, and are chosen so that they all vanish
for the trivial element $\LEO(U)$; see Example~\ref{eg:unknot-full}.
The invariants $r_o$ and $s_o$ satisfy
\begin{align}
0 &\leq s^+_{\FF_2}(C,D,f) - r_o(C,D,f) \leq 2\label{eq:s-versus_ru}\\
0 &\leq s^-_{\FF_2}(C,D,f) - s_o(C,D,f) \leq 2.\label{eq:s-versus_su}
\end{align}
The situation for the complete invariants is more complicated; see
Theorem~\ref{thm:s-rels} in the case of knots.

\subsection{The reduced case}
All of the local equivalence invariants defined here have reduced analogues
\[
\stil^\alpha(\wt{C},\wt{D},f) = \max\{q\in 2\ZZ\mid q \mbox{ is $\alpha$-reduced-full}\}+2
\]
where $\alpha$ is any of the $\beta_n$ or $\beta$, and $\alpha$-reduced-full is defined as $\alpha$-half-full, but with $\wt{C},\wt{D}$ in place of $C,D$. Also,
\[
\stil_o(\wt{C},\wt{D},f) = \max\{q\in 2\ZZ\mid q \mbox{ is
  oddly reduced-full}\},
\]
which satisfies
$0 \leq s_{\FF_2}(\wt{C}, \wt{D}, f) - \stil_o(\wt{C},\wt{D}, f)
\leq 2$.  Define $\stil_c$ similarly, where to be completely
reduced-full we require that $i(a)$ be a generator for
$H^{0, q}(h^{-1}\wt{D})$ as a $\ZZ$-module.  Given an unreduced LEO
triple $(C,D,f)$, we will write $\stil^\alpha(C,D,f)$ or
$\stil_\gamma(C,D,f)$ to denote the invariant of its image
$\pi(C,D,f)$ in $\rCLEO$.

We will sometimes refer to the invariants $r^\alpha$, $s^\alpha$, and $\stil^\alpha$ as the \emph{Bockstein refinements} and $r_\gamma$, $s_\gamma$, and $\stil_\gamma$ as the \emph{comprehensive refinements}. (The latter terminology is justified in Theorems~\ref{thm:reduced-local-is-e},~\ref{thm:s-rels}, and~\ref{thm:local-is-e}.)

\subsection{Invariance and examples} Next, we establish the invariance
under local equivalence of the various numbers defined in the first
part of this section.

\begin{lemma}\label{lem:s-beta-descends}
  The integers $r^{\beta_n}$, $s^{\beta_n}$, $r^{\beta}$, $s^{\beta}$,
  $r_o$, $s_o$, $r_c$, and $s_c$ are invariant under local equivalence
  of $(C,D,f)$, i.e., descend to $\CLEO$; their reduced versions
  $\stil^{\beta_n}$, $\stil^{\beta}$, $\stil_o$, and $\stil_c$ descend
  to $\rCLEO$. Further, $\stil_o$ and $\stil^{\beta_n}$ descend to
  $\rCLEO^o$.
\end{lemma}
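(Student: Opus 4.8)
The plan is to show, in each case, that a local map $(\alpha,\beta)\co (C,D,f)\to(C',D',f')$ cannot decrease the relevant "fullness" thresholds, and then invoke the existence of local maps in both directions to get equality. Fix one of the invariants, say $r^{\beta_n}$; the others are entirely analogous. Suppose $q$ is $\beta_n$-half-full for $(C,D,f)$, witnessed by $\check a\in H^{-1,q}(C;\ZZ/(2^n))$ and $a\in H^{0,q}(D;\FF_2)$ with $f\circ\beta_n(\check a)=p(a)$ and $i(a)\neq 0$. I would push this configuration forward: set $\check a'=\alpha_*(\check a)$ and $a'=\beta_*(a)$, where I use that $\alpha$ and $\beta$ are chain maps over the appropriate rings and hence induce maps on all the relevant homology groups (mod $2$, mod $2^n$, after inverting $h$, and after setting $h=0$) compatibly with the change-of-ring maps $\beta_n$, $p$, $i$. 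The only non-trivial point is that $f'\circ\beta_n(\check a')=p(a')$: this follows from the homotopy-commutativity of the middle square in Diagram~\eqref{eq:local-map}, after reducing mod $2$ and passing to homology (a chain homotopy over $\FF_2[X]/(X^2)$ induces an honest equality on homology), together with naturality of the Bockstein with respect to the chain map $\alpha$. Finally $i(a')=\beta_*(i(a))\neq 0$ because, by definition of a local map, $\beta\co h^{-1}D\to h^{-1}D'$ is a homotopy equivalence, hence an isomorphism on homology in each bigrading. Therefore $q$ is $\beta_n$-half-full for $(C',D',f')$, so $r^{\beta_n}(C,D,f)\le r^{\beta_n}(C',D',f')$; symmetrically for the reverse local map, and equality follows. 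The same argument with "half-full" replaced by "full" handles $s^{\beta_n}$, since the spanning condition on $i(a),i(b)$ in $H^{0,q}(h^{-1}D;\FF_2)$ is preserved by the isomorphism $\beta_*$.

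For the comprehensive refinements $r_o,s_o,r_c,s_c$, the argument is identical, with $\beta_n$ replaced by the change-of-coefficients map $j\co H^{0,q}(C)\to H^{0,q}(C;\FF_2)$, which is also natural in $\alpha$; in the "completely (half-)full" cases one additionally needs that $\beta_*$ carries primitive elements to primitive elements and sends a $\ZZ$-module generating set of $H^{0,q}(h^{-1}D)$ to one of $H^{0,q}(h^{-1}D')$ — again immediate because $\beta_*$ is an isomorphism of $\ZZ$-modules there. For $r^\beta,s^\beta$ one uses the operation $\beta=\beta_1+f^{-1}\circ\beta_1\circ f$ from Equation~\eqref{eq:beta-sum}; here the extra wrinkle is that the witness condition involves $f$ and $f^{-1}$, so one must also use that $f'\circ\alpha$ is homotopic to $\beta\circ f$ \emph{as maps of $\FF_2[X]/(X^2)$-complexes} to see that $\alpha$ intertwines the two copies of $\beta_1$ up to the identification, hence commutes with $\beta$ on homology; this is exactly the content of the parenthetical remark in Definition~\ref{def:local-map}.

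For the reduced invariants $\stil^{\beta_n},\stil^\beta,\stil_o,\stil_c$ descending to $\rCLEO$: since $\pi\co\CLEO\to\rCLEO$ is a well-defined homomorphism (Proposition~\ref{prop:rCLEO}\ref{item:reduce}), and each $\stil$ is by definition evaluated on the image $\pi(C,D,f)$, it suffices to run the above argument for reduced LEO triples and reduced local maps, which is the same verbatim with $C,D$ replaced by $\wt C,\wt D$ and $\ZZ[X]/(X^2),\BNring$ replaced by $\ZZ,\ZZ[h]$. Finally, for the two-reduced group $\rCLEO^o$: the invariants $\stil_o$ and $\stil^{\beta_n}$ are defined using only $\wt C$ (over $\ZZ$), $\wt D$ (now over $\FF_2[h]$), the Bockstein $\beta_n$ on $H^*(\wt C;\ZZ/(2^n))$, and the maps $p,i,f$ — none of which requires $\wt D$ to be defined over $\ZZ[h]$ rather than $\FF_2[h]$ — so the identical naturality argument applies to local maps of two-reduced triples from Definition~\ref{def:mod-2-local}. (By contrast $\stil_c$ uses the integral group $H^{0,q}(h^{-1}\wt D)$, which is why it is not listed for $\rCLEO^o$.) I expect the main obstacle to be bookkeeping rather than conceptual: namely, tracking that every square in Diagram~\eqref{eq:local-map} that I invoke commutes \emph{on homology after the relevant tensor/localization}, given that it only commutes up to an $\FF_2[X]/(X^2)$-equivariant homotopy at the chain level — but since homotopic chain maps induce equal maps on homology, this reduces to checking the homotopies land in the right category of modules, which the definitions guarantee.
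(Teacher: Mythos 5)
Your proposal is correct and follows essentially the same route as the paper: push the witnesses for (half-)fullness forward along the local map using naturality and homotopy-commutativity of Diagram~\eqref{eq:local-map}, note that $\beta_*$ is an isomorphism on $h^{-1}$-homology so the nondegeneracy/spanning conditions are preserved, and then apply the reverse local map for equality. The paper only writes out the representative case $r^{\beta_n}$ and states the rest are analogous; you supply the additional bookkeeping for $\beta$, the complete variants, and the reduced and two-reduced settings, all correctly.
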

\begin{proof}
  The proofs are all essentially the same, so we only spell out a
  representative case, $r^{\beta_n}$. Suppose $(h\co C\to C',k\co D\to D')$ is a local map
  from $(C,D,f)$ to $(C',D',f')$. Then, we have a commutative diagram
  \[
    \begin{tikzcd}[column sep=1.5em]
      H^{-1,q}(C;\ZZ/(2^n)) \arrow[r, "f \circ \beta_n"]\arrow[d,"h_*"] &[1.2em] H^{0,q}(D_{h=0}; \FF_2)\arrow[d,"k_*"] & H^{0,q}(D;\FF_2) \arrow[l, "p" swap] \arrow[r, "i"]\arrow[d,"k_*"] & H^0(h^{-1}D;\FF_2)\arrow{d}{k_*}[swap]{\cong} \\
      H^{-1,q}(C';\ZZ/(2^n)) \arrow[r, "f \circ \beta_n"] & H^{0,q}(D'_{h=0}; \FF_2) & H^{0,q}(D';\FF_2) \arrow[l, "p" swap] \arrow[r, "i"] & H^0(h^{-1}D';\FF_2),
    \end{tikzcd}
  \]
  where the arrow on the right is an isomorphism.  So, if $q$ is
  $\beta_n$-half-full for $(C,D,f)$, witnessed by elements
  $\check{a}$, $\hat{a}$, $a$, and $\overline{a}$, then
  $q$ is also $\beta_n$-half-full for $(C',D',f')$, witnessed by
  $h_*(\check{a})$, $k_*(\hat{a})$, $k_*(a)$, and
  $k_*(\overline{a})$. Hence,
  $r^{\beta_n}(C',D',f')\geq r^{\beta_n}(C,D,f)$. 
  If $(C,D,f)$ and
  $(C',D',f')$ are locally equivalent, there is also a local map the
  other direction, giving the reverse inequality.
\end{proof}

We illustrate the definitions above with a few simple algebraic
examples.

\begin{example}\label{eg:unknot-full}
  Consider the trivial LEO triple
  $\LEO(U)=(\ZZ[X]/(X^2)\{1\},\BNring\{1\},\id)$. We write down the
  sequence in Diagram~\eqref{eq:Bockstein_s_invariant} for $q=-3$,
  $-1$, $1$, and $3$. For $q=-3$, it has the form
  \[
    \begin{tikzcd}[scale cd=1.0]
      H^{-1,-3}(C;\ZZ/(2^n))\arrow[r,"\id \circ \beta_n"]\arrow[equal,d] &[1em]
         H^{0,-3}(D_{h=0}; \FF_2)\arrow[equal,d] &
         H^{0,-3}(D;\FF_2) \arrow[l, "p" swap] \arrow[r, "i"]\arrow[equal,d] &
         H^{0, -3}(h^{-1}D;\FF_2)\arrow[equal,d]\\
      0 \arrow[r] & 0 &
         \FF_2\langle h^2, Xh \rangle\arrow[l]\arrow[r] &  \FF_2\langle h^2, Xh \rangle.
    \end{tikzcd}
  \]
  In particular, $-3$ is $\beta_n$-full, with
  $\check{a}=\check{b}=\hat{a}=\hat{b}=0$, $a=\overline{a}=h^2$, and
  $b=\overline{b}=Xh$. The same applies to $\beta$ (which also
  vanishes); indeed, $\beta$ will be similar to $\beta_n$ throughout
  this example. For $r_c$ and $s_c$, we are interested instead in the
  diagram
  \[
    \begin{tikzcd}
      H^{0,-3}(C)\arrow[r,"f\circ j"]\arrow[equal,d] &
         H^{0,-3}(D_{h=0}; \FF_2)\arrow[equal,d] &
         H^{0,-3}(D) \arrow[l, "p" swap] \arrow[r, "i"]\arrow[equal,d] &
         H^{0,-3}(h^{-1}D)\arrow[equal,d]\\
      0 \arrow[r] & 0 & \ZZ\langle h^2, Xh \rangle\arrow[l]\arrow[r] &
         \ZZ\langle h^2, Xh \rangle.
    \end{tikzcd}
  \]
  So $q=-3$ is completely full, by taking $\check{a}=\check{b}=0$ and
  $a=h^2$, $b=Xh$, and the same for oddly full.

  For $q=-1$, the diagram for $s^{\beta_n}$ is
  \[
    \begin{tikzcd}
      0\arrow[r] & \FF_2\langle X\rangle &[1em]
      \FF_2\langle h,X\rangle \arrow[l, "0 \, \reflectbox{$\mapsto$} \, h",swap] \arrow[r] &
      \FF_2\langle h,X\rangle,
    \end{tikzcd}
  \]
  so $q=-1$ is not full for $\beta_n$ (because we cannot find
  $\check{b}$, say) but is half-full via
  $\check{a}=\hat{a}=0$ and $a=\overline{a}=h$. So,
  $s^{\beta_n}=-3+3=0$. For the complete case, the sequence is
  \[
    \begin{tikzcd}
      \ZZ\langle X\rangle\arrow[r] & \FF_2\langle X\rangle &[1em]
      \ZZ\langle h,X\rangle \arrow[l, "0 \, \reflectbox{$\mapsto$} \, h",swap] \arrow[r] &
      \ZZ\langle h,X\rangle,
    \end{tikzcd}
  \]
  and we can take $\check{a}=0$, $a=h$, and $\check{b}=b=X$ to see
  that $q = -1$ is completely and oddly full.

  For $q=1$, the diagram for $s^{\beta_n}$ is
  \[
    \begin{tikzcd}
      0\arrow[r] & \FF_2\langle 1\rangle &
      \FF_2\langle 1\rangle \arrow[l, "\id",swap] \arrow[r] &
      \FF_2\langle 1, h^{-1}X\rangle,
    \end{tikzcd}
  \]
  so $1$ is not $\beta_n$-half-full (because one cannot find
  $\check{a}$), so $r^{\beta_n}=-1+1=0$. For the complete case, the
  sequence is
  \[
    \begin{tikzcd}
      \ZZ\langle 1\rangle\arrow[r] & \FF_2\langle 1\rangle &
      \ZZ \langle 1\rangle\arrow[l]\arrow[r] & \ZZ\langle 1, h^{-1} X\rangle
    \end{tikzcd}
  \]
  so $1$ is completely half-full but not completely full. So $s_c =
  -1+1 = 0$, and the same for $s_o$.

  For $q=3$, the diagram for $s^{\beta_n}$ is 
  \[
    \begin{tikzcd}
      0 \arrow[r] & 0 & 0\arrow[l] \arrow[r] &
      \FF_2\langle h^{-1}, h^{-2} X\rangle,
    \end{tikzcd}
  \]
  so again $3$ is not $\beta_n$-half-full (as must be the case), while
  for the complete case the sequence is similarly
  \[
    \begin{tikzcd}
      0\arrow[r] & 0 &
      0\arrow[l]\arrow[r] &  \ZZ\langle h^{-1}, h^{-2} X\rangle,
    \end{tikzcd}
  \]
  so $3$ is not completely half-full, and $r_c= r_o = 1-1=0$.
\end{example}

\begin{example}\label{eg:alg-example1}
  For this example, we look at a reduced LEO triple.  Consider the
  two-step complex $C$ over $\ZZ$ given by
  $C^{-1}=\ZZ$, $C^0=\ZZ\{2\}\oplus\ZZ$, and
  $\bdy(1)=(0,2^n)$; and the two-step complex $D$ over
  $\ZZ[h]$ given by $D^{-1}=\ZZ[h]$,
  $D^0=\ZZ[h]\{2\}\oplus\ZZ[h]$, and $\bdy(1)=(h,0)$. There is
  an evident identification of $C\otimes\FF_2$ and
  $D\otimes\FF_2$, given in grading $0$ by the $2\times2$
  identity matrix. That is:
  \[
    C:
    \begin{tikzcd}[column sep=0pt]
      \ZZ\{2\} & & \ZZ\\
      & \ZZ \arrow[ur,"2^n"] & 
    \end{tikzcd}
    \qquad
    C/2=D/(2,h):
    \begin{tikzcd}[column sep=0pt]
      \FF_2\{2\} & & \FF_2\\
      & \FF_2 &
    \end{tikzcd}
    \qquad D:
    \begin{tikzcd}[column sep=0pt]
      \ZZ[h]\{2\} & & \ZZ[h]\\
      & \ZZ[h] \arrow[ul,"h"] & 
    \end{tikzcd}
  \]
  Then $s_{\FF_2}(C,D,f)=0$. The Bockstein $\beta_m$
  vanishes for $m<n$, so $\stil^{\beta_m}(C,D,f)=0$,
  but for $p\geq n$, $0$ is $\beta_p$-reduced-full so
  $\stil^{\beta_p}=2$. For this complex $\beta=\beta_1$ (since
  $D/(h)$ has no torsion) and
  $\stil^\beta=\stil^{\beta_1}$. Also, $0$ is oddly and completely
  reduced-full, so $\stil_o=\stil_c=0$.

  If we dualize the complexes just described, we have
  $\stil^{\beta_p}(C^*,D^*,f^*)=s_{\FF_2}=0$ for all
  $p$. However, for the dual complex, $0$ is no longer oddly or
  completely reduced-full, so $\stil_o=\stil_c=-2$.

  Replacing each copy of $\ZZ$ in $C$ by $\ZZ[X]/(X^2)$ and
  each copy of $\ZZ[h]$ in $D$ by $\BNring$ gives an unreduced
  LEO triple $(C,D,f)$ with examples with
  $s^{\beta_p}(C,D,f)=r^{\beta_p}(C,D,f)=\stil^{\beta_p}(C,D,f)$
  and similarly for the other refined $s$-invariants.
\end{example}

\begin{example}\label{eg:alg-example2}  
  As another example, consider the reduced LEO triple given by
  \[
    C:\quad
    \begin{tikzcd}[column sep=0pt]
      & \ZZ\{2\} & \\
      \ZZ & \ZZ & \ZZ\{2\}\\
      & \ZZ\arrow[ul,"2"]\arrow[u,swap,"2"]
    \end{tikzcd}\qquad\qquad
    D:\quad
    \begin{tikzcd}[column sep=0pt]
      & \ZZ[h]\{2\} & \\
      \ZZ[h]\arrow[ur,"h"] & \ZZ[h] & \ZZ[h]\{2\}\arrow[ul,swap,"-2"]\\
      & \ZZ[h]\arrow[ul,"2"]\arrow[ur,swap,"h"]      
    \end{tikzcd}
  \]
  with the obvious identification of $C/(2)$ and
  $D/(2,h)$. That is, $C^{-1}=\ZZ$,
  $C^{0}=\ZZ\oplus\ZZ\oplus(\ZZ\{2\})$, and
  $C^{1}=\ZZ\{2\}$, with the only nontrivial differential the
  one from $C^{-1}$ to $C^{0}$ given by
  $\bdy(1)=(2,2,0)$, and $D$ is obtained from $C$ by
  replacing each $\ZZ$ with $\ZZ[h]$, but has differential
  $D^{-1}\to D^0$ given by $\bdy(1)=(2,0,h)$ and
  $D^0\to D^1$ given by $\bdy(a,b,c)=ha-2c$.

  The Bockstein maps on the cohomology of $D/(2,h)$ are given
  below, where the \textcolor{mediumblue}{dashed} arrows are $\beta_1$ and
  the \textcolor{darkred}{dotted} arrows are $f^{-1}\circ\beta_1\circ f$:
  \[
    \begin{tikzcd}[column sep=0pt,
      beta arrow/.style={dashed, mediumblue},
      conj arrow/.style={densely dotted, darkred}]
      & \FF_2\{2\} & \\
      \FF_2 & \FF_2 & \FF_2\{2\}\arrow[ul, beta arrow]\\
      & \FF_2\arrow[ul, beta arrow, bend left] 
             \arrow[ul, conj arrow, bend right] 
             \arrow[u, conj arrow]
    \end{tikzcd}.
  \]
  It follows that $0$ is not reduced-full for the Bocksteins $\beta_n$
  induced by $C$, or for the Bocksteins induced by $D$, but
  is for the sum $\beta$ from Equation~\eqref{eq:beta-sum}. Thus,
  $\stil^{\beta_n}(C,D,f)=s_{\FF_2}(C,D,f)=0$ while
  $\stil^\beta(C,D,f)=2$. It is also easy to check that
  $\stil_o(C,D,f)=\stil_c(C,D,f)=0$.

  If we dualize the complexes, to 
  \[
    C^*:\quad
    \begin{tikzcd}[column sep=0pt]
      & \ZZ & \\
      \ZZ\arrow[ur,"2"] & \ZZ\arrow[u,swap,"2"] & \ZZ\{-2\}\\
      & \ZZ\{2\}
    \end{tikzcd}\qquad\qquad
    D^*:\quad
    \begin{tikzcd}[column sep=0pt]
      & \ZZ[h]\\
      \ZZ[h]\arrow[ur,"2"] & \ZZ[h] & \ZZ[h]\{-2\}\arrow[ul,swap,"h"]\\
      & \ZZ[h]\{-2\}\arrow[ur,swap,"-2"]\arrow[ul,"h"] & 
    \end{tikzcd},
  \]
  we now have
  $\stil^{\beta_n}(C,D,f)=\stil^\beta(C,D,f)=\stil_o(C,D,f)=0$
  but $\stil_c(C,D,f)=-2$: any generator in bigrading
  $(0,0)$ mapping to a generator of $H^{0, 0}(h^{-1}D^*)$
  must have the form $(2a,b,0)$ with $b$ odd, but the mod-$(2,h)$
  reduction of such a class is not in the image of the homology of
  $C^*$.
\end{example}

\section{Structural results on reduced LEO groups}\label{sec:LEO-structure}
In this section, we show that the invariant $\stil_c$, applied to a
reduced LEO triple and its dual, detects the trivial local equivalence
class, and $\stil_o$ does the same for two-reduced LEO triples. In the
two-reduced case, a little more work shows that the local equivalence
group $\rCLEO^o$ is totally ordered.

\begin{theorem}\label{thm:reduced-local-is-e}
  A reduced LEO triple $(C,D,f)$ is trivial in $\rCLEO$ if and only if
  $\stil_c(C,D,f)=\stil_c(C^*,D^*,f^*)=0$.  Similarly, a two-reduced
  LEO triple $(C,D,f)$ is trivial in $\rCLEO^o$ if and only if
  $\stil_o(C,D,f)=\stil_o(C^*,D^*,f^*)=0$.
\end{theorem}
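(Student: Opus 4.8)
The plan is to prove both directions by reducing triviality in $\rCLEO$ to the existence of local maps to and from the trivial reduced LEO triple $T=(\ZZ,\ZZ[h],\id)$, all of whose generators lie in bigrading $(0,0)$. Three elementary facts will be used throughout: the class of $T$ is the identity of $\rCLEO$; $T$ is self-dual; and $(C,D,f)^{**}\cong(C,D,f)$ via the standard double-dual identification for finitely generated free complexes. We also recall that $\stil_c$ descends to $\rCLEO$ (Lemma~\ref{lem:s-beta-descends}), and that a direct computation, of the sort carried out in Example~\ref{eg:unknot-full}, gives $\stil_c(T)=0$: here $q=0$ is completely reduced-full because $1\in H^{0,0}(\ZZ[h])$ maps to a generator of $H^{0,0}(h^{-1}\ZZ[h])\cong\ZZ$ and is hit from $H^{0,0}(\ZZ)$ through $f=\id$, while $H^{0,2}(\ZZ[h])=0$.

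The ``only if'' direction is then immediate: if $(C,D,f)$ is trivial in $\rCLEO$, i.e.\ locally equivalent to $T$, then $\stil_c(C,D,f)=\stil_c(T)=0$ by invariance, and since $(C,D,f)\sim T$ forces $(C^*,D^*,f^*)\sim T^*=T$, also $\stil_c(C^*,D^*,f^*)=0$.

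For the ``if'' direction the key step is to establish the claim: \emph{if a reduced LEO triple $(C',D',f')$ satisfies $\stil_c(C',D',f')\ge 0$, then there is a local map $T\to(C',D',f')$.} To prove it, note that $q=0$ is completely reduced-full, so one can pick cycles $\check a\in (C')^{0,0}$ and $a\in(D')^{0,0}$ with $f\circ j[\check a]=p[a]$ in $H^{0,0}(D'_{h=0};\FF_2)$ and with $i[a]$ a generator of $H^{0,0}(h^{-1}D')\cong\ZZ$. Set $\alpha\co\ZZ\to C'$ and $\beta\co\ZZ[h]\to D'$ by $1\mapsto\check a$ and $1\mapsto a$; these are bigrading-preserving chain maps of modules. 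After inverting $h$, $h^{-1}\beta$ carries the free generator $1$ of $h^{-1}\ZZ[h]=\ZZ[h,h^{-1}]$ to the free generator $i[a]$ of the rank-one $\ZZ[h,h^{-1}]$-module $H^{*,*}(h^{-1}D')$, so it is a quasi-isomorphism, hence a homotopy equivalence of complexes of free $\ZZ[h,h^{-1}]$-modules. Finally the square of Definition~\ref{def:reduced-local-equiv} homotopy-commutes: both $f\circ(\alpha\otimes\FF_2)$ and $\beta\otimes\FF_2$ are maps out of $\FF_2$ (concentrated in homological degree $0$) into $D'_{h=0}\otimes\FF_2$, and they agree on $H^{0,0}$ — which is exactly the equation $f\circ j[\check a]=p[a]$ — hence differ by a coboundary and are chain homotopic. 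Granting the claim, apply it to $(C,D,f)$ to get a local map $T\to(C,D,f)$, and apply it to the reduced LEO triple $(C^*,D^*,f^*)$ (well-defined by Proposition~\ref{prop:rCLEO}) to get $T\to(C^*,D^*,f^*)$; dualizing the latter (a local map $A\to B$ dualizes to $B^*\to A^*$, by the reduced analogue of Point~\ref{item:gp-inv-equiv} in the proof of Theorem~\ref{thm:local-equiv-group}) produces a local map $(C,D,f)\cong(C,D,f)^{**}\to T^*=T$. Local maps in both directions give $(C,D,f)\sim T$, that is, triviality in $\rCLEO$.

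The two-reduced statement will be proved by the identical argument over $\FF_2[h]$ in place of $\ZZ[h]$, with $\stil_o$ in place of $\stil_c$ and using that $\stil_o$ descends to $\rCLEO^o$ (Lemma~\ref{lem:s-beta-descends}); since $H^{0,0}(h^{-1}D)\cong\FF_2$ there, ``generator'' and ``nonzero primitive'' coincide, so ``oddly reduced-full'' plays the role of ``completely reduced-full'' with no other change. The step I expect to be the main obstacle is the last part of the claim: converting the purely homological input ``$q=0$ is completely reduced-full'' into an honest chain-level local map, specifically deducing homotopy-commutativity of the defining square from mere agreement on $H^{0,0}$. This works because $T$ is supported in a single homological degree, so a chain homotopy of maps out of it is the same data as a nullhomotopy of their difference; one must also track the bigrading, which is what pins the relevant quantum grading to $0$ (and is why the hypothesis is phrased as $\stil_c=0$ rather than $\stil_c\ge 0$, the reverse inequality being automatic from the monotonicity of $\stil_c$ under local maps exhibited in the proof of Lemma~\ref{lem:s-beta-descends}).
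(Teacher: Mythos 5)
Your proof is correct and follows essentially the same strategy as the paper's: the ``only if'' direction is invariance (Lemma~\ref{lem:s-beta-descends}), and for ``if'' you use a cocycle representative witnessing that $q=0$ is completely reduced-full to build a local map $T\to(C,D,f)$, then apply the same construction to the dual triple and dualize to get the map in the other direction. The only cosmetic differences are that you phrase the intermediate claim with the hypothesis $\stil_c\geq 0$ rather than $\stil_c=0$ (equivalent, since completely-reduced-fullness propagates downward by multiplying the witness by $h$), and you route the reverse map through the double-dual identification $(C,D,f)^{**}\cong(C,D,f)$ rather than directly transposing.
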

Consequently,
$(C,D,f)$ is locally equivalent to $(C',D',f)$
if and only if
\[
  \stil_c\bigl((C,D,f)\otimes(C',D',f')^*\bigr)=\stil_c\bigl((C,D,f)^*\otimes(C',D',f')\bigr)=0,
\]
and similarly for the two-reduced case.

\begin{proof}
  We prove the statement about $\rCLEO$; the statement about
  $\rCLEO^o$ is similar.
  The ``only if'' direction is Lemma~\ref{lem:s-beta-descends}. So,
  assume that
  $\stil_c(C,D,f)=\stil_c(C^*,D^*,f^*)=0$. Suppose
  that $[\check{a}]$ and $[a]$ witness the fact that $0$ is completely reduced-full, so $\check{a}\in C$ and $a\in D$ are
  cocycle representatives in quantum grading $0$. Let $\wt{a}$ denote
  the image of $\check{a}$ in $C$. Define $\alpha\co \ZZ\to C$ by
  $\alpha(1)=\wt{a}$ and $\beta\co \ZZ[h]\to D$ by
  $\beta(1)=a$. Since the image of $[a]$ in $H^{0,0}(h^{-1}D)$
  is a generator, $\beta$ induces a quasi-isomorphism
  $\ZZ[h,h^{-1}]\to h^{-1}D$; since $h^{-1}D$ is a complex of
  free modules over $\ZZ[h,h^{-1}]$, this is the same as a homotopy
  equivalence. The images $[p(a)]$ of $[a]$
  and $[f(\wt{a})]$ of $[\check{a}]$ in $H(D_{h=0};\FF_2)$
  agree, so there is an element $c\in D_{h=0}\otimes\FF_2$ so that
  $\bdy(c)=f(\wt{a})-p(a)$. Define a homotopy
  $\gamma\co \FF_2\to D_{h=0}\otimes\FF_2$ by $\gamma(1)=c$. Then
  $\gamma$ is a homotopy making Diagram~\eqref{eq:reduced-local-map}
  homotopy commute. In particular, $(\alpha,\beta)$ is a local map
  from $(\ZZ,\ZZ[h],\id)$ to $(C,D,f)$.

  To construct a local map the other direction, applying the
  discussion in the previous paragraph to $(C^*,D^*,f^*)$
  gives a local map
  $(\gamma,\delta)\co (\ZZ,\ZZ[h],\id)\to
  (C^*,D^*,f^*)$. The transpose of $\gamma$ (respectively
  $\delta$) is a map $\gamma^T\co C\to \ZZ$ (respectively
  $\delta^T \co D^*\to \ZZ[h]$), and Diagram~\eqref{eq:reduced-local-map} for
  $(\gamma,\delta)$ induces a corresponding diagram for
  $(\gamma^T,\delta^T)$ after replacing the homotopy equivalences with
  their inverses. Finally, the fact that the map
  $\delta\co \ZZ[h,h^{-1}]\to h^{-1}D^*$ is a quasi-isomorphism
  implies that the transpose to $\delta$ is also a
  quasi-isomorphism. So, $(\gamma^T,\delta^T)$ is the desired local
  map $(C,D,f)\to (\ZZ,\ZZ[h],\id)$.
\end{proof}

\begin{corollary}\label{cor:stilc-triv}
  Given a reduced LEO triple $(C,D,f)$,
  $\stil_c(C^*,D^*,f^*)=-\stil_c(C,D,f)$ if and only if $(C,D,f)$ is
  locally equivalent to $(\ZZ\{q\},\ZZ[h]\{q\},\id)$, where
  $q=\stil_c(C,D,f)$.
\end{corollary}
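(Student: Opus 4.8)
The plan is to reduce the corollary to Theorem~\ref{thm:reduced-local-is-e} by translating by a quantum-grading shift. Write $q=\stil_c(C,D,f)$; this lies in $2\ZZ$ because $\stil_c$ is a maximum over even integers, so in particular the triple $(\ZZ\{q\},\ZZ[h]\{q\},\id)$ appearing in the statement is a genuine reduced LEO triple. For a reduced LEO triple $T=(C,D,f)$ and an even integer $r$, set $T\{r\}=(C\{r\},D\{r\},f)$; shifting by an even amount keeps $h^{-1}D\{r\}$ supported in even quantum gradings, so $T\{r\}$ is again a reduced LEO triple, and $T\mapsto T\{r\}$ is a bijection on the set of local equivalence classes with inverse $T\mapsto T\{-r\}$, since a local map of triples shifts to a local map between the shifted triples. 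I would then use two identities, both immediate from the definitions: first, $(T\{r\})^{*}=T^{*}\{-r\}$, which follows from the quantum-grading convention for duals in Definition~\ref{def:tensor-prod}; second, $\stil_c(T\{r\})=\stil_c(T)+r$, since an integer $q'$ is completely reduced-full for $T\{r\}$ exactly when $q'-r$ is completely reduced-full for $T$. Finally, writing $U_r=(\ZZ\{r\},\ZZ[h]\{r\},\id)=(\ZZ,\ZZ[h],\id)\{r\}$, one has $\stil_c(U_r)=r$ by a direct computation (the reduced analogue of Example~\ref{eg:unknot-full}: $r$ is completely reduced-full for $U_r$, while $r+2$ is not, as $H^{0,r+2}(\ZZ[h]\{r\})=0$), and $U_r^{*}=U_{-r}$.

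For the ``if'' direction, suppose $(C,D,f)$ is locally equivalent to $U_q$. Duals respect local equivalence (this is built into Proposition~\ref{prop:rCLEO}, which makes $\rCLEO$ a group with inverse the dual), so $(C^{*},D^{*},f^{*})$ is locally equivalent to $U_q^{*}=U_{-q}$. Since $\stil_c$ descends to $\rCLEO$ (Lemma~\ref{lem:s-beta-descends}), it follows that
\[
\stil_c(C^{*},D^{*},f^{*})=\stil_c(U_{-q})=-q=-\stil_c(C,D,f).
\]

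For the ``only if'' direction, suppose $\stil_c(C^{*},D^{*},f^{*})=-q$. Applying the two identities above to the triple $(C,D,f)\{-q\}$ gives
\[
\stil_c\bigl((C,D,f)\{-q\}\bigr)=q-q=0,\qquad
\stil_c\bigl(((C,D,f)\{-q\})^{*}\bigr)=\stil_c\bigl((C^{*},D^{*},f^{*})\{q\}\bigr)=-q+q=0.
\]
By Theorem~\ref{thm:reduced-local-is-e}, $(C,D,f)\{-q\}$ is trivial in $\rCLEO$, that is, locally equivalent to $(\ZZ,\ZZ[h],\id)$; applying the shift $\{q\}$ then shows $(C,D,f)$ is locally equivalent to $(\ZZ\{q\},\ZZ[h]\{q\},\id)=U_q$, as required.

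Once Theorem~\ref{thm:reduced-local-is-e} is in hand this argument is purely formal, so I do not anticipate a genuine obstacle. The only point demanding care is the bookkeeping of grading shifts --- in particular, verifying from the conventions of Definition~\ref{def:tensor-prod} that $(T\{r\})^{*}=T^{*}\{-r\}$ rather than $T^{*}\{r\}$, and confirming $\stil_c(U_r)=r$ together with the shift formula $\stil_c(T\{r\})=\stil_c(T)+r$.
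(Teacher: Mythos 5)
Your proof is correct and follows the same route as the paper: the paper argues by tensoring $(C,D,f)$ with $(\ZZ\{q\},\ZZ[h]\{q\},\id)^{*}$, which is exactly the $\{-q\}$ shift you use, checks that the result and its dual have vanishing $\stil_c$, and invokes Theorem~\ref{thm:reduced-local-is-e} before shifting back. You simply make the bookkeeping (the identities $(T\{r\})^{*}=T^{*}\{-r\}$ and $\stil_c(T\{r\})=\stil_c(T)+r$) explicit, which the paper leaves implicit.
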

\begin{proof}
  The ``if'' direction is trivial; for the other, suppose
  $\stil_c(C^*,D^*,f^*)=-\stil_c(C,D,f)=-q$.
  Tensoring $(C,D,f)$ with $(\ZZ\{q\},\ZZ[h]\{q\},\id)^*$
  simply has the effect of shifting the gradings, so the result and
  its dual have $\stil_c=0$.  Applying
  Theorem~\ref{thm:reduced-local-is-e} and then tensoring by
  $(\ZZ\{q\},\ZZ[h]\{q\},\id)$ gives the result.
\end{proof}

\begin{remark}\label{rem:sc-not-enough}
  For the unreduced case, it is not true that
  \[
    r_c(C,D,f)=s_c(C,D,f)=r_c((C,D,f)^*)=s_c((C,D,f)^*)=0
  \]
  implies that $(C,D,f)$ is locally equivalent to $\LEO(U)$. Let $D$
  be the complex from Example~\ref{eg:pathology} with $k=2$ and let
  $C=D_{h=0}$.  Consider the triple $(C\{-2\},D\{-2\},\id)$ and its
  dual. Since $C$ is simply the modulo-$h$ reduction of $D$, $q$ is
  completely half-full if and only if there is an element of
  $H(D\{-2\})$ of grading $q$ which is not $h$-torsion (and similarly
  for completely full). So, the computation in
  Example~\ref{eg:pathology} shows that $1$ is completely half-full,
  and $-1$ is completely full, so
  $r_c(C\{-2\},D\{-2\},\id)=s_c(C\{-2\},D\{-2\},\id)=0$. On the other
  hand, the dual complex to $D$, with grading shifted up by $2$ and
  viewed as a complex over $\ZZ[h]$, is given by
  \[
    \begin{tikzcd}
      5 & \ZZ[h]\langle a\rangle & & \\
      3 & \ZZ[h]\langle Xa\rangle & \ZZ[h]\langle b\rangle\arrow[l]\arrow[ul,"h",swap] & \\
      1 & &\ZZ[h]\langle Xb\rangle & \ZZ[h]\langle c\rangle\arrow[uull,"h^2", bend right,swap]\\
      -1 & & & \ZZ[h]\langle Xc\rangle\arrow[uull,"h^2", bend left]
    \end{tikzcd}
  \]
  where $a,b,c$ are names for the generators of the copies of
  $\BNring$ and the numbers indicate the quantum gradings. Canceling
  $Xa$ and $b$ gives that
  \[r_c\big((C\{-2\},D\{-2\},\id)^*\big)=s_c\big((C\{-2\},D\{-2\},\id)^*\big)=0,\]
  where the relevant generators are $Xb$ and $Xc$.

  There is no local map from $\LEO(U)$ to $(C\{-2\},D\{-2\},\id)$.  A
  generator of $\BNring\{1\}$ in $\LEO(U)$ would have to map to
  $(0,a)\in D^0=\BNring\{-1\}\oplus\BNring\{1\}$ for some $a\in\ZZ$,
  but $Xh^2(0,a)=\bdy(X)$ (where $X\in D^{-1}=\BNring\{-3\}$), so
  $(0,a)$ does not generate the homology of $h^{-1}D$.

  A similar short computation shows that there is also no local map
  from $(C\{-2\},D\{-2\},\id)$ to $\LEO(U)$.  This example turns out
  to show that the strategy used in Theorem~\ref{thm:o-total-order}
  below to give a total order on $\rCLEO^o$ does not work for $\CLEO$.
\end{remark}

Recall the variant $\rCLEO^{o}$ of $\rCLEO$ from
Definition~\ref{def:mod-2-local}, which is defined like $\rCLEO$ but
where $D$ is a complex over $\FF_2[h]$ rather than $\ZZ[h]$.
We show in Theorem~\ref{thm:o-total-order} that $\rCLEO^o$ is a
totally ordered group. The main ingredient in the proof is the
following:

\begin{lemma}\label{lem:o-mirror}
  Let $(C,D,f)$ be a reduced LEO triple, with dual
  $(C^*,D^*,f^*)$.  If
  $\stil_o(C,D,f)=s_{\FF_2}(C,D,f)-2$ then
  $\stil_o(C^*,D^*,f^*)=s_{\FF_2}(C^*,D^*,f^*)$.
\end{lemma}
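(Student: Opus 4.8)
The plan is to normalize so that everything happens in one quantum grading, reformulate the hypothesis and conclusion as statements about a single finite-dimensional $\FF_2$-vector space, and then build the witnesses for the dual side from a separating functional.

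\emph{Normalization and reformulation.} Since $\stil_o$ and $s_{\FF_2}$ are shifted by the same amount under a quantum grading shift and dualization negates gradings, I would first tensor with $(\ZZ\{-q\},\FF_2[h]\{-q\},\id)$ so that $q:=s_{\FF_2}(C,D,f)=0$; because $s_{\FF_2}$ is a homomorphism on $\rCLEO$ (Lemma~\ref{lem:splus-not-homom}) and dualization is inversion, also $s_{\FF_2}(C^*,D^*,f^*)=0$. As $\stil_o$ and the notion of ``oddly reduced-full'' depend only on the image in $\rCLEO^o$, I may take $D$ to be a finite free complex over $\FF_2[h]$ (with $C$ still over $\ZZ$). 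By Lemma~\ref{lem:PID_homology}, $H^0(D)\cong\FF_2[h]\langle g\rangle\oplus T$ with $T$ torsion; the localization hypothesis forces all $H^k(D)$ with $k\neq0$ to be torsion, and after the shift $g$ lies in quantum grading $0$ with $i(g)$ a generator of $H^{0,0}(h^{-1}D)\cong\FF_2$. Put $W:=H^{0,0}(D_{h=0})\cong H^{0,0}(C;\FF_2)$ via $f$, and $V:=f\bigl(\im\, j_C\bigr)\subseteq W$, where $j_C\colon H^{0,0}(C;\ZZ)\to H^{0,0}(C;\FF_2)$ is reduction mod $2$. From the long exact cohomology sequence for multiplication by $h$ on $D$, the map $p$ on $H^{0,0}$ has kernel $h\,H^{0,2}(D)=hT^{(2)}$, so its image is $P=\FF_2\langle p(g)\rangle\oplus p(T^{(0)})$ with $p(g)\neq0$. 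Unwinding Definition~\ref{def:reduced-local-equiv} and the definition of $\stil_o$, ``$0$ is oddly reduced-full for $(C,D,f)$'' is exactly the statement $p(g)\in V+p(T^{(0)})$; the hypothesis says this fails, so there is $\phi\in W^{*}=\Hom_{\FF_2}(W,\FF_2)$ with $\phi|_{V+p(T^{(0)})}=0$ and $\phi(p(g))=1$. The goal is to produce from $\phi$ classes $\check a^{*}\in H^{0,0}(C^*;\ZZ)$ and $a^{*}\in H^{0,0}(D^*)$ witnessing that $0$ is oddly reduced-full for $(C^*,D^*,f^*)$; since $\stil_o(C^*,D^*,f^*)\le s_{\FF_2}(C^*,D^*,f^*)=0$, this forces $\stil_o(C^*,D^*,f^*)=0$.

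\emph{The $C$-side.} Since $\phi$ kills $V=f(\im\, j_C)$, the class $f^{\vee}(\phi)\in H^{0,0}(C^*;\FF_2)=\Hom_{\FF_2}\bigl(H^{0,0}(C;\FF_2),\FF_2\bigr)$ (here $f^{\vee}$ is the transpose of $f$, a homotopy inverse to $\fbar$) lies in $(\im\, j_C)^{\perp}$. A universal-coefficients computation shows $(\im\, j_C)^{\perp}\subseteq\im\, j_{C^*}$: decomposing $H^{0,0}(C;\FF_2)$ and $H^{0,0}(C^*;\FF_2)$ by the universal coefficients sequence, the $\FF_2$-duality pairing matches the free summands perfectly, matches the $\Tor(H^{1,0}(C;\ZZ),\FF_2)$-summand of the former with the $\Ext^1(H^{1,0}(C;\ZZ),\ZZ)$-summand of the latter, and matches the torsion of $H^{0,0}(C;\ZZ)$ against itself, from which the inclusion follows by inspection. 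Hence $f^{\vee}(\phi)=j_{C^*}(\check a^{*})$ for some integral class $\check a^{*}\in H^{0,0}(C^*;\ZZ)$.

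\emph{The $D$-side and conclusion.} The evaluation $D\otimes_{\FF_2[h]}D^*\to\FF_2[h]$ reduced mod $h$ induces the perfect pairing $W\otimes W^{*}\to\FF_2$, under which $\langle p(x),p^{*}(y)\rangle$ equals the value $y(x)\in\FF_2$; as $y(x)=0$ whenever $y$ is torsion, $p^{*}$ carries the torsion $(T^{*})^{(0)}=\ker i^{*}$ of $H^{0,0}(D^*)$ into $P^{\perp}$, whereas the free dual generator $g^{*}$ of $H^{0,0}(D^*)$ satisfies $p^{*}(g^{*})(p(g))=1$ and $p^{*}(g^{*})|_{p(T^{(0)})}=0$. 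Thus $\phi-p^{*}(g^{*})$ annihilates $P$, i.e.\ lies in $P^{\perp}$, and the decisive fact (see below) is that in fact $P^{\perp}=p^{*}\bigl((T^{*})^{(0)}\bigr)$, giving $\phi=p^{*}(g^{*}+t^{*})$ for some $t^{*}\in(T^{*})^{(0)}$. Setting $a^{*}:=g^{*}+t^{*}$ we have $i^{*}(a^{*})=i^{*}(g^{*})\neq0$, and since $\fbar\circ f^{\vee}\simeq\id$,
\[
\fbar\bigl(j_{C^*}(\check a^{*})\bigr)=\fbar\bigl(f^{\vee}(\phi)\bigr)=\phi=p^{*}(a^{*}).
\]
So $\check a^{*}$ and $a^{*}$ are the required witnesses, and the lemma follows.

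\emph{Main obstacle.} The crux is the identity $P^{\perp}=p^{*}\bigl((T^{*})^{(0)}\bigr)$, equivalently the rank count $\dim_{\FF_2}\im\, p+\dim_{\FF_2}\im\, p^{*}=\dim_{\FF_2}W+1$. Using the long exact sequences for multiplication by $h$ on $D$ and on $D^*$, this reduces to $\dim\bigl((T^{*})^{(0)}/h(T^{*})^{(2)}\bigr)=\dim\ker\bigl(h\colon H^{1,2}(D)\to H^{1,0}(D)\bigr)$. Then, writing $T^{*}=\Ext^1_{\FF_2[h]}\!\bigl(H^1(D),\FF_2[h]\bigr)$ by universal coefficients over the PID $\FF_2[h]$, and using the elementary facts that for a finitely generated torsion $\FF_2[h]$-module $M$ one has $\dim(M^{(q)}/hM^{(q+2)})$ equal to the number of cyclic summands whose $\FF_2[h]$-generator sits in grading $q$, $\dim\ker(h|_{M^{(q)}})$ equal to the number of summands whose bottom element sits in grading $q$, and that $\Ext^1_{\FF_2[h]}(-,\FF_2[h])$ sends a summand $\FF_2[h]/(h^n)$ with generator in grading $t$ to one with generator in grading $2n-t$ (hence bottom element in grading $2-t$), both quantities equal the number of $\FF_2[h]/(h^n)$-summands of $H^1(D)$ with $t=2n$. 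The remaining ingredients — the $C$-side universal-coefficients claim, the evaluation-pairing compatibility $\langle p(x),p^{*}(y)\rangle=y(x)$, and the homotopy $\fbar\circ f^{\vee}\simeq\id$ — are routine, but coordinating them and keeping precise track of quantum-grading shifts, particularly in the $\Ext$-terms and under dualization, is the fiddly part of the argument.
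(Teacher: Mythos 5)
Your proposal is correct in outline and reaches the same conclusion via the same central insight—that the failure of oddly reduced-full produces a separating $\FF_2$-functional $\phi$ which then furnishes the witnesses $(\check a^*, a^*)$ for the dual triple—but it follows a genuinely different route for the supporting lemmas. The paper's proof begins by replacing $C$ (over the PID $\ZZ$) and $D\otimes\FF_2$ (over the PID $\FF_2[h]$) by homotopy-equivalent direct sums of elementary two-term blocks $\ZZ$, $\ZZ\xrightarrow{2^n}\ZZ$, $\FF_2[h]$, $\FF_2[h]\xrightarrow{h^n}\FF_2[h]$. With an explicit basis in hand, the images of $j_C$, $j_{C^*}$, $p$, and $p^*$ and their annihilators are literally read off coordinate-by-coordinate, and the two lifting claims become transparent. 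Your proof stays entirely at the level of (co)homology groups and instead proves the two lifting steps abstractly: first the universal-coefficients claim $(\im j_C)^\perp\subseteq\im j_{C^*}$, and second the rank identity $P^\perp = p^*\bigl((T^*)^{(0)}\bigr)$ via a $\dim$-count reduced to $\Ext^1_{\FF_2[h]}$. This is a legitimate alternative, and arguably a bit more conceptual, but the steps you call ``routine'' are subtler than your phrasing suggests. In particular, the inclusion $(\im j_C)^\perp\subseteq\im j_{C^*}$ does \emph{not} follow ``by inspection'' of the universal-coefficients decompositions, because the UCT splittings of $H^{0,0}(C;\FF_2)$ and $H^{0,0}(C^*;\FF_2)$ are non-canonical and there is no a priori guarantee that the chosen splittings are dual to one another under the $\FF_2$-pairing (one must also track the self-pairing of the torsion of $H^{0,0}(C;\ZZ)$ against $\Ext^1(H^{0,0}(C;\ZZ),\ZZ)[2]$, not just the $\Tor$/$\Ext$ matching you mention). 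The cleanest way to verify both of your auxiliary lemmas is precisely the explicit elementary-block decomposition that the paper uses at the outset, which makes them trivial; by avoiding it you shift the work into $\Tor$/$\Ext$ bookkeeping, acknowledged in your own ``main obstacle'' paragraph. So: the conceptual skeleton matches the paper, but the tactic for justifying the two lifting steps is different, and the paper's explicit-decomposition tactic is what makes those steps genuinely routine.
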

\begin{proof}
  Since $\ZZ$ and $\FF_2[h]$ are PIDs, $C$ is homotopy equivalent
  to a direct sum of bigrading-shifted copies of $\ZZ$,
  $\ZZ\stackrel{2^n}{\longrightarrow}\ZZ$, and
  $\ZZ\stackrel{p^n}{\longrightarrow}\ZZ$ ($p>2$ prime), while
  $D\otimes_\ZZ\FF_2$ is homotopy equivalent to the direct sum of
  a copy of $\FF_2[h]$ in bigrading $(0,q=s_{\FF_2}(C,D,f))$
  and some bigrading-shifted copies of
  $\FF_2[h]\stackrel{h^n}{\longrightarrow}\FF_2[h]$.  Since homotopy equivalences
  induce local equivalences, we may assume that $C$ and $D$
  have this form. Moreover, since $\stil_o$ depends on $C$ only
  through the image of the map $H(C)\to H(C;\FF_2)$, we may
  assume that $C$ has no summands of the form
  $\ZZ\stackrel{p^n}{\longrightarrow}\ZZ$ for $p>2$.

  A copy of $\ZZ\stackrel{2^n}{\longrightarrow}\ZZ$ where the first
  $\ZZ$ has bigrading $(i,j)$ in $C$ contributes copies of $\FF_2$
  to $H(C)$ in bigradings $(i,j)$ and $(i+1,j)$, with the second
  copy in the image of the map $H(C)\to
  H(C;\FF_2)$. Similarly, a copy of
  $\FF_2[h]\stackrel{h^n}{\lra}\FF_2[h]$ where the first $\FF_2[h]$ is in
  bigrading $(i,j)$ contributes copies of $\FF_2$ to
  $H(D_{h=0})$ in bigradings $(i,j)$ and $(i+1,j+2n)$, where the
  second copy is in the image of
  $H(D;\FF_2)\to H(D_{h=0};\FF_2)$.

  Suppose that $H^{0,q}(C\otimes\FF_2)=\FF_2^{c+d+e}$ where
  $\FF_2^c\oplus0^d\oplus0^e$ comes from summands of $\ZZ$ in
  $C^{0,q}$, $0^c\oplus\FF_2^d\oplus0^e$ comes from summands of
  the form $\ZZ\stackrel{2^n}{\longrightarrow}\ZZ$ with the second
  $\ZZ$ in bigrading $(0,q)$ (and, so, is in the image of the map
  $H(C)\to H(C;\FF_2)$), and $0^c\oplus0^d\oplus\FF_2^e$
  comes from summands of the form
  $\ZZ\stackrel{2^n}{\longrightarrow}\ZZ$ with the first $\ZZ$ in
  bigrading $(0,q)$ (and, so, is not in the image of the map
  $H(C)\to H(C;\FF_2)$). Suppose
  $H^{0,q}(D_{h=0};\FF_2)=\FF_2^{1+a+b}$ where the first $\FF_2$
  comes from the $\FF_2[h]$-summand, $0\oplus\FF_2^a\oplus0^b$ comes
  from summands $\FF_2[h]\stackrel{h^n}{\lra}\FF_2[h]$ where the second
  $\FF_2[h]$ is generated in degree $(0,q)$ (hence, is in the image
  of $H(D;\FF_2)\to H(D_{h=0};\FF_2)$), and
  $0\oplus0^a\oplus\FF_2^b$ comes from summands
  $\FF_2[h]\stackrel{h^n}{\lra}\FF_2[h]$ where the first $\FF_2[h]$ is
  generated in degree $(0,q)$ (hence, is not in the image of
  $H(D;\FF_2)\to H(D_{h=0};\FF_2)$). Here, $c+d+e=1+a+b$.

  The hypothesis that
  $\stil_o(C,D,f)=s_{\FF_2}(C,D,f)-2$ means that
  $q$ is not oddly reduced-full. So,
  $f(\{(x_1,\dots,x_c,y_1,\dots,y_d,0,\dots,0)\})$ does not intersect
  $\{(1,u_1,\dots,u_a,0,\dots,0)\}$. Let
  $E=\{(0,\dots,0,0,\dots,0,z_1,\dots,z_e)\}\subset
  H^{0,q}(C;\FF_2)$, and let
  $\pi_E\co H^{0,q}(C;\FF_2)\to E$ be the projection induced by
  our chosen basis. Let
  $A=\{(0,u_1,\dots,u_a,0,\dots,0)\}\subset H^{0,q}(D_{h=0};\FF_2)$.
  So, $\pi_E(f^{-1}(1,0,\dots,0,0,\dots,0))\not\in \pi_E(f^{-1}(A))$.

  Let $\alpha_E$ be a linear functional on $E$ such that
  $\alpha_E(f^{-1}(1,0,\dots,0,0,\dots,0))=1$ and
  $\alpha_E(\pi_E(f^{-1}(A)))=0$, and let $\alpha=\alpha_E\circ\pi_E$;
  $\alpha$ is a class in $H^{0,-q}(C^*;\FF_2)$.  Since $\alpha$
  vanishes on $\{(0,\dots,0,y_1,\dots,y_d,0,\dots,0)\}$, $\alpha$ is
  in the image of $H^{0,q}(C^*)\to H^{0,q}(C^*;\FF_2)$.
  Further, $f^*\alpha$ vanishes on $A$, so $f^*\alpha$ is in the image
  of a class $\beta\in H^{0,q}(D^*;\FF_2)$.  Finally, since
  $(f^*\alpha)(1,0,\dots,0,0,\dots,0)=1$, the image of $\beta$ in
  $H^{0,q}(h^{-1}D^*;\FF_2)$ is non-zero.
  Thus, $(0,-q)$ is oddly reduced-full, proving the result.
\end{proof}

\begin{corollary}
  \label{cor:o-mirror}
  If $(C, D, f)$ is a reduced LEO triple, then at least one of
  $\stil_o(C, D, f)$ and $\stil_o(C^*, D^*, f^*)$ is non-negative.
\end{corollary}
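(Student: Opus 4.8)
The plan is to argue by contradiction, combining Lemma~\ref{lem:o-mirror} with the fact (Lemma~\ref{lem:splus-not-homom}) that $s_{\FF_2}$ is a group homomorphism on $\rCLEO^o$ — so that $s_{\FF_2}(C^*,D^*,f^*)=-s_{\FF_2}(C,D,f)$ — together with the pinching inequality $0\le s_{\FF_2}(\wt C,\wt D,f)-\stil_o(\wt C,\wt D,f)\le 2$ recorded just before the statement. Note that both $\stil_o$ and $s_{\FF_2}$ take values in $2\ZZ$ for reduced triples, since $h^{-1}\wt D$ is supported in even quantum gradings; hence the difference $s_{\FF_2}-\stil_o$ is either $0$ or $2$.

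First I would suppose, for contradiction, that both $\stil_o(C,D,f)<0$ and $\stil_o(C^*,D^*,f^*)<0$. Applying the inequality $\stil_o\ge s_{\FF_2}-2$ to each triple forces $s_{\FF_2}(C,D,f)\le 0$ and $s_{\FF_2}(C^*,D^*,f^*)\le 0$. Using that $s_{\FF_2}$ is a homomorphism and $(C^*,D^*,f^*)$ is the inverse of $(C,D,f)$ in $\rCLEO^o$, the second of these reads $-s_{\FF_2}(C,D,f)\le 0$, and combining with the first gives $s_{\FF_2}(C,D,f)=0$, and likewise $s_{\FF_2}(C^*,D^*,f^*)=0$.

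Next, since $\stil_o(C,D,f)<0$ while $\stil_o(C,D,f)\ge s_{\FF_2}(C,D,f)-2=-2$ and $\stil_o(C,D,f)$ is even, the only possibility is $\stil_o(C,D,f)=-2=s_{\FF_2}(C,D,f)-2$. Now Lemma~\ref{lem:o-mirror} applies verbatim (with input $(C,D,f)$) and yields $\stil_o(C^*,D^*,f^*)=s_{\FF_2}(C^*,D^*,f^*)=0$, contradicting the assumption $\stil_o(C^*,D^*,f^*)<0$. Therefore at least one of $\stil_o(C,D,f)$ and $\stil_o(C^*,D^*,f^*)$ is non-negative.

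As for the main obstacle: essentially all of the genuine content is already packaged in Lemma~\ref{lem:o-mirror}, so the remaining work is bookkeeping with the homomorphism property of $s_{\FF_2}$ and the parity and pinching constraints. The only point demanding a little care is confirming that both $\stil_o$ and $s_{\FF_2}$ are even for reduced triples, so that ``$\stil_o<0$'' together with ``$\stil_o\ge -2$'' forces ``$\stil_o=-2$''; this is immediate from the defining hypothesis that $h^{-1}\wt D$ is supported in even quantum gradings.
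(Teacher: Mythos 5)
Your proof is correct and uses exactly the same ingredients as the paper's: the pinching inequality $0\le s_{\FF_2}-\stil_o\le 2$, the fact that $s_{\FF_2}(C^*,D^*,f^*)=-s_{\FF_2}(C,D,f)$, parity, and Lemma~\ref{lem:o-mirror} for the borderline case $s_{\FF_2}=0$. The paper phrases it as a direct case split on $s_{\FF_2}(C,D,f)\in\{\geq 2,\ \leq -2,\ =0\}$ while you phrase it as a contradiction, but the content is identical. One small bookkeeping remark: the corollary concerns reduced LEO triples (so the relevant group is $\rCLEO$, not $\rCLEO^o$ as you write), though the identity $s_{\FF_2}(C^*,D^*,f^*)=-s_{\FF_2}(C,D,f)$ holds in both settings so the slip is harmless.
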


\begin{proof}
If $s_{\FF_2}(C, D, f) \geq 2$, then $\stil_o(C, D, f) \geq 0$ as
needed; if $s_{\FF_2}(C, D, f) \leq -2$, then
$s_{\FF_2}(C^*, D^*, f^*) = -s_{\FF_2}(C, D, f) \geq 2$, giving
$\stil_o(C^*, D^*, f^*) \geq 0$.  In the remaining case when
$\stil_{\FF_2}(C, D, f) = 0$, if $\stil_o(C, D, f) \neq 0$, then
Lemma~\ref{lem:o-mirror} gives that
$\stil_o(C^*, D^*, f^*) = s_{\FF_2}(C^*, D^*, f^*) = 0$, proving the
claim.
\end{proof}

Define a relation on $\rCLEO^o$ by declaring that
$[(C,D,f)]\geq [(C',D',f')]$ if there
is a local map from $(C',D',f')$ to
$(C,D,f)$.

\begin{theorem}\label{thm:o-total-order}
  This definition specifies a translation-invariant total order on
  $\rCLEO^o$.
  Further, the total order is characterized by
  $[(C,D,f)]\geq \pi(\LEO(U))$ if and only if
  $\stil_o(C,D,f)\geq0$.
   (Here, $\pi(\LEO(U))$ denotes the identity in $\rCLEO^o$.)
\end{theorem}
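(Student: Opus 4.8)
The plan is to verify, in order, that the relation descends to a partial order on $\rCLEO^o$, that it is translation-invariant, that it satisfies the stated $\stil_o$-characterization, and that it is total; since the substantive input is already Corollary~\ref{cor:o-mirror}, most of this is bookkeeping.

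\textbf{Partial order and translation invariance.} The relation descends to local equivalence classes by the usual argument (pre- and post-compose with local maps realizing equivalences). Reflexivity uses the identity local map; transitivity uses that a composite of local maps is a local map, since the composite of the $h$-localized components is again a homotopy equivalence and the homotopy-commuting squares concatenate; and antisymmetry is immediate, as the existence of local maps in both directions is the definition of local equivalence. For translation invariance, tensoring a local map $(\alpha,\beta)\co(C',D',f')\to(C,D,f)$ with the identity local map on a two-reduced triple $(E,F,g)$ gives $(\alpha\otimes\id,\beta\otimes\id)$ between the tensor products: $\beta\otimes\id$ remains a homotopy equivalence after inverting $h$, being $\beta$ tensored with an identity over $\FF_2[h,h^{-1}]$, and the relevant square commutes up to the tensored homotopy. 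Thus $[(C,D,f)]\geq[(C',D',f')]$ implies $[(C,D,f)\otimes(E,F,g)]\geq[(C',D',f')\otimes(E,F,g)]$.

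\textbf{The characterization.} Let $U^o$ denote the identity $\pi(\LEO(U))$ of $\rCLEO^o$, represented by $(\ZZ,\FF_2[h],\id)$ with generators in bidegree $(0,0)$; a direct check (analogous to Example~\ref{eg:unknot-full}) gives $\stil_o(U^o)=0$. I claim $[(C,D,f)]\geq U^o$ if and only if $\stil_o(C,D,f)\geq0$. For ``$\Rightarrow$'', a local map $(\ZZ,\FF_2[h],\id)\to(C,D,f)$ pushes witnesses for oddly-reduced-fullness forward along a map whose $h$-localized component is an isomorphism on $H^{0,q}(h^{-1}D;\FF_2)$; the computation in the proof of Lemma~\ref{lem:s-beta-descends} then yields the source-$\leq$-target monotonicity $0=\stil_o(U^o)\leq\stil_o(C,D,f)$. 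For ``$\Leftarrow$'', first note that if $q$ is oddly reduced-full then so is $q-2$, replacing the witnesses $\check a,a$ by $0,ha$ (here $h$ acts invertibly on the homology of $h^{-1}D$ while $p(ha)=0$); so $\stil_o(C,D,f)\geq0$ forces $q=0$ to be oddly reduced-full, and the construction from the proof of Theorem~\ref{thm:reduced-local-is-e}, run in the two-reduced setting with $\stil_o$ in place of $\stil_c$, produces from these witnesses a local map $(\ZZ,\FF_2[h],\id)\to(C,D,f)$. Crucially this construction uses oddly-reduced-fullness of the source triple only, which is why the converse direction of Theorem~\ref{thm:reduced-local-is-e} is not needed here.

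\textbf{Totality.} Given classes $x,y$, translation invariance reduces ``$x\geq y$ or $y\geq x$'' to ``$z\geq U^o$ or $U^o\geq z$'' for $z=x-y$; moreover $U^o\geq z$ is equivalent to $-z\geq U^o$, again by translation invariance, with $-z$ represented by the dual of a representative of $z$. By the characterization this becomes ``$\stil_o(C,D,f)\geq0$ or $\stil_o(C^*,D^*,f^*)\geq0$'' for a representative $(C,D,f)$ of $z$, which is exactly Corollary~\ref{cor:o-mirror}. Hence $\rCLEO^o$ is a totally ordered abelian group, and the characterization above is the final assertion of the theorem; it also pins down the order, whose positive cone is $\{z\mid\stil_o(\text{rep})\geq0\}$. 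The only delicate points are keeping the direction of the order aligned with the source-$\leq$-target behavior of $\stil_o$ and confirming that the Theorem~\ref{thm:reduced-local-is-e} construction needs fullness of the source only; the genuinely hard ingredient, the $\stil_o$-mirror statement, is Lemma~\ref{lem:o-mirror}, invoked here as a black box.
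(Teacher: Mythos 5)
Your proposal is correct and follows essentially the same route as the paper: the partial-order axioms and translation invariance come from composing and tensoring local maps, the $\stil_o$-characterization comes from the one-sided construction inside the proof of Theorem~\ref{thm:reduced-local-is-e} (together with the monotonicity argument of Lemma~\ref{lem:s-beta-descends} for the forward implication), and totality is deduced from Corollary~\ref{cor:o-mirror}. The only cosmetic difference is that you reduce $U^o\geq z$ to $-z\geq U^o$ via translation invariance, whereas the paper passes directly through duality of local maps; these are interchangeable.
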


\begin{proof}
This essentially follows from Corollary~\ref{cor:o-mirror} and the fact
that compositions and tensor products of local maps are local maps. In
more detail, for the first statement, we must check that the order is
transitive, anti-symmetric, translation-invariant, and that for any
$[(C,D,f)]$ and $[(C',D',f)]$, either
$[(C,D,f)]\geq [(C',D',f)]$ or
$[(C',D',f)]\geq
[(C,D,f)]$. Transitivity follows from the fact that a
composition of local maps is a local map, and translation invariance
follows from the fact that tensoring a local map
$(C,D,f)\to (C',D',f')$ with the identity map of $(C'',D'',f'')$ gives
a local map of tensor products. Anti-symmetry is immediate: if there
is a local map from $(C,D,f)$ to $(C',D',f')$ and from $(C',D',f')$ to
$(C,D,f)$ then, by definition, $(C,D,f)$ is locally equivalent to
$(C',D',f')$.  The claim that either
$[(C,D,f)]\geq [(C',D',f')]$ or
$[(C',D',f')]\geq [(C,D,f)]$ follows
from the fact that for any $(C,D,f)$ there is either a
local map from $\pi(\LEO(U))$ to $(C,D,f)$ or
vice-versa, which in turn follows from Corollary~\ref{cor:o-mirror}
and (the proof of) Theorem~\ref{thm:reduced-local-is-e}.

For the second statement, from the proof of
Theorem~\ref{thm:reduced-local-is-e},
$\stil_o(C,D,f)\geq0$ if and only if there is a local
map from $\pi(\LEO(U))$ to $(C,D,f)$, so the property
that $\stil_o(C,D,f)\geq0$ characterizes the
non-negative elements of $\rCLEO^o$. By translation invariance, this
in turn characterizes the total order.
\end{proof}

\begin{corollary}
  Every nontrivial element of $\rCLEO^o$ has infinite order.
\end{corollary}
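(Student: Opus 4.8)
The plan is to observe that this is a formal consequence of Theorem~\ref{thm:o-total-order}: every totally ordered abelian group is torsion-free. First I would let $x=[(C,D,f)]$ be a nontrivial element of $\rCLEO^o$. By Theorem~\ref{thm:o-total-order}, the relation $\geq$ is a total order on $\rCLEO^o$, so either $x\geq \pi(\LEO(U))$ or $x\leq \pi(\LEO(U))$; since $x$ is nontrivial, anti-symmetry of the order rules out $x=\pi(\LEO(U))$, so exactly one of $x>0$ or $x<0$ holds (writing $0$ for the identity $\pi(\LEO(U))$).

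Next, suppose $x>0$. Translation invariance of the order gives $nx=(n-1)x+x>(n-1)x$ for each $n\geq1$, so by induction $nx>0$, and in particular $nx\neq0$ for all $n\geq1$; hence $x$ has infinite order. If instead $x<0$, then the inverse $-x=[(C^*,D^*,f^*)]$ satisfies $-x>0$ by translation invariance, so the previous case applies to $-x$, and again $x$ has infinite order.

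There is essentially no obstacle here: the content is entirely in Theorem~\ref{thm:o-total-order}, and once that total order is in hand, torsion-freeness is automatic. One could equivalently phrase the argument through the invariant $\stil_o$, using the characterization in Theorem~\ref{thm:o-total-order} that the sign of $x$ is detected by the sign of $\stil_o$ of a representative (together with its dual), but the order-theoretic formulation above is cleaner.
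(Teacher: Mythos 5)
Your proof is correct and takes exactly the route the paper intends: the corollary is stated without proof immediately after Theorem~\ref{thm:o-total-order}, precisely because it is the standard fact that a totally ordered abelian group is torsion-free, which you have spelled out correctly via translation invariance.
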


\begin{remark}\label{remark:rCLEO-not-ordered}
  As we will now show, the analogue of Corollary~\ref{cor:o-mirror}
  does not hold for $\stil_c$, and hence the proof of 
  Theorem~\ref{thm:o-total-order} does not generalize to $\rCLEO$.
  There is a knot $K_1$ with $s_{\FF_2} = 2$ and $s_{\FF_p} = 0$ for all
  other $p$, and a knot $K_2$ with $s_{\FF_3} = -2$ and
  $s_{\FF_p} = 0$ for all other $p$~\cite{LewarkZib,
    Schutz:integral-s}. Taking $K = K_1 \# K_2$, we claim
  that $\stil_c(K)$ and $\stil_c(\Kbar)$ are both negative.  By
  Lemma~\ref{lem:obstruction_order}, it suffices to show $s_\ZZ$ is
  negative for both.  A result of
  Sch\"utz~\cite[Corollary~4.9]{Schutz:integral-s} gives that
  $s_\ZZ \leq s_\FF$ for any field $\FF$, so we have
  $s_\ZZ(K) \leq s_{\FF_3}(K) = -2$ and
  $s_\ZZ(\Kbar) \leq s_{\FF_2}(\Kbar) = -2$, as needed.
\end{remark}

\begin{remark}
  Theorem~\ref{thm:reduced-local-is-e} reminds us of Hom's
  construction of her group $\mathcal{CFK}$ of knot Floer-like
  complexes modulo the complexes with
  $\varepsilon=0$~\cite{Hom15:inf-rank}. The role of her concordance
  invariant $\varepsilon$ is played here by
  $\bigl(s_{\FF_2}(C,D,f)-\stil_c(C,D,f),-s_{\FF_2}(C,D,f)-\stil_c((C,D,f)^*)\bigr)$,
  which takes values in $\{(0,0),(2,0),(0,-2)\}$.
  For the analogue for
  $s_o$, Theorem~\ref{thm:o-total-order} pushes this analogy further,
  showing that, like Hom's group $\mathcal{CFK}$, the version
  $\rCLEO^o$ of the even-odd local equivalence group has a total
  order. 
\end{remark}

\section{The case of knots}
\label{sec:knot-case}
This section has two goals. The simpler is to connect the refined
$s$-invariants with the concordance group and the slice genus; we do
that near the outset. The other is to explore relations between these
invariants that hold for knots but not general LEO triples. That is,
for an arbitrary LEO triple, the invariants $s^+_\FF$ and $s^-_\FF$
are typically distinct, by Lemma~\ref{lem:splus-not-homom}, but for
knots they agree, and agree with the reduced version $s_\FF$. This
fact, and the techniques underlying its proof, imply more relations
between the refined $s$-invariants of LEO triples coming from
knots. We state the key relations first, then develop the properties
of the Khovanov complexes needed to prove them in
Section~\ref{subsec:conjugation}, before giving the proofs themselves
in Section~\ref{subsec:s-rels-proof}. Finally, in
Section~\ref{sec:more-relations}, we use the structural results from
Section~\ref{sec:LEO-structure} to study the relationships between the
Bockstein and comprehensive refinements.

To start, we note:
\begin{proposition}\label{prop:knot-conc-invts}
  Let $(C,D,f)=\LEO(K)$ or $\LEE(K)$. Then
  the numbers
  $r^{\alpha}(C,D,f)$, $s^{\alpha}(C,D,f)$, and
  $\stil^{\alpha}(C,D,f)$ for $\alpha\in\{\beta_n,\beta\}$ ($1\leq
  n\leq \infty$), and the numbers $r_\gamma(C,D,f)$,
  $s_\gamma(C,D,f)$, and $\stil_\gamma(C,D,f)$ for $\gamma\in\{o, c\}$, are all concordance invariants of $K$. 
\end{proposition}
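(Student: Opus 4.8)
The plan is to obtain this as an immediate consequence of two facts already established: Proposition~\ref{prop:conc-implies-local-equiv}, which says that concordant knots have locally equivalent $\LEO$ (and $\LEE$) triples, and Lemma~\ref{lem:s-beta-descends}, which says that the refined $s$-invariants are invariants of local equivalence. Concretely, suppose $K_0$ and $K_1$ are smoothly concordant. By Proposition~\ref{prop:conc-implies-local-equiv}, $\LEO(K_0)$ is locally equivalent to $\LEO(K_1)$ and $\LEE(K_0)$ is locally equivalent to $\LEE(K_1)$. By Lemma~\ref{lem:s-beta-descends}, each of $r^{\beta_n}$, $s^{\beta_n}$, $r^{\beta}$, $s^{\beta}$, $r_o$, $s_o$, $r_c$, and $s_c$ takes the same value on locally equivalent LEO triples, so each of these numbers, evaluated on $\LEO(K)$ or $\LEE(K)$, depends only on the concordance class of $K$. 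For the reduced refinements $\stil^{\alpha}$ and $\stil_{\gamma}$ applied to the unreduced triples $\LEO(K)$, $\LEE(K)$, recall that by the convention of Section~\ref{sec:refined-s} these denote the corresponding invariants of the image $\pi(C,D,f)\in\rCLEO$; since $\pi\co\CLEO\to\rCLEO$ is a group homomorphism (Proposition~\ref{prop:rCLEO}\ref{item:reduce}), local equivalence of the unreduced triples passes to local equivalence of their $\rCLEO$-images, and Lemma~\ref{lem:s-beta-descends} records that $\stil^{\beta_n}$, $\stil^{\beta}$, $\stil_o$, $\stil_c$ descend to $\rCLEO$. Hence these too are concordance invariants of $K$.

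The only index not literally covered by Lemma~\ref{lem:s-beta-descends} is $n=\infty$. Here I would use that $r^{\beta_\infty}=\lim_{n\to\infty}r^{\beta_n}$ and $s^{\beta_\infty}=\lim_{n\to\infty}s^{\beta_n}$ (with analogous formulas in the reduced case). By naturality, $q$ being $\beta_k$-(half-)full implies $q$ is $\beta_n$-(half-)full for $k\leq n$, so the sequences $n\mapsto r^{\beta_n}$ and $n\mapsto s^{\beta_n}$ are nondecreasing; and by \eqref{eq:r-diff-by-2}, \eqref{eq:s-diff-by-2} they are bounded above (by $s^{+}_{\FF_2}(C,D,f)+2$ and $s^{-}_{\FF_2}(C,D,f)+2$ respectively). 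So each stabilizes, and its eventual constant value is a local-equivalence invariant since every term is; the same applies to $\stil^{\beta_\infty}$. Combined with the previous paragraph, this proves the proposition.

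There is no genuine obstacle here: the statement is essentially a corollary of Proposition~\ref{prop:conc-implies-local-equiv} and Lemma~\ref{lem:s-beta-descends}. The one point deserving a line of care is the $n=\infty$ case---checking that the $\beta_n$-refinements stabilize as $n\to\infty$, so that the $\beta_\infty$-invariants are well defined and inherit local-equivalence invariance from their finite approximations---and even that is already implicit in the discussion accompanying the definition of $\beta_\infty$.
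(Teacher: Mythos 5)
Your proof is correct and follows the same route as the paper, which simply cites Propositions~\ref{prop:conc-implies-local-equiv} and~\ref{prop:rCLEO} together with Lemma~\ref{lem:s-beta-descends}. The extra care you take with the $n=\infty$ case is a reasonable addition, though the paper treats $\beta_\infty$ as directly covered by the definitions in Section~\ref{sec:refined-s}.
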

\begin{proof}
  This is immediate from
  Propositions~\ref{prop:conc-implies-local-equiv}
  and~\ref{prop:rCLEO} and Lemma~\ref{lem:s-beta-descends}. 
\end{proof}
For the Bockstein refinements, the key relations are:
\begin{theorem}
  \label{thm:bockstein_s_vals}
  For any knot $K$ and $\alpha\in\{\beta_n,\beta\}$, the invariants
  $r^\alpha$, $s^{\alpha}$, and $\stil^{\alpha}$ (applied to $\LEO(K)$
  or $\LEE(K)$) lie in $\{s_{\FF_2}(K),s_{\FF_2}(K)+2\}$. Further,
  \begin{gather*}
    s_{\FF_2}(K)\leq r^{\alpha}(\LEO(K))=\stil^{\alpha}(\LEO(K))= s^{\alpha}(\LEO(K))= s_{\FF_2}(K)+2,\\
    s^{\beta_1}(\LEO(K))\leq s^{\beta_2}(\LEO(K))\leq\cdots\leq s^{\beta_\infty}(\LEO(K)).
  \end{gather*}
\end{theorem}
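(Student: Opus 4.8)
The plan is to separate the membership assertion, the chain of equalities for $\LEO(K)$, and the monotonicity, and to treat them in that order.

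The membership assertion is essentially formal. For $\alpha=\beta_n$ it is the combination of the general bounds~\eqref{eq:r-diff-by-2} and~\eqref{eq:s-diff-by-2} with two knot-specific facts: first, $s^+_{\FF_2}(\LEO(K))=s^-_{\FF_2}(\LEO(K))=s_{\FF_2}(K)$ (recalled before Lemma~\ref{lem:splus-not-homom}); and second, the Khovanov complexes of a knot are supported in odd quantum gradings, so every half-full or full grading is odd and hence $r^{\beta_n},s^{\beta_n}\in2\ZZ$. An even integer lying between $s_{\FF_2}(K)$ and $s_{\FF_2}(K)+2$ must be an endpoint. The ``$ha$'' argument from the paragraph after Definition~\ref{def:s-beta-n} (taking $\check a=0$) applies verbatim to $\alpha=\beta$, and its reduced counterpart --- in which $H^{0,q}(h^{-1}\wt D;\FF_2)$ is one-dimensional, so ``half-full'', ``full'', and ``reduced-full'' all coincide --- gives the same conclusion for $\stil^{\alpha}$. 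Replacing $\LEO(K)$ by $\LEE(K)$ changes only the complex $C$, not $D$ nor the localization, so the identical reasoning applies.

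The heart is the chain $r^{\alpha}(\LEO(K))=\stil^{\alpha}(\LEO(K))=s^{\alpha}(\LEO(K))$. Because of the bookkeeping shifts (the $+1$, $+2$, $+3$ in the definitions of $r^\alpha$, $\stil^\alpha$, $s^\alpha$), this is equivalent to showing that, for $\LEO(K)$, the largest $\alpha$-half-full grading, the largest $\alpha$-reduced-full grading of $\pi(\LEO(K))$, and the largest $\alpha$-full grading are, respectively, $q_0$, $q_0-1$, and $q_0-2$ for a single odd integer $q_0$. The structural tool I would use is the $\BNring$-module structure on $\KhCxBN(K)$ together with the identification $h^{-1}\KhCxBN(K)\simeq h^{-1}\BNring$ of~\eqref{eq:BN-cx-localized}: this exhibits the two free $\ZZ[h]$-generators of the localized Bar-Natan homology, in quantum gradings $s_{\FF_2}(K)\pm1$, as related by the $X$-action, which commutes with $\beta_n$, $f$, $p$, and $i$. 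Concretely, from an $\alpha$-half-full witness $(\check a,a)$ at a grading $q$ whose localized class $i(a)$ has nonzero ``$\mathfrak s$-component'', the pairs $(0,ha)$ and $(X\check a,Xa)$ are $\alpha$-half-full witnesses at $q-2$ whose localized classes span $H^{0,q-2}(h^{-1}\KhCxBN(K);\FF_2)$, so $q-2$ is $\alpha$-full; using instead the quotient/inclusion maps of the extension $0\to\roKhCx(K)\{-1\}\to\oKhCx(K)\to\roKhCx(K)\{1\}\to0$ matches the unreduced data at $q$ with the reduced data at $q-1$. The genuinely knot-specific ingredient --- supplied by the conjugation automorphism $I$ and the chain map $T$ of Section~\ref{subsec:conjugation} (with $T$ realizing $\beta$ as in Lemma~\ref{lem:old-refinements}) --- is needed to arrange that a half-full witness with a usable $\mathfrak s$-component exists whenever the grading is half-full at all, and to run the argument in the reverse direction (full at $q$ forcing half-full at $q+2$), where no ``multiply by $X$'' is available. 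Together this forces all three invariants to equal $q_0+1$ for the same $q_0\in\{s_{\FF_2}(K)-1,\,s_{\FF_2}(K)+1\}$.

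Finally, the monotonicity $s^{\beta_1}(\LEO(K))\le s^{\beta_2}(\LEO(K))\le\cdots\le s^{\beta_\infty}(\LEO(K))$ is just the specialization to $\LEO(K)$ of the naturality already recorded after Definition~\ref{def:s-beta-n}: the commuting ladder of short exact sequences $0\to\ZZ/(2)\to\ZZ/(2^{k+1})\to\ZZ/(2^k)\to0$ into $0\to\ZZ/(2)\to\ZZ/(2^{n+1})\to\ZZ/(2^n)\to0$ for $k\le n$ makes every $\beta_k$-full grading $\beta_n$-full, and $\beta_\infty$ is the Bockstein of the colimit sequence $0\to\ZZ/(2)\to\ZZ[\frac{1}{2}]/\ZZ\to\ZZ[\frac{1}{2}]/\ZZ\to0$, so $s^{\beta_\infty}=\lim_n s^{\beta_n}$, a limit that stabilizes since every term lies in the two-element set above. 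The step I expect to be the main obstacle is the previous paragraph, and within it the degenerate case where the localized class of a half-full witness is of pure ``$X$-type'' together with the full $\Rightarrow$ half-full direction: this is exactly where the unknot shows that the module structure alone does not suffice (there $s_{\FF_2}(K)+1$ fails to be half-full) and where the conjugation symmetry and the map $T$, rather than any formal manipulation, do the real work.
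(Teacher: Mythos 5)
Your handling of the membership assertion (via Formulas~\eqref{eq:r-diff-by-2}, \eqref{eq:s-diff-by-2}, and the parity of $q$-gradings for knots) and the monotonicity in $n$ (via naturality of the Bockstein ladders and the colimit) matches the paper and is fine.

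The gap is in the middle paragraph, where you claim $r^{\alpha}(\LEO(K))=\stil^{\alpha}(\LEO(K))=s^{\alpha}(\LEO(K))$ for both $\alpha=\beta_n$ and $\alpha=\beta$. The inequalities $r^{\alpha}\leq\stil^{\alpha}\leq s^{\alpha}$ are what Lemma~\ref{lem:bockstein_s_vals} establishes (via $\pi$, $S$, and $I$, as you sketch). The reverse inequality $s^{\alpha}\leq r^{\alpha}$, which you correctly identify as the ``main obstacle,'' is exactly Lemma~\ref{lem:r-is-s} --- and that lemma covers only $\alpha=\beta_n$ (and the comprehensive invariants), \emph{not} $\alpha=\beta$. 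Remark~\ref{rem:no-examples} explicitly flags $s^{\beta}(\LEO(K))\stackrel{?}{=}r^{\beta}(\LEO(K))$ as open. So the chain of equalities in the theorem is a statement about $\beta_n$, not about $\beta$, and your proposal overreaches by asserting it for $\beta$.

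Moreover, even for $\beta_n$ your account of the hard direction is a placeholder. You write that ``the conjugation symmetry and the map $T$, rather than any formal manipulation, do the real work,'' but the actual argument in Lemma~\ref{lem:r-is-s} is not a formal consequence of having $I$ and $T$. Starting from a $\beta_n$-full witness $(a,b,\check a,\check b)$ in grading $q$, Lemma~\ref{lem:minus-plus} produces a combination $ma+nb$ with $i(T(ma+nb))$ primitive one grading up; Diagram~\eqref{eq:shuma_diag} identifies $p\circ T$ mod $h$ with Shumakovitch's acyclic differential $\nu$; and one must then lift $\nu(mj(\check a)+nj(\check b))$ back to $H^{0,q+2}(C)$. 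That lift is the delicate part: it uses the $\FF_2$-splittings of $\Kh(K;\FF_2)$ and $\Kh_h(K;\FF_2)$, a computation of $\nu$'s matrix in the split basis (identity on one reduced summand, zero elsewhere), and the fact that the Ozsv\'ath--Rasmussen--Szab\'o integral splitting of odd Khovanov homology refines the $\FF_2$-splitting. This argument lives entirely on the odd side and explains why it works for $\beta_n$ but does not transparently extend to $\beta=\beta_1+f^{-1}\beta_1 f$, whose even-side summand would require a corresponding lift in $\KhCx(K)$ --- a step the paper does not carry out. Your proposal needs either to restrict the equality claim to $\beta_n$ or to supply the lifting argument; as written it is a sketch of the inequalities plus a hope.

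One small point of terminology: you say $T$ ``realizes $\beta$ as in Lemma~\ref{lem:old-refinements},'' but that lemma identifies $r^{\Sq^1}$, $s^{\Sq^1}$ with $r^{\beta_1}$, $s^{\beta_1}$; it is Lemma~\ref{lem:chainT} that relates $T$ to $\nu$, and $\nu$ is not itself the Bockstein.
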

(The analogue for $\LEE(K)$ is given in Lemma~\ref{lem:bockstein_s_vals}.)
The following theorem gives the analogous results for the
comprehensive refinements:
\begin{theorem}\label{thm:s-rels}
  For any knot $K$,
  \begin{align*}
    s_{\ZZ}(K)-2\leq r_c(K) &= \stil_c(K)=s_c(K)\leq s_{\ZZ}(K)\\
    s_{\FF_2}(K)-2\leq r_o(K) &= \stil_o(K)=s_o(K)\leq s_{\FF_2}(K)\\
    \stil_c(K)&\leq \stil_o(K).
  \end{align*}
  (Here, we have shortened notation by writing $K$ to mean $\LEO(K)$.)
\end{theorem}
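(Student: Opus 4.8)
The plan is to isolate the part of the statement that holds for arbitrary (reduced) LEO triples from the part that genuinely uses that $K$ is a knot. Write $(C,D,f)=\LEO(K)$ with $C=\oKhCx(K)$, $D=\KhCxBN(K)$, and write $(\wt C,\wt D,\wt f)$ for its image $\pi(C,D,f)$ in $\rCLEO$, so $\wt C=\roKhCx(K)$, $\wt D=\rKhCxBN(K)$. The inputs I would use are: $h^{-1}\wt D$ is homotopy equivalent to a rank-$1$ free $\ZZ[h,h^{-1}]$-module in homological degree $0$; $h^{-1}D$ is homotopy equivalent over $\BNring$ to a rank-$1$ free $h^{-1}\BNring$-module (Equation~\eqref{eq:BN-cx-localized}), hence a rank-$2$ free $\ZZ[h,h^{-1}]$-module whose two generators --- the images of the orientation classes $\mathfrak s_o,\mathfrak s_{\bar o}$ --- lie in quantum gradings exactly $2$ apart; and the extra structure recorded in Section~\ref{subsec:conjugation}: the conjugation automorphism $I$ of $D$ (from the Frobenius-algebra automorphism $X\mapsto h-X$ of $\BNring$), which swaps $\mathfrak s_o$ and $\mathfrak s_{\bar o}$ up to a unit power of $h$ and is compatible with $f$ and with change of coefficients, and the chain map $T$ relating $D$ to $\wt D$ through the $X$-action.

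\smallskip
Several inequalities are purely formal and hold for any (reduced) LEO triple. A witness that $q$ is ``completely reduced-full/half-full'' produces in particular a primitive (resp.\ generating) element in the image of $H^{0,q}(\wt D;\ZZ)\to H^{0,q}(h^{-1}\wt D;\ZZ)$, and a witness that $q$ is ``oddly reduced-full'' produces the $\FF_2$-analogue; comparing with Definition~\ref{def:s-of-triple} and Formula~\eqref{eq:def-sZ} this gives $\stil_c(K)\le s_\ZZ(K)$ and $\stil_o(K)\le s_{\FF_2}(K)$. Since $h^{-1}\wt D$ is concentrated in homological degree $0$ there is no $\Tor$-term, so the mod-$2$ reduction of a ``completely reduced-full'' witness is ``oddly reduced-full'', which gives $\stil_c(K)\le\stil_o(K)$. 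Conversely, if $q$ is half-full then so is $q-2$: replace the $D$-witness $a$ by $ha$, which still maps to a primitive class, and take the $C$-witness to be $0$ (which the definitions permit); applied to a class realizing surjectivity at grading $s_\ZZ(K)$ in Formula~\eqref{eq:def-sZ}, resp.\ at $s_{\FF_2}(K)$, this gives $s_\ZZ(K)-2\le\stil_c(K)$ and $s_{\FF_2}(K)-2\le\stil_o(K)$. So the reduced refinements already satisfy the asserted two-step windows, and the rest of the theorem reduces to the equalities $r_\gamma(K)=\stil_\gamma(K)=s_\gamma(K)$ for $\gamma\in\{o,c\}$.

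\smallskip
These equalities are where the knot-specific structure enters, and here the plan has two parts. First, to pass between the unreduced refinements $r_\gamma,s_\gamma$ and the reduced refinement $\stil_\gamma$, I would use the map $T$ --- equivalently the short exact sequences relating $D$, $\wt D$, and the kernel/cokernel of $X$, together with their odd counterparts --- which transport a ``reduced-full at $q$'' witness for $\pi(\LEO(K))$ into ``half-full at $q+1$'' and ``full at $q-1$'' witnesses for $\LEO(K)$, and conversely send an unreduced witness down to a reduced one; tracking the $\{\pm1\}$ quantum shifts gives $r_\gamma(K)=\stil_\gamma(K)=s_\gamma(K)$. Second, to see that for a knot ``half-full'' and ``full'' are separated by exactly $2$ in quantum grading (so that the shift conventions in Definition~\ref{def:s-beta-n} and Section~\ref{subsec:comprehensive} make $r_\gamma$ and $s_\gamma$ equal rather than differing by $2$): in the top half-full grading only one of the two orientation classes is available (the other sits $2$ lower), so the image relevant to ``full'' has rank $1$ there and the grading is not full; one grading-$2$ step down, the conjugate $I(a)$ of a half-full witness $a$ supplies a second class whose image under $i$ is the complementary orientation class, and the two together generate $H^{0,q-2}(h^{-1}D)$. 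Combining these with the formal bounds of the previous paragraph yields all three displayed lines.

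\smallskip
The main obstacle is the simultaneous bookkeeping of the change-of-coefficients map $j$, the identification $f$, and the conjugation $I$ in the step ``half-full $\Rightarrow$ full'': to witness ``completely full'' one needs the second class $\check b\in H^{0,q}(C)$, i.e.\ an integral lift in (reduced) odd Khovanov homology of the mod-$2$ reduction of $I(a)$, and producing it is exactly what the compatibility of $I$ and $T$ with $f$ and with the odd Bockstein $\Sq^1_o$ --- to be established in Section~\ref{subsec:conjugation} --- is for; absent that compatibility one would only get $s_\gamma(K)\ge r_\gamma(K)-2$. A secondary subtlety is that the reduced-to-unreduced transport must cost exactly one unit of quantum grading, so the window does not widen from $2$ to $4$; this uses that $h^{-1}D$ has a single generator over $h^{-1}\BNring$ for a knot, which forces the complementary orientation class into the adjacent quantum grading.
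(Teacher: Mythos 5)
Your overall decomposition matches the paper's: reduce the theorem to (i) a chain of formal inequalities linking $r_\gamma$, $\stil_\gamma$, $s_\gamma$, $s_\ZZ$, $s_{\FF_2}$ (the paper's Lemma~\ref{lem:obstruction_order}), plus (ii) the equalities $r_\gamma = \stil_\gamma = s_\gamma$, which use genuine knot structure (the paper's Lemma~\ref{lem:r-is-s}). Your plan for the formal part is correct, and you identify the right machinery --- conjugation $I$, the chain map $T$, the orientation classes $\mathcal O$, $\overline{\mathcal O}$, and the short exact sequence relating reduced and unreduced Bar-Natan complexes via $S$ and $\pi$.

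However, the crucial knot-specific step is misdirected. You describe the main obstacle as proving ``half-full at $q$ $\Rightarrow$ full at $q-2$''. But that implication only yields $\max\{\text{full}\} \geq \max\{\text{half-full}\}-2$, i.e.\ $s_\gamma \geq r_\gamma$, which already follows formally from $r_\gamma \leq \stil_\gamma \leq s_\gamma$. What is actually needed to close the equality is the reverse inequality $s_\gamma \leq r_\gamma$, which requires the \emph{upward} implication ``full at $q$ $\Rightarrow$ half-full at $q+2$''. Your geometric justification that ``in the top half-full grading only one of the two orientation classes is available'' is also not always correct: writing $q_0$ for the top half-full grading, one has $q_0 \in \{s_\ZZ-1,\,s_\ZZ+1\}$, and in the case $q_0 = s_\ZZ-1$ the map $i$ is surjective there, so both orientation classes \emph{are} available on the $D$-side; ruling out fullness at $q_0$ in that case is precisely the upward implication in disguise, not an orientation-class counting argument.

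Moreover, the key technical input for that upward step is not ``compatibility of $I$ and $T$ with $f$ and $\Sq^1_o$''. It is that $T$ reduces mod $h$ to Shumakovitch's acyclic differential $\nu$ (Lemma~\ref{lem:chainT}), and --- the genuinely delicate point of Lemma~\ref{lem:r-is-s} --- that $\nu$ can be lifted to odd Khovanov homology over $\ZZ$. This lift is not a formal consequence of conjugation-compatibility: it requires computing the matrix of $\nu$ with respect to Shumakovitch's splitting $\Kh(K;\FF_2)\cong\rKh(K;\FF_2)\oplus\rKh(K;\FF_2)$, checking it is the ``shift'' $(u,v)\mapsto(v,0)$, and then transporting it to the analogous integral splitting of odd Khovanov homology. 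Without this explicit construction you cannot supply the integral witness $\check b$ on the $C$-side at grading $q+2$, and you only obtain the two-grading window $r_\gamma \leq s_\gamma \leq r_\gamma + 2$ rather than equality.
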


Both theorems are proved in Section~\ref{subsec:s-rels-proof}, after
we develop some more machinery in Section~\ref{subsec:conjugation}.

\begin{remark}
  For an algebraic example illustrating that $s_{\FF_2}-2$ is not a
  lower bound for $s_c$ and the other complete invariants,
  consider the complex $D$ given by
  \[
    \begin{tikzcd}[column sep=0pt]
      \BNring\{5\} & & \BNring\{3\} & & \BNring\{1\} \\
      & \BNring\{3\} \arrow[ul, "h"] \arrow [ur, swap, "3"] & & \BNring\{1\} \arrow[ul, "h"] \arrow[ur, swap, "3"] &
    \end{tikzcd}
  \]
with the top line in homological degree $0$.
One checks that $s_{\FF_2}$ is $4$, but $H^{0, q}(D;\ZZ) \to
H^{0,q}(h^{-1}D;\ZZ)$ is surjective only for $q\leq -1$. With
$C=D\otimes_\BNring \ZZ[X]/(X^2)$ we get $s_c(C,D,\id) = r_c(C,D,\id)
= 0$. While this example is algebraic, as noted in Remark~\ref{remark:rCLEO-not-ordered}, there exist knots $K$ with $s_{\FF_2}(K)>s_{\FF_3}(K)$ (\cite{LewarkZib, Schutz:integral-s}) and, by considering connected sums of such knots, we can make the difference between $s_{\FF_2}$ and $s_c$ arbitrarily large.
Consequently, there are knots where $s_c$ and $s_o$ differ by
arbitrarily large amounts.
\end{remark}

The example of Remark~\ref{rem:sc-not-enough} showed that $r_c$ alone
does not detect local equivalence in $\CLEO$. One of the reasons
Theorem~\ref{thm:s-rels} is powerful is because for knots, $r_c$ does
suffice to detect local equivalence:

\begin{theorem}\label{thm:local-is-e}
  If $K$ is a knot so that $r_c(\LEO(K))=r_c(\LEO(\Kbar))=0$ then
  $\LEO(K)$ is locally equivalent to $\LEO(U)$.
\end{theorem}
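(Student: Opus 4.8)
The plan is to construct local maps $\LEO(U)\to\LEO(K)$ and $\LEO(K)\to\LEO(U)$ by hand, mirroring the proof of Theorem~\ref{thm:reduced-local-is-e} but keeping track of the $\ZZ[X]/(X^2)$- and $\BNring$-module structures; I would not try to reduce to $\rCLEO$ and lift, since that lifting is essentially a consequence of this theorem rather than an input to it. First I would unpack the hypotheses using Theorem~\ref{thm:s-rels}: $r_c(\LEO(K))=0$ forces $s_c(\LEO(K))=\stil_c(\LEO(K))=0$ and $0\le s_\ZZ(K)\le 2$, and likewise for $\Kbar$. Because $K\#\Kbar$ is slice, Proposition~\ref{prop:conc-implies-local-equiv} together with the K\"unneth isomorphisms (Lemma~\ref{lem:Kh-kunneth-BN}, Theorem~\ref{thm:odd-Kunneth}) identifies $[\LEO(\Kbar)]$ with $[\LEO(K)]^{-1}=[\LEO(K)^*]$ in $\CLEO$, so $s_\FF(\Kbar)=-s_\FF(K)$ for every field $\FF$; combining this with the inequality $s_\ZZ\le s_\FF$ (\cite[Corollary~4.9]{Schutz:integral-s}, as used in Remark~\ref{remark:rCLEO-not-ordered}) pins everything down to $s_\FF(K)=0$ for all $\FF$ and $s_\ZZ(K)=s_\ZZ(\Kbar)=0$. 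In particular, by the very definition of $s_\ZZ$ (Formula~\eqref{eq:def-sZ}), the maps $H^{0,0}(\rKhCxBN(K);\ZZ)\to H^{0,0}(h^{-1}\rKhCxBN(K);\ZZ)$ and $H^{0,0}(\rKhCxBN(\Kbar);\ZZ)\to H^{0,0}(h^{-1}\rKhCxBN(\Kbar);\ZZ)$ are surjective.

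Next I would build a local map $\LEO(U)\to\LEO(K)$. Since $\LEO(U)=(\ZZ[X]/(X^2)\{1\},\BNring\{1\},\id)$ has both modules free of rank one, such a local map amounts to a cocycle $a\in\KhCxBN(K)^{0,1}$ whose image $i(a)$ \emph{generates} $h^{-1}\KhCxBN(K)\simeq h^{-1}\BNring$ as a module over $h^{-1}\BNring$, a cocycle $\check a\in\oKhCx(K)^{0,1}$ with $f(j(\check a))=p(j(a))$ in the homology of $\KhCx(K;\FF_2)$, and an $\FF_2[X]/(X^2)$-equivariant null-homotopy of their difference. Complete half-fullness in quantum grading $1=r_c(\LEO(K))+1$ supplies a pair $(\check a,a)$ with $i(a)$ \emph{primitive} in the $\ZZ$-module $H^{0,1}(h^{-1}\KhCxBN(K))$. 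The remaining, and decisive, point is to upgrade ``primitive'' to ``generates over $h^{-1}\BNring$''. Here the knot-specific input is essential: using $s(K)=0$, the surjectivity extracted from $s_\ZZ(K)=0$ in the previous paragraph (which produces an integral generator at the reduced level), and the conjugation automorphism $I$ and chain map $T$ of Section~\ref{subsec:conjugation} --- which exchange the two orientation generators of $h^{-1}\KhCxBN(K)$ and so control it as an $h^{-1}\BNring$-module, not merely as a $\ZZ[h,h^{-1}]$-module --- one corrects the primitive class above to an honest module generator. With $a$ fixed, $\check a$ is the complete-half-fullness witness, and the $X$-equivariant homotopy exhibiting $f(j(\check a))=p(j(a))$ on the nose is produced exactly as in the proof of Theorem~\ref{thm:reduced-local-is-e}, with basepoints arranged as in Proposition~\ref{prop:conc-implies-local-equiv}. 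This yields the desired $\alpha\co\ZZ[X]/(X^2)\{1\}\to\oKhCx(K)$ and $\beta\co\BNring\{1\}\to\KhCxBN(K)$.

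For the reverse map I would dualize. Running the construction of the previous paragraph for $\Kbar$ --- legitimate since $r_c(\LEO(\Kbar))=0$ and the grading conclusions hold for $\Kbar$ as well --- gives a local map $\LEO(U)\to\LEO(\Kbar)$. As $\LEO(\Kbar)$ is locally equivalent to $\LEO(K)^*$ and duality sends local maps to local maps (with the homotopy equivalences replaced by their inverses, as in the proof of Theorem~\ref{thm:local-equiv-group}), transposing produces a local map $\LEO(K)\to\LEO(U)$ (using that $\LEO(K)^{**}$ and $\LEO(U)^*$ are locally equivalent to $\LEO(K)$ and $\LEO(U)$ respectively). Combining the two directions shows $\LEO(K)$ is locally equivalent to $\LEO(U)$.

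The main obstacle is exactly the promotion step in the second paragraph: passing from a class that is primitive in integral homology to one that generates $h^{-1}\KhCxBN(K)$ over the ring $h^{-1}\BNring$. The axioms of a LEO triple see $h^{-1}D$ only up to homotopy equivalence and cannot distinguish these two conditions --- this is precisely why $r_c$ alone fails to detect triviality in $\CLEO$ in general, as Example~\ref{eg:pathology} and Remark~\ref{rem:sc-not-enough} demonstrate. Overcoming it requires genuinely using the equalities $r_c=s_c=\stil_c$ of Theorem~\ref{thm:s-rels} (which bring the integral invariant $s_\ZZ$, and hence the reduced-level generator, into play) together with the conjugation symmetry of Section~\ref{subsec:conjugation}, rather than merely the numerical vanishing $r_c=0$.
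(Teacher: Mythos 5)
Your overall structure — build a local map $\LEO(U)\to\LEO(K)$ from cocycle witnesses and get the reverse by running the same construction for $\Kbar$ and dualizing — matches the paper, and you correctly identify the crux as upgrading $i(a)$ from primitive in $H^{0,1}(h^{-1}D)$ to a generator of $H(h^{-1}D)$ over $h^{-1}\BNring$. The preliminaries are also sound: $r_c(\LEO(K))=r_c(\LEO(\Kbar))=0$ does force $s_\FF(K)=0$ for all $\FF$ and $s_\ZZ(K)=s_\ZZ(\Kbar)=0$ (though one only needs the inequalities $r_c\le s_c\le s_\ZZ$ of Lemma~\ref{lem:obstruction_order} for this, not the full equality $r_c=s_c=\stil_c$ from Lemma~\ref{lem:r-is-s} that you invoke).

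The gap is in the promotion step. You propose to ``correct'' $a$ using the surjectivity from $s_\ZZ(K)=0$ together with $I$ and $T$ — this is the construction of Lemma~\ref{lem:minus-plus}, which produces a fresh element $a'=T(b)$ with $i(a')$ a module generator. But it is a different class from $a$, and the original $\check a$ no longer witnesses complete half-fullness for $a'$; producing a compatible $\check a'$ requires lifting Shumakovitch's differential $\nu$ to integral odd Khovanov homology via the ORSz splitting — the nontrivial core of the proof of Lemma~\ref{lem:r-is-s} — which you do not mention. If instead you meant to correct within the span of $a$ and $I(a)$ so that $\check a$ survives, that fails in general: writing $i(a)=m h^j\mathcal{O}+n h^j\ol{\mathcal{O}}$ with $\gcd(m,n)=1$, one has $i(\alpha a+\beta I(a))=(\alpha m+\beta n)h^j\mathcal{O}+(\alpha n+\beta m)h^j\ol{\mathcal{O}}$, and for, say, $(m,n)=(3,1)$ no integers $\alpha,\beta$ make both coefficients $\pm1$. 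The paper sidesteps all of this by applying Lemma~\ref{lem:plus-minus} instead: if $i(a)$ were primitive but not a module generator, $i\co\Kh_h^{0,3}(K)\to h^{-1}\Kh_h^{0,3}(K)$ would be nonzero, hence nonzero over $\QQ$, forcing $s_\QQ(K)\ge 2$ — which contradicts $r_c(\Kbar)=0$ via $s_c(\Kbar)\le s_\ZZ(\Kbar)\le s_\QQ(\Kbar)\le -2$. So the original pair $(\check a,a)$ already works with no correction needed, and since you have already derived $s_\QQ(K)=0$ you have the ingredient in hand: you should invoke Lemma~\ref{lem:plus-minus} and argue by contradiction rather than attempt to replace $a$.
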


Again, the proof is deferred to Section~\ref{subsec:s-rels-proof}.

\begin{corollary}\label{cor:CLEO-to-rCLEO-iso}
  Let $\LEO(\mathcal{C})$ be the image of the smooth concordance group
  in $\CLEO$, and $\pi\co \CLEO\to\rCLEO$ be the quotient map. Then
  the restriction of $\pi$ to $\LEO(\mathcal{C})$ is injective. In
  particular, $\LEO(K)$ is trivial in $\CLEO$ if and only if
  $\stil_c(\LEO(K))=\stil_c(\LEO(\Kbar))=0$.
\end{corollary}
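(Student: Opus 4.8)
The plan is to deduce this corollary formally from Theorems~\ref{thm:s-rels} and~\ref{thm:local-is-e}, so that nearly all of the real work has already been done. Since both $\pi$ and $K\mapsto\LEO(K)$ are group homomorphisms, to prove that $\pi$ is injective on $\LEO(\mathcal{C})$ it suffices to show that its kernel there is trivial; that is, that $\LEO(K)$ is already trivial in $\CLEO$ whenever $\pi(\LEO(K))$ is trivial in $\rCLEO$. I would first record the duality bookkeeping: the mirror $\Kbar$ is the inverse of $K$ in the concordance group, so by Theorem~\ref{thm:local-equiv-group} the class $\LEO(\Kbar)$ is the inverse of $\LEO(K)$ in $\CLEO$, i.e.\ $\LEO(\Kbar)=\LEO(K)^*$, and since $\pi$ is a homomorphism it carries inverses to inverses, hence $\pi(\LEO(\Kbar))=\pi(\LEO(K))^*$ in $\rCLEO$.

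Now assume $\pi(\LEO(K))=0$. By the (easy) ``only if'' direction of Theorem~\ref{thm:reduced-local-is-e}, $\stil_c(\pi(\LEO(K)))=\stil_c(\pi(\LEO(K))^*)=0$; unwinding the notational convention that $\stil_c$ of an unreduced triple means $\stil_c$ of its image in $\rCLEO$, together with the identification $\pi(\LEO(\Kbar))=\pi(\LEO(K))^*$, this says $\stil_c(\LEO(K))=\stil_c(\LEO(\Kbar))=0$. Next I would invoke Theorem~\ref{thm:s-rels}, which for knots gives $r_c(\LEO(K))=\stil_c(\LEO(K))=0$ and likewise $r_c(\LEO(\Kbar))=0$. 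Finally Theorem~\ref{thm:local-is-e} applies and shows $\LEO(K)$ is locally equivalent to $\LEO(U)$, i.e.\ trivial in $\CLEO$. This establishes injectivity of $\pi$ on $\LEO(\mathcal{C})$.

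For the ``in particular'' clause, $\LEO(K)$ is trivial in $\CLEO$ if and only if $\pi(\LEO(K))$ is trivial in $\rCLEO$ --- the forward direction because $\pi$ is a homomorphism, the reverse direction being exactly the injectivity just proved --- and Theorem~\ref{thm:reduced-local-is-e} identifies the latter condition with $\stil_c(\LEO(K))=\stil_c(\LEO(\Kbar))=0$, again using the convention above and the identification $\pi(\LEO(\Kbar))=\pi(\LEO(K))^*$. I do not anticipate a genuine obstacle: the statement is essentially a packaging of Theorems~\ref{thm:s-rels} and~\ref{thm:local-is-e}, and the only point requiring care is checking that $\pi$ intertwines the two duality operations and that $\LEO(\Kbar)$ represents the class dual to $\LEO(K)$, which is immediate once one knows that both $K\mapsto\LEO(K)$ and $\pi$ are homomorphisms.
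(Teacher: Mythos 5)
Your proof is correct and uses the same chain of results as the paper: Theorem~\ref{thm:reduced-local-is-e} to characterize triviality of $\pi(\LEO(K))$ via $\stil_c$, Theorem~\ref{thm:s-rels} to identify $\stil_c$ with $r_c$ for knots, and Theorem~\ref{thm:local-is-e} to pass from vanishing of $r_c$ to triviality in $\CLEO$. You spell out the duality bookkeeping ($\LEO(\Kbar)=\LEO(K)^*$ and $\pi$ intertwining duals) that the paper's very terse proof leaves implicit, but the argument is essentially identical.
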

\begin{proof}
  By Theorem~\ref{thm:reduced-local-is-e}, a knot maps to zero in
  $\rCLEO$ if and only if we have $\stil_c(\LEO(K))=\stil_c(\LEO(\Kbar))=0$. By
  Theorem~\ref{thm:local-is-e}, a knot maps to zero in $\CLEO$ if and
  only if $r_c(\LEO(K))=0$. By Theorem~\ref{thm:s-rels},
  $\stil_c(\LEO(K))=r_c(\LEO(K))$.
\end{proof}

\begin{corollary}\label{cor:stilc-triv-unreduced}
  Given a knot $K$, $\stil_c(\LEO(\Kbar))=-\stil_c(\LEO(K))$
  if and only if $\LEO(K)$ is locally equivalent to $\LEO(U)\{q\}$,
  where $q=\stil_c(\LEO(K))$.
\end{corollary}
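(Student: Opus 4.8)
The plan is to follow the proof of Corollary~\ref{cor:stilc-triv}, its reduced analogue, using Theorems~\ref{thm:s-rels} and~\ref{thm:local-is-e} in place of Theorem~\ref{thm:reduced-local-is-e}.

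The ``if'' direction is immediate. If $\LEO(K)$ is locally equivalent to $\LEO(U)\{q\}$, then since $\stil_c$ descends to local equivalence (Lemma~\ref{lem:s-beta-descends}) and tensoring with $\LEO(U)\{q\}$ merely shifts all quantum gradings by $q$, we get $\stil_c(\LEO(K))=\stil_c(\LEO(U)\{q\})=\stil_c(\LEO(U))+q=q$, using $\stil_c(\LEO(U))=0$ from Example~\ref{eg:unknot-full}. In particular $q$ is even, since $\stil_c$ takes values in $2\ZZ$. Recalling that $\LEO(\Kbar)$ is (isomorphic to) the dual $\LEO(K)^*$, that $\LEO(U)^*\cong\LEO(U)$, and that dualizing negates an overall grading shift, we then get $\stil_c(\LEO(\Kbar))=\stil_c((\LEO(U)\{q\})^*)=\stil_c(\LEO(U)\{-q\})=-q$, which is $-\stil_c(\LEO(K))$.

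For the ``only if'' direction, set $q=\stil_c(\LEO(K))$, which by Theorem~\ref{thm:s-rels} equals $r_c(\LEO(K))$ and lies in $2\ZZ$. Applying Theorem~\ref{thm:s-rels} to $\Kbar$ as well, the hypothesis gives $r_c(\LEO(\Kbar))=\stil_c(\LEO(\Kbar))=-q$. Now tensor $\LEO(K)$ with $\LEO(U)\{q\}^*$: since $\LEO(U)\{q\}^*\cong\LEO(U)\{-q\}$ and tensoring with a grading-shifted trivial triple only shifts quantum gradings (here, by Definition~\ref{def:tensor-prod}), the result is $\LEO(K)\{-q\}$, whose dual is locally equivalent to $\LEO(\Kbar)\{q\}$. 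Since $r_c$ shifts with the quantum grading, $r_c(\LEO(K)\{-q\})=r_c(\LEO(K))-q=0$ and $r_c(\LEO(\Kbar)\{q\})=r_c(\LEO(\Kbar))+q=0$. Invoking Theorem~\ref{thm:local-is-e} (see below), $\LEO(K)\{-q\}$ is locally equivalent to $\LEO(U)$; tensoring back with $\LEO(U)\{q\}$ then gives that $\LEO(K)$ is locally equivalent to $\LEO(U)\{q\}$, as desired.

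The step I expect to be the main obstacle is bridging to Theorem~\ref{thm:local-is-e}: that theorem is stated for $\LEO(K)$ of an honest knot, whereas we apply it to the quantum-grading shift $\LEO(K)\{-q\}$, which is not itself of the form $\LEO(K')$. I would resolve this by noting that the proof of Theorem~\ref{thm:local-is-e} in Section~\ref{subsec:s-rels-proof} uses only the conjugation symmetry from Section~\ref{subsec:conjugation} and the rank-one localization~\eqref{eq:BN-cx-localized}, both of which are visibly unaffected by an overall quantum-grading shift; equivalently, one could state Theorem~\ref{thm:local-is-e} for all triples of the form $\LEO(K')\{n\}$ from the outset. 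A detour through $\rCLEO$ using Corollary~\ref{cor:CLEO-to-rCLEO-iso} does not circumvent this, since $\LEO(K)\{-q\}$ is still not in the image of the concordance group; so the cleanest fix is the shift-invariance remark.
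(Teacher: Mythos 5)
Your plan has the right shape, and you correctly isolate the subtlety: Theorem~\ref{thm:local-is-e} is stated only for $\LEO(K')$ of an honest knot, whereas you need it for the shifted triple $\LEO(K)\{-q\}$. However, your stated reason for rejecting the detour through $\rCLEO$ and Corollary~\ref{cor:CLEO-to-rCLEO-iso} is factually wrong. You write that $\LEO(K)\{-q\}$ ``is still not in the image of the concordance group,'' but it is: if $T$ denotes the trefoil with $s(T)=-2$, a direct computation (noted in the paper's proof) shows $\LEO(T)$ is locally equivalent to $\LEO(U)\{-2\}$, so $\LEO(K)\{-q\}$ is locally equivalent to $\LEO\bigl(K\#\tfrac{q}{2}T\bigr)$. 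This is exactly the paper's argument: it applies Theorem~\ref{thm:reduced-local-is-e} to conclude $\pi\bigl(\LEO(K)\{-q\}\bigr)$ is trivial in $\rCLEO$, then uses Corollary~\ref{cor:CLEO-to-rCLEO-iso} applied to the knot $K\#\tfrac{q}{2}T$ to upgrade this to a local equivalence in $\CLEO$, and unshifts. So the detour you dismissed is precisely what makes the paper's proof short.

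Your alternative route---reproving Theorem~\ref{thm:local-is-e} for all triples of the form $\LEO(K')\{n\}$ by noting that the conjugation $I$, the map $T$ of Lemma~\ref{lem:chainT}, the elements $\mathcal{O}, \overline{\mathcal{O}}$, Lemma~\ref{lem:plus-minus}, and the localization are all compatible with a uniform quantum-grading shift---is plausible and would likely go through. But it requires re-auditing each step of Section~\ref{subsec:s-rels-proof} (including the use of $s_\ZZ\leq s_\QQ$ from Lemma~\ref{lem:obstruction_order}, which is cited for knots) in the shifted setting, whereas the trefoil trick lets you invoke the existing theorems verbatim. The ``if'' direction and the grading-shift bookkeeping in your proposal are fine.
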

\begin{proof}
  The ``if'' direction is trivial.
  For the other direction, observe that, for any even
  $q\in\ZZ$, two LEO triples $(C,D,f)$ and
  $(C',D',f')$ are locally equivalent if and only if
  $(C,D,f)\{q\}$ is locally equivalent to
  $(C',D',f')\{q\}$. Also, by definition, we have
  $\stil_c((C,D,f)\{q\}) = \stil_c(C,D,f) +q$.  Furthermore, if
  $T$ is the trefoil knot with
  $s(T)=-2$ then, by direct computation,
  $\LEO(K)$ is locally equivalent to
  $\LEO(U)\{-2\}$. So, for any knot $K$ and even integer
  $q$,
  $\LEO(K)\{-q\}=\LEO(K\#\frac{q}{2}T)$ is in the image of the smooth
  concordance group, so Corollary~\ref{cor:CLEO-to-rCLEO-iso}
  applies to $\LEO(K)\{-q\}$.

  Now, if
  $\stil_c(\LEO(K))=-\stil_c(\LEO(\Kbar))=q$ then
  $\stil_c(\LEO(K)\{-q\})=\stil_c(\LEO(\Kbar)\{q\})=0$. Hence, by
  Theorem~\ref{thm:reduced-local-is-e},
  $\pi(\LEO(K)\{-q\})$ is reduced locally equivalent to
  $\pi(\LEO(U))$. Thus, by Corollary~\ref{cor:CLEO-to-rCLEO-iso},
  $\LEO(K)\{-q\}$ is locally equivalent to
  $\LEO(U)$, and the result follows.
\end{proof}

We conclude this subsection with two simpler topological properties of
these invariants.  First, as mentioned at the beginning of
Section~\ref{sec:refined-s}, the invariants we have constructed are
inspired by the refined $s$-invariants from the Khovanov stable
homotopy type~\cite{LS14:refine-s}. There, a refined $s$-invariant was
associated to any cohomology operation, using the even Khovanov stable
homotopy type. Analogous operations were constructed using the odd
stable homotopy type by
Sarkar-Scaduto-Stoffregen~\cite{SSS20:odd-htpy}.  The relationship of
these earlier invariants to the ones of Section~\ref{sec:refined-s}
is:

\begin{lemma}\label{lem:old-refinements}
  Let $r^{\Sq^1}$ and $s^{\Sq^1}$ be the refined $s$-invariants
  associated to the operation $\Sq^1$ on even Khovanov
  homology~\cite{LS14:refine-s}, and $r^{\Sq^1_o}$ and $s^{\Sq^1_o}$
  be the refined $s$-invariants associated to the operation $\Sq^1$ on
  odd Khovanov homology~\cite{SSS20:odd-htpy}. Then
  \begin{align*}
    r^{\Sq^1}(K)&=r^{\beta_1}(\LEE(K)) & 
    s^{\Sq^1}(K)&=s^{\beta_1}(\LEE(K)) \\
    r^{\Sq^1_o}(K)&=r^{\beta_1}(\LEO(K)) & 
    s^{\Sq^1_o}(K)&=s^{\beta_1}(\LEO(K))
  \end{align*}
\end{lemma}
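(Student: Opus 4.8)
The plan is to reduce the lemma to two facts: first, that the Steenrod square $\Sq^1$ (respectively $\Sq^1_o$) on Khovanov (respectively odd Khovanov) cohomology with $\FF_2$-coefficients is exactly the composite $f\circ\beta_1$ appearing in Diagram~\eqref{eq:Bockstein_s_invariant}; and second, that once this identification is made, the definitions of $r^{\Sq^1},s^{\Sq^1}$ from~\cite{LS14:refine-s} and of $r^{\Sq^1_o},s^{\Sq^1_o}$ from~\cite{SSS20:odd-htpy} are literally the definitions of $r^{\beta_1},s^{\beta_1}$ from Definitions~\ref{def:halffull} and~\ref{def:s-beta-n}. Both cited constructions attach to a degree-$1$ cohomology operation $\phi$ on the even (respectively odd) Khovanov stable homotopy type a pair of integers, extracted from the interaction of $\phi\colon\Kh^{-1,q}(K;\FF_2)\to\Kh^{0,q}(K;\FF_2)$ with the Bar--Natan--Lee--Turner spectral sequence over $\FF_2$ (see, e.g.,~\cite{Schutz:integral-s}); since the even and odd Khovanov complexes agree over $\FF_2$, this spectral sequence is the same in both settings, and the two constructions differ only in the choice of operation.

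For the first fact, recall that on the $\FF_2$-cohomology of any spectrum $\Sq^1$ is the mod-$2$ Bockstein, i.e.\ the connecting map of $0\to\FF_2\to\ZZ/(4)\to\FF_2\to0$; crucially, this operation is computed by the $\ZZ/(4)$-Bockstein of the integral cochain complex, so it is a chain-homotopy invariant of that complex. For $\LEE(K)$ we have $C=\KhCx(K)$, $D_{h=0}=\KhCx(K)$, and $f=\id$, so $f\circ\beta_1$ is just the $\ZZ/(4)$-Bockstein of $\KhCx(K;\ZZ)$, which equals $\Sq^1$ on $\Kh^{*,q}(K;\FF_2)$ since the integral cohomology of the even Khovanov spectrum is computed by $\KhCx(K;\ZZ)$. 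For $\LEO(K)$ we have $C=\oKhCx(K)$, $D_{h=0}=\KhCx(K)$, and $f$ is the canonical identification $\oKhCx(K)\otimes_\ZZ\FF_2\cong\KhCx(K;\FF_2)$; hence $f\circ\beta_1$ is the $\ZZ/(4)$-Bockstein of $\oKhCx(K;\ZZ)$ transported to $\Kh_o^{*,q}(K;\FF_2)=\Kh^{*,q}(K;\FF_2)$, which is $\Sq^1_o$ because the integral cohomology of the odd Khovanov spectrum of~\cite{SSS20:odd-htpy} is computed by $\oKhCx(K;\ZZ)$.

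For the second fact, note that under the identifications $D\otimes_\ZZ\FF_2=\KhCxBN(K;\FF_2)$, $D_{h=0}\otimes_\ZZ\FF_2=\KhCx(K;\FF_2)$, and $h^{-1}D\otimes_\ZZ\FF_2$ homotopy equivalent to a free rank-$2$ module over $\FF_2[h,h^{-1}]$, the $h$-adic filtration on $\KhCxBN(K;\FF_2)$ is the Bar--Natan--Lee--Turner filtration, and a class $x\in\Kh^{0,q}(K;\FF_2)$ survives this spectral sequence to detect a generator of the rank-$2$ limit in its own quantum grading precisely when $x=p(a)$ for some $a\in H^{0,q}(D;\FF_2)$ with $i(a)\neq0$; two such classes jointly span the contribution of the limit in grading $q$ precisely when the corresponding $i(a),i(b)$ span $H^{0,q}(h^{-1}D;\FF_2)$. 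Given the identification $\phi=f\circ\beta_1$ from the first step, the condition that $q$ be $\beta_1$-half-full (respectively $\beta_1$-full) in the sense of Definition~\ref{def:halffull} is then word-for-word the condition used in~\cite{LS14:refine-s} (respectively~\cite{SSS20:odd-htpy}) to define $r^{\Sq^1}$ (respectively $s^{\Sq^1}$); in particular the allowance $\check a=0$ corresponds on both sides to a grading $q$ that ``survives'' merely because grading $q+2$ carries a generator. Finally one matches the normalizing shifts: the constants $+1$ and $+3$ in Definition~\ref{def:s-beta-n}, together with the sandwich inequalities~\eqref{eq:r-diff-by-2} and~\eqref{eq:s-diff-by-2}, pin down $r^{\beta_1}$ and $s^{\beta_1}$ so as to vanish on $\LEO(U)$ and $\LEE(U)$ (Example~\ref{eg:unknot-full}), which is also the normalization in~\cite{LS14:refine-s,SSS20:odd-htpy}, forcing agreement on the nose.

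The main obstacle is the second step: unwinding the genuinely different languages of the two source papers so as to verify that their constructions, specialized to $\Sq^1$, really produce the half-full/full dichotomy of Definition~\ref{def:halffull} rather than a superficially similar variant. One must check carefully that the relevant filtration is the \emph{unshifted} $h$-adic filtration on $\KhCxBN$ (equivalently the Lee--Turner filtration), that the bookkeeping of the two limit generators---corresponding to the two orientations, in quantum gradings $s_{\FF_2}(K)\pm1$---matches the passage from ``half-full'' to ``full'', and that the grading shifts are as claimed; for the odd case one additionally needs that $\Sq^1_o$ is computed by the integral odd Khovanov Bockstein, which follows from the construction in~\cite{SSS20:odd-htpy}. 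None of this requires serious computation, only a somewhat involved reconciliation of conventions.
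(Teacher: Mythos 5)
Your proof takes essentially the same route as the paper's: identify $\Sq^1$ (resp.\ $\Sq^1_o$) with the mod-$2$ Bockstein $\beta_1$ of the integral even (resp.\ odd) Khovanov complex, and then reconcile the graded picture of Definition~\ref{def:halffull} with the filtered picture of~\cite{LS14:refine-s,SSS20:odd-htpy}. One caveat: you claim that once $\phi = f\circ\beta_1$ is identified, the $\beta_1$-(half-)full conditions become ``word-for-word'' the conditions from the cited papers, but your closing paragraph then retracts this as needing ``a somewhat involved reconciliation of conventions''---and that reconciliation is the actual content of the paper's proof, not bookkeeping. Concretely, Definition~\ref{def:halffull} uses the graded maps $H^{0,q}(D;\FF_2)\to H^{0,q}(h^{-1}D;\FF_2)$, while the cited papers use the filtered maps $H^0(\mathcal{F}_q D_{h=1};\FF_2)\to H^0(D_{h=1};\FF_2)$. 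The paper exhibits a commutative diagram comparing these two rows, in which the left and middle vertical maps are isomorphisms, and then supplies the nontrivial ingredient: the image of $H^{0,q}(D;\FF_2)$ inside the rank-two group $H^{0,q}(h^{-1}D;\FF_2)$ is carried isomorphically to its image in $H^0(D_{h=1};\FF_2)$. This is what makes the graded and filtered notions of (half-)full equivalent in both directions, rather than one merely implying the other; without it one only gets inequalities between $r^{\beta_1}, s^{\beta_1}$ and $r^{\Sq^1}, s^{\Sq^1}$. You have the right architecture and even flag the missing step, but a complete proof must carry out this comparison rather than asserting identity of definitions.
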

\begin{proof}
  Recall that the first Steenrod square $\Sq^1$ is the Bockstein
  homomorphism $\beta_1$. So, the only difference between the two
  constructions is that here we have viewed the Bar-Natan complex as a
  graded complex over $\FF_2[h]$, while the earlier papers considered
  the filtered Bar-Natan complex corresponding to $D_{h=1}$.
  If we write $\mathcal{F}_qD_{h=1}\subset D_{h=1}$ for the image of
  $D^{(q)}$ under the quotient map $D\to D_{h=1}$, we get that the quotient map restricted to the graded subcomplex $D^{(q)}$ is injective, so that $H^{0,q}(D;\FF_2)\cong H^0(\mathcal{F}_qD_{h=1};\FF_2)$. 
  Also, $\mathcal{F}_qD_{h=1}/\mathcal{F}_{q+2}D_{h=1}\cong D_{h=0}^{(q)}$, and we can define $\beta_n$-half-full and $\beta_n$-full with $H^0(\mathcal{F}_qD_{h=1};\FF_2)$ in place of $H^{0,q}(D;\FF_2)$ and $H^0(D_{h=1};\FF_2)$ in place of $H^0(h^{-1}D;\FF_2)$.
  Because of the commutative diagram 
  \[
    \begin{tikzcd}
      H^{0,q}(D_{h=0};\FF_2) \arrow[d, "\cong"] & H^{0,q}(D;\FF_2) \arrow[l] \arrow[r] \arrow[d, "\cong"] & H^{0,q}(h^{-1}D;\FF_2) \arrow[d] \\
      H^0(\mathcal{F}_qD_{h=1}/\mathcal{F}_{q+2}D_{h=1};\FF_2) & H^0(\mathcal{F}_qD_{h=1};\FF_2) \arrow[l] \arrow[r] & H^0(D_{h=1};\FF_2)
    \end{tikzcd}
  \]
  we get that $q$ being $\beta_n$-(half-)full is implied by the same in this filtered sense. To see that these notions are equivalent, note that $h^{-1}D$ is also graded, so the image of $H^{0,q}(D;\FF_2)$ is contained in $H^{0,q}(h^{-1}D;\FF_2)\cong \FF_2\oplus \FF_2$, and this image is mapped isomorphically to $H^0(D_{h=1};\FF_2)$.
\end{proof}

\begin{remark}
  It is natural to ask whether the other refined invariants from the
  Khovanov stable homotopy type, like $s^{\Sq^2}$, are invariants of
  local equivalence. This seems unlikely to us, but we do not know
  a counterexample.
\end{remark}

Second, while they are not concordance homomorphisms, the refined
$s$-invariants do give slice genus bounds:
\begin{lemma}
  Let $\Sigma$ be a smooth, connected, orientable cobordism from $K_0$
  to $K_1$, and let $s'$ be any of the invariants in
  Proposition~\ref{prop:knot-conc-invts}. Then,
  \[
    |s'(K_0)-s'(K_1)|\leq -\chi(\Sigma).
  \]
\end{lemma}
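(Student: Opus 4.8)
The plan is to promote the argument of Proposition~\ref{prop:conc-implies-local-equiv} from concordances to cobordisms of positive genus, at the cost of a quantum grading shift by $-\chi(\Sigma)$. Write $-\chi(\Sigma)=2g\geq0$ (a connected orientable surface with two boundary circles has Euler characteristic $-2g$). Fixing a movie for $\Sigma$ and arranging, exactly as in the proof of Proposition~\ref{prop:conc-implies-local-equiv}, that no Reidemeister move crosses the basepoint, that the basepoint component never dies, and that the basepoint on $K_1$ is the image of the basepoint on $K_0$, we obtain chain maps $\alpha\co\oKhCx(K_0)\to\oKhCx(K_1)$ and $\beta\co\KhCxBN(K_0)\to\KhCxBN(K_1)$ (and the analogue $\KhCx(K_0)\to\KhCx(K_1)$ in place of $\alpha$, for $\LEE$) of homological degree $0$ and quantum degree $\chi(\Sigma)=-2g$, respecting the module structures, and fitting into Diagram~\eqref{eq:Kh-local-equiv}. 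The commutativity assertions in that diagram --- the Sarkar--Scaduto--Stoffregen result on the odd side, functoriality on the even side, and naturality of $/(2)$, $/(2,h)$, and localization --- hold for arbitrary cobordisms, not just concordances.

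The one new input beyond Proposition~\ref{prop:conc-implies-local-equiv} is that, since $\Sigma$ is connected, the induced map $h^{-1}\beta\co h^{-1}\KhCxBN(K_0)\to h^{-1}\KhCxBN(K_1)$ is still a homotopy equivalence, now of quantum degree $\chi(\Sigma)$: on each of the two canonical rank-one $h^{-1}\BNring$-summands it is multiplication by a unit of $h^{-1}\BNring$ (a sign times a power of the unit $h$, the power absorbing the grading shift), by the Bar--Natan analogue of Rasmussen's analysis of the Lee deformation~\cite[\S4]{Rasmussen10:s}, \cite[Proposition~3.4]{LS22:mixed}. Because every structural condition in Definition~\ref{def:local} is insensitive to an even quantum grading shift, $\LEO(K_1)\{2g\}$ is again a LEO triple, and the quantum shift exactly cancels the degree $-2g$ of $\alpha$ and $\beta$; one checks directly that $(\alpha,\beta)$ is then an honest local map $\LEO(K_0)\to\LEO(K_1)\{2g\}$ in the sense of Definition~\ref{def:local-map} (bigrading-preserving, compatible with module structures by the choice of based movie, with $h^{-1}\beta$ a homotopy equivalence by the previous sentence, and with Diagram~\eqref{eq:local-map} homotopy commuting by Diagram~\eqref{eq:Kh-local-equiv}). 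The same discussion applies to $\LEE$.

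Next I would invoke the monotonicity of the refined invariants under local maps that is implicit in the proof of Lemma~\ref{lem:s-beta-descends}: a local map $T\to T'$ pushes every witnessing configuration forward, and since the induced map on $H^{0,q}(h^{-1}D;\FF_2)$, respectively on $H^{0,q}(h^{-1}D)$, is an isomorphism, the conditions of spanning over $\FF_2$, primitivity, and generation over $\ZZ$ are all preserved; hence $s'(T)\leq s'(T')$ for each of $r^\alpha,s^\alpha,\stil^\alpha,r_\gamma,s_\gamma,\stil_\gamma$ (and their $\beta_\infty$ limits). Since $s'$ merely increases by $2g$ under the shift $\{2g\}$ (the reduced invariants via the identity $\pi(\LEO(K_1)\{2g\})=\pi(\LEO(K_1))\{2g\}$), applying this to $(\alpha,\beta)$ gives $s'(K_0)\leq s'(K_1)+2g=s'(K_1)-\chi(\Sigma)$. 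Running the identical argument for the reverse cobordism from $K_1$ to $K_0$, which has the same Euler characteristic, yields $s'(K_1)\leq s'(K_0)-\chi(\Sigma)$, and combining the two inequalities proves the lemma.

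The main obstacle is precisely the new input of the second paragraph: confirming that a connected cobordism of positive genus still induces a \emph{homotopy equivalence} on the localized Bar--Natan complex (rather than, say, multiplication by $h^{g}$ on a module in which $h$ fails to be invertible). Once this is established the map $(\alpha,\beta)$ is a genuine local map into a shifted triple, and everything else is bookkeeping of grading shifts together with the monotonicity already contained in the proof of Lemma~\ref{lem:s-beta-descends}. One should also take a moment to record that the constructions of Proposition~\ref{prop:conc-implies-local-equiv} (based movies, persistence of the basepoint component) go through verbatim for a general connected cobordism, but this is routine.
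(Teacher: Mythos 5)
Your proposal is correct and follows the same route as the paper: promote the construction of Proposition~\ref{prop:conc-implies-local-equiv} to a connected cobordism, observe it yields a local map shifted in quantum grading by $\chi(\Sigma)$, apply the monotonicity under local maps implicit in Lemma~\ref{lem:s-beta-descends}, and then repeat with the reversed cobordism. The paper states this more tersely (without separating the grading shift from the local map, and without dwelling on why $h^{-1}\beta$ remains a homotopy equivalence), but the substance is identical.
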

(For $s'(K)=s^{\beta_n}(\LEE(K))$, say, this is a special case of a known result~\cite[Theorem~1]{LS14:refine-s}.)
\begin{proof}
  The proof of Proposition~\ref{prop:conc-implies-local-equiv} shows
  that if there is a smooth, connected, orientable cobordism $\Sigma$
  from $K_0$ to $K_1$ then there is a local map from $\LEO(K_0)$ to
  $\LEO(K_1)$, except shifting the quantum grading by
  $\chi(\Sigma)$. Given a configuration showing that $q$ is full or
  half-full in whatever sense for $K_0$, the image of that
  configuration under the local map shows that $q+\chi(\Sigma)$ is
  full or half-full in the same sense for $K_1$, so
  $s'(K_1)\geq s'(K_0)+\chi(\Sigma)$. Reversing the cobordism, the
  same argument gives $s'(K_0)\geq s'(K_1)+\chi(\Sigma)$, proving the
  result.
\end{proof}

\begin{remark}\label{remark:nothing}
  It follows from Theorems~\ref{thm:bockstein_s_vals} and~\ref{thm:s-rels}, and the computations in
  Section~\ref{sec:computations} proving the refinements all differ from
  $s_{\FF_2}$, that none of the refined $s$-invariants discussed in
  this section are homomorphisms from $\CLEO$ (or $\rCLEO$), or even
  from the subgroup generated by knots. Their nontriviality
  (Section~\ref{sec:computations}) does imply that the homomorphism
  $\vec{s}$ from Proposition~\ref{prop:Zinf-summand} has nontrivial
  kernel. It would be interesting to construct new homomorphisms from
  $\CLEO$ to $\ZZ$ giving new concordance homomorphisms. One strategy
  (inspired by Dai-Hom-Stoffregen-Truong's
  work~\cite{DHST21:more-conc}) might be to find an explicit answer to
  Question~\ref{question:identify-group}, or to prove a structure
  theorem for some other quotient of $\CLEO$.
\end{remark}

\subsection{Conjugation on the Khovanov complexes}\label{subsec:conjugation}
The special features of the Khovanov complexes of a knot which allows
us to prove Theorem~\ref{thm:s-rels} are a conjugation action and
particular generators for $h^{-1}\Kh_h(K)$.

Define $\overline{\,\cdot\,}\colon \BNring \to \BNring$ as the $\ZZ[h]$-linear map with $\overline{1} = 1$ and $\overline{X} = h-X$. It is straightforward to check that this is a ring homomorphism with $\overline{\overline{x}} = x$ for all $x\in \BNring$. We call this automorphism the {\em conjugation of }$\BNring$.

Conjugation gives rise to a bigrading-preserving
involution $I\colon \KhCx_h(K)\to \KhCx_h(K)$ as follows. If $v$ is a vertex in the cube of resolutions for $K$, and $x = x_1\otimes x_2 \otimes \cdots \otimes x_k\in \KhCx_h(K)$ is an element over $v$, define
\[
I(x) = (-1)^{\Split(v)} \overline{x}_1\otimes \overline{x}_2 \otimes \cdots \otimes \overline{x}_k.
\] 
This commutes with the differential on $\KhCx_h(K)$ \cite[\S
2]{Schutz:integral-s} and satisfies $I(r\cdot x) = \overline{r}I(x)$
for all $r\in \BNring$. Also, if we write $\KhCx_{h=1}(K)$ for
$\KhCx_h(K)_{h=1}$, the map $I$ descends to $I\colon \KhCx_{h=1}(K)\to \KhCx_{h=1}(K)$, and the crucial property of this $I$ is that there exists $\varepsilon_q\in \{\pm 1\}$ with
\begin{equation}\label{eq:lift-filtered}
x+\varepsilon_q I(x) \in \mathcal{F}_{q+2}
\end{equation}
for all $x\in \mathcal{F}_q\subset \KhCx_{h=1}(K)$ \cite[Lemma 2.1]{Schutz:integral-s}. Here, as in the reduced case, $\mathcal{F}_q = p(\KhCx_h^q(K))$ with $p\colon \KhCx_h(K)\to \KhCx_{h=1}(K)$. This property can be lifted to $\KhCx_h(K)$:
\begin{lemma}\label{lem:chainT}
There is a chain map $T\colon \KhCx_h^q(K)\to \KhCx_h^{q+2}(K)$ over $\ZZ$ so
that
\begin{equation}\label{eq:lift-grading}
\id +\varepsilon_q I = h\cdot T. 
\end{equation}
Furthermore, at the level of homology, there is a commutative diagram
\begin{equation}\label{eq:shuma_diag}
\begin{tikzcd}
\Kh_h^{i,q}(K) \ar[r, "T"] \ar[d] & \Kh_h^{i,q+2}(K)\ar[d] \\
\Kh^{i,q}(K;\FF_2) \ar[r, "\nu"] & \Kh^{i,q+2}(K;\FF_2)
\end{tikzcd}
\end{equation}
where $\nu$ is Shumakovitch's acyclic differential on Khovanov homology over $\FF_2$ \cite[\S 3.2]{Shu14:torsion}.
\end{lemma}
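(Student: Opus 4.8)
The plan is to obtain $T$ by dividing the operator $\id+\varepsilon_q I$ by $h$, to argue that this division is legitimate quantum grading by quantum grading, to check that the quotient is a chain map, and finally to identify its reduction modulo $(2,h)$ with Shumakovitch's differential by expanding conjugation as a polynomial in $h$.

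\emph{Step 1 (construction of $T$).} Recall that $p$ is injective on each $\KhCx_h^q(K)$ with image $\mathcal{F}_q$, and that $p$ carries the subgroup $h\cdot\KhCx_h^{q+2}(K)\subseteq\KhCx_h^q(K)$ onto $\mathcal{F}_{q+2}$ (both facts are visible from a homogeneous $\ZZ[h]$-basis of $\KhCx_h(K)$). Hence, for $z\in\KhCx_h^q(K)$, one has $p(z)\in\mathcal{F}_{q+2}$ if and only if $z\in h\cdot\KhCx_h^{q+2}(K)$. Now apply this to $z=x+\varepsilon_q I(x)$ for $x\in\KhCx_h^q(K)$: since $I$ commutes with $p$ and $p(x)\in\mathcal{F}_q$, Equation~\eqref{eq:lift-filtered} gives $p(z)=p(x)+\varepsilon_q I(p(x))\in\mathcal{F}_{q+2}$, so $z\in h\cdot\KhCx_h^{q+2}(K)$. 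As $\KhCx_h(K)$ is free over $\ZZ[h]$, multiplication by $h$ is injective, so there is a unique $T(x)\in\KhCx_h^{q+2}(K)$ with $h\cdot T(x)=x+\varepsilon_q I(x)$; summing over $q$ defines a $\ZZ$-linear map $T$ of bidegree $(0,2)$ satisfying~\eqref{eq:lift-grading}. (One can avoid citing~\eqref{eq:lift-filtered}: the formulas for $\gr_q$ and $\Split(v)$ show that, for a generator over a vertex $v$ with $j$ circles labelled $X$, the parity of $\Split(v)+j$ depends only on the quantum grading; so $I$ reduces modulo $h$ to multiplication by a sign $\pm1$ on the part of $\KhCx(K;\ZZ)$ in quantum grading $q$, and choosing $\varepsilon_q$ to be the opposite sign forces $\id+\varepsilon_q I$ to vanish modulo $h$ in quantum grading $q$, which is exactly the divisibility used.)

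\emph{Step 2 ($T$ is a chain map).} The differential $\partial$ preserves quantum grading and commutes with $I$ and with multiplication by $h$. So, for $x\in\KhCx_h^q(K)$, $h\cdot\partial T(x)=\partial\bigl(x+\varepsilon_q I(x)\bigr)=\partial x+\varepsilon_q I(\partial x)=h\cdot T(\partial x)$, and cancelling $h$ gives $\partial T=T\partial$.

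\emph{Step 3 (reduction mod $(2,h)$ and the square).} Work over $\FF_2[h]$, where $\varepsilon_q=1$ and, at a vertex $v$, $I$ becomes $\bigotimes_{C\in\pi_0 K_v}\overline{(\,\cdot\,)}_C$ (tensor over $\FF_2[h]$) with $\overline{1}=1$, $\overline{X}=h+X$; thus $\overline{(\,\cdot\,)}_C=\id+h\,\nu_C$, where $\nu_C$ relabels the circle $C$ from $X$ to $1$ and kills generators on which $C$ is labelled $1$. The $\nu_C$ for distinct circles commute, so
\[
  \id+I\big|_v \;=\; \id+\prod_{C\in\pi_0 K_v}\bigl(\id+h\,\nu_C\bigr)\;=\;h\sum_{C\in\pi_0 K_v}\nu_C\;+\;h^2(\cdots),
\]
the two copies of $\id$ cancelling in characteristic $2$. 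Comparing with $h\cdot T=\id+I$ and cancelling $h$ yields $T\equiv\sum_C\nu_C$ modulo $h$ at each vertex, i.e.\ $T$ reduces modulo $(2,h)$ to the endomorphism $\sum_C\nu_C$ of $\KhCx(K;\FF_2)$, which is precisely Shumakovitch's differential $\nu$~\cite[\S3.2]{Shu14:torsion}. Chasing a cocycle $x\in\KhCx_h^q(K)$ around~\eqref{eq:shuma_diag}: the reduction of $[T(x)]$ modulo $(2,h)$ is $[\nu(\bar{x})]=\nu[\bar{x}]$ (where $\bar{x}$ is the image of $x$), which is also what one gets by first reducing $[x]$ and then applying $\nu$; so the square commutes.

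I expect Step 3 to be the main obstacle, and inside it the identification $T\equiv\nu\pmod{(2,h)}$: one must recall the chain-level description of Shumakovitch's $\nu$ and match it against the linear-in-$h$ part of the conjugation operator. Conveniently, the compatibility of $\sum_C\nu_C$ with the merge and split maps in the cube of resolutions — which would otherwise have to be verified to know $\nu$ is a chain map — is automatic here, since $T$ is a chain map over $\ZZ$ by Step 2 and reduction modulo $(2,h)$ of a chain map is a chain map. Steps 1 and 2 are essentially bookkeeping, the only substantive input being the divisibility $(\id+\varepsilon_q I)\bigl(\KhCx_h^q(K)\bigr)\subseteq h\cdot\KhCx_h^{q+2}(K)$ supplied by~\eqref{eq:lift-filtered} (or by the grading computation indicated there).
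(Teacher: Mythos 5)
Your proof is correct and follows essentially the same route as the paper's: construct $T$ via the divisibility $(\id+\varepsilon_q I)(\KhCx_h^q(K))\subseteq h\cdot\KhCx_h^{q+2}(K)$ (coming from Equation~\eqref{eq:lift-filtered} together with injectivity of $p$ in each quantum degree), check chain-map-ness by cancelling $h$, and identify $T$ modulo $(2,h)$ with $\nu$ by expanding $I|_v=\prod_C(\id+h\nu_C)$ and extracting the $h$-linear term. The alternative parity argument you sketch in Step 1 for avoiding \eqref{eq:lift-filtered} is a nice, correct addition but not in the paper's proof.
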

\begin{proof}
  For $x\in \KhCx_h^q(K)$ we get $p(x)+\varepsilon_qI(p(x)) = p(y)$ for some $y\in \KhCx_h^{q+2}(K)$ by Formula~(\ref{eq:lift-filtered}). Since $p|_{\KhCx_h^{(q+2)}}$ is injective, this uniquely determines $y$. But we also have $x+\varepsilon_q I(x)\in \KhCx_h^q(K)$, since $I$ preserves gradings. Also, $hy\in \KhCx_h^q(K)$ and therefore
\[
p(x+\varepsilon_q I(x)) = p(y) = p(hy).
\]
Hence $x+\varepsilon_qI(x)-hy\in \KhCx^q(K)$ is in $\ker p$. But $p$
restricted to $\KhCx_h^q(K)$ is injective, so $x+\varepsilon(q)I(x)-hy=0$,
and we can define $T(x) = y\in \KhCx_h^{q+2}$ and obtain
Formula~\eqref{eq:lift-grading}. Since $\varepsilon_q$ is fixed and $I$ is a
chain map, we get that $h\cdot T$ is a chain map. Therefore $T\circ
\partial - \partial\circ T$ lands in $\ker(h\colon \KhCx_h(K) \to \KhCx_h(K))$. Hence,
since multiplication by $h$ is injective on the free complex $\KhCx_h(K)$, we
see $T$ is a chain map. 

For the commutative diagram (\ref{eq:shuma_diag}), note that after passing to $\FF_2$ coefficients, we can ignore $\varepsilon_q$. In particular, we then have a chain map $T\colon \KhCx_h(K;\FF_2)\to \KhCx_h(K;\FF_2)$ which respects the action of $\FF_2[h]$. To see that setting $h=0$ turns $T$ into $\nu$ requires us to look more carefully at the definition. A typical basis element of $\KhCx_h(K;\FF_2)$ over $\FF_2[h]$ is of the form $b = \eta_1\otimes \cdots \otimes \eta_k$ with $\eta_j\in \{1,X\}$ for all $j=1,\ldots k$. Then $I(b)$ replaces each $\eta_j = X$ with $h+X = h+\eta_j$. In particular,
\[
I(b) = b + h T_1(b) +h^2 T_2(b) +\cdots +h^k T_k(b),
\]
where $T_j(b)$ is the sum of all $\bar{\eta}_1\otimes \cdots \otimes\bar{\eta}_k$, where exactly $j$ of the $\eta_m$ equal to $X$ have been replaced by $1$, and all $k-i$ other $\bar{\eta}_m = \eta_m$. In particular, $T_1$ agrees with Shumakovitch's $\nu$ \cite[Definition 3.1.A]{Shu14:torsion} and
\[
T = \nu + h T_2+\cdots+ h^{k-1} T_k.
\]
Setting $h=0$ gives the desired result. 
\end{proof}

Let $\varphi\colon h^{-1}\BNring\{1\}\to h^{-1}\KhCx_h(K)$ be a grading-preserving $\BNring$-linear homotopy equivalence. Then $\varphi(1)$ is a cycle which represents a homology class in $h^{-1}\Kh^{0,1}_h(K)$ that generates $h^{-1}\Kh_h(K)$ as a free $h^{-1}\BNring$-module. We can also pull back the involution $I\colon h^{-1}\Kh_h(K)\to h^{-1}\Kh_h(K)$ to $I_{h^{-1}\BNring}\colon h^{-1}\BNring\{1\}\to h^{-1}\BNring\{1\}$. This is a $\ZZ[h]$-linear grading-preserving involution on $h^{-1}\BNring\{1\}$ which satisfies $I_{h^{-1}\BNring}(rx) = \overline{r}I_{h^{-1}\BNring}(x)$ for all $r\in h^{-1}\BNring$ and $x\in h^{-1}\BNring\{1\}$. This implies that $I_{h^{-1}\BNring}$ is either plus or minus the conjugation.

Define
\begin{align*}
\mathcal{O} &= (h-X) [\varphi(1)] \in h^{-1}\Kh_h^{0,-1}(K) \\
\overline{\mathcal{O}} &= I(\mathcal{O}) = \pm X [\varphi(1)] \in h^{-1}\Kh_h^{0,-1}(K).
\end{align*}
Then $\mathcal{O},\overline{\mathcal{O}}$ generate $h^{-1}\Kh_h^{0,-1}(K)\cong \ZZ^2$ as an abelian group. Observe that $(h-X)\mathcal{O} = h\mathcal{O}$ and $X\mathcal{O} = 0$. Furthermore, each combination $m\mathcal{O}+n\overline{\mathcal{O}}$ with $\gcd(n,m) = 1$ is a primitive element in $h^{-1}\Kh_h^{0,-1}(K)$, but only for $m,n\in \{\pm 1\}$  does this element generate $h^{-1}\Kh_h(K)$ as an $h^{-1}\BNring$-module. This is because $\pm h$ and $\pm (h-2X)$ are the only units of $h^{-1}\BNring$ in quantum degree $-2$ (the inverse of the latter is $\pm(h-2X)h^{-2}$).

\begin{lemma}\label{lem:minus-plus}
If $i\colon \Kh_h^{0,q-2}(K) \to h^{-1}\Kh_h^{0,q-2}(K)$ is surjective, then there exists $a\in \Kh_h^{0,q}(K)$ such that $i(a)$ generates $h^{-1}\Kh_h(K)$ as an $h^{-1}\BNring$-module. Furthermore, if $b_1,b_2\in  \Kh_h^{0,q-2}(K)$ are such that $i(b_1),i(b_2)$ generate $h^{-1}\Kh_h^{0,q-2}(K)$, then $a = T(b)$ for some linear combination $b$ of $b_1$ and $b_2$.
\end{lemma}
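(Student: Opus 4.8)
The plan is to transport the question into the localized homology $h^{-1}\Kh_h(K)$, which by hypothesis is free of rank $1$ over $h^{-1}\BNring$ on the class $[\varphi(1)]$, and to compute the effect there of the chain map $T$ from Lemma~\ref{lem:chainT}. The first step is routine bookkeeping: $T$ is a chain map over $\ZZ$, so it induces maps $\Kh_h^{0,q-2}(K)\to\Kh_h^{0,q}(K)$ and $h^{-1}\Kh_h^{0,q-2}(K)\to h^{-1}\Kh_h^{0,q}(K)$ compatible with the localization maps $i$; and on the localized homology the relation $\id+\varepsilon_{q-2}I=h\cdot T$ of Equation~\eqref{eq:lift-grading}, together with invertibility of $h$, gives
\[
  T(x)=h^{-1}\bigl(x+\varepsilon_{q-2}I(x)\bigr)\qquad\text{for }x\in h^{-1}\Kh_h^{0,q-2}(K).
\]

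Next I would pin down an explicit generating pair for $h^{-1}\Kh_h^{0,q-2}(K)\cong\ZZ^2$. Because $q$ is odd, $k\coloneqq(1-q)/2$ is an integer, and multiplication by the unit $h^k$ carries $h^{-1}\Kh_h^{0,-1}(K)$ isomorphically onto $h^{-1}\Kh_h^{0,q-2}(K)$; hence $h^k\mathcal O$ and $h^k\overline{\mathcal O}$ span that group over $\ZZ$. Since $\overline h=h$, the involution $I$ commutes with multiplication by $h$, so $I(h^k\mathcal O)=h^k\overline{\mathcal O}$, and the displayed formula yields
\[
  T(h^k\mathcal O)=h^{k-1}\bigl(\mathcal O+\varepsilon_{q-2}\overline{\mathcal O}\bigr)=h^{k-1}\bigl(h-X\pm\varepsilon_{q-2}X\bigr)[\varphi(1)].
\]
The factor $h-X\pm\varepsilon_{q-2}X$ is either $h$ or $h-2X$, both of which appear in the list of degree $-2$ units of $h^{-1}\BNring$ recalled above; since $h^{k-1}$ is also a unit, $T(h^k\mathcal O)$ equals a unit of $h^{-1}\BNring$ times $[\varphi(1)]$. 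Hence $T(h^k\mathcal O)$ generates $h^{-1}\Kh_h(K)$ as an $h^{-1}\BNring$-module and lies in quantum grading $q$.

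To conclude, I would lift back. If $b_1,b_2\in\Kh_h^{0,q-2}(K)$ are such that $i(b_1),i(b_2)$ generate $h^{-1}\Kh_h^{0,q-2}(K)$, then, this group being free abelian of rank $2$, the pair $i(b_1),i(b_2)$ is in fact a $\ZZ$-basis; so I may write $h^k\mathcal O=c_1 i(b_1)+c_2 i(b_2)$ with $c_1,c_2\in\ZZ$, set $b=c_1b_1+c_2b_2$, and put $a=T(b)$. Then $i(a)=T(i(b))=T(h^k\mathcal O)$ is the desired module generator. The first assertion is the special case: when $i$ is surjective in quantum grading $q-2$ such $b_1,b_2$ exist (lift any $\ZZ$-basis of $h^{-1}\Kh_h^{0,q-2}(K)$).

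I do not anticipate a serious obstacle. Once the structural inputs recalled just before the lemma are in place --- that $h^{-1}\Kh_h(K)$ is free of rank $1$, the description of its degree $-2$ units, and the conjugate-linearity of $I$ with $I(\mathcal O)=\overline{\mathcal O}$ --- the argument is a short computation over $h^{-1}\BNring$, and the one genuinely substantive ingredient, the lifted relation $\id+\varepsilon_q I=h\cdot T$, is supplied by Lemma~\ref{lem:chainT}. The only points needing a little care are the degree bookkeeping (so that $q$, and hence $k$, land correctly in the odd-grading setup) and the verification that the chain map $T$ induces maps on homology compatible with localization.
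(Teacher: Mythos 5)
Your proof is correct and follows essentially the same route as the paper: lift $h^j\mathcal{O}$ (your $h^k\mathcal{O}$ with $k=(1-q)/2$) to $\Kh_h^{0,q-2}(K)$, apply $T$, and use the relation $\id+\varepsilon_{q-2}I = h\cdot T$ to see that the image is $h^{j-1}(\mathcal{O}\pm\overline{\mathcal{O}})$, a unit multiple of $[\varphi(1)]$. You are somewhat more explicit than the paper in expanding the unit $h-X\pm X\in\{h,\,h-2X\}$ and in spelling out how the second statement follows (writing $h^k\mathcal{O}$ as a $\ZZ$-linear combination of $i(b_1),i(b_2)$); the paper compresses this to ``the second statement follows by construction of $a$.'' The extra detail is fine and the degree bookkeeping you flag is handled correctly.
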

\begin{proof}
  There exists $j$ such that $h^j\mathcal{O}, h^j\overline{\mathcal{O}}$ generate $h^{-1}\Kh_h^{0,q-2}(K)$. By assumption, there exists a cycle $b\in \KhCx_h^{0,q-2}(K)$ such that $i(b)$ represents $h^j\mathcal{O}$. Then, $i(I(b))$ represents $h^j\overline{\mathcal{O}}$. By Formula~(\ref{eq:lift-grading}), $a = T(b)$ is a cycle in $\KhCx_h^{0,q}(K)$ such that $i(a)$ represents $h^{j-1}(\mathcal{O}\pm \overline{\mathcal{O}})$, which generates $h^{-1}\Kh_h(K)$ as an $h^{-1}\BNring$-module. The second statement follows by construction of $a$.
\end{proof}

\begin{lemma}\label{lem:plus-minus}
If there is an element $a\in \Kh_h^{0,q}(K)$ with $i(a) \in h^{-1}\Kh_h^{0,q}(K)$ primitive, then $i\co \Kh_h^{0,q-2}(K)\to h^{-1}\Kh_h^{0,q-2}(K)$ is surjective.
Furthermore, if $i(a)$ does not generate $h^{-1}\Kh_h(K)$ as an $h^{-1}\BNring$-module, then $i\colon \Kh_h^{0,q+2}(K)\to h^{-1}\Kh_h^{0,q+2}(K)$ is non-zero.
\end{lemma}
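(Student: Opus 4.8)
The plan is to carry out everything inside the free rank-one $h^{-1}\BNring$-module $N := h^{-1}\Kh_h(K)$ (which is concentrated in homological grading $0$), using the generator $g := [\varphi(1)]$ in quantum grading $1$ and the conjugation involution $I$. Two preliminary observations drive the proof. First, the localization map $i$ from the homological grading $0$ part of $\Kh_h(K)$ to $N$ is $\BNring$-linear, so its image, which I denote $M$, is a $\BNring$-submodule of $N$; and since $I$ is bigrading-preserving and commutes with $i$ (it is natural under localization), $M$ is also $I$-stable. Second, relative to $g$ the module $N$ is completely explicit: it is free over $\ZZ[h,h^{-1}]$ on $g$ and $Xg$ (using $X^2=hX$); its quantum grading $1-2k$ piece has $\ZZ$-basis $\{h^k g,\ h^{k-1}Xg\}$; multiplication by $X$ sends $g\mapsto Xg$ and $Xg\mapsto hXg$; and, since $I$ on $h^{-1}\BNring\{1\}$ is $\pm$ the conjugation (as noted before Lemma~\ref{lem:minus-plus}), $I(g)=\epsilon g$ and $I(Xg)=\epsilon(hg-Xg)$ for a fixed $\epsilon\in\{\pm1\}$.

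For the first assertion, write $q=1-2a$ and $v:=i(a)=\alpha h^a g+\gamma h^{a-1}Xg$, where $\gcd(\alpha,\gamma)=1$ is exactly the statement that $v$ is primitive. Using that $M$ is an $I$-stable $\BNring$-submodule containing $v$: from $X\cdot v=(\alpha+\gamma)h^a Xg$ and $X\cdot I(v)=\epsilon\alpha\,h^a Xg$, together with $\gcd(\alpha+\gamma,\alpha)=1$, we get $h^a Xg\in M$; then $h\cdot v$ and $h\cdot I(v)$, each corrected by an integer multiple of $h^a Xg\in M$, give $\alpha h^{a+1}g\in M$ and $\epsilon(\alpha+\gamma)h^{a+1}g\in M$, and $\gcd(\alpha,\alpha+\gamma)=1$ gives $h^{a+1}g\in M$. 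Since $\{h^{a+1}g,\ h^a Xg\}$ is a $\ZZ$-basis of the quantum grading $q-2=1-2(a+1)$ piece of $N$, the map $i\colon \Kh_h^{0,q-2}(K)\to h^{-1}\Kh_h^{0,q-2}(K)$ is surjective.

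For the second assertion, apply Lemma~\ref{lem:chainT} to $a$: set $y:=T(a)\in\Kh_h^{0,q+2}(K)$. Localizing the identity $\id+\varepsilon_q I=h\cdot T$ and inverting $h$ on $N$ gives $i(y)=h^{-1}\bigl(v+\varepsilon_q I(v)\bigr)$ in $h^{-1}\Kh_h^{0,q+2}(K)$, so it suffices to show $v+\varepsilon_q I(v)\neq 0$. Plugging in the formula for $I$ and writing $\delta:=\varepsilon_q\epsilon\in\{\pm1\}$, one computes $v+\varepsilon_q I(v)=(2\alpha+\gamma)h^a g$ when $\delta=1$ and $v+\varepsilon_q I(v)=\gamma\bigl(2h^{a-1}Xg-h^a g\bigr)$ when $\delta=-1$. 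In either case, since $\gcd(\alpha,\gamma)=1$, this can vanish only if $(\alpha,\gamma)\in\{(\pm1,0),\ (\pm1,\mp2)\}$; and writing $v=h^{a-1}(\alpha h+\gamma X)\,g$, these are exactly the cases in which $\alpha h+\gamma X$ is one of the quantum grading $-2$ units $\pm h$, $\pm(h-2X)$ of $h^{-1}\BNring$, i.e.\ exactly the cases in which $v$ generates $h^{-1}\Kh_h(K)$ as an $h^{-1}\BNring$-module. Hence, if $v$ does not generate $h^{-1}\Kh_h(K)$, then $v+\varepsilon_q I(v)\neq 0$, so $i(y)\neq 0$ and $i\colon \Kh_h^{0,q+2}(K)\to h^{-1}\Kh_h^{0,q+2}(K)$ is non-zero.

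The substance is in the first paragraph: pinning down the $\BNring$- and $I$-module structure of $N$ in terms of $g=[\varphi(1)]$, including the sign $\epsilon$ and the already-established facts that $I$ on the localized module is $\pm$ the conjugation and that $\pm h,\pm(h-2X)$ are its only quantum grading $-2$ units. After that, the two assertions reduce to short gcd-computations and the identification of the vanishing locus of $v+\varepsilon_q I(v)$ with the set of module generators; the only real bookkeeping hazard I expect is carrying $\epsilon$ and $\varepsilon_q$ consistently and choosing the correct $\ZZ$-basis of $N$ in each relevant quantum grading.
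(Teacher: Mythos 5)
Your proof of the first assertion is essentially the paper's argument in different coordinates: you work with the $\ZZ$-basis $\{h^k g,\,h^{k-1}Xg\}$ of each quantum-graded piece of $N$, while the paper works with $\{h^{k}\mathcal{O},\,h^{k}\overline{\mathcal{O}}\}$ (a unimodular change of basis, since $\mathcal{O}=(h-X)g$, $\overline{\mathcal{O}}=\pm Xg$). In both cases the ingredients are the same: $M=\im(i)$ is a $\BNring$-submodule stable under $I$, and a short gcd/Bezout manipulation using multiplication by $X$, multiplication by $h$ (the paper uses $h-X$), and $I$ shows $M$ contains a $\ZZ$-basis of the quantum grading $q-2$ piece.

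Your proof of the second assertion genuinely diverges from the paper's, and is shorter. You apply $T$ directly to $a$, localize the identity $\id+\varepsilon_q I=h\cdot T$, and compute $v+\varepsilon_q I(v)$ explicitly in coordinates, observing that it vanishes precisely when $\alpha h+\gamma X$ is one of the four quantum-degree-$-2$ units $\pm h$, $\pm(h-2X)$ of $h^{-1}\BNring$, i.e., precisely when $i(a)$ already generates; hence nonvanishing when it does not. The paper instead invokes Lemma~\ref{lem:minus-plus}, using the surjectivity just established in part (1) to produce an auxiliary class $a'$ with $i(a')=h^{j-1}(\mathcal{O}\pm\overline{\mathcal{O}})$, forms a $\ZZ$-linear combination of $a$ and $a'$ lying over $\mathcal{O}$ alone, and applies $T$ to that. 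Your route avoids this detour and the dependence on Lemma~\ref{lem:minus-plus}; the paper's route keeps the explicit generators $\mathcal{O}$, $\overline{\mathcal{O}}$ in play, which it continues to use in the surrounding reduced/unreduced comparison via $\pi(\mathcal{O})$. Both are correct, and the bookkeeping you flagged (the sign $\epsilon$ from $I|_{h^{-1}\BNring}$, the sign $\varepsilon_q$, and the choice of basis in each grading) checks out.
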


\begin{proof}
  We can write $i(a) = mh^{j-1}\mathcal{O}+nh^{j-1}\overline{\mathcal{O}}$ with $m,n\in \ZZ$ such that $\gcd(m,n) = 1$.

Denote the image of $i\colon \Kh_h^{0,q}(K)\to h^{-1}\Kh_h^{0,q}(K)\cong \ZZ\oplus \ZZ$ by $H$. Then $mh^j\mathcal{O}+nh^j\overline{\mathcal{O}}\in H$. We also get $i((h-X)a) = mh^j\mathcal{O}\in H$ and therefore also $nh^j\overline{\mathcal{O}}\in H$. Applying $I$ to $(h-X)a$ and $Xa$ also shows that $I(mh^j\mathcal{O}) = mh^j\overline{\mathcal{O}}\in H$ as well as $I(nh^j\overline{\mathcal{O}}) = nh^j\mathcal{O}\in H$. This implies
\begin{equation}\label{eq:eucl-alg}
mh^j\mathcal{O}, nh^j\mathcal{O}, nh^j\overline{\mathcal{O}}, mh^j\overline{\mathcal{O}} \in H.
\end{equation}
In particular, if $(m,n) = (1,0)$ or $(m,n)=(0,1)$ we get $H=\ZZ\oplus \ZZ$. Otherwise, we have $|m|\geq |n|$ or $|m| < |n|$. In both cases, Formula~(\ref{eq:eucl-alg}), $\gcd(m,n)=1$ and the Euclidean Algorithm imply $\mathcal{O}, \overline{\mathcal{O}}\in H$. This shows surjectivity of $i$ in quantum grading $q-2$.

By Lemma \ref{lem:minus-plus} there exists an $a'\in\Kh_h^{0,q}(K)$ such that $i(a')$ generates $h^{-1}\Kh_h(K)$ as an $h^{-1}\BNring$-module. In particular, we can assume that $i(a') = h^{j-1} (\mathcal{O}\pm \overline{\mathcal{O}})$. If we assume that $i(a)$ does not generate $h^{-1}\Kh_h(K)$ as an $h^{-1}\BNring$-module, then $|m|\not= |n|$, and for some linear combination of $a$ and $a'$ we get that $(n\pm m)h^{j-1}\mathcal{O}\not=0$ is in the image of $i$. Let $b\in \KhCx_h^q(K)$ be a cycle such that $i(b)$ represents this multiple of $h^{j-1}\mathcal{O}$. Using Formula~(\ref{eq:lift-grading}), we get that $T(b)\in \KhCx_h^{q+2}(K)$ is a cycle with $i(T(b))$ representing $(n\pm m)h^{j-2}(\mathcal{O}\pm \overline{\mathcal{O}})$, a non-zero element in $h^{-1}\Kh_h^{0,q+2}(K)$.
\end{proof}

The reduced complex is defined as
\[
\wt{\KhCx}_h(K) = \KhCx_h(K)\otimes_\BNring \ZZ[h]\{-1\}
\]
where $X$ acts on $\ZZ[h]$ as $0$. This gives rise to a short exact sequence of chain complexes
\begin{equation}\label{eq:ses-bn}
\begin{tikzcd}
0 \ar[r] & \rKhCx_h(K)\{-1\} \ar[r, "S"] & \KhCx_h(K) \ar[r, "\pi"] & \rKhCx_h(K)\{1\} \ar[r] &0
\end{tikzcd}
\end{equation}
with $S(x) = X I(x)$ (where we view $x$ as a labeling of the circles
that labels the marked circle $1$) and $\pi(x) = x\otimes 1$.
After inverting $h$ this gives rise to another short exact sequence
\begin{equation}\label{eq:ses-bn-2}
\begin{tikzcd}[scale cd=0.91, column sep=1.8em]
0 \ar[r] & h^{-1}\rKh_h^{0,q+1}(K) \ar[r, "S"] & h^{-1}\Kh_h^{0,q}(K) \ar[r, "\pi"] & h^{-1}\rKh_h^{0,q-1}(K) \ar[r] & 0
\end{tikzcd}
\end{equation}
with $\pi(h^k\overline{\mathcal{O}}) = 0$ and $\pi(h^k\mathcal{O})$ generating $h^{-1}\rKh_h^{0,q-1}(K)$. Here, $k$ is an integer ensuring the right quantum degree.

\subsection{Applications of conjugation to the refined \texorpdfstring{$s$}{s}-invariants}
\label{subsec:s-rels-proof}

Recall from Section~\ref{subsec:graded-s} that for a knot $K$ we have
\begin{align*}
s^+_\ZZ(K) &= \max\{q \mid \exists\, a\in \Kh^{0,q}_h(K)\mbox{ with }i(a)\in h^{-1}\Kh^{0,q}_h(K)\mbox{ primitive}\}-1,\\
s^-_\ZZ(K) &= \max\{q \mid \Kh_h^{0,q}(K)\to h^{-1}\Kh^{0,q}_h(K)\mbox{ surjective}\}+1,\\
s_\ZZ(K) &= \max\{q \mid \rKh_h^{0,q}(K)\to h^{-1}\rKh_h^{0,q}(K)\mbox{ surjective}\}.
\end{align*}

\begin{lemma}\label{lem:allszettsame}
Let $K$ be a knot. Then
\[
s^+_\ZZ(K) = s^-_\ZZ(K) = s_\ZZ(K).
\]
\end{lemma}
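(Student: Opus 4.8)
The plan is to prove the two equalities $s^+_\ZZ(K) = s^-_\ZZ(K)$ and $s^-_\ZZ(K) = s_\ZZ(K)$ separately, using the conjugation machinery and the short exact sequence~\eqref{eq:ses-bn-2} developed in Section~\ref{subsec:conjugation}.

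\textbf{Step 1: $s^+_\ZZ(K) \geq s^-_\ZZ(K)$ is immediate}, since surjectivity of $i\co \Kh_h^{0,q}(K)\to h^{-1}\Kh_h^{0,q}(K)$ in particular produces an element hitting a primitive class, so every $q$ witnessing $s^-_\ZZ$ also witnesses $s^+_\ZZ$. For the reverse inequality $s^+_\ZZ(K) \leq s^-_\ZZ(K)$: suppose $q$ realizes the maximum in the definition of $s^+_\ZZ$, so there is $a\in \Kh_h^{0,q+1}(K)$ with $i(a)$ primitive. The first part of Lemma~\ref{lem:plus-minus} immediately gives that $i\co \Kh_h^{0,q-1}(K)\to h^{-1}\Kh_h^{0,q-1}(K)$ is surjective, so $q-1$ witnesses $s^-_\ZZ$, i.e.\ $s^-_\ZZ(K)\geq (q-1)+1 = q = s^+_\ZZ(K)+1-1$. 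Being slightly careful about the $-1$ and $+1$ shifts in the two definitions, this gives $s^+_\ZZ(K)\leq s^-_\ZZ(K)$, hence equality.

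\textbf{Step 2: $s^-_\ZZ(K) = s_\ZZ(K)$.} Here I use the short exact sequence~\eqref{eq:ses-bn-2}, namely
\[
0 \to h^{-1}\rKh_h^{0,q+1}(K) \xrightarrow{S} h^{-1}\Kh_h^{0,q}(K) \xrightarrow{\pi} h^{-1}\rKh_h^{0,q-1}(K)\to 0,
\]
together with the corresponding map of short exact sequences coming from~\eqref{eq:ses-bn} before inverting $h$. For one direction: if $\Kh_h^{0,q}(K)\to h^{-1}\Kh_h^{0,q}(K)$ is surjective, then composing with $\pi$ shows $\rKh_h^{0,q-1}(K)\to h^{-1}\rKh_h^{0,q-1}(K)$ hits everything $\pi$ hits upstairs, which is all of $h^{-1}\rKh_h^{0,q-1}(K)$ since $\pi$ is surjective; using that $\pi(h^k\mathcal{O})$ generates, this gives surjectivity in reduced quantum grading $q-1$, so $s_\ZZ(K)\geq q-2$, and tracking the shift constants yields $s_\ZZ(K)\geq s^-_\ZZ(K) - $ (the appropriate constant). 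For the other direction: given surjectivity of $\rKh_h^{0,q}(K)\to h^{-1}\rKh_h^{0,q}(K)$, lift a generator of the target back through $\pi$ (using the map of the unreduced sequence and the fact that $\pi(\mathcal{O})$ generates the reduced localization) to an element of $\Kh_h^{0,q+1}(K)$ whose image is $h^k\mathcal{O}$; then apply the conjugation $I$ and the argument in Lemma~\ref{lem:plus-minus}/Lemma~\ref{lem:minus-plus} to upgrade this to full surjectivity of $\Kh_h^{0,q-1}(K)\to h^{-1}\Kh_h^{0,q-1}(K)$, as in the Euclidean-algorithm step of~\eqref{eq:eucl-alg}. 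Matching the $\pm1$ grading shifts in Definition~\ref{def:s-of-triple} and~\eqref{eq:def-sZ} then gives the reverse inequality.

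\textbf{Main obstacle.} The conceptual content is all in the lemmas already proved; the real work is bookkeeping with the quantum-grading shifts. Rasmussen's invariant conventions put the two free generators of $h^{-1}\Kh_h(K)$ two quantum gradings apart, centered so that the reduced generator sits in grading $-1$ (after the $\{-1\}$ shift in the reduced complex) while $\mathcal{O},\overline{\mathcal{O}}$ live in grading $-1$ unreduced; the maps $S$ and $\pi$ shift by $+1$ and $-1$. I expect the delicate point is verifying that with these conventions the $-1$ in the definition of $s^+_\ZZ$, the $+1$ in $s^-_\ZZ$, and the (unshifted) $s_\ZZ$ all line up to give exact equality rather than an off-by-two discrepancy; this requires carefully threading the shift $\{-1\}$ of the reduced complex and the grading shifts in~\eqref{eq:ses-bn} through both inequalities. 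Once the shifts are pinned down, the existence statements are exactly Lemmas~\ref{lem:minus-plus} and~\ref{lem:plus-minus}.
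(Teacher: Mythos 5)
Your conceptual outline matches the paper's approach (conjugation $I$, the operator $T$ via Lemmas~\ref{lem:minus-plus} and~\ref{lem:plus-minus}, the maps $S$ and $\pi$ from the reduced sequence), and your use of Lemma~\ref{lem:plus-minus} to get $s^+_\ZZ(K)\leq s^-_\ZZ(K)$ is correct.  However, there is a genuine gap, and it is precisely the ``off-by-two'' issue you flag at the end — that discrepancy is real and does not resolve by bookkeeping alone.  Concretely: in Step 1 you claim $s^+_\ZZ(K)\geq s^-_\ZZ(K)$ is immediate because surjectivity of $i$ in degree $q$ produces a primitive class in degree $q$.  But the definitions carry opposite shifts ($-1$ for $s^+_\ZZ$, $+1$ for $s^-_\ZZ$), so this observation only yields $s^+_\ZZ(K)\geq s^-_\ZZ(K)-2$.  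To close the gap you must \emph{step up two quantum gradings}, which is exactly what Lemma~\ref{lem:minus-plus} is for: from surjectivity in degree $q$ it produces $a$ in degree $q+2$ with $i(a)$ not just primitive but a generator of $h^{-1}\Kh_h(K)$ as an $h^{-1}\BNring$-module.  The paper uses this lemma in the direction you thought was trivial, not only in the reverse one; the ``immediate'' direction doesn't exist here.

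The same issue infects the forward direction of your Step 2: composing with $\pi$ (which drops the quantum grading by $1$) from unreduced surjectivity in degree $s^-_\ZZ(K)-1$ gives reduced surjectivity only in degree $s^-_\ZZ(K)-2$, so again you get $s_\ZZ(K)\geq s^-_\ZZ(K)-2$ rather than $s_\ZZ(K)\geq s^-_\ZZ(K)$.  The paper again first applies Lemma~\ref{lem:minus-plus} to step up to degree $s^-_\ZZ(K)+1$, obtains $a$ with $i(a)=h^j\mathcal{O}\pm h^j\overline{\mathcal{O}}$, and only then pushes forward by $\pi$ (using that $\pi(\mathcal{O})$ generates), landing in reduced degree $s^-_\ZZ(K)$.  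Finally, in your Step 2 reverse direction, ``lifting through $\pi$'' at the level of homology is not generally possible (the sequence~\eqref{eq:ses-bn} gives a long exact sequence, not a surjection on $H^0$); the paper instead applies the injective chain map $S$ to the reduced generator, using $S(\pi(\mathcal{O}))=\overline{\mathcal{O}}$, and then applies $I$ to produce the second generator — this avoids any lifting problem.  In short: the $\pm1$ shifts do not ``line up'' on their own; Lemma~\ref{lem:minus-plus} supplies the degree-$2$ gain needed in both inequalities that you treated as free.
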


\begin{proof}
  Let $q = s^-_\ZZ(K)-1$, so that $i\co \Kh_h^{0,q}(K)\to h^{-1}\Kh_h^{0,q}(K)\cong\ZZ\oplus \ZZ$ is surjective. By Lemma \ref{lem:minus-plus}, $h^j\mathcal{O}\pm h^j\overline{\mathcal{O}}$, one of the generators of $h^{-1}\Kh_h(K)$ and hence primitive, is $i(a)$ for some $a$ in quantum degree $q+2$. Then $\pi(h^j\mathcal{O}\pm h^j\overline{\mathcal{O}}) = h^j\pi(\mathcal{O})\in h^{-1}\rKh_h^{0,q+1}(K)$ is $i(\pi(a))$, so is also in the image,
  and generates this abelian group.  It follows that
\[
s^-_\ZZ(K)  \leq s^+_\ZZ(K) \mbox{ and } s^-_\ZZ(K) \leq s_\ZZ(K). 
\]

Now let $q=s_\ZZ(K)$, so that $i\colon \rKh_h^{0,q}(K)\to h^{-1}\rKh_h^{0,q}(K)$ is surjective. Then there exists $a$ with $i(a) = h^{j-1}\pi(\mathcal{O})\in h^{-1}\rKh_h^{0,q}(K)$. Since $S(\pi(\mathcal{O})) = \overline{\mathcal{O}}$, we get $b = \pm S(a)\in \Kh_h^{0,q-1}(K)$ with $i(b) = h^{j}\overline{\mathcal{O}}$. Also, $i(I(b)) = \pm h^{j}\mathcal{O}$ and hence $i\colon \Kh_h^{0,q-1}(K)\to h^{-1}\Kh_h^{0,q-1}(K)$ is surjective. Therefore
\[
s_\ZZ(K)\leq s^-_\ZZ(K) \mbox{ and hence } s_\ZZ(K) = s^-_\ZZ(K).
\]

It remains to show that for $q = s^+_\ZZ(K)-1$ we get $i\colon \Kh_h^{0,q}(K)\to h^{-1}\Kh_h^{0,q}(K)$ is surjective. This follows from Lemma \ref{lem:plus-minus} and therefore
\[
s^+_\ZZ(K) \leq s^-_\ZZ(K). \qedhere
\]
\end{proof}

\begin{remark}\label{rem:no-better}
The graded groups $H^0(h^{-1}\rKhCx_h(K))^{(q)}$ contain $p$-torsion if and only if the analogous unreduced graded groups contain $p$-torsion \cite[Lemma 4.15]{Schutz:integral-s}. Combined with Lemma \ref{lem:allszettsame}, this shows that an unreduced graded $s$-invariant contains the same information as the $s^\ZZ(K)$ defined by Sch\"utz \cite{Schutz:integral-s}.
\end{remark}


The first step towards proving Theorem~\ref{thm:bockstein_s_vals} is:
\begin{lemma}
  \label{lem:bockstein_s_vals}
  For any knot $K$ and $\alpha\in\{\beta_n,\beta\}$, the invariants
  $r^\alpha$, $s^{\alpha}$, and $\stil^{\alpha}$ (applied to $\LEO(K)$
  or $\LEE(K)$) lie in $\{s_{\FF_2}(K),s_{\FF_2}(K)+2\}$. Further,
  \begin{gather*}
    s_{\FF_2}(K)\leq r^{\alpha}(\LEO(K))\leq\stil^{\alpha}(\LEO(K))\leq s^{\alpha}(\LEO(K))\leq s_{\FF_2}(K)+2,\\
    s^{\beta_1}(\LEO(K))\leq s^{\beta_2}(\LEO(K))\leq\cdots\leq s^{\beta_\infty}(\LEO(K)),
  \end{gather*}
  and similarly for $\LEE(K)$. 
\end{lemma}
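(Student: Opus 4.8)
The plan is to prove Lemma~\ref{lem:bockstein_s_vals} in three parts. First, I would establish the two-sided bound $s_{\FF_2}(K)\leq r^\alpha,s^\alpha,\stil^\alpha\leq s_{\FF_2}(K)+2$. The upper bounds come from inequalities~\eqref{eq:r-diff-by-2} and~\eqref{eq:s-diff-by-2} together with the fact that for a knot $s^+_{\FF_2}(K)=s^-_{\FF_2}(K)=s_{\FF_2}(K)$ (cited just before Lemma~\ref{lem:splus-not-homom}); a parallel estimate handles $\stil^\alpha$ against $s_{\FF_2}$ using the reduced analogue. For the lower bound, note that $q=s_{\FF_2}(K)-2$ is always $\alpha$-full (and $\alpha$-reduced-full): take $\check a=\check b=0$, and let $a,b\in H^{0,q}(D;\FF_2)$ be $h$ times classes in degree $q+2$ whose images span $H^{0,q+2}(h^{-1}D;\FF_2)$; then $p(a)=p(b)=0$, matching $f\circ\beta_n(0)=0$, while $i(a),i(b)$ still span $H^{0,q}(h^{-1}D;\FF_2)$ since multiplication by $h$ is an isomorphism on $h^{-1}D$. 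This shows $s^\alpha\geq (s_{\FF_2}(K)-2)+3 = s_{\FF_2}(K)+1 > s_{\FF_2}(K)$, and since $s^\alpha$ differs from $s^-_{\FF_2}=s_{\FF_2}$ by $0$ or $2$, it equals $s_{\FF_2}(K)+2$ once... actually, I would be careful here and instead argue the ordering $r^\alpha\leq\stil^\alpha\leq s^\alpha$ directly.

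For the chain of inequalities $s_{\FF_2}(K)\leq r^\alpha\leq\stil^\alpha\leq s^\alpha\leq s_{\FF_2}(K)+2$, the inequality $r^\alpha\leq s^\alpha$ is because $\alpha$-full implies $\alpha$-half-full (take $\check b,b$ to be part of the data), combined with the different shifts ($+1$ versus $+3$), so if $q$ is $\alpha$-full then $q$ is $\alpha$-half-full and $q+2$ is $\alpha$-half-full as well (multiply by $h$); chasing the shift constants gives $r^\alpha\leq s^\alpha$. For $r^\alpha\leq\stil^\alpha$ and $\stil^\alpha\leq s^\alpha$, I would compare the unreduced diagram~\eqref{eq:Bockstein_s_invariant} with its reduced analogue using the short exact sequence~\eqref{eq:ses-bn} relating $\KhCx_h(K)$ to $\rKhCx_h(K)$: a configuration witnessing $q$ is $\alpha$-full for $\LEO(K)$ pushes forward under $\pi$ to a configuration witnessing $q$ is $\alpha$-reduced-full (so $\stil^\alpha\geq s^\alpha-2$), and a reduced-full configuration lifts back via $S$ (so $r^\alpha\geq\stil^\alpha-\text{(shift)}$); the precise shift bookkeeping ($\stil^\alpha$ uses $+2$, $s^\alpha$ uses $+3$, $r^\alpha$ uses $+1$) gives exactly $r^\alpha\leq\stil^\alpha\leq s^\alpha$. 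This is where I expect the main obstacle: carefully matching the quantum-grading shifts in~\eqref{eq:ses-bn-2} (which shifts by $\pm1$) with the asymmetric shift conventions in Definitions~\ref{def:s-beta-n} and in $\stil^\alpha$, and checking that the Bockstein $\beta_n$ on $C$ (resp.\ on $\rKhCx$) intertwines correctly with $S$ and $\pi$.

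Finally, for the monotonicity $s^{\beta_1}\leq s^{\beta_2}\leq\cdots\leq s^{\beta_\infty}$, this is already established in the discussion preceding Definition~\ref{def:s-beta-n}: naturality of the Bockstein with respect to the maps $\ZZ/(2^{k+1})\to\ZZ/(2^{n+1})$ shows that $q$ being $\beta_k$-full implies $q$ being $\beta_n$-full for $k\leq n$, hence $s^{\beta_k}\leq s^{\beta_n}$; passing to the direct limit $\ZZ[\tfrac12]/\ZZ$ gives $s^{\beta_\infty}=\lim_n s^{\beta_n}$, which is the largest. The case $\alpha=\beta$ is handled by the same arguments since $\beta=\beta_1+f^{-1}\circ\beta_1\circ f$ is again a degree-preserving operation $H^{-1,q}(C;\FF_2)\to H^{0,q}(C;\FF_2)$ and all the structural inputs (the $h$-trick for the lower bound, the $S$/$\pi$ comparison for the middle inequalities) are insensitive to which such operation is used. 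The statement for $\LEE(K)$ is identical, replacing $\oKhCx(K)$ by $\KhCx(K)$ throughout and using the even identification $f$; none of the above arguments used any special property of odd Khovanov homology beyond functoriality of the Bockstein.
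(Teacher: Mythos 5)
The peripheral pieces of your outline are fine: the two-sided bounds for $r^\alpha$ and $s^\alpha$ do indeed follow directly from inequalities~\eqref{eq:r-diff-by-2} and~\eqref{eq:s-diff-by-2} together with $s^\pm_{\FF_2}(K)=s_{\FF_2}(K)$ (both the lower and upper bounds are in those inequalities, so your extra $h$-multiplication argument is redundant, and moreover contains a parity slip---$s_{\FF_2}(K)-2$ is even while unreduced full gradings are odd, so the candidate full grading would have to be $s_{\FF_2}(K)-3$; you noticed this gave the wrong answer and punted). The Bockstein naturality argument for monotonicity in $n$ and the observation that $\LEE(K)$ is handled identically are both correct.

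The genuine gap is in the two middle inequalities $r^\alpha\leq\stil^\alpha\leq s^\alpha$, which are the actual new content. Your argument that ``full implies half-full'' gives $r^\alpha\leq s^\alpha$ is backwards: since $r^\alpha$ uses the shift $+1$ and $s^\alpha$ uses $+3$, full-implies-half-full only yields $s^\alpha\leq r^\alpha+2$, which is vacuous here. More importantly, your sketch of the $S/\pi$ comparison misidentifies which notion (full vs.\ half-full vs.\ reduced-full) is pushed and which direction the inequality runs, and---critically---it omits the ingredient that makes both middle inequalities work: the conjugation involution $I$ from Section~\ref{subsec:conjugation}. To show $r^\alpha\leq\stil^\alpha$ one takes a half-full witness in grading $s_{\FF_2}(K)+1$ and must know that $i$ is not surjective there, hence $i(I(a))=i(a)$, to conclude $\pi(i(a))\neq 0$ before pushing to the reduced complex. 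To show $\stil^\alpha\leq s^\alpha$, lifting a reduced-full witness by $S$ only gives a half-full witness in grading $s_{\FF_2}(K)-1$; to upgrade it to a full witness one must produce a second class, and the paper does so by taking $b=I(S(a))+S(a)$ and $\check b=0$ (with $I+\id$ providing the second spanning class in $h^{-1}\Kh_h$). Without $I$ these steps do not go through, and ``careful shift bookkeeping'' alone will not close the gap.

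Also note a small bookkeeping error you should fix: your claimed conclusions $\stil^\alpha\geq s^\alpha-2$ and $r^\alpha\geq\stil^\alpha-\text{(shift)}$ point the wrong way---these are automatic consequences of everything living in $\{s_{\FF_2}(K),s_{\FF_2}(K)+2\}$ and say nothing new---whereas the lemma asserts $r^\alpha\leq\stil^\alpha\leq s^\alpha$.
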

\begin{proof}
Most of this is a rephrasing of observations from
  Section~\ref{sec:refined-s}, keeping in mind that for knots,
  $s^{\pm}_{\FF_2}(\LEO(K))=s^{\pm}_{\FF_2}(\LEE(K))=s_{\FF_2}(K)$. To
  see the inequalities between $r^\alpha$, $\stil^\alpha$, and $s^\alpha$, assume that
  $r^\alpha(\LEO(K)) = s_{\FF_2}(K)+2 \eqqcolon s+2$. Then there exists $a\in \Kh^{0,s+1}_h(K;\FF_2)$ and $\check{a}\in \Kh^{-1,s+1}_o(K;\mathbb{A})$ with $0\not = i(a)\in h^{-1}\Kh^{0,s+1}(K;\FF_2)$ and $\alpha(\check{a}) = p(a)\in \Kh^{0,s+1}(K;\FF_2)$. Here, $\mathbb{A}$ is the coefficient group appropriate for $\alpha$. Notice that $i$ cannot be surjective in quantum degree $s+1$, and therefore $i(I(a)) = i(a)$. This implies $0\not=\pi(i(a))\in h^{-1}\rKh^{0,s}_h(K;\FF_2)$ and the naturality of the Bockstein homomorphism shows that $\pi(a)$ and $\pi(\check{a})$ witness that $s$ is $\alpha$-reduced full.
  
To see that $\stil^\alpha(\LEO(K)) \leq s^\alpha(\LEO(K))$, assume that $s$ is $\alpha$-reduced-full, witnessed by $a\in \rKh^{0,s}_h(K;\FF_2)$ and $\check{a}\in \rKh^{-1,s}_o(K;\mathbb{A})$. Then $S(a)\in \Kh^{0,s-1}_h(K;\FF_2)$ and $S(\check{a})\in \Kh^{-1,s-1}_o(K;\mathbb{A})$ witness that $s-1$ is $\alpha$-half-full. Using $b = I(S(a))+S(a)\in \Kh^{0,s-1}_h(K;\FF_2)$ and $\check{b} = 0 \in \Kh^{-1,s-1}_o(K;\mathbb{A})$ we see that $s-1$ is in fact $\alpha$-full.

The proof for $\LEE(K)$ is identical.
\end{proof}

The analogous first step towards Theorem~\ref{thm:s-rels} is slightly harder:
\begin{lemma}\label{lem:obstruction_order}
  For any knot $K$,
  \[
    \begin{tikzcd}[column sep=12pt]
       s_{\ZZ}(K)-2\arrow[r,"\leq"] \arrow[dr,"\leq"] & r_c(K)\ar[dr,"\leq"]\arrow[r,"\leq"] &
      \stil_c(K)\arrow[r,"\leq"]\arrow[dr,"\leq"] & s_c(K)\arrow[dr,"\leq"] \ar[r,"\leq"]& s_\ZZ(K) \arrow[dr,"\leq"]\\
       & s_{\FF_2}(K)-2\arrow[r,"\leq"] &  r_o(K)\ar[r,"\leq"] &
       \stil_o(K)\ar[r,"\leq"] & s_o(K)\ar[r,"\leq"]
       & s_{\FF_2}(K).
    \end{tikzcd}
  \]
\end{lemma}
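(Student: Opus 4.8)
The plan is to verify each of the three horizontal chains of inequalities and the five downward diagonal inequalities, all of which are comparisons between suprema of quantum gradings $q$ satisfying various "fullness" conditions; most will follow from set-theoretic inclusions of the witnessing configurations, and the genuinely new input will be the conjugation machinery of Section~\ref{subsec:conjugation}. First I would dispose of the diagonal inequalities and the outer inequalities $s_\ZZ(K)-2 \leq r_c(K)$ and $s_c(K)\leq s_\ZZ(K)$: by Lemma~\ref{lem:allszettsame}, $s_\ZZ(K)=s^+_\ZZ(K)=s^-_\ZZ(K)$, and unwinding the definitions, a $q$ witnessing surjectivity of $H^{0,q}(D;\ZZ)\to H^{0,q}(h^{-1}D;\ZZ)$ gives (by passing mod $2$, using that $h^{-1}\Kh_h$ has rank $2$) a $q$ that is completely full, etc.; the downward diagonals "$r_c\leq s_{\FF_2}-2$ is false, rather $s_{\FF_2}-2\leq r_o$" come from the observation that $\ZZ$-coefficient configurations map to $\FF_2$-coefficient configurations under $j$, so completely (half-)full implies oddly (half-)full, giving $r_c\leq r_o$, $\stil_c\leq\stil_o$, $s_c\leq s_o$, while $s_{\FF_2}-2\leq r_o$ and $s_\ZZ-2\leq r_c$ come from the observation that if $q<s_{\FF_2}$ (resp. $q<s_\ZZ$) one can multiply a suitable class by $h$ to produce a witness with $p(a)=0$, which is trivially in the image of $f\circ j$.

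The core of the argument is the top chain $r_c(K)\leq\stil_c(K)\leq s_c(K)$ and its analogue $r_o(K)\leq\stil_o(K)\leq s_o(K)$, together with the reduction of the unreduced invariants to the reduced ones via the short exact sequences \eqref{eq:ses-bn} and \eqref{eq:ses-bn-2}. For $r_c\leq\stil_c$: suppose $q+2$ is completely half-full for $\LEO(K)$, witnessed by $a\in\Kh_h^{0,q+2}(K;\ZZ)$ with $i(a)$ primitive in $h^{-1}\Kh_h^{0,q+2}$ and $\check a\in\Kh^{0,q+2}(K;\ZZ)$ with $f\circ j(\check a)=p(a)$. I would apply $\pi$ from \eqref{eq:ses-bn-2}: $\pi(i(a))=i(\pi a)$ is then a nonzero (in fact generating, by the discussion after Lemma~\ref{lem:chainT}, since $i$ is not surjective in degree $q+2$ forces $i(a)$ to be a multiple of $\mathcal{O}$ or $\overline{\mathcal{O}}$ up to powers of $h$, whose image under $\pi$ generates) element of $h^{-1}\rKh_h^{0,q}(K)$, and $\pi(\check a)$, $\pi(a)$ witness that $q$ is completely reduced-full, giving $\stil_c(K)\geq q=r_c(K)-2+2$... wait, I need to be careful about the grading shift: $r_c$ carries a $-1$ shift, $\stil_c$ is the bare max, so I would track these constants precisely, using that the $\{\pm1\}$ shifts in \eqref{eq:ses-bn} change parity appropriately and the $-1$'s in the definitions of $r_c,s_c$ are there to match. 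For $\stil_c\leq s_c$: given $q$ completely reduced-full for $K$, I would apply $S$ from \eqref{eq:ses-bn-2} to produce a class in $\Kh_h^{0,q-1}$, then use the chain map $T$ of Lemma~\ref{lem:chainT} together with $I$ exactly as in the proof of Lemma~\ref{lem:bockstein_s_vals} to produce a \emph{second} generator $i(b)=i(a)+i(I(a))$ (over $\ZZ$, using that $h^{-1}\Kh_h$ has rank $2$ and $\mathcal O,\overline{\mathcal O}$ are a basis) so that $q-1$ becomes completely full; the crucial points are Lemma~\ref{lem:minus-plus} and Lemma~\ref{lem:plus-minus}, which are precisely engineered to promote "primitive" to "generates" after applying $T$ and passing down by $2$ in quantum grading.

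The main obstacle I anticipate is the bookkeeping of grading shifts between the unreduced invariants $r_c,s_c,r_o,s_o$ (which have $\mp1$ shifts built into their definitions) and the reduced $\stil_c,\stil_o$ (no shift, but taking max over $2\ZZ$), reconciled through the odd shifts $\{\pm1\}$ appearing in the reduced/unreduced short exact sequences \eqref{eq:ses-bn}, \eqref{eq:ses-bn-2}; getting every "$+2$" and "$-1$" to land correctly is where a careless argument would go wrong. A secondary subtlety is that for the unreduced complete invariants one must argue over $\ZZ$, not $\FF_2$, that the relevant maps $i$ hit \emph{generators} (not merely primitive or nonzero classes) of $h^{-1}\Kh_h(K)$ as an $h^{-1}\BNring$-module --- this is exactly the content of the "only for $m,n\in\{\pm1\}$" discussion in Section~\ref{subsec:conjugation}, so I would lean heavily on the explicit description of the units of $h^{-1}\BNring$ in quantum degree $-2$ and on Lemmas~\ref{lem:minus-plus} and~\ref{lem:plus-minus} rather than trying to reprove it inline.
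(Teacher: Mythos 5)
There is a genuine gap in the core step $r_c(K)\leq\stil_c(K)$. You write that if $q+2$ is completely half-full with $i(a)$ primitive, then $\pi(i(a))$ generates $h^{-1}\rKh_h$, justifying this by the claim that ``$i$ is not surjective in degree $q+2$ forces $i(a)$ to be a multiple of $\mathcal{O}$ or $\overline{\mathcal{O}}$ up to powers of $h$.'' This is false. Primitivity of $i(a)$ only says $i(a)=mh^j\mathcal{O}+nh^j\overline{\mathcal{O}}$ with $\gcd(m,n)=1$; non-surjectivity of $i$ in that degree places no further constraint on the pair $(m,n)$ (for instance $(m,n)=(2,3)$ is allowed). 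Since the reduction map $\pi$ from the short exact sequence \eqref{eq:ses-bn-2} kills $\overline{\mathcal{O}}$, one gets $\pi(i(a))=m\cdot h^j\pi(\mathcal{O})$, which is a generator of $h^{-1}\rKh_h^{0,q-1}(K)\cong\ZZ$ only when $m=\pm1$. (Indeed, in the extreme case $i(a)=h^j\overline{\mathcal{O}}$, $\pi(i(a))=0$.) The fix requires the involution $I$: since $I$ interchanges $\mathcal{O}$ and $\overline{\mathcal{O}}$ up to sign, $i(I(a))=nh^j\mathcal{O}+mh^j\overline{\mathcal{O}}$, while $I(a)$ and $a$ have the same image (up to sign) in $\KhCx(K;\FF_2)$ and hence are compatible with the same odd class $\pm\check a$. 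Choosing $\alpha,\beta$ with $\alpha m+\beta n=1$ then produces $\alpha a+\beta I(a)$ whose image under $\pi\circ i$ is a generator, witnessing complete reduced-fullness. Your proposal omits this Bezout/$I$ step entirely, which is precisely the content of the conjugation machinery you cited but did not deploy at this point.

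The remainder of your outline is essentially sound and matches the paper's route: the diagonal $c\to o$ inequalities are tautological; $s_{\FF}-2\leq r_\bullet$ comes from multiplying by $h$ to kill $p(a)$; $s_c\leq s_\ZZ$ is definitional given Lemma~\ref{lem:allszettsame}; and for $\stil_c\leq s_c$ your plan of producing a second generator via $T$ (so $hT(S(a))$ plays the role of $S(a)+\epsilon I(S(a))$, with the empty odd witness $\check b=0$) is a workable variant of the paper's argument, which instead uses $b=\pm S(a)$ together with $I(b)$ as the two witnesses. But your sketch there is vague about why $i(S(a))$ and $i(hT(S(a)))$ together generate $h^{-1}\Kh_h^{0,q-1}(K)$ as a $\ZZ$-module --- spelling out the determinant computation in the $\{\mathcal O,\overline{\mathcal O}\}$ basis would close that. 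Your concern about grading bookkeeping is legitimate but not where the substantive error lies.
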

\begin{proof}
The fact that $r_c(K)\leq r_o(K)$, and similarly for $\stil$ and $s$,
is immediate from the definitions.  The claims 
$s_{\FF_2}(K)-2\leq r_o(K)$ and $s_o(K)\leq s_{\FF_2}(K)$ follow from
Formulas~\eqref{eq:s-versus_ru} and \eqref{eq:s-versus_su} and the
fact that for knots, $s^+_{\FF_2}=s^-_{\FF_2}=s_{\FF_2}$. The
inequality between $s_{\ZZ}$ and $s_{\FF_2}$ follows from a result of
Sch\"utz~\cite[Corollary 4.9]{Schutz:integral-s}.  (In particular,
this part also applies to $s_{\FF}$ for any field $\FF$:
$s_{\ZZ}(K)\leq s_{\FF}(K)$ for any field $\FF$.)  The fact that
$s_c(K)\leq s_{\ZZ}(K)$ is immediate from the definitions and the fact
that $s_{\ZZ}(K)=s_{\ZZ}^-(K)$ by Lemma~\ref{lem:allszettsame}.

To see that $s_{\ZZ}(K)-2\leq r_c(K)$, let $q$ be the maximal grading
so that there is an $a\in \Kh_h^{0,q}(K;\ZZ)$ with
$i(a)\in h^{-1}\Kh_h^{0,q}(K;\ZZ)$ primitive. Since
$s^+_\ZZ(K)=s_{\ZZ}(K)$ by Lemma~\ref{lem:allszettsame},
$q=s_{\ZZ}(K)+1$. Then $ha$ also has
$i(ha)\in h^{-1}\Kh_h^{0,q-2}(K;\ZZ)$ primitive, but now
$p(ha)=0\in \Kh^{0,q-2}(K;\FF_2)$, so in particular $p(a)=j(0)$,
and hence $q-2$ is completely half-full. A similar argument shows that
$s_{\ZZ}(K)-2\leq \stil_c(K)$ and the same for $s_c(K)$.

Finally, we show that $r_c(K)\leq \stil_c(K)\leq s_c(K)$.
The proof is similar to the proof of Lemma~\ref{lem:allszettsame}.

To see that $r_c(K)\leq \stil_c(K)$, recall that we showed in
Lemma~\ref{lem:obstruction_order} that
$s_{\ZZ}(K)-2\leq r_c(K), \stil_c(K), s_c(K)\leq s_{\ZZ}(K)$; hence,
if $r_c(K)=s_{\ZZ}(K)-2$ then there is nothing to show. So, let
$q=s^+_\ZZ(K)+1$ and assume that $q$ is completely half-full,
witnessed by elements $a\in \Kh_h^{0,q}(K)$ and
$\check{a}\in \oKh^{0,q}(K)$. We can write
$i(a)\in h^{-1}\Kh_h^{0,q}(K)$ as
$m h^j\mathcal{O}+nh^j\ol{\mathcal{O}}$, for some $j$, where
$\gcd(m,n) = 1$. The image $\pi(i(a))=i(\pi(a))$ in
$h^{-1}\rKh_h^{0,1}(K)$ is $mh^j\pi(\mathcal{O})$, which is
not a generator unless $m=\pm 1$. So, consider the element
$I(a)$. The image of $I(a)$ in $\oKh^{0,q}(K)$ agrees with the
image of $\pm a$, and hence of $\pm\check{a}$, and
$i(I(a))=nh^j\mathcal{O}+mh^j\ol{\mathcal{O}}$. Since $\gcd(m,n)=1$,
there are integers $\alpha,\beta$ so that
$\pi(i(\alpha a+\beta I(a)))=h^j\pi(\mathcal{O})$. Then
$\pi(\alpha a+\beta I(a))\in \rKh_h^{0,1}(K)$ and
$\pi((\alpha\pm\beta)\check{a})\in \roKh^{0,q}(K)$ witness the
fact that $q-1$ is completely reduced-full, so
$\stil_c(K)=s_{\ZZ}(K)$.

Similarly, to see that $\stil_c(K)\leq s_c(K)$, if suffices to
consider the case that $q=s_\ZZ(K)$ is completely reduced-full. Then
we can find $a\in \rKh_h^{0,q}(K)$ and
$\check{a}\in \roKh^{0,q}(K)$ which map to the same element of
$\rKh^{0,q}(K;\FF_2)$ and so that $i(a)=h^{j-1}\pi(\mathcal{O})\in
h^{-1}\rKh_h^{0,q}(K)$. Since $S(\pi(\mathcal{O})) =
\overline{\mathcal{O}}$, there is an element $b = \pm S(a)\in
\Kh_h^{0,q-1}(K)$ with $i(b) = h^{j}\overline{\mathcal{O}}$. Also,
$i(I(b)) = \pm h^{j}\mathcal{O}$. So,  $i(b)$ and $i(I(b))$ span 
$h^{-1}\Kh_h^{0,q-1}(K)$. Further, the images of $b$ and
$S(\check{a})\in \oKh^{0,q-1}(K)$ in $\Kh^{0,q-1}(K;\FF_2)$ agree,
as do the images of $I(b)$ and $S(\check{a})$, so $q-1$ is
completely full, as desired.

The analogues of these last inequalities for the $o$ variants are
similar but easier to prove, following an argument of
Sch\"utz~\cite[Proposition~5.5]{Schutz:integral-s}.
\end{proof}

We can now prove Theorem~\ref{thm:local-is-e}, that for knots $r_c$
applied to the knot and its mirror detects local equivalence:
\begin{proof}[Proof of Theorem~\ref{thm:local-is-e}]
  Write $\LEO(K)=(C,D,f)$.  Since $r_c(\LEO(K))=0$, there are elements
  $a\in H^{0,1}\!(D)$ and $\check{a}\in H^{0,1}(C)$ with
  $f\circ j(\check{a}) = p(a)\in H^{0,1}(D_{h=0};\FF_2)$ and
  $i(a)\in H^{0,1}(h^{-1}D)$ primitive. We claim that $i(a)$ generates
  $H(h^{-1}D)$ as a module over $h^{-1}\BNring$. Suppose not. By
  Lemma~\ref{lem:plus-minus}, the map
  $i\co \Kh^{0,3}_h(K)\to
  h^{-1}\Kh_h^{0,3}(K)\cong\ZZ^2$ is non-zero. Hence, by the
  universal coefficient theorem, the map
  $i\co \Kh_h^{0,3}(K;\QQ)\to h^{-1}\Kh^{0,3}_h(K;\QQ)\cong\QQ^2$ is non-zero, so
  $s_\QQ(K)\geq 2$. Hence, $s_{\QQ}(\Kbar)\leq -2$, but by construction
  $s_{\ZZ}(\Kbar)\leq s_{\QQ}(\Kbar)$, so, by
  Lemma~\ref{lem:obstruction_order}, $s_c(\Kbar)\leq -2$, a
  contradiction.

  Now, we proceed as in the proof of
  Theorem~\ref{thm:reduced-local-is-e}.
  Fix cocycle representatives for $a$ and $\check{a}$ and define
  $\alpha\co \ZZ[X]/(X^2)\{1\}\to C$ by $\alpha(1)=\check{a}$ and
  $\beta\co \BNring\{1\}\to D$ by $\beta(1)=a$. We just verified that
  $\beta$ induces a quasi-isomorphism $h^{-1}\BNring\to h^{-1}D$;
  since both sides are free, $\beta$ is also a homotopy
  equivalence. The homotopy between $f\circ \alpha$ and
  $\beta\circ \id$ is induced by the fact that
  $[p(a)]=[f\circ j(\check{a})]\in H(D_{h=0};\FF_2)$. So, $(\alpha,\beta)$
  induces a local map $\LEO(U)\to\LEO(K)$.

  Applying the same argument with $\Kbar$ in place of $K$ gives a
  local map $\LEO(U)\!\to \!\LEO(\Kbar)=\LEO(K)^*$. Dualizing gives a
  local map $\LEO(K)\to\LEO(U)^*=\LEO(U)$, concluding the proof that
  $\LEO(K)$ is locally equivalent to $\LEO(U)$.
\end{proof}

The second component of the proofs of Theorems~\ref{thm:bockstein_s_vals} and~\ref{thm:s-rels} is
showing that, in fact, the three versions each comprehensive invariant
agree, as do the three versions of the $\beta_n$-refined invariant for
$\LEO(K)$:

\begin{lemma}\label{lem:r-is-s}
Let $K$ be a knot and $1\leq n\leq \infty$. Then for $\gamma\in
\{c,o\}$ we have
\[
  r_\gamma(\LEO(K)) = s_\gamma(\LEO(K)) \hspace{0,4cm}\mbox{and}\hspace{0.4cm}  r^{\beta_n}(\LEO(K)) = s^{\beta_n}(\LEO(K)).
\]

\end{lemma}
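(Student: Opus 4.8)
The plan is to reduce the statement to a single ``lifting'' assertion and prove that assertion with the conjugation machinery of Section~\ref{subsec:conjugation}. By Lemmas~\ref{lem:obstruction_order} and~\ref{lem:bockstein_s_vals}, for $\LEO(K)$ we already have $r_\gamma\le\stil_\gamma\le s_\gamma$ and $r^{\beta_n}\le\stil^{\beta_n}\le s^{\beta_n}$, so it suffices to prove the reverse inequalities $r_\gamma\ge s_\gamma$ and $r^{\beta_n}\ge s^{\beta_n}$; then $\stil$ is squeezed in between and all three coincide. Since the top full grading and the top half-full grading enter the definitions of $s$ and $r$ with additive constants differing by $2$, each reverse inequality follows from the following claim: \emph{for $\LEO(K)$, if $q$ is $\gamma$-full (resp.\ $\beta_n$-full), then $q+2$ is $\gamma$-half-full (resp.\ $\beta_n$-half-full).} Indeed, taking $q$ to be the top full grading makes the top half-full grading at least $q+2$, which is exactly what forces $r\ge s$.

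To prove the claim, suppose $q$ is full, witnessed by classes $a,b\in\Kh_h^{0,q}(K)$ (or its mod-$2$ reduction) together with odd classes $\check a,\check b$ such that $p(a)=f(\iota(\check a))$ and $p(b)=f(\iota(\check b))$, where $\iota$ denotes $j$ for $\gamma=c$ and $\beta_n$ or $j\bmod 2$ for $\gamma=o$ and for $\beta_n$. Fullness forces $i\colon\Kh_h^{0,q}(K)\to h^{-1}\Kh_h^{0,q}(K)$ to be onto its target ($\ZZ^2$ for $c$, $\FF_2^2$ for $o$ and $\beta_n$), so by Lemma~\ref{lem:minus-plus} (in its mod-$2$ variant for the $o$ and $\beta_n$ cases) there is a class $c$ in degree $q+2$ with $i(c)$ primitive, and $c$ may be taken of the form $c=T(d)$, with $T$ the chain map of Lemma~\ref{lem:chainT} and $d$ a suitable $\ZZ$-linear combination of $a$ and $b$ chosen as in the proof of Lemma~\ref{lem:minus-plus} (this is where the generators $\mathcal{O},\overline{\mathcal{O}}$ and the involution $I$ enter). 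It then remains to exhibit a $C$-side witness $\check c$ in degree $q+2$ with $p(c)=f(\iota(\check c))$. By the commutative diagram~\eqref{eq:shuma_diag}, setting $h=0$ identifies $T$ with Shumakovitch's acyclic differential $\nu$ on $\Kh(K;\FF_2)$~\cite{Shu14:torsion}, so $p(c)=p(T(d))=\nu(p(d))=\nu\bigl(f(\iota(\check d))\bigr)$, where $\check d$ is the corresponding combination of $\check a,\check b$. Under the canonical identification $\oKh(K;\FF_2)\cong\Kh(K;\FF_2)$ the operator $\nu$ transports to the acyclic differential on odd Khovanov homology, and the remaining point is that this differential is compatible with the integral (resp.\ $\ZZ/(2^n)$) structure---it carries a class of the form $f(\iota(\check d))$ again to a class of the form $f(\iota(\check c))$. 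That compatibility is supplied by the odd analogue of the short exact sequence~\eqref{eq:ses-bn} together with Putyra--Shumakovitch's analysis of the mixed Bocksteins~\cite{PS16:mixed-Bocksteins}, and with it the claim---and hence the lemma---follows.

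The main obstacle is precisely this last step. The even-side tools ($I$, $T$, the generators $\mathcal{O}$ and $\overline{\mathcal{O}}$, and Lemmas~\ref{lem:minus-plus} and~\ref{lem:plus-minus}) already produce the primitive class $c=T(d)$ two quantum gradings above a full grading; what requires genuine odd-Khovanov input is verifying that its reduction $p(c)$ still lies in the image of the odd change-of-rings map (respectively of the odd Bockstein $\beta_n$). Making this rigorous amounts to setting up an odd counterpart of $T$ and of diagram~\eqref{eq:shuma_diag}: the interaction of Shumakovitch's acyclic differential with the reduced/unreduced short exact sequence and with the (possibly higher) Bocksteins on odd Khovanov homology. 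Once that is available, the remainder is routine bookkeeping with the inequalities already recorded in Lemmas~\ref{lem:obstruction_order},~\ref{lem:bockstein_s_vals},~\ref{lem:chainT}, and~\ref{lem:minus-plus}.
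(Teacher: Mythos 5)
Your reduction of the problem is correct and matches the paper's: by Lemmas~\ref{lem:obstruction_order} and~\ref{lem:bockstein_s_vals} you already have $r\le\stil\le s$, so the lemma follows once one shows that a full grading $q$ forces $q+2$ to be half-full, and you correctly identify that Lemma~\ref{lem:minus-plus} together with the chain map $T$ of Lemma~\ref{lem:chainT} and diagram~\eqref{eq:shuma_diag} produce the desired class $c=T(d)$ with $i(c)$ primitive and $p(c)=\nu(p(d))$.

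However, the last and pivotal step---showing that $\nu$ lifts to an operator on odd Khovanov homology (resp.\ on the $\ZZ/(2^n)$-coefficient group) compatibly enough to produce the needed $C$-side witness $\check c$---is exactly where your proposal stops short. You acknowledge it as ``the main obstacle'' and attribute it to Putyra--Shumakovitch and an unspecified ``odd analogue'' of the short exact sequence~\eqref{eq:ses-bn}, but this is an appeal, not an argument, and it is not the route the paper takes. The paper instead computes the matrix of $\nu$ with respect to Shumakovitch's splitting
$\Kh^{0,q}(K;\FF_2)\cong\rKh^{0,q-1}(K;\FF_2)\oplus\rKh^{0,q+1}(K;\FF_2)$: using the splitting map $\tilde r(u\otimes1)=\nu(Xu)$ (the $h=0$ reduction of $r(u\otimes1)=u+XT(u)$), the relations $\nu^2=0$ and $X\nu+\nu X=\id$, one finds that $\tilde\nu$ is the identity on the summand $\rKh^{0,q+1}$ and zero between all other summands. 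Since the Ozsv\'ath--Rasmussen--Szab\'o integral splitting of $\oKh$ reduces modulo~$2$ to Shumakovitch's splitting, one can then \emph{define} the lift on odd Khovanov homology by the same matrix $(a,b)\mapsto(b,0)$, and this gives the needed witness. Without this explicit computation of the matrix and the use of the integral ORSz splitting, your argument has a genuine gap: it is not enough to observe that $\oKh(K;\FF_2)\cong\Kh(K;\FF_2)$ carries a differential $\nu$, one must show it lifts to integral (resp.\ $\ZZ/(2^n)$) coefficients in a way consistent with $j$ (resp.\ $\beta_n$).
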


\begin{proof}
We focus on the case $r_c(\LEO(K)) = s_c(\LEO(K))$; the other cases are similar. By Lemma \ref{lem:obstruction_order}, it suffices to show that if $s_c(\LEO(K)) = s_\ZZ(K)$, then $s_c(\LEO(K)) \leq r_c(\LEO(K))$. So assume that $q=s_\ZZ(K) -1$ is completely full and let $a,b\in \Kh_h^{0,q}(K)$ and $\check{a},\check{b}\in \Kh_o^{0,q}(K)$ witness this fact.

By Lemma \ref{lem:minus-plus}, there exist $m,n\in \ZZ$ with $i(T(ma+nb))$ primitive in $h^{-1}\Kh^{0,q}_h(K)$. By Diagram~\eqref{eq:shuma_diag}, we have $p(T(ma+nb)) = \nu(m j(\check{a})+n j(\check{b}))$. It remains to lift Shumakovitch's $\nu$ to odd Khovanov homology.

Khovanov homology with coefficients in $\FF_2$ splits~\cite[Corollary 3.2.C]{Shu14:torsion} and so does Bar-Natan homology over $\FF_2$~\cite{MR3482492}. We want to analyze the matrix for the map
\[
\tilde{\nu} \colon \rKh^{0,q-1}(K;\FF_2)\oplus \rKh^{0,q+1}(K;\FF_2)\to \rKh^{0,q+1}(K;\FF_2) \oplus \rKh^{0,q+3}(K;\FF_2)
\]
induced by $\nu$ and Shumakovitch's splitting.

The sequence (\ref{eq:ses-bn}) can be split over $\FF_2$ by using
$r\co\rKhCx_h(K;\FF_2)\{1\}\to \KhCx_h(K;\FF_2)$, $r(u\otimes 1) = u+XT(u)$. After setting $h=0$, $T$ turns into $\nu$ and because $X\nu +\nu X = \id$~\cite[\S 3.2]{Shu14:torsion}, the splitting $\tilde{r}\colon \rKhCx(K;\FF_2)\{1\}\to \KhCx(K;\FF_2)$ is given by $\tilde{r}(u\otimes 1) = \nu(Xu)$.
In particular, for $u\otimes 1 \in \rKh^{0,q-1}(K;\FF_2)$ we get $\tilde{\nu}(u\otimes 1) = \nu (\nu(Xu)) = 0$, as $\nu^2=0$~\cite[\S 3.2]{Shu14:torsion}.

For $u\otimes 1\in \rKh^{0,q+1}(K;\FF_2)$ observe that inclusion into $\Kh^{0,q}(K;\FF_2)$ is given by sending $u\otimes 1$ to $XI(u)$. As $h=0$ in our situation, $XI(u) = Xu$ by Lemma \ref{lem:chainT}. Then $\nu(Xu) = u +X\nu(u)$. Projecting $\Kh^{0,q+2}(K;\FF_2)$ to the summand $\rKh^{0,q+3}(K;\FF_2)$ is done by sending $v$ to $\nu(v)\otimes 1$, so $\nu(Xu)$ is sent to $0$ in this summand. Finally, projection from $\Kh^{0,q+2}(K;\FF_2)$ to the summand $\rKh^{0,q+1}(K;\FF_2)$ is just the standard change of coefficients $w\mapsto w\otimes 1$. Therefore $\nu(Xu) = u+X\nu(u)$ is sent to $u\otimes 1$.

It follows that $\tilde{\nu}$ is the identity on $\rKh^{0,q+1}(K;\FF_2)$ and zero between all other summands. The splitting of odd Khovanov homology with integral coefficients~\cite{ORSz13:odd} descends to the splitting over $\FF_2$ described above, so we can lift $\nu$ to odd Khovanov homology by using the same matrix as for $\tilde{\nu}$ (i.e., $(a,b)\mapsto(b,0)$). This finishes the proof.
\end{proof}

Lemmas~\ref{lem:bockstein_s_vals} and~\ref{lem:r-is-s} imply that the
reduced invariants $\stil^{\beta_n}(\LEO(K))$ and $\stil_o(\LEO(K))$
also agree with $r^{\beta_n}(\LEO(K))$ and $r_o(\LEO(K))$.

\begin{remark}\label{rem:no-examples}
  We do not know examples of knots $K$ where
  $s^{\beta_n}(\LEE(K))\neq r^{\beta_n}(\LEE(K))$ (respectively
  $s^{\beta}(\LEO(K)) \neq r^{\beta}(\LEO(K))$), and suspect they may also
  be equal in general.
\end{remark}

\begin{proof}[Proof of Theorem~\ref{thm:bockstein_s_vals}]
  This is immediate from Lemmas~\ref{lem:bockstein_s_vals} and~\ref{lem:r-is-s}.
\end{proof}

\begin{proof}[Proof of Theorem~\ref{thm:s-rels}]
  This is immediate from Lemmas~\ref{lem:obstruction_order}
  and~\ref{lem:r-is-s}.
\end{proof}

\subsection{More relations between the refined \texorpdfstring{$s$}{s}-invariants}
\label{sec:more-relations}

There are two main uses for concordance invariants: proving that a
knot $K$ is not slice (concordant to the unknot), and proving that
knots $K_1$ and $K_2$ are not concordant. Given concordance invariants
$\alpha$ and $\beta$, we say that $\alpha$ is a \emph{stronger slice
  obstruction} than $\beta$ if whenever $\beta$ detects non-sliceness
of $K$, so does $\alpha$. We say $\alpha$ and $\beta$ are
\emph{independent} if there are knots $K_1,\dots,K_4$ so that
$\alpha(K_1)=\alpha(K_2)$, $\beta(K_1)\neq\beta(K_2)$,
$\alpha(K_3)\neq\alpha(K_4)$, and $\beta(K_3)=\beta(K_4)$. It is
possible for $\alpha$ to be a stronger slice obstruction than $\beta$
but for $\alpha$ and $\beta$ to be independent; depending on the
problem, one property may be more relevant than the other. This
subsection is focused on showing that some of the concordance
invariants described above are stronger slice obstructions than
others. Independence of many of these invariants follows from the
computations in Section~\ref{sec:computations} (see also
Examples~\ref{eg:alg-example1} and~\ref{eg:alg-example2}).

We already know that some of the invariants are stronger slice
obstructions than others. For example,
Theorem~\ref{thm:bockstein_s_vals} implies that for knots with
$s_{\FF_2}(K)=0$, if $n>m$ then $s^{\beta_n}$ is a stronger slice
obstruction than $s^{\beta_m}$. On the other hand, the computations in
Section~\ref{sec:computations} show that neither $s^{\beta_\infty}$
nor $s^\beta$ is a stronger slice obstruction than the
other. Similarly, for knots with $s_{\FF_2}(K)=0$,
Lemma~\ref{lem:obstruction_order} implies that $r_c$ is a stronger
slice obstruction than the others in that lemma. (See also
Theorem~\ref{thm:local-is-e}.)

We turn now to the relationship between the two families of invariants
($s^\alpha$ and $s_\gamma$).

Knowing $H^0(C)$ as an abelian group leads to
an understanding of the image of the Bockstein homomorphism
$\beta_n$, namely, the image is generated by the $\ZZ/(2^k)$-summands
in $H^0(C)$ with $k\leq n$ via the map
$H^0(C)\otimes_\ZZ \FF_2\to H^0(C;\FF_2)$. So,
$q=s_{\FF_2}(K)$ is $\beta_n$-reduced-full if there is a non-$h$-torsion
element of $H^{0,q}(D;\FF_2)$ whose image in
$H^{0,q}(C;\FF_2)$ is in the image of the $\ZZ/(2^k)$-summands
with $k\leq n$. In particular, as noted above, if $s_{\FF_2}(K)$ is
$\beta_n$-reduced-full (so $\stil^{\beta_n}(\LEO(K))=s_{\FF_2}(K)+2$), it
is also $\beta_p$-reduced-full for $p\geq n$. This also tells us that
$q$ is oddly reduced-full, so $\stil_o(K)=s_{\FF_2}(K)$. So,
we have proved:
\begin{lemma}\label{lem:s-beta-to-s-o}
  If $\stil^{\beta_n}(\LEO(K))=s_{\FF_2}(K)+2$ then
  $\stil_o(\LEO(K))=s_{\FF_2}(K)$, and similarly for
  $\LEE(K)$.
\end{lemma}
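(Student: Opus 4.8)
The plan is to translate both hypotheses into statements about the reduced LEO triple $(\wt C,\wt D,\wt f):=\pi(\LEO(K))=(\roKhCx(K),\rKhCxBN(K),\id)$, since by definition $\stil^{\beta_n}(\LEO(K))$ and $\stil_o(\LEO(K))$ are the corresponding invariants of this triple. Put $q:=s_{\FF_2}(K)$; this is even and equals $s_{\FF_2}(\wt C,\wt D,\wt f)$. Because $0\leq s_{\FF_2}(\wt C,\wt D,\wt f)-\stil_o(\wt C,\wt D,\wt f)\leq 2$, it suffices to show that $q$ is oddly reduced-full for $(\wt C,\wt D,\wt f)$. From $\stil^{\beta_n}(\LEO(K))=q+2$ we get, directly from the definition of $\stil^{\beta_n}$, that $q$ is $\beta_n$-reduced-full: there are $\check a\in H^{-1,q}(\wt C;\ZZ/(2^n))$ and $a\in H^{0,q}(\wt D;\FF_2)$ with $\wt f\bigl(\beta_n(\check a)\bigr)=p(a)$ in $H^{0,q}(\wt D_{h=0};\FF_2)$ and $i(a)\neq 0$ in $H^{0,q}(h^{-1}\wt D;\FF_2)$.

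The heart of the argument is the elementary fact that the image of the higher Bockstein lands inside the image of mod-$2$ reduction, i.e.
\[
  \Image\bigl(\beta_n\co H^{-1,q}(\wt C;\ZZ/(2^n))\to H^{0,q}(\wt C;\FF_2)\bigr)\ \subseteq\ \Image\bigl(j\co H^{0,q}(\wt C)\to H^{0,q}(\wt C;\FF_2)\bigr).
\]
I would prove this at the chain level using the short exact sequence $0\to\ZZ/(2)\xrightarrow{\,\cdot 2^n\,}\ZZ/(2^{n+1})\to\ZZ/(2^n)\to 0$: choose a lift $w\in\wt C^{-1}$ of a mod-$2^n$ cocycle representing $\check a$, so $\bdy w\in 2^n\wt C^0$; since $\wt C^0$ is free, hence torsion-free, there is a unique $z\in\wt C^0$ with $\bdy w=2^n z$, and then $\bdy z=0$ (as $2^n\bdy z=\bdy^2 w=0$), so $z$ is an integral cocycle in quantum grading $q$ with $\beta_n(\check a)=[z\bmod 2]=j([z])$. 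Setting $\check b:=[z]\in H^{0,q}(\wt C)$, we then have $\wt f\bigl(j(\check b)\bigr)=\wt f\bigl(\beta_n(\check a)\bigr)=p(a)$ with $i(a)\neq 0$, which is exactly the assertion that $q$ is oddly reduced-full. Hence $\stil_o(\LEO(K))=q=s_{\FF_2}(K)$. (One can say more — $2^n\check b=0$, so $\Image(\beta_n)$ is precisely the $j$-image of the $2$-power torsion of order dividing $2^n$ in $H^{0,q}(\wt C)$, matching the description in the text — but only the containment above is needed here.)

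The case of $\LEE(K)$ is handled by the same argument verbatim, replacing $\roKhCx(K)$ by $\rKhCx(K)$; the only property of the reduced complex used is that it is a finitely generated free complex over $\ZZ$, which holds for $\rKhCx(K)$ as well. I expect the main (and essentially only) obstacle to be the chain-level identification of $\Image(\beta_n)$ inside $\Image(j)$ above: it is routine homological algebra, but one must use the correct higher Bockstein (associated to $0\to\ZZ/2\to\ZZ/2^{n+1}\to\ZZ/2^n\to 0$, not $0\to\ZZ/2\to\ZZ/4\to\ZZ/2\to 0$) and the torsion-freeness of $\wt C^0$ to pin down the integral cocycle $z$; everything else is bookkeeping with the definitions of the refined invariants.
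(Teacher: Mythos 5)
Your proof is correct and is essentially the same argument the paper uses: the paper's justification for this lemma (in the paragraph preceding it) is precisely that the image of $\beta_n$ lands in the image of the reduction map $j\co H^{0,q}(\wt C)\to H^{0,q}(\wt C;\FF_2)$ (generated by the $\ZZ/(2^k)$-summands with $k\le n$), which upgrades a witness for $\beta_n$-reduced-fullness to a witness for oddly-reduced-fullness. Your contribution is to verify at the chain level, using torsion-freeness of the free complex, the containment $\Image(\beta_n)\subseteq\Image(j)$ that the paper asserts without proof; this is a clean and complete way to fill that gap, and the reduction-to-the-reduced-triple bookkeeping and the $\LEE$ case are handled correctly.
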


On the other hand, if we dualize the complex, the Bockstein
homomorphism reverses direction, so the corresponding
$\ZZ/(2^k)$-summand no longer lifts to an integral class. So, it seems
reasonable to expect that, often, if
$\stil^{\beta_n}(\LEO(K))=s_{\FF_2}(K)+2$ then
$\stil_o(\Kbar)=s_{\FF_2}(\Kbar)-2$ (where $\Kbar$ denotes the
mirror). A precise results along these lines is the following:

\begin{proposition}\label{prop:e-the-best}
  The concordance invariant 
  $\svectil_c(K) = \bigl(\stil_c(\LEO(K)),\stil_c(\LEO(\Kbar))\bigr)$ is a
  stronger slice obstruction than $s_{\FF_2}(K)$ and all of
  $s^{\beta_n}$, $s^{\beta}$, $s_o$, $\stil^{\beta_n}, \stil^{\beta},
  \stil_o, r^{\beta_n}$, and $r^{\beta}$ applied to either $\LEO(K)$
  or $\LEE(K)$.
  Also, the concordance invariant
  $\bigl(\stil_o(\LEO(K)),\stil_o(\LEO(\Kbar))\bigr)$ is a
  stronger slice obstruction than $s_{\FF_2}(K)$ and
  $\stil^{\beta_n}(\LEO(K))$.
\end{proposition}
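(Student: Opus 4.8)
The strategy in both cases is contraposition: if the left-hand invariant equals its value at the unknot (namely $(0,0)$), we show every right-hand invariant also equals its value at the unknot (namely $0$), so the left-hand invariant is at least as strong a slice obstruction. For the first statement, suppose $\svectil_c(K)=(0,0)$, i.e.\ $\stil_c(\LEO(K))=\stil_c(\LEO(\Kbar))=0$. By Theorem~\ref{thm:s-rels} applied to $K$ and to $\Kbar$ we have $r_c(\LEO(K))=\stil_c(\LEO(K))=0$ and $r_c(\LEO(\Kbar))=\stil_c(\LEO(\Kbar))=0$, so Theorem~\ref{thm:local-is-e} shows $\LEO(K)$ is locally equivalent to $\LEO(U)$; as in the proof of Proposition~\ref{prop:conc-implies-local-equiv}, a local map of LEO triples induces a local map of the associated $\LEE$ triples, so $\LEE(K)$ is locally equivalent to $\LEE(U)$ as well. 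Every invariant on the right of the first statement descends to $\CLEO$ or $\rCLEO$ (Lemma~\ref{lem:s-beta-descends}, Proposition~\ref{prop:rCLEO}), and all of them vanish on $\LEO(U)$ and $\LEE(U)$ by Example~\ref{eg:unknot-full} (together with the fact that $\LEE(U)$ has the same $D$, hence the same $s_{\FF_2}$, as $\LEO(U)$). Hence they all vanish on $K$.

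For the second statement, suppose $\stil_o(\LEO(K))=\stil_o(\LEO(\Kbar))=0$. First, if $s\coloneqq s_{\FF_2}(K)\neq0$ we derive a contradiction: replacing $K$ by $\Kbar$ if needed we may assume $s>0$, and since $s_{\FF_2}$ takes values in $2\ZZ$ we have $s\geq2$. Theorem~\ref{thm:s-rels} gives $s-2\leq\stil_o(\LEO(K))$ and $\stil_o(\LEO(\Kbar))\leq s_{\FF_2}(\Kbar)=-s$; if $s\geq4$ the former forces $\stil_o(\LEO(K))\geq2>0$, and if $s=2$ the latter forces $\stil_o(\LEO(\Kbar))\leq-2<0$. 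So $s_{\FF_2}(K)=0$. Next, suppose some $\stil^{\beta_n}(\LEO(K))\neq0$; by Theorem~\ref{thm:bockstein_s_vals} it then equals $s_{\FF_2}(K)+2=2$. The plan is to deduce $\stil_o(\LEO(\Kbar))=s_{\FF_2}(\Kbar)-2=-2$, contradicting our assumption, via the following key claim, which is the crux of the whole proposition: \emph{if $\stil^{\beta_n}(\LEO(K))=s_{\FF_2}(K)+2$ for some $n$, then $\stil_o(\LEO(\Kbar))=s_{\FF_2}(\Kbar)-2$.}

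To prove the key claim I would imitate the proof of Lemma~\ref{lem:o-mirror}. Write $(C,D,f)$ for the reduced triple $\pi(\LEO(K))$ and put $s=s_{\FF_2}(K)$. Since $\ZZ$ and $\FF_2[h]$ are PIDs and homotopy equivalences are local equivalences, we may replace $C$ by a direct sum of bigrading-shifted copies of $\ZZ$, $\ZZ\xrightarrow{2^m}\ZZ$, and $\ZZ\xrightarrow{p^m}\ZZ$ ($p>2$ prime; these last summands affect neither $\stil^{\beta_n}$ nor $\stil_o$ and may be dropped), and $D\otimes\FF_2$ by a copy of $\FF_2[h]$ generated in bigrading $(0,s)$ plus bigrading-shifted copies of $\FF_2[h]\xrightarrow{h^m}\FF_2[h]$. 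Decompose $H^{0,s}(C;\FF_2)=\FF_2^c\oplus\FF_2^d\oplus\FF_2^e$, with $\FF_2^c$ from free $\ZZ$ summands at $(0,s)$, $\FF_2^d$ from summands $\ZZ\xrightarrow{2^m}\ZZ$ with target at $(0,s)$, and $\FF_2^e$ from such summands with source at $(0,s)$, and decompose $H^{0,s}(D_{h=0};\FF_2)=\FF_2^1\oplus\FF_2^a\oplus\FF_2^b$ exactly as in Lemma~\ref{lem:o-mirror}. The image of $j$ is $\FF_2^c\oplus\FF_2^d$, and one checks $\im(\beta_n)\subseteq\FF_2^d$, so the hypothesis that $s$ is $\beta_n$-reduced-full says precisely that $f(\FF_2^d)$ meets the set of images under $p$ of classes $a\in H^{0,s}(D;\FF_2)$ with $i(a)\neq0$, written $\{(1,u,0):u\in\FF_2^a\}$ in these coordinates. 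Dualizing: tracking how the three types of cyclic summand of $C$ (resp.\ $D$) transform under $\Hom(-,\ZZ)$ shows that $H^{0,-s}(C^*;\FF_2)\cong H^{0,s}(C;\FF_2)^\vee$ with the image of $j$ for $C^*$ equal to the annihilator of $\FF_2^d$, and that under the matching identifications $f^*$ is the transpose of $f^{-1}$. A witness $\phi\in\im(j\text{ for }C^*)$ that $-s$ is oddly reduced-full for $(C^*,D^*,f^*)$ would then be a functional vanishing on $\FF_2^d$, nonzero on $f^{-1}(\FF_2^1)$, and vanishing on $f^{-1}(\FF_2^a)$; this is impossible exactly because $f(\FF_2^d)\cap\{(1,u,0)\}\neq\emptyset$, which is to say $f^{-1}(\FF_2^1)\in\FF_2^d+f^{-1}(\FF_2^a)$. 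So $-s$ is not oddly reduced-full for $\LEO(\Kbar)$, and since $\stil_o(\LEO(\Kbar))\in\{s_{\FF_2}(\Kbar)-2,\,s_{\FF_2}(\Kbar)\}$ by Theorem~\ref{thm:s-rels}, it equals $s_{\FF_2}(\Kbar)-2$.

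The main obstacle is the duality bookkeeping inside the key claim: one must verify carefully how each of the three kinds of cyclic summand of $C$ and of $D$ is carried by $\Hom(-,\ZZ)$ to a summand of the dual, in particular that the $2^m$-torsion classes in $H^0(C)$ carrying $\im(\beta_n)$ become ``mod-$2$-only'' classes of $C^*$ lying outside the image of $j$, and that $f^*$ really pairs the relevant subspaces as stated; the rest is formal. Everything else reduces to combining Theorems~\ref{thm:local-is-e},~\ref{thm:s-rels}, and~\ref{thm:bockstein_s_vals} with Example~\ref{eg:unknot-full} and Lemma~\ref{lem:o-mirror}.
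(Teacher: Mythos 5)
Your proof of the first statement is essentially the paper's: use Theorems~\ref{thm:s-rels} and~\ref{thm:local-is-e} to conclude that if $\svectil_c(K)=(0,0)$ then $\LEO(K)$ (and hence $\LEE(K)$, since the latter is determined by the second component of the former) is locally equivalent to $\LEO(U)$, so every invariant that descends to $\CLEO$ or $\rCLEO$ vanishes.

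For the second statement your argument is correct, but it takes a genuinely longer and more hands-on route than the paper's. The paper's (one-sentence) proof parallels the first part: $\stil_o$, $\stil^{\beta_n}$, and $s_{\FF_2}$ all descend to the two-reduced local equivalence group $\rCLEO^o$ --- Lemma~\ref{lem:s-beta-descends} states the first two explicitly, and $s_{\FF_2}$ depends only on $D\otimes\FF_2$ --- so if $\stil_o(\LEO(K))=\stil_o(\LEO(\Kbar))=0$, Theorem~\ref{thm:reduced-local-is-e} (the $\rCLEO^o$ clause) shows the image of $\LEO(K)$ in $\rCLEO^o$ is trivial, and then $s_{\FF_2}$ and all $\stil^{\beta_n}$ automatically vanish. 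You instead (a) deduce $s_{\FF_2}(K)=0$ by a small inequality chase, which is fine, and (b) state a ``key claim'' --- that $\stil^{\beta_n}(\LEO(K))=s_{\FF_2}(K)+2$ forces $\stil_o(\LEO(\Kbar))=s_{\FF_2}(\Kbar)-2$ --- and prove it by re-running the PID-decomposition/duality argument from Lemma~\ref{lem:o-mirror} while tracking where the image of the Bockstein lands (inside $\FF_2^d$) and what dualization does to the three kinds of cyclic summand. That bookkeeping is correct: a witness that $-s$ is oddly reduced-full for the dual would have to annihilate $\FF_2^d$ and $f^{-1}(\FF_2^a)$ yet be nonzero on $f^{-1}$ of the free generator, which is impossible precisely because the hypothesis puts that free generator in $\FF_2^d+f^{-1}(\FF_2^a)$. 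However, your key claim is itself just the contrapositive of ``if $\LEO(K)$ is trivial in $\rCLEO^o$ then $\stil^{\beta_n}=s_{\FF_2}$,'' which comes for free once one notices that $\stil^{\beta_n}$ is an invariant of $\rCLEO^o$ (alternatively, it follows from Lemma~\ref{lem:s-beta-to-s-o} together with Corollary~\ref{cor:stilc-triv}'s $\rCLEO^o$ analogue). So the extra decomposition work, while valid and perhaps instructive, is avoidable.
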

\begin{proof}
  For the first statement, by Theorems~\ref{thm:s-rels}
  and~\ref{thm:local-is-e}, if $\svectil_c(K)=(0,0)$ then $\LEO(K)$ is
  locally equivalent to $\LEO(U)$. It follows that $\LEE(K)$ is also
  locally equivalent to $\LEO(U)$, since $\LEE(K)$ is determined by
  $\LEO(K)$. So, all of the local equivalence invariants for $\LEO(K)$
  and $\LEE(K)$ must vanish. The proof of the second statement is
  similar, using two-reduced local equivalence and
  Theorem~\ref{thm:reduced-local-is-e} in place of
  Theorem~\ref{thm:local-is-e}.
\end{proof}

\begin{corollary}
  If $\stil^{\beta_n}(\LEO(K))=s_{\FF_2}(K)+2$ then
  $\stil^{\beta_n}(\LEO(\Kbar))=-s_{\FF_2}(K)$.
\end{corollary}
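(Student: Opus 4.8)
The plan is to normalize $s_{\FF_2}(K)$ to $0$ by a connected sum with trefoils, and then play the two refinements $\stil^{\beta_n}$ and $\stil_o$ against each other, using Proposition~\ref{prop:e-the-best} together with the two-element ranges of Theorems~\ref{thm:bockstein_s_vals} and~\ref{thm:s-rels}.

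First I would reduce to the case $s_{\FF_2}(K)=0$. Writing $s=s_{\FF_2}(K)$, connect-summing $K$ with $|s|/2$ copies of the right- or left-handed trefoil $T$ (whose $\LEO$ triple is locally equivalent to $\LEO(U)\{\mp 2\}$, as in the proof of Corollary~\ref{cor:stilc-triv-unreduced}) yields a knot $K'$ with $\pi(\LEO(K'))$ locally equivalent to $\pi(\LEO(K))\{-s\}$, hence $s_{\FF_2}(K')=0$, and with $\pi(\LEO(\Kbar'))$ locally equivalent to $\pi(\LEO(\Kbar))\{s\}$. Since $\stil^{\beta_n}$ descends to $\rCLEO$ and shifts by the quantum grading shift (Lemma~\ref{lem:s-beta-descends}), the hypothesis becomes $\stil^{\beta_n}(\LEO(K'))=2=s_{\FF_2}(K')+2$, and it suffices to prove $\stil^{\beta_n}(\LEO(\Kbar'))=0$, since then $\stil^{\beta_n}(\LEO(\Kbar))=\stil^{\beta_n}(\LEO(\Kbar'))-s=-s$.

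So assume $s_{\FF_2}(K)=0$ and $\stil^{\beta_n}(\LEO(K))=2$. By Lemma~\ref{lem:s-beta-to-s-o}, $\stil_o(\LEO(K))=s_{\FF_2}(K)=0$. Because $\stil^{\beta_n}(\LEO(K))=2\neq 0=\stil^{\beta_n}(\LEO(U))$, the invariant $\stil^{\beta_n}$ detects the non-sliceness of $K$, so by the second part of Proposition~\ref{prop:e-the-best} (equivalently, apply the $\rCLEO^o$ part of Theorem~\ref{thm:reduced-local-is-e} to the image of $\LEO(K)$ in $\rCLEO^o$, where $\stil^{\beta_n}$ is also defined: if both $\stil_o$'s vanished the triple would be trivial in $\rCLEO^o$, forcing $\stil^{\beta_n}(\LEO(K))=0$) the pair $\bigl(\stil_o(\LEO(K)),\stil_o(\LEO(\Kbar))\bigr)$ is not $(0,0)$. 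With $\stil_o(\LEO(K))=0$ this gives $\stil_o(\LEO(\Kbar))\neq 0$, and Theorem~\ref{thm:s-rels} pins $\stil_o(\LEO(\Kbar))$ to $\{s_{\FF_2}(\Kbar)-2,\,s_{\FF_2}(\Kbar)\}=\{-2,0\}$, so $\stil_o(\LEO(\Kbar))=-2=s_{\FF_2}(\Kbar)-2$.

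Finally I would run Lemma~\ref{lem:s-beta-to-s-o} contrapositively for $\Kbar$: if $\stil^{\beta_n}(\LEO(\Kbar))$ equaled $s_{\FF_2}(\Kbar)+2=2$, the lemma would give $\stil_o(\LEO(\Kbar))=s_{\FF_2}(\Kbar)=0$, contradicting the previous step; since $\stil^{\beta_n}(\LEO(\Kbar))\in\{0,2\}$ by Theorem~\ref{thm:bockstein_s_vals}, we conclude $\stil^{\beta_n}(\LEO(\Kbar))=0$, as needed. The only real obstacle is the initial normalization: "stronger slice obstruction" is phrased relative to the unknot's value, so Proposition~\ref{prop:e-the-best} only forces $\bigl(\stil_o(\LEO(K)),\stil_o(\LEO(\Kbar))\bigr)\neq(0,0)$ once $\stil^{\beta_n}(\LEO(K))\neq 0$, which is exactly why we must first pass to $K'$ with $s_{\FF_2}(K')=0$ so that the hypothesis reads $\stil^{\beta_n}(\LEO(K'))=2$; everything after that is bookkeeping with the ranges $\{s_{\FF_2},s_{\FF_2}+2\}$ and $\{s_{\FF_2}-2,s_{\FF_2}\}$.
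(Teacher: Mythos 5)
Your proof is correct, and it rests on the same two ingredients as the paper's: Lemma~\ref{lem:s-beta-to-s-o} and the triviality criterion in $\rCLEO^o$ from Theorem~\ref{thm:reduced-local-is-e}. The difference is in organization. The paper's proof is a single contradiction: assume $\stil^{\beta_n}(\LEO(\Kbar))=-s_{\FF_2}(K)+2$, apply Lemma~\ref{lem:s-beta-to-s-o} to \emph{both} $K$ and $\Kbar$ to deduce $\stil_o(\LEO(K))=s_{\FF_2}(K)$ and $\stil_o(\LEO(\Kbar))=-s_{\FF_2}(K)$, and then invoke the $\rCLEO^o$ analogue of Corollary~\ref{cor:stilc-triv}---the ``grading-shifted triviality'' form, which handles arbitrary $s_{\FF_2}$---to conclude that the image of $\LEO(K)$ in $\rCLEO^o$ is a shifted unknot, forcing $\stil^{\beta_n}(\LEO(K))=s_{\FF_2}(K)$ and contradicting the hypothesis. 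You instead use only the unshifted version of the triviality criterion (``trivial iff both $\stil_o$'s vanish''), which is why you must first normalize by connect-summing with trefoils to arrange $s_{\FF_2}(K')=0$; after that your two-step contradiction (first pinning $\stil_o(\LEO(\Kbar'))=-2$ via Theorem~\ref{thm:reduced-local-is-e}, then running Lemma~\ref{lem:s-beta-to-s-o} contrapositively) is sound. So the essential content is shared, and the normalization step is an honest workaround for not using the shifted corollary; using the $\rCLEO^o$ analogue of Corollary~\ref{cor:stilc-triv} directly would let you drop the normalization and the first contradiction step.
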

\begin{proof}
  By Lemma~\ref{lem:s-beta-to-s-o}, if
  $\stil^{\beta_n}(\LEO(K))=s_{\FF_2}(K)+2$ and
  $\stil^{\beta_n}(\LEO(\Kbar))=-s_{\FF_2}(K)+2$ then
  $\stil_o(\LEO(K))=s_{\FF_2}(K)$ and
  $\stil_o(\LEO(\Kbar))=s_{\FF_2}(\Kbar)=-s_{\FF_2}(K)$, but
  then Corollary~\ref{cor:stilc-triv} implies that
  $\stil^{\beta_n}(\LEO(K))=s_{\FF_2}(K)$, a contradiction.
\end{proof}

Finally, for alternating knots, the techniques in this paper do not
give new information:

\begin{proposition}
\label{prop:alt}
Let $K$ be an alternating knot. Then, $\LEO(K)$ is locally equivalent
to $\LEO(U)\{\sgn(K)\}$, where $\sgn(K)$ denote the signature of
$K$. In particular, if we let $(C,D,f) = \LEO(K)$ or $\LEE(K)$, then
\[
s^\alpha(C,D,f) = \tilde{s}^\alpha(C,D,f) = r^\alpha(C,D,f) = \sgn(K)
\]
for all $\alpha\in \{\beta_n,\beta\}$ with $1\leq n \leq \infty$, and
\[
s_\gamma(C,D,f) = \tilde{s}_\gamma(C,D,f) = r_\gamma(C,D,f) = \sgn(K)
\]
for $\gamma\in \{o, c\}$. 
\end{proposition}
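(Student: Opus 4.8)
The plan is three-staged. First I would show that $\LEO(K)$ maps to $[\LEO(U)\{\sgn(K)\}]$ in $\rCLEO$ by writing down explicit local maps in both directions, in the style of the proof of Theorem~\ref{thm:local-is-e}. Then I would upgrade this to a local equivalence $\LEO(K)\sim\LEO(U)\{\sgn(K)\}$ in $\CLEO$ via Corollary~\ref{cor:stilc-triv-unreduced}. Finally I would read off the values of the refined invariants from Lemma~\ref{lem:s-beta-descends}, Example~\ref{eg:unknot-full}, and the fact that each of these invariants shifts by $q$ under the grading shift $\{q\}$. The inputs special to alternating knots are: the Rasmussen invariant equals the signature, $s_\QQ(K)=\sgn(K)$ \cite{Rasmussen10:s}; and the reduced odd Khovanov homology $\roKh(K;\ZZ)$ is thin \cite{ORSz13:odd}, so in particular $\roKh^{1,\sgn(K)}(K;\ZZ)=0$ and hence the change-of-coefficients map $\roKh^{0,\sgn(K)}(K;\ZZ)\to\roKh^{0,\sgn(K)}(K;\FF_2)=\rKh^{0,\sgn(K)}(K;\FF_2)$ is onto.

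For the first stage, recall $\pi(\LEO(K))=(\roKhCx(K),\rKhCxBN(K),\id)$ and that $h^{-1}\rKh_h(K)$ is free of rank $1$ over $\ZZ[h,h^{-1}]$. Since $\rKh_h(K;\ZZ)\otimes\QQ$ has its free $\QQ[h]$-summand in quantum grading $s_\QQ(K)=\sgn(K)$, the map $\rKh_h^{0,\sgn(K)}(K;\ZZ)\to h^{-1}\rKh_h^{0,\sgn(K)}(K;\ZZ)\cong\ZZ$ is onto, and any $a\in\rKhCxBN^{0,\sgn(K)}(K)$ representing a preimage of a generator has $[a]$ generating $h^{-1}\rKh_h(K)$ as a module. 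Set $\bar a=p([a])\in\rKh^{0,\sgn(K)}(K;\ZZ)$; by the displayed surjectivity there is a cocycle $\check a\in\roKhCx^{0,\sgn(K)}(K)$ whose mod-$2$ class is $f^{-1}(\bar a\bmod2)$. Then $\alpha(1)=\check a$ and $\beta(1)=a$ define $\ZZ$- and $\ZZ[h]$-linear chain maps $\alpha\co\ZZ\{\sgn(K)\}\to\roKhCx(K)$ and $\beta\co\ZZ[h]\{\sgn(K)\}\to\rKhCxBN(K)$; the map $\beta$ becomes a homotopy equivalence after inverting $h$ (as $[a]$ is a module generator and both sides are free), and Diagram~\eqref{eq:reduced-local-map} homotopy-commutes since $f(\check a\bmod2)=\bar a\bmod2=\beta(1)\bmod(2,h)$ in homology. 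So $(\alpha,\beta)$ is a local map $(\ZZ\{\sgn(K)\},\ZZ[h]\{\sgn(K)\},\id)\to\pi(\LEO(K))$.

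For the opposite direction, $\Kbar$ is again alternating with $\sgn(\Kbar)=-\sgn(K)$, so the same construction gives a local map $(\ZZ\{-\sgn(K)\},\ZZ[h]\{-\sgn(K)\},\id)\to\pi(\LEO(\Kbar))$; since $\pi$ and $K\mapsto\LEO(K)$ are group homomorphisms (Proposition~\ref{prop:rCLEO}, Theorem~\ref{thm:local-equiv-group}) and $\Kbar$ represents $-[K]$, $\pi(\LEO(\Kbar))$ is locally equivalent to $\pi(\LEO(K))^*$, and dualizing this local map (a local map $X\to Y$ induces $Y^*\to X^*$, as in the proof of Theorem~\ref{thm:local-equiv-group}) yields a local map $\pi(\LEO(K))\to(\ZZ\{\sgn(K)\},\ZZ[h]\{\sgn(K)\},\id)$. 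Hence $\pi(\LEO(K))$ is locally equivalent to $(\ZZ\{\sgn(K)\},\ZZ[h]\{\sgn(K)\},\id)$, so $\stil_c(\LEO(K))=\sgn(K)$ and $\stil_c(\LEO(\Kbar))=\stil_c\bigl(\pi(\LEO(K))^*\bigr)=-\sgn(K)=-\stil_c(\LEO(K))$; Corollary~\ref{cor:stilc-triv-unreduced} then gives that $\LEO(K)$ is locally equivalent to $\LEO(U)\{\sgn(K)\}$ in $\CLEO$. Since $\LEE(K)$ is the even LEO triple $(\KhCxBN(K)/(h),\KhCxBN(K),\id)$ built from $\LEO(K)$, and a local map of LEO triples induces one of the associated even triples (use $\beta$ on the $D$-factors and $\beta\bmod h$ on the $C$-factors), $\LEE(K)$ is likewise locally equivalent to $\LEE(U)\{\sgn(K)\}=\LEO(U)\{\sgn(K)\}$. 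By Lemma~\ref{lem:s-beta-descends} each of $r^\alpha,s^\alpha,\stil^\alpha$ ($\alpha\in\{\beta_n,\beta\}$) and $r_\gamma,s_\gamma,\stil_\gamma$ ($\gamma\in\{o,c\}$) is a local-equivalence invariant; each is of the form $\max\{q'\mid q'\text{ is (half-/reduced-)full}\}+\text{const}$, hence satisfies $I\bigl((C,D,f)\{q\}\bigr)=I(C,D,f)+q$ for even $q$; and each vanishes on $\LEO(U)$ (the unknot is slice; cf.\ Proposition~\ref{prop:knot-conc-invts} and Example~\ref{eg:unknot-full}). Applying these to $\LEO(K)$ and $\LEE(K)$ gives the displayed equalities.

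The main obstacle is assembling the correct alternating-knot input: chiefly verifying that the free $\ZZ[h]$-summand of $\rKh_h(K;\ZZ)$ is realized integrally in quantum grading $\sgn(K)$ (equivalently $s_\ZZ(K)=\sgn(K)$, which is implicit in Sch\"utz's computation \cite{Schutz:integral-s} and follows from thinness together with the rank-one freeness of $h^{-1}\rKh_h(K)$), and that the single relevant bidegree $\roKh^{1,\sgn(K)}(K;\ZZ)$ vanishes so that the required odd mod-$2$ class lifts to $\ZZ$. Once these are in hand, the remainder is bookkeeping with local maps, duals, and grading shifts; the one point to handle carefully is that the reduction $f$ and the identification of $\roKh$ with $\rKh$ mod $2$ are compatible in the square~\eqref{eq:reduced-local-map}, which holds by naturality.
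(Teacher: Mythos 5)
Your proposal is correct and lands on essentially the same overall strategy as the paper's proof: establish that $\stil_c(\LEO(K))=\sgn(K)$ and $\stil_c(\LEO(\Kbar))=-\sgn(K)$, then invoke Corollary~\ref{cor:stilc-triv-unreduced} to get $\LEO(K)\sim\LEO(U)\{\sgn(K)\}$ and read off the invariants. The differences are in the bookkeeping. The paper computes $r_c$ of the \emph{unreduced} triple $\LEO(K)$ directly: it cites Sch\"utz's result that $s_\ZZ(K)=s_{\FF_2}(K)$ for alternating knots, uses Lemma~\ref{lem:allszettsame} to produce a primitive class in $H^{0,s+1}(D)$, and then uses torsion-freeness of (unreduced) odd Khovanov homology of alternating knots to see that $f\circ j$ is surjective in the relevant bidegree. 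You instead work in the reduced setting, constructing explicit local maps (thereby re-running the argument of Theorem~\ref{thm:reduced-local-is-e} rather than just establishing the $\stil_c$ values), using thinness of $\roKh(K)$ in place of torsion-freeness of $\oKh(K)$; both serve the same purpose of lifting the relevant mod-$2$ class to $\ZZ$ via universal coefficients.

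One point worth tightening: in your second paragraph you infer that $\rKh_h^{0,\sgn(K)}(K;\ZZ)\to h^{-1}\rKh_h^{0,\sgn(K)}(K;\ZZ)$ is onto from the fact that the free $\QQ[h]$-summand sits in quantum grading $s_\QQ(K)=\sgn(K)$, but rational information alone does not give integral surjectivity — that surjectivity is precisely the assertion $s_\ZZ(K)=\sgn(K)$, which is strictly stronger than $s_\QQ(K)=\sgn(K)$. You acknowledge this in your closing paragraph as ``the main obstacle,'' but the middle paragraph reads as if $s_\QQ$ were enough. The paper avoids this ambiguity by citing Sch\"utz's Lemma~4.11 ($s_\ZZ=s_{\FF_2}$ for alternating knots) directly. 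If you prefer to derive it from thinness, you would want to spell out that the free reduced Bar-Natan complex of a thin knot decomposes over $\ZZ[h]$ as a single free $\ZZ[h]$ generator in bidegree $(0,\sgn(K))$ plus knight-move pieces $\ZZ[h]\stackrel{h}{\to}\ZZ[h]$, which is a short additional argument. With that step pinned down, your proof is complete.
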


\begin{proof}
  For the first statement, let $(C,D,f)=\LEO(K)$ and consider
  $r_c(C,D,f)$. By an earlier result~\cite[Lemma
  4.11]{Schutz:integral-s}, $s_\ZZ(K) = s_{\FF_2}(K)\eqqcolon s$, and
  hence by Lemma~\ref{lem:allszettsame}, there exists
  $a\in H^{0,s+1}(D)$ with $i(a)\in H^{0,s+1}(h^{-1}D)$
  primitive. Since odd Khovanov homology is torsion-free for
  alternating knots, we get that
  $f\circ j\colon H^{0,s+1}(C)\to H^{0,s+1}(D_{h=0};\FF_2)$ is
  surjective, and therefore $s+1$ is completely half-full, that is,
  $r_c(\LEO(K)) = s$. Applying the same argument to the mirror of $K$
  gives $r_c(\LEO(K)^*)=-s$. So, by
  Corollary~\ref{cor:stilc-triv-unreduced}, $\LEO(K)$ is locally
  equivalent to $\LEO(U)\{\sgn(K)\}$. Hence, their local equivalence
  invariants agree.
\end{proof}

\section{Computations}\label{sec:computations}
\begin{table}
  \begin{tabular}{crrrrr}
    \toprule
      crossings & $\svec^{\Sq^1_e}(K)$ & $\svec^{\Sq^1_o}(K)$
                     & $\svec^\beta(K)$ & $\svec^{\beta_{15}}(K)$ & $\svectil_c(K)$ \\
      \midrule
      \hphantom{0}9 & 0 & 1 & 1 & 1 & 1 \\
      10 & 0 & 2 & 2 & 2 & 2 \\
      11 & 0 & 10 & 10 & 10 & 10  \\ 
      12 & 0 & 49 & 49 & 50 & 50 \\
      13 & 0 & 286 & 285 & 297 & 297 \\
      14 & 2 & 1,718 & 1,717 & 1,797 & 1,797 \\
      15 & 41 & 11,244 & 11,239 & 11,808 & 11,819 \\
      16 & 162 & 73,814 & 73,787 & 77,873 & 77,929 \\
    \bottomrule
  \end{tabular}

  \vspace{1em}
  
  \caption{This table shows the number of prime knots with a specific
    number of crossings where each invariant is non-constant and hence
    carries more information than $s_{\FF_2}$.  The total number where
    $\svectil_c(K)$ is non-constant is 5.4\% of the 1.7 million prime
    knots with at most 16 crossings.}
  \label{tab:s-invars}
\end{table}

Many of the knot invariants in
Section~\ref{sec:knot-case} have been implemented in the latest
version of \texttt{KnotJob} \cite{knotjob}, including the strongest
among them (cf.~Corollary~\ref{cor:CLEO-to-rCLEO-iso}).  Specifically,
\texttt{KnotJob} can compute\footnote{\texttt{KnotJob} uses the
  notations `BLS-odd' for $\svec^{\beta_{15}}$, `$\Sq^1$-sum' for
  $\svec^\beta$, and `complete LS-Inv' for $\svectil_c$.}
\begin{align*}
  \svec^{\beta_{15}}(K) &= \big(r^{\beta_{15}}(\LEO(K)), \
                          -r^{\beta_{15}}(\LEO(\Kbar))
                          \big),\\
  \svec^{\beta}(K) &= \big(r^{\beta}(\LEO(K)), \
                          s^{\beta}(\LEO(K)), \ 
                         -r^{\beta}(\LEO(\Kbar)), \ 
                         -s^{\beta}(\LEO(\Kbar))\big),\\
  \svectil_c(K) &= \big(\stil_c(\LEO(K)), \ 
                   -\stil_c(\LEO(\Kbar))\big).                   
\end{align*}
The invariants
\begin{align*}
\svec^{\Sq^1_o}(K)  &= \big(r^{\beta_{1}}(\LEO(K)),\
                                          s^{\beta_{1}}(\LEO(K)), \
                                          -r^{\beta_{1}}(\LEO(\Kbar)),\
                                          -s^{\beta_{1}}(\LEO(\Kbar))\big),\\
\svec^{\Sq^1_e}(K)  &= \big(r^{\beta_{1}}(\LEE(K)),\
                           s^{\beta_{1}}(\LEE(K)), \
                          -r^{\beta_{1}}(\LEE(\Kbar)),\
                          -s^{\beta_{1}}(\LEE(\Kbar))\big)
\end{align*}
had already been implemented in an earlier version of \texttt{KnotJob}. (By Lemma~\ref{lem:r-is-s}, we now know there is some redundancy in $\svec^{\Sq^1_o}(K)$.)

For alternating knots, these tuples are constant with all entries
equal to $s_{\FF_2}(K)$ by Proposition~\ref{prop:alt}.  Also, for at
least $\svec^{\beta_n}(K)$, if the tuple is constant, then the common
value is $s_{\FF_2}(K)$; this follows from
Theorem~\ref{thm:bockstein_s_vals} and the fact that
$s_{\FF_2}(\Kbar) = -s_{\FF_2}(K)$.  Table~\ref{tab:s-invars} shows
the number of knots with a non-constant invariant with a specific
number of crossings $\leq 16$. For knots in this range, the invariant
for $\beta = \Sq^1_o+\Sq^1_e$ is very similar to the invariant for
$\Sq^1_o$. This is not surprising, as $\Sq^1_e$ has little impact for
these knots. There are only four knots with at most $15$ crossings
such that $\svec^\beta$ is non-constant while $\svec^{\Sq^1_o}$ is
constant, and for all of these $\svec^{\Sq^1_e}$ is also
non-constant. Among $16$-crossing knots, there are three with
non-constant $\svec^\beta$ for which both $\svec^{\Sq^1_o}$ and
$\svec^{\Sq^1_e}$ are constant. For these, we also have
$\svec^{\beta_{15}}$ constant. For all these knots, we found that if
any of the invariants $\svec^{\Sq^1_e}(K)$, $\svec^{\Sq^1_o}(K)$,
$\svec^\beta(K)$, and $\svec^{\beta_{15}}(K)$ are non-constant, then
so is $\svectil_c$ (cf.\ Proposition~\ref{prop:e-the-best}).

We have computed $\svec^{\beta_{15}}$ instead of
$\svec^{\beta_\infty}$ because its implementation is faster, and the
two only differ if the odd Khovanov homology has torsion of order
$>2^{15}$. In particular, for all the knots in
Table~\ref{tab:s-invars}, $\svec^{\beta_{15}}=\svec^{\beta_\infty}$.
Computation times for $\svec^{\beta_{15}}$ and $\svec^{\beta}$ are
faster than for $\svectil_c$, because the computations involve modular, rather than integer, arithmetic.



\subsection{Obstructing sliceness}

There are 352 million prime knots with at most 19
crossings~\cite{Burton2020}.  Dunfield and Gong have an ongoing
project to identify which of these knots are smoothly
slice~\cite{DunfieldGong2023}.  As of April 2023, there were only
17{,}991 knots (0.005\%) whose slice status remained unknown. In
particular, each of these knots has signature 0, an Alexander
polynomial that satisfies the Fox-Milnor criterion, all of
Herald-Kirk-Livingston's twisted Alexander
polynomials~\cite{HeraldKirkLivingston2010} that were computed are
consistent with the knot being slice, the Heegaard Floer invariants
$\tau$, $\nu$, and $\varepsilon$ are all $0$, as are the even
$s$-invariant over the fields $\FF_2$, $\FF_3$, and $\QQ$, the even
and odd $\Sq^1$-refined $s$-invariants, and Sch\"utz's $s^\ZZ$
invariant~\cite{Schutz:integral-s}.  Of these 17,991 inscrutable
knots, 826 have non-zero $\svec^{\beta_{15}}$ invariant, 64 have
non-zero $\svec^{\beta}$ invariant, and 890 have nonvanishing
$\svectil_c$ invariant.  The knots where $\svec^{\beta_{15}}$ and
$\svec^{\beta}$ are non-zero are disjoint, and $\svectil_c$ provided
no new information, but was non-zero if and only if one of
$\svec^{\beta_{15}}$ and $\svec^{\beta}$ was.  Thus these invariants
collectively obstruct sliceness of 890 knots, reducing the number of
mystery knots by 5\%.  Of these 890 knots, at least 832 are
topologically slice as they have Alexander polynomial equal to
1~\cite[Theorem~11.7B]{FreedmanQuinn}.  By
Corollary~\ref{cor:CLEO-to-rCLEO-iso}, no further slice obstructions
for these inscrutable knots can be obtained from $\LEO(K)$.

Another way to put these 890 knots in context is
that Dunfield-Gong found 1.6 million slice knots and showed 350.5
million are not even topologically slice~\cite{DunfieldGong2023}.
The smooth slice invariants
from Khovanov homology mentioned, namely
($s_{\FF_2}, s_{\FF_3}, s_\QQ, s^\ZZ, \svec^{\Sq^1},
\svec^{\Sq^1_o}$), provide additional slice obstructions for only
about 12,100 knots.  Thus, the $\svec^{\beta_{15}}$ and
$\svec^{\beta}$ invariants increase that total by 8.1\% to about
13,000.

Also, Owens and Swenton studied \emph{alternating} knots with at most 21
crossings, determining sliceness for all but 3,276 (0.0003\%) of the
1.2 billion such knots~\cite{OwensSwenton2023}.  By
Proposition~\ref{prop:alt}, the
invariants  $\svec^{\beta_{15}}$, $\svec^{\beta}$, and $\svectil_c$ give the
same information as $s_{\FF_2}$ for alternating knots, and so cannot
help resolve these remaining cases.

\subsection{Manolescu-Piccirillo knots}
\label{sec: MP knots}

Manolescu and Piccirillo~\cite{ManolescuPiccirillo2023} gave five
topologically slice knots such that if any of them were smoothly
slice, then one would obtain an exotic 4-sphere.
Nakamura~\cite{Nakamura2022} then showed that none of these
knots are smoothly
slice by using a 0-surgery homeomorphism to relate slice properties of
two knots stably after a connected sum with some 4-manifold.  We found
the $\svec^\beta$ invariant can also be used to check directly that these
five knots are not slice; in contrast, the $\svec^{\beta_{15}}$,
$\svec^{\Sq^1}$, and $\svec^{\Sq^1_o}$ invariants of these knots all
vanish.

Manolescu-Piccirillo's strategy was based on finding pairs of knots
$K$ and $K'$ with the same 0-surgery where $s_{\FF_2}(K) = 0$ and
$s_{\FF_2}(K') \neq 0$. From Dunfield-Gong's
data~\cite{DunfieldGong2023}, we found knots $K$ and $K'$ with the
same 0-surgery where $\svec^\beta(K) = 0$ and
$\svec^\beta(K') \neq 0$, but where all the invariants
($s_{\FF_2}, s_{\FF_3}, s_\QQ, s^\ZZ, \svec^{\Sq^1}, \svec^{\Sq^1_o}$)
vanish for both knots; here $K$ had 17 crossings and $K'$ had 40
crossings.
We also found analogous such pairs for $\svec^{\beta_{15}}$, with the
caveat that we could not verify that $s^\ZZ(K') = 0$ as $K'$ had 50
crossings.

\bibliographystyle{amsalphaurl}
\def\MR#1{\href{http://www.ams.org/mathscinet-getitem?mr=#1}{MR#1}}
\bibliography{slocal}
\end{document}